\documentclass[11pt]{article}
\usepackage{amsmath,amsthm,mathrsfs,amsfonts,amssymb,stmaryrd,xy,pslatex}
\usepackage[a4paper,textwidth=140mm,textheight=195mm]{geometry} 
\usepackage[dvips]{hyperref}
\theoremstyle{definition} 
\newtheorem{remark}{Remark}[section]
\newtheorem{remarks}[remark]{Remarks}
\newtheorem{example}[remark]{Example}
\newtheorem{examples}[remark]{Examples} \theoremstyle{plain}
\newtheorem{definition}[remark]{Definition}
\newtheorem{theorem}[remark]{Theorem}
\newtheorem{proposition}[remark]{Proposition}

\newtheorem{lemma}[remark]{Lemma}

\newtheorem{notation}[remark]{Notation}
%

\newcommand{\intd}{\,\mathrm{d}}

\newcommand{\Fix}{\mathrm{Fix}}
\newcommand{\Cofix}{\mathrm{Cofix}}
\newcommand{\triv}{\mathbf{1}}

\newcommand{\wclose}[1]{\llbracket{#1}\rrbracket}
\newcommand{\complex}{\mathbb{C}}

\newcommand{\naturals}{\mathbb{N}}

\DeclareMathOperator{\Id}{id}

\DeclareMathOperator{\supp}{supp}

\DeclareMathOperator{\Ad}{Ad}

\DeclareMathOperator{\End}{End}

\newcommand{\smalldiagram}{}

\newcommand{\bfF}{\mathbf{F}}
\newcommand{\bfG}{\mathbf{G}}

\newcommand{\bfcs}{\mathbf{C^{*}\!\textrm{-}}}
\newcommand{\bfws}{\mathbf{W^{*}\!\textrm{-}}}

\newcommand{\bfrep}{\mathbf{rep}}
\newcommand{\bfcorep}{\mathbf{corep}}


\newcommand{\trivrep}{\mathbf{1}_{V}}
\newcommand{\bKbo}{{_{\frakB}\frakK_{\frakBo}}}

\newcommand{\hDelta}{\widehat{\Delta}}
\newcommand{\hepsilon}{\widehat{\epsilon}}

\newcommand{\frakA}{\mathfrak{A}}
\newcommand{\frakB}{\mathfrak{B}}
\newcommand{\frakC}{\mathfrak{C}}

\newcommand{\frakAo}{\mathfrak{A}^{\dag}}
\newcommand{\frakBo}{\mathfrak{B}^{\dag}}
\newcommand{\frakCo}{\mathfrak{C}^{\dag}}

\newcommand{\fraka}{\mathfrak{a}}
\newcommand{\frakb}{\mathfrak{b}}
\newcommand{\frakc}{\mathfrak{c}}
\newcommand{\frakd}{\mathfrak{d}}
\newcommand{\frake}{\mathfrak{e}}

\newcommand{\frakbo}{\mathfrak{b}^{\dag}}
\newcommand{\frakco}{\mathfrak{c}^{\dag}}
\newcommand{\frakdo}{\mathfrak{d}^{\dag}}

\newcommand{\frakH}{\mathfrak{H}}
\newcommand{\frakK}{\mathfrak{K}}
\newcommand{\frakL}{\mathfrak{L}}

\newcommand{\cbasel}[2]{(\mathfrak{#2},\mathfrak{#1},
\mathfrak{#1}^{\dag})}
\newcommand{\cbases}[2]{\cbasel{#1}{#2}}
\newcommand{\cbaseos}[2]{(\mathfrak{#2},\mathfrak{#1}^{\dag},
\mathfrak{#1})} \newcommand{\cbasesa}{\cbases{A}{H}}
\newcommand{\cbasesb}{\cbases{B}{K}}

\newcommand{\cbaseosb}{\cbaseos{B}{K}}

\newcommand{\hdelta}{\widehat\delta}
\newcommand{\hphi}{\widehat\phi}

\newcommand{\aHb}{{_{\alpha}H_{\beta}}}
\newcommand{\aHhb}{{_{\alpha}H_{\hbeta}}}
\newcommand{\cKd}{{_{\gamma}K_{\delta}}}
\newcommand{\cKhd}{{_{\gamma}K_{\hdelta}}}
\newcommand{\eLf}{{_{\epsilon}L_{\phi}}}
\newcommand{\eLhf}{{_{\epsilon}L_{\widehat\phi}}}


\newcommand{\lt}{\triangleleft}
\newcommand{\rt}{\triangleright}

\newcommand{\Lt}[1]{\triangleleft}
\newcommand{\Rt}[1]{\triangleright}

\newcommand{\hpi}{\widehat{\pi}}

\newcommand{\hbeta}{\widehat{\beta}}

\newcommand{\hsigma}{\widehat{\sigma}}

\newcommand{\hA}{\widehat{A}} \newcommand{\ha}{\widehat{a}}
\newcommand{\hB}{\widehat{B}} \newcommand{\hb}{\widehat{b}}

\newcommand{\frei}{\,\cdot\,}
\newcommand{\mycong}{\xrightarrow{\cong}}

\newcommand{\vntimes}{\,{\bar{\otimes}}\,}

\newcommand{\rtensor}[3]{ {_{#1}\! \underset{#2}{\otimes}\!
{}_{#3}}} \newcommand{\vtensor}[3]{ {_{#1}
\underset{#2}{\bar{\otimes}} {}_{#3}}}

\newcommand{\tensor}[1]{\underset{#1}{\otimes}}

\newcommand{\btensor}{\underset{\frakb}{\otimes}}
\newcommand{\botensor}{\underset{\frakbo}{\otimes}}
\newcommand{\ctensor}{\underset{\frakc}{\otimes}}
\newcommand{\rtensorh}{\underset{\frakb}{\otimes}}

\newcommand{\rbtensor}[2]{\rtensor{#1}{\frakb}{#2}}
\newcommand{\rbotensor}[2]{\rtensor{#1}{\frakbo}{#2}}

\newcommand{\HtensorK}{H \rbtensor{\beta}{\gamma} K}
\newcommand{\KtensorH}{K \rbotensor{\gamma}{\beta} H}

\newcommand{\fibre}[3]{ {_{#1}\! \underset{#2}{\ast}\!
{}_{#3}}}  

\newcommand{\bfibre}{\underset{\frakb}{\ast}}
\newcommand{\bofibre}{\underset{\frakb^{\dag}}{\ast}}

\newcommand{\fsource}{\rtensor{\hbeta}{\frakbo}{\alpha}}
\newcommand{\frange}{\rtensor{\alpha}{\frakb}{\beta}}

\newcommand{\tl}{\ensuremath \olessthan}
\newcommand{\tr}{\ensuremath \ogreaterthan}

\newcommand{\Hsource}{H \fsource H} \newcommand{\Hrange}{H
\frange H}

\newcommand{\sHsource}{H \fsource H} \newcommand{\sHrange}{H
\frange H}

\newcommand{\Hone}{H \fsource H \fsource H}
\newcommand{\Htwo}{H \frange H \fsource H}
\newcommand{\Hthree}{H \frange H \frange H}
\newcommand{\Hfour}{(\Hsource) \rtensor{(\alpha \lt
\alpha)}{\frakb}{\beta} H} \newcommand{\Hfive}{H
\rtensor{\hbeta}{\frakbo}{(\alpha \rt \alpha)} (\Hrange)}
\newcommand{\Hfourlt}{H \fsource H
\rtensor{\beta}{\frakbo}{\alpha} H}
\newcommand{\Hfourrt}{\big(\Hrange\big) \rtensor{\hbeta \lt
\beta}{\frakbo}{\alpha} H}


\newcommand{\rfsource}{\rtensor{\hdelta}{\frakbo}{\alpha}}
\newcommand{\rfrange}{\rtensor{\gamma}{\frakb}{\beta}}

\newcommand{\rHsource}{K \rfsource H}
\newcommand{\rHrange}{K \rfrange H} 

\newcommand{\rHone}{K
\rfsource H \fsource H} \newcommand{\rHtwo}{K \rfrange H
\fsource H} \newcommand{\rHthree}{K \rfrange H \frange H}
\newcommand{\rHfour}{(\rHsource) \rtensor{(\gamma \lt
\alpha)}{\frakb}{\beta} H} \newcommand{\rHfourlt}{K \rfsource
H \rtensor{\beta}{\frakbo}{\alpha} H}
\newcommand{\rHfourrt}{\big(\rHrange\big) \rtensor{(\hdelta
\lt \beta)}{\frakbo}{\alpha} H} \newcommand{\rHfive}{K
\rtensor{\hdelta}{\frakbo}{(\alpha \rt \alpha)} (\Hrange)}

\newcommand{\crfsource}{\rtensor{\hbeta}{\frakbo}{\gamma}}
\newcommand{\crfrange}{\rtensor{\alpha}{\frakb}{\delta}}

\newcommand{\crHrange}{H
  \rtensor{\alpha}{\frakb}{\delta} K}
\newcommand{\crHsource}{H \rtensor{\hbeta}{\frakbo}{\gamma} K}
\newcommand{\wrHsource}{K {_{\hsigma}\tl} \alpha}
\newcommand{\wrHrange}{K {_{\sigma}\tl} \beta}
\newcommand{\wrHone}{K {_{\hsigma}\tl} \alpha
{_{\rho_{\hbeta}}\tl} \alpha} \newcommand{\wrHtwo}{K
{_{\sigma}\tl} \beta {_{\rho_{\hbeta}}\tl} \alpha}
\newcommand{\wrHthree}{K {_{\sigma}\tl} \beta
{_{\rho_{\alpha}}\tl} \beta} \newcommand{\wrHfour}{(K
{_{\hsigma}\tl} \alpha) {_{\sigma \tl \Id}\tl} \beta}
\newcommand{\wrHfourlt}{K {_{\hsigma}\tl} \alpha
{_{\rho_{\beta}}\tl} \alpha} \newcommand{\wrHfourrt}{(K
{_{\sigma}\tl} \beta) {_{\hsigma \tl \Id}\tl} \alpha}
\newcommand{\wrHfive}{K {_{\hsigma}\tl} (\alpha
\tr_{\rho_{\beta}} \alpha)}

\newcommand{\crHone}{H \fsource H \crfsource K}
\newcommand{\crHtwo}{H \frange H \crfsource K}
\newcommand{\crHthree}{H \frange H \crfrange K}
\newcommand{\crHfour}{(\Hsource) \rtensor{(\alpha \lt
    \alpha)}{\frakb}{\delta{}} K} \newcommand{\crHfive}{H
  \rtensor{\hbeta}{\frakbo}{(\alpha \rt \gamma)} (\crHrange)}



\newcommand{\fibreab}{\fibre{\alpha}{\frakb}{\beta}}
\newcommand{\fibrebc}{\fibre{\beta}{\frakb}{\gamma}}
\newcommand{\AfibreA}{A \fibreab A}

\newcommand{\AfibreB}{A \fibrebc B}

 \newcommand{\Cl}{C_{\lambda}}
\newcommand{\gB}{{_{\gamma}B}}


\newcommand{\kalpha}[1]{|\alpha\rangle_{{#1}}}
\newcommand{\balpha}[1]{\langle\alpha|_{{#1}}}
\newcommand{\kbeta}[1]{|\beta{}\rangle_{{#1}}}
\newcommand{\bbeta}[1]{\langle\beta|_{{#1}}}
\newcommand{\khbeta}[1]{|\hbeta{}\rangle_{{#1}}}
\newcommand{\bhbeta}[1]{\langle\hbeta|_{{#1}}}

\newcommand{\kgamma}[1]{|\gamma\rangle_{{#1}}}
\newcommand{\bgamma}[1]{\langle\gamma|_{{#1}}}





\newcommand{\pmuspace}{(H,\hbeta,\alpha,\beta)}
\newcommand{\pmuoperator}{V \colon
  \Hsource \to \Hrange}
\newcommand{\pmu}{(\frakb,H,\hat\beta,\alpha,\beta,V)}

\newcommand{\GrrG}{G {_{r}\times_{r}} G}
\newcommand{\GsrG}{G {_{s}\times_{r}} G}

\setlength{\parindent}{0cm}

\newcommand{\leg}[1]{#1}
\title{$C^{*}$-pseudo-multiplicative unitaries and Hopf $C^{*}$-bimodules}

\author{Thomas Timmermann\\[1ex]
\texttt{timmermt@math.uni-muenster.de}\\ SFB 478
``Geometrische Strukturen in der Mathematik''\\ Hittorfstr.\
27, 48149 M\"unster}

\date{\today}

\begin{document} \xyrequire{matrix} \xyrequire{arrow}
\xyrequire{curve} 

\maketitle

\abstract{We introduce $C^{*}$-pseudo-multiplicative
  unitaries and concrete Hopf $C^{*}$-bi\-modules for the
  study of quantum groupoids in the setting of
  $C^{*}$-algebras. These unitaries and Hopf
  $C^{*}$-bimodules generalize multiplicative unitaries and
  Hopf $C^{*}$-algebras and are analogues of the
  pseudo-multi\-plicative unitaries and Hopf--von
  Neumann-bimod\-ules studied by Enock, Lesieur and Vallin.
  To each $C^{*}$-pseudo-multiplicative unitary, we
  associate two Fourier algebras with a duality pairing, a
  $C^{*}$-tensor category of representations, and in the
  regular case two reduced and two universal
  Hopf $C^{*}$-bimodules.  The theory is illustrated by
  examples related to locally compact Hausdorff
  groupoids. In particular, we obtain a continuous Fourier
  algebra for a locally compact Hausdorff groupoid.}

\section{Introduction}
\label{section:introduction}

Multiplicative unitaries, which were first systematically
studied by Baaj and Skandalis \cite{baaj:2}, are fundamental
to the theory of quantum groups in the setting of operator
algebras and to generalizations of Pontrjagin duality
\cite{timmer:buch}. First, one can associate to every
locally compact quantum group a multiplicative unitary
\cite{vaes:1,vaes:30,masuda}.  Out of this unitary, one can
construct two Hopf $C^{*}$-algebras, where one coincides
with the initial quantum group, while the other is the
generalized Pontrjagin dual of the quantum group. The
duality manifests itself by a pairing on dense Fourier
subalgebras of the two Hopf $C^{*}$-algebras. These Hopf
$C^{*}$-algebras can be completed to Hopf--von Neumann
algebras and are reduced in the sense that they correspond
to the regular representations of the quantum group and of
its dual, respectively. Considering arbitrary
representations, one can also construct out of the
associated unitary two universal Hopf $C^{*}$-algebras with
morphisms onto the reduced ones.  In the study of coactions
of quantum groups on algebras, the unitary is an essential
tool for the construction of dual coactions on the reduced
crossed products and in the proof of biduality \cite{baaj:2}
which generalizes the Takesaki--Takai duality.

Much of the theory of quantum groups has been generalized
for quantum groupoids in a variety of settings, for example,
for finite quantum groupoids in the setting of
finite-dimensional $C^{*}$-algebras by B\"ohm, Szlach\'anyi,
Nikshych and others \cite{boehm:1,boehm:2,boehm:3,nikshych}
and for measurable quantum groupoids in the setting of von
Neumann algebras by Enock, Lesieur and Vallin
\cite{enock:10,enock:1,enock:2,lesieur}.  Fundamental for
the second theory are the Hopf--von Neumann bimodules and
pseudo-multiplicative unitaries introduced by Vallin
\cite{vallin:1,vallin:2}.

In this article, we introduce generalizations of
multiplicative unitaries and Hopf $C^{*}$-algebras that are
suited for the study of locally compact quantum groupoids in
the setting of $C^{*}$-algebras, and extend many of the
results on multiplicative unitaries that were obtained by
Baaj and Skandalis in \cite{baaj:2}. In particular, we
associate to every regular $C^{*}$-pseudo-multiplicative
unitary two Hopf $C^{*}$-bimodules and two Fourier algebras
with a duality pairing, and construct universal Hopf
$C^{*}$-bimodules from a $C^{*}$-tensor category of
representations of the unitary.  The theory presented here
was applied already in \cite{timmer:leiden} to the
definition and study of compact $C^{*}$-quantum groupoids,
and will be applied in a forthcoming article to the study of
reduced crossed products for coactions of Hopf
$C^{*}$-bimodules on $C^{*}$-algebras and to an extension of
the Baaj-Skandalis duality theorem; see also
\cite{timmer:ckac}.

Our concepts are related to their von~Neumann-algebraic
counterparts as follows.  In the theory of quantum groups,
one can use the multiplicative unitary to pass between the
setting of von Neumann algebras and the setting of
$C^{*}$-algebras and thus obtains a bijective
correspondence between measurable and locally compact
quantum groups. This correspondence breaks down for quantum
groupoids --- already for ordinary spaces, considered as
groupoids consisting entirely of units, a measure does not
determine a topology. In particular, one can not expect to
pass from a measurable quantum groupoid in the setting of
von~Neumann algebras to a locally compact quantum groupoid
in the setting of $C^{*}$-algebras in a canonical way. The
reverse passage, however, is possible, at least on the level
of the unitaries and the Hopf bimodules.

Fundamental to our approach is the framework of modules,
relative tensor products and fiber products in the setting
of $C^{*}$-algebras introduced in \cite{timmer:fiber}. That
article also explains in detail how the theory developed
here can be reformulated in the setting of von Neumann
algebras, where we recover Vallin's notions of a
pseudo-multiplicative unitary and a Hopf--von Neumann
bimodule, and how to pass from the level of $C^{*}$-algebras
to the setting of von Neumann algebras by means of various
functors.

The theory presented here overcomes several restrictions of
our former generalizations of multiplicative unitaries and
Hopf $C^{*}$-algebras \cite{timmermann:hopf}; see also
\cite{timmer:comparison}.

This work was supported by the SFB 478 ``Geometrische
Strukturen in der Mathematik''\footnote{funded by the
Deutsche Forschungsgemeinschaft (DFG)} and initiated during
a stay at the ``Special Programme on Operator Algebras'' at
the Fields Institute in Toronto, Canada, in July 2007.

\paragraph{Organization} This article is organized as
follows.
We start with preliminaries, summarizing notation,
terminology and some background on Hilbert $C^{*}$-modules.

In Section \ref{section:pmu}, we recall the notion of a
multiplicative unitary and define
$C^{*}$-pseudo-multiplicative unitaries. This definition involves $C^{*}$-modules over $C^{*}$-bases and their
relative tensor product, which were introduced in
\cite{timmer:fiber} and which we briefly recall. As an
example, we construct the $C^{*}$-pseudo-multiplicative
unitary of a locally compact Hausdorff groupoid. We shall
come back to this example frequently.

In Section \ref{section:legs}, we associate to every
well-behaved $C^{*}$-pseudo-multiplicative unitary two Hopf
$C^{*}$-bimodules. These Hopf $C^{*}$-bimodules are
generalized Hopf $C^{*}$-algebras, where the target of the
comultiplication is no longer a tensor product but a fiber
product that is taken relative to an underlying
$C^{*}$-base.  Inside these Hopf $C^{*}$-bimodules, we
identify dense convolution subalgebras which can be
considered as generalized Fourier algebras, and construct a
dual pairing on these subalgebras. To illustrate the theory,
we apply all constructions to the unitary associated to a
groupoid $G$, where one recovers the reduced groupoid
$C^{*}$-algebra of $G$ on one side and the function algebra
of $G$ on the other side.

In Section \ref{section:reps}, we study representations and
corepresentations of $C^{*}$-pseudo-multi\-plicative
unitaries. These (co)representations form a $C^{*}$-tensor
category and lead to the construction of universal variants
of the Hopf $C^{*}$-bimodules introduced in Section
\ref{section:legs}.  For the unitary associated to a
groupoid, we establish a categorical equivalence between
corepresentations of the unitary and representations of the
groupoid.

In Section \ref{section:props}, we show that every
$C^{*}$-pseudo-multiplicative unitary satisfying a certain
regularity condition is well-behaved. This condition is
satisfied, for example, by the unitaries associated to
groupoids and by the unitaries associated to compact quantum
groupoids. Furthermore, we collect some results on proper
and \'etale $C^{*}$-pseudo-multiplicative unitaries.

\paragraph{Terminology and notation }

Given a subset $Y$ of a normed space $X$, we denote by $[Y]
\subset X$ the closed linear span of $Y$.  We call a linear
map $\phi$ between normed spaces {\em contractive} or a {\em
  linear contraction} if $\|\phi\|\leq 1$.

All sesquilinear maps like inner products of Hilbert spaces
are assumed to be conjugate-linear in the first component
and linear in the second one.  Let $H,K$ be Hilbert spaces.
We canonically identify ${\cal L}(H,K)$ with a subspace of
${\cal L}(H \oplus K)$. Given subsets $X \subseteq {\cal
L}(H)$ and $Y \subseteq {\cal L}(H,K)$, we denote by $X'$
the commutant of $X$ and by $\wclose{Y}$ the $\sigma$-weak
closure of $Y$.

Given a $C^{*}$-subalgebra $A \subseteq {\cal L}(H)$ and a
$*$-homomorphism $\pi \colon A \to {\cal L}(K)$, we put
 \begin{align} \label{eq:intertwiner} {\cal L}^{\pi}(H,K) :=
\{ T \in {\cal L}(H,K) \mid Ta = \pi(a)T \text{ for all } a
\in A\}.
 \end{align}

 We use the ket-bra notation and define for each $\xi \in H$
operators $|\xi\rangle \colon \complex \to H$, $\lambda
\mapsto \lambda \xi$, and $\langle\xi|=|\xi\rangle^{*}
\colon H \to \complex$, $\xi' \mapsto \langle
\xi|\xi'\rangle$.

We shall use some theory of groupoids; for background, see
\cite{renault} or \cite{paterson}. Given a groupoid $G$, we
denote its unit space by $G^{0}$, its range map by $r$, its
source map by $s$, and let  $\GrrG =
\{(x,y) \in G \times G \mid r(x)=r(y)\}$, $\GsrG = \{(x,y)
\in G \times G \mid s(x)=r(y)\}$ and $G^{u}=r^{-1}(u)$,
$G_{u}=s^{-1}(u)$ for each $u \in G^{0}$.

We shall make extensive use of (right) Hilbert
$C^{*}$-modules and the internal tensor product; a standard
reference is \cite{lance}.  Let $A$ and $B$ be
$C^{*}$-algebras.  Given Hilbert $C^{*}$-modules $E$ and $F$
over $B$, we denote by $\mathcal{L}_{B}(E,F)$ the space of
all adjointable operators from $E$ to $F$.  Let $E$ and $F$
be $C^{*}$-modules over $A$ and $B$, respectively, and let
$\pi \colon A \to \mathcal{L}_{B}(F)$ be a
$*$-homomorphism. Recall that the internal tensor product $E
\otimes_{\pi} F$ is the Hilbert $C^{*}$-module over $B$
which is the closed linear span of elements $\eta
\otimes_{\pi} \xi$, where $\eta \in E$ and $\xi \in F$ are
arbitrary and $\langle \eta \otimes_{\pi} \xi|\eta'
\otimes_{\pi} \xi'\rangle = \langle
\xi|\pi(\langle\eta|\eta'\rangle)\xi'\rangle$ and $(\eta
\otimes_{\pi} \xi)b=\eta \otimes_{\pi} \xi b$ for all
$\eta,\eta' \in E$, $\xi,\xi' \in F$,  $b \in B$
\cite[\S 4]{lance}.  We denote the internal tensor product
by ``$\tr$'' and drop the index $\pi$ if the representation
is understood; thus, for example, $E \tr F=E \tr_{\pi} F=E
\otimes_{\pi} F$.


We also define a {\em flipped internal tensor product} $F
{_{\pi}\tl} E$ as follows. We equip the algebraic tensor
product $F \odot E$ with the structure maps $\langle \xi
\odot \eta | \xi' \odot \eta'\rangle := \langle \xi|
\pi(\langle \eta|\eta'\rangle) \xi'\rangle$, $(\xi \odot
\eta) b := \xi b \odot \eta$, form the separated completion,
and obtain a Hilbert $C^{*}$-module $F {_{\pi}\tl} E$ over
$B$ which is the closed linear span of elements $\xi
{_{\pi}\tl} \eta$, where $\eta \in E$ and $\xi \in F$ are
arbitrary and $\langle \xi {_{\pi}\tl} \eta|\xi'
{_{\pi}\tl} \eta'\rangle = \langle
\xi|\pi(\langle\eta|\eta'\rangle)\xi'\rangle$ and $(\xi
{_{\pi}\tl} \eta)b=\xi b {_{\pi}\tl} \eta$ for all
$\eta,\eta' \in E$, $\xi,\xi' \in F$,  $b\in B$. As
above, we drop the index $\pi$ and simply write ``$\tl$''
instead of ``${_{\pi}\tl}$'' if the representation $\pi$ is
understood.  Evidently, the usual and the flipped internal
tensor product are related by a unitary map $\Sigma \colon F
\tr E \mycong E \tl F$, $\eta \tr \xi \mapsto \xi \tl \eta$.

For each $\xi \in E$, the maps
\begin{align} \label{eq:ketbra} l^{\pi}_{F} (\xi) &\colon F \to E \tr F, \
\eta \mapsto \xi \tr \eta, & r^{\pi}_{F}(\xi) &\colon F \to
F \tl E, \ \eta \mapsto \eta \tl \xi,
\end{align} are adjointable operators, and for all $\eta
\in F$, $\xi' \in E$,
\begin{align*} l^{\pi}_{F}(\xi)^{*}(\xi' \tr \eta) &=
\pi(\langle \xi|\xi'\rangle) \eta =
r^{\pi}_{F}(\xi)^{*}(\eta \tl \xi').
\end{align*} Again, we drop the supscript $\pi$ in
$l^{\pi}_{F}(\xi)$ and $r^{\pi}_{F}(\xi)$ if this
representation is understood.

Finally, let $E_{1},E_{2}$ be Hilbert $C^{*}$-modules over
$A$, let $F_{1}$, $F_{2}$ be Hilbert $C^{*}$-modules over
$B$ with representations $\pi_{i} \colon A \to
\mathcal{ L}_{B}(F_{i})$ ($i=1,2$), and let $S \in \mathcal{
  L}_{A}(E_{1},E_{2})$, $T \in \mathcal{
  L}_{B}(F_{1},F_{2})$ such that $T\pi_{1}(a)=\pi_{2}(a)T$
for all $a \in A$. Then there exists a unique operator $S
\tr T \in \mathcal{ L}_{B}(E_{1} \tr F_{1}, E_{2} \tr
F_{2})$ such that $(S \tr T)(\eta \tr \xi)= S\eta \tr T\xi$
for all $\eta \in E_{1}$, $\xi \in F_{1}$, and $(S\tr
T)^{*}=S^{*} \tr T^{*}$ \cite[Proposition 1.34]{echter}.

\section{$C^{*}$-pseudo-multiplicative unitaries}
\label{section:pmu}

Recall that a multiplicative unitary on a Hilbert space $H$
is a unitary $V \colon H \otimes H \to H \otimes H$ that
satisfies the {\em pentagon equation}
$V_{12}V_{13}V_{23}=V_{23}V_{12}$ (see \cite{baaj:2}).
Here,  $V_{12},V_{13},V_{23}$  are operators on $H \otimes
H \otimes H$ defined by $V_{12} = V \otimes \Id$, $V_{23} =
\Id \otimes V$, $V_{13} = (\Sigma \otimes \Id)V_{23}(\Sigma
\otimes \Id) = (\Id \otimes \Sigma) V_{12}(\Id \otimes
\Sigma)$, where $\Sigma \in {\cal L}(H \otimes H)$ denotes
the flip $\eta \otimes \xi \mapsto \xi \otimes \eta$.
As an example, consider a locally compact group $G$ with left
Haar measure $\lambda$.  Then the formula 
\begin{align} \label{eq:pmu-group}
  (V f)(x,y) &= f(x,x^{-1}y) 
\end{align}
defines a linear bijection of $C_{c}(G \times G)$ that
extends to a unitary on $L^{2}(G \times G,\lambda
\otimes \lambda) \cong L^{2}(G,\lambda) \otimes
L^{2}(G,\lambda)$. This unitary is multiplicative,
and the pentagon equation amounts to associativity of the
multiplication in $G$.

In this section, we generalize the notion of a
multiplicative unitary so that it covers the example above
if we replace the group $G$ by a locally compact Hausdorff
groupoid $G$. In that case, formula \eqref{eq:pmu-group}
only makes sense for $(x,y) \in \GrrG$ and defines a linear
bijection from $C_{c}(\GsrG)$ to $C_{c}(\GrrG)$. If the
groupoid $G$ is finite, that bijection is a unitary from
$l^{2}(\GsrG)$ to $l^{2}(\GrrG)$, and these two Hilbert
spaces can be identified with tensor products of $l^{2}(G)$
with $l^{2}(G)$, considered as a module over the algebra
$C(G^{0})$ with respect to representations that are
naturally induced by the maps $s,r\colon G \to G^{0}$.  For
a general groupoid, the simple algebraic tensor product of
modules has to be replaced by a refined version. In the
setting of von Neumann algebras, Vallin used the relative
tensor product of Hilbert modules introduced by Connes, also
known as Connes' fusion of correspondences, to define
pseudo-multiplicative unitaries \cite{vallin:2} which
include as a main example the unitary of a measurable
groupoid. To take the topology of $G$ into account, we shall
work in the setting of $C^{*}$-algebras and use the relative
tensor product of $C^{*}$-modules over $C^{*}$-bases
introduced in \cite{timmer:fiber}.

\subsection{The relative tensor product of $C^{*}$-modules over $C^{*}$-bases}

\label{subsection:pmu-rtp}

In this subsection, we recall the relative tensor product of
$C^{*}$-modules over $C^{*}$-bases which is fundamental to
the definition of a $C^{*}$-pseudo-multiplicative unitary,
and generalize the theory presented in \cite[Section
2]{timmer:fiber} in two respects. First, we introduce the
notion of a semi-morphism between $C^{*}$-modules which will
be important in subsection
\ref{subsection:rep-universal}. Second, the definition of a
$C^{*}$-pseudo-multiplicative unitaries forces us to
consider $C^{*}$-$n$-modules for $n \geq 2$ and not only
$C^{*}$-bimodules. We shall not give separate proofs of
statements that are only mild generalizations of statements
found in \cite{timmer:fiber}.
For additional motivation and details, we refer to
\cite{timmer:fiber}; an extended example can be found in
subsection \ref{subsection:pmu-groupoid}.

\paragraph{$C^{*}$-modules over $C^{*}$-bases}
A {\em $C^{*}$-base} is a triple $\cbasesb$ consisting of a
Hilbert space $\frakK$ and two commuting nondegenerate
$C^{*}$-algebras $\frakB,\frakBo \subseteq
\mathcal{L}(\frakK)$.  A $C^{*}$-base should be thought of
as a $C^{*}$-algebraic counterpart to  pairs consisting
of a von Neumann algebra and its commutant. As an example,
one can associate to every faithful KMS-state $\mu$ on a
$C^{*}$-algebra $B$ the $C^{*}$-base $(H_{\mu},B,B^{op})$,
where $H_{\mu}$ is the GNS-space for $\mu$ and $B$ and
$B^{op}$ act on $H_{\mu}=H_{\mu^{op}}$ via the
GNS-representations \cite[Example 2.9]{timmer:fiber}.  If
$\frakb=\cbasesb$ is a $C^{*}$-base, then so is
$\frakbo:=\cbaseosb$ and
$M(\frakb):=(\frakK,M(\frakB),M(\frakBo))$, where 
 $M(\frakB)$ and $M(\frakBo)$ are
naturally represented of $\frakK$.

From now on, let $\frakb=\cbasesb$ be a $C^{*}$-base.  We
shall use the following notion of a $C^{*}$-module. A {\em
  $C^{*}$-$\frakb$-module} is a pair
$H_{\alpha}=(H,\alpha)$, where $H$ is a Hilbert space and
$\alpha \subseteq \mathcal{L}(\frakK,H)$ is a closed
subspace satisfying $[\alpha \frakK]=H$, $[\alpha \frakB]=
\alpha$, and $[\alpha^{*}\alpha] = \frakB \subseteq
\mathcal{L}(\frakK)$.  If $H_{\alpha}$ is a
$C^{*}$-$\frakb$-module, then $\alpha$ is a Hilbert
$C^{*}$-module over $B$ with inner product $(\xi,\xi')
\mapsto \xi^{*}\xi'$ and there exist isomorphisms
\begin{align} \label{eq:rtp-iso}
  \alpha \tr \frakK &\to H, \ \xi \tr \zeta
  \mapsto \xi \zeta, & \frakK \tl \alpha &\to H, \
  \zeta \tl \xi \mapsto \xi\zeta,
\end{align}
and a nondegenerate representation
\begin{align*}
  \rho_{\alpha} \colon \frakBo \to \mathcal{L}(H), \quad
  \rho_{\alpha}(b^{\dagger})(\xi \zeta)= \xi b^{\dagger} \zeta
  \quad \text{for all } b^{\dagger} \in \frakBo, \xi \in
  \alpha, \zeta \in \frakK.
\end{align*}
A {\em semi-morphism} between $C^{*}$-$\frakb$-modules
$H_{\alpha}$ and $K_{\beta}$ is an operator $T \in
\mathcal{L}(H,K)$ satisfying $T\alpha \subseteq \beta$. If
additionally $T^{*}\beta \subseteq \alpha$, we call $T$ a
{\em morphism}. We denote the set of all (semi-)morphisms by
$\mathcal{ L}_{(s)}(H_{\alpha},K_{\beta})$. If $T \in
\mathcal{ L}_{s}(H_{\alpha},K_{\beta})$, then
$T\rho_{\alpha}(b^{\dagger}) = \rho_{\beta}(b^{\dagger})T$
for all $b^{\dagger} \in \frakBo$, and if additionally $T
\in \mathcal{ L}(H_{\alpha},K_{\beta})$, then left
multiplication by $T$ defines an operator in $\mathcal{
  L}_{\frakB}(\alpha,\beta)$ which we again denote by $T$.

We shall use the following notion of $C^{*}$-bi- and
$C^{*}$-$n$-modules.  Let $\frakb_{1},\ldots,\frakb_{n}$ be
$C^{*}$-bases, where
$\frakb_{i}=(\frakK_{i},\frakB_{i},\frakBo_{i})$ for each
$i$.  A {\em
  $C^{*}$-$(\frakb_{1},\ldots,\frakb_{n})$-module} is a
tuple $(H,\alpha_{1},\ldots,\alpha_{n})$, where $H$ is a
Hilbert space and $(H,\alpha_{i})$ is a
$C^{*}$-$\frakb_{i}$-module for each $i$ such that
$[\rho_{\alpha_{i}}(\frakBo_{i})\alpha_{j}]=\alpha_{j}$
whenever $i\neq j$.  In the case $n=2$, we abbreviate
$\aHb:=(H,\alpha,\beta)$.  We note that if
$(H,\alpha_{1},\ldots,\alpha_{n})$ is a
$C^{*}$-$(\frakb_{1},\ldots,\frakb_{n})$-module, then
$[\rho_{\alpha_{i}}(\frakBo_{i}),\rho_{\alpha_{j}}(\frakBo_{j})]=0$
whenever $i \neq j$.  The set of {\em (semi-)morphisms}
between $C^{*}$-$(\frakb_{1},\ldots,\frakb_{n})$-modules
$\mathcal{H}=(H,\alpha_{1},\ldots,\alpha_{n})$,
$\mathcal{K}=(K,\gamma_{1},\ldots,\gamma_{n})$ is $\mathcal{
  L}_{(s)}(\mathcal{H}, \mathcal{K}) := \bigcap_{i=1}^{n}
\mathcal{ L}_{(s)}(H_{\alpha_{i}},K_{\gamma_{i}}) \subseteq
\mathcal{L}(H,K)$.

\paragraph{The relative tensor product}
Let $\frakb=\cbasesb$ be a $C^{*}$-base, $H_{\beta}$
$C^{*}$-$\frakb$-module, and $K_{\gamma}$ a
$C^{*}$-$\frakbo$-module. The {\em relative tensor product}
of $H_{\beta}$ and $K_{\gamma}$ is the Hilbert space
\begin{align*}
  \HtensorK:=\beta \tr \frakK \tl \gamma.
\end{align*}
It is spanned by elements $\xi \tr \zeta \tl \eta$, where
$\xi \in \beta$, $\zeta \in \frakK$, $\eta \in \gamma$, and
\begin{align*}
  \langle \xi \tr \zeta \tl
\eta|\xi' \tr \zeta' \tl \eta'\rangle = \langle \zeta |
\xi^{*}\xi' \eta^{*}\eta' \zeta'\rangle = \langle
\zeta|\eta^{*}\eta' \xi^{*}\xi' \zeta'\rangle 
\end{align*}
for all $\xi,\xi' \in \beta$, $\zeta,\zeta' \in \frakK$,
$\eta,\eta' \in \gamma$.  Obviously, there exists a unitary
{\em flip} 
\begin{align*}
  \Sigma \colon \HtensorK \to \KtensorH, \quad \xi \tr \zeta
  \tl \eta \mapsto \eta \tr \zeta \tl \xi.
\end{align*}
Using the unitaries in \eqref{eq:rtp-iso} on $H_{\beta}$ and
$K_{\gamma}$, respectively, we shall make the following
identifications without further notice:
\begin{gather*}
  H {_{\rho_{\beta}} \tl} \gamma \cong \HtensorK
  \cong \beta \tr_{\rho_{\gamma}} K, \quad \xi \zeta \tl \eta
  \equiv \xi \tr \zeta \tl \eta \equiv \xi \tr \eta\zeta.
\end{gather*}

For all $S \in \rho_{\beta}(\frakBo)'$ and $T \in
\rho_{\gamma}(\frakB)'$, we have
operators
\begin{align*}
  S \tl \Id &\in \mathcal{L}(H
  {_{\rho_{\beta}}\tl} \gamma) = \mathcal{L}(\HtensorK), &
  \Id \tr T &\in \mathcal{L}(\beta
  \tr_{\rho_{\gamma}} K) = \mathcal{L}(\HtensorK).
\end{align*}
If $S \in {\cal L}_{s}(H_{\beta})$ or $T \in {\cal
  L}_{s}(K_{\gamma})$, then $(S \tl \Id)(\xi
\tr \eta\zeta) = S\xi \tr
\eta\zeta$ or $(\Id \tr T)(\xi\zeta \tl
\eta) = \xi\zeta \tl T\eta$, respectively,
for all $\xi \in \beta$, $\zeta \in
\frakK$, $\eta \in \gamma$, so that we can define
\begin{multline*}
  S \btensor T :=(S \tl \Id) (\Id \tr T) = (\Id \tr T)(S \tl
  \Id) \in {\cal L}(\HtensorK)  \\
  \text{ for all } (S,T) \in \left({\cal L}_{s}(H_{\beta})
    \times \rho_{\gamma}(\frakB)'\right) \cup
  \left(\rho_{\beta}(\frakBo)' \times {\cal
      L}_{s}(K_{\gamma})\right).
\end{multline*}

For each $\xi \in \beta$ and $\eta \in \gamma$, there exist
bounded linear operators
\begin{align*}
  |\xi\rangle_{{1}} \colon K &\to \HtensorK, \ \omega
  \mapsto \xi \tr \omega, & 
  |\eta\rangle_{{2}} \colon H &\to \HtensorK, \ \omega
  \mapsto \omega \tl \eta,
\end{align*}
whose adjoints $ \langle \xi|_{1}:=|\xi\rangle_{1}^{*}$ and
$\langle\eta|_{{2}} := |\eta\rangle_{{2}}^{*}$ are given by
\begin{align*}
  \langle \xi|_{1}\colon \xi' \tr
  \omega &\mapsto \rho_{\gamma}(\xi^{*}\xi')\omega, &
  \langle\eta|_{{2}} \colon \omega
  \tl\eta' &\mapsto \rho_{\beta}( \eta^{*}\eta')\omega.
\end{align*}
We put $\kbeta{1} := \{ |\xi\rangle_{{1}} \,|\, \xi \in
\beta\} \subseteq {\cal L}(K,\HtensorK)$ and similarly
define $\bbeta{1}$, $\kgamma{2}$, $\bgamma{2}$.

Let $\mathcal{H}=(H,\alpha_{1},\ldots,\alpha_{m},\beta)$ be
a $C^{*}$-$(\fraka_{1},\ldots,\fraka_{m},\frakb)$-module and
$\mathcal{K}=(K,\gamma,\delta_{1},\ldots,\delta_{n})$ a
$C^{*}$-$(\frakbo,\frakc_{1},\ldots,\frakc_{n})$-module,
where $\fraka_{i}=(\frakH_{i},\frakA_{i},\frakAo_{i})$ and
$\frakc_{j}=(\frakL_{j},\frakC_{j},\frakCo_{j})$ are
$C^{*}$-bases for all $i,j$. We put
\begin{align*}
  \alpha_{i} \lt \gamma &:= [\kgamma{2}\alpha_{i}] \subseteq
  \mathcal{ L}(\frakH_{i}, \HtensorK), & \beta \rt
  \delta_{j} &:= [\kbeta{1}\delta_{j}] \subseteq \mathcal{
    L}(\frakL_{j}, \HtensorK)
\end{align*}
for all $i,j$.  Then $(\HtensorK,
\alpha_{1} \lt \gamma, \ldots, \alpha_{m} \lt \gamma, \beta
\rt \delta_{1},\ldots, \beta \rt \delta_{n})$ is a
$C^{*}$-$(\fraka_{1},\ldots,\fraka_{m}$, $
\frakc_{1},\ldots,\frakc_{n})$-module, called the {\em
  relative tensor product} of $\mathcal{H}$ and
$\mathcal{K}$ and denoted by $ \mathcal{H} \btensor
\mathcal{K}$. For all $i,j$ and  $a^{\dagger} \in \frakAo_{i}$, $c^{\dagger} \in \frakCo_{j}$,
\begin{align*}
  \rho_{(\alpha_{i} \lt \gamma)}(a^{\dagger}) &=
  \rho_{\alpha_{i}}(a^{\dagger}) \btensor \Id, &
  \rho_{(\beta \rt \delta_{j})}(c^{\dagger}) &= \Id \btensor
  \rho_{\delta_{j}}(c^{\dagger}).
\end{align*}
The relative tensor product has nice categorical properties:
\begin{description}
\item[Functoriality] Let $\tilde{\mathcal{H}}=(\tilde
  H,\tilde \alpha_{1},\ldots,\tilde \alpha_{m},\tilde
  \beta)$ be a
  $C^{*}$-$(\fraka_{1},\ldots,\fraka_{m},\frakb)$-module,
  $\tilde{\mathcal{K}}=(\tilde K,\tilde \gamma$, $\tilde
  \delta_{1},\ldots,\tilde \delta_{n})$ a
  $C^{*}$-$(\frakbo,\frakc_{1},\ldots,\frakc_{n})$-module,
  and $S \in
  \mathcal{L}_{(s)}(\mathcal{H},\tilde{\mathcal{H}})$, $T
  \in
  \mathcal{L}_{(s)}(\mathcal{K},\tilde{\mathcal{K}})$. Then
  there exists a unique operator $S \btensor T \in
  \mathcal{L}_{(s)}(\mathcal{H} \btensor \mathcal{K},
  \tilde{\mathcal{H}} \btensor \tilde{\mathcal{K}})$
  satisfying $(S \btensor T)(\xi \tr \zeta \tl \eta) = S\xi
  \tr \zeta \tl T\eta$ for all $\xi \in \beta, \, \zeta \in
  \frakK, \, \eta \in \gamma$. 
\item[Unitality] The triple
  $\mathcal{U}:=(\frakK,\frakBo,\frakB)$ is a
  $C^{*}$-$(\frakBo,\frakB)$-module and the maps
    \begin{align} \label{eq:rtp-unital}
      \begin{aligned}
        l_{{\cal H}} &\colon H
        \rtensor{\beta}{\frakb}{\frakBo} \frakK \to H, & \xi
        \tr \zeta \tl b^{\dagger} &\mapsto \xi
        b^{\dagger}\zeta, \\ r_{{\cal K}} &\colon \frakK \,
        \rtensor{\frakB}{\frakb}{\gamma} K \to K, & b \tr
        \zeta \tl \eta &\mapsto \eta b \zeta,
      \end{aligned}
  \end{align}
  are isomorphisms of
  $C^{*}$-$(\fraka_{1},\ldots,\fraka_{m},\frakb)$-modules
  and
  $C^{*}$-$(\frakbo,\frakc_{1},\ldots,\frakc_{n})$-modules
  $\mathcal{H} \btensor \mathcal{U} \to \mathcal{H}$ and
  $\mathcal{U} \btensor \mathcal{K} \to \mathcal{K}$,
  respectively, natural in $\mathcal{H}$ and $\mathcal{K}$.
\item[Associativity] Let
  $\frakd,\frake_{1},\ldots,\frake_{l}$ be $C^{*}$-bases,
  $\hat{\mathcal{K}}
  =(K,\gamma,\delta_{1},\ldots,\delta_{n},\epsilon)$ a
  $C^{*}$-$(\frakbo,\frakc_{1},\ldots,\frakc_{n}$,
  $\frakd)$-module and
  $\mathcal{L}=(L,\phi,\psi_{1},\ldots,\psi_{l})$ a
  $C^{*}$-$(\frakdo,\frake_{1},\ldots,\frake_{l})$-module. Then
  there exists a canonical isomorphism
  \begin{align} \label{eq:rtp-associative}
    a_{{\cal H},{\cal K},{\cal L}} \colon (\HtensorK) \rtensor{\beta \rt
    \epsilon}{\frakd}{\phi} L \to \beta
  \tr_{\rho_{\gamma}} K {_{\rho_{\epsilon}}\tl} \, \phi
  \to H \rtensor{\beta}{\frakb}{\gamma \lt \phi} (K
  \rtensor{\epsilon}{\frakd}{\phi} L)
\end{align}
which is an isomorphism of
$C^{*}$-$(\fraka_{1},\ldots,\fraka_{m},\frakc_{1},\ldots,\frakc_{n},\frake_{1},\ldots,\frake_{l})$-modules
$(\mathcal{H} \btensor \hat{\mathcal{K}}) \tensor{\frakd}
\mathcal{L} \to \mathcal{H} \btensor (\hat{\mathcal{K}}
\tensor{\frakd} \mathcal{L})$. From now on, we identify the
Hilbert spaces in \eqref{eq:rtp-associative} and denote them
by $\HtensorK \rtensor{\epsilon}{\frakd}{\phi} L$.
\item[Direct sums] Let $\fraka=\cbasesa$ and
  $\frakb=\cbasesb$ be $C^{*}$-bases and let $({\cal
    H}_{i})_{i}$ be a family of
  $C^{*}$-$(\fraka,\frakb)$-modules, where ${\cal H}_{i}=
  (H_{i},\alpha_{i},\beta_{i})$ for each $i$.  Denote by
  $\boxplus_{i} \alpha_{i} \subseteq {\cal L}\big(\frakH,
  \oplus_{i} H_{i}\big)$ the norm-closed linear span of all
  operators of the form $\zeta \mapsto (\xi_{i}\zeta)_{i}$,
  where $(\xi_{i})_{i}$ is contained in the algebraic direct
  sum $ \bigoplus^{\mathrm{alg}}_{i} \alpha_{j}$, and
  similarly define $\boxplus_{i} \beta_{i} \subseteq {\cal
    L}\big(\frakK, \oplus_{i} H_{i}\big)$. Then the triple
  $\boxplus_{i} {\cal H}_{i} := \big(\oplus_{i} H_{i},
  \boxplus_{i} \alpha_{i}, \boxplus_{i} \beta_{i}\big)$ is a
  $C^{*}$-$(\fraka,\frakb)$-module, and for each $j$, the
  canonical inclusions $\iota^{{\cal H}}_{j} \colon H_{j}
  \to \oplus_{i} H_{i}$ and projection $\pi^{{\cal H}}_{j}
  \colon \oplus_{i} H_{i} \to H_{j}$ are morphisms ${\cal
    H}_{j} \to \boxplus_{i} {\cal H}_{i}$ and $\boxplus_{i}
  {\cal H}_{i} \to {\cal H}_{j}$. With respect to these
  maps, $\boxplus_{i} {\cal H}_{i}$ is the direct sum of the
  family $({\cal H}_{i})_{i}$.

  Let $\frakc$ be a $C^{*}$-base and $({\cal K}_{j})_{j}$ a
  family of $C^{*}$-$(\frakbo,\frakc)$-modules, and define
  the direct sum $\boxplus_{j} {\cal K}_{j}$ similarly as
  above.  Then there exist inverse isomorphisms
  $\boxplus_{i,j} ( {\cal H}_{i} \btensor {\cal K}_{j})
  \leftrightarrows (\boxplus_{i} {\cal H}_{i}) \btensor
  (\boxplus_{j} {\cal K}_{j})$ given by
  $(\omega_{i,j})_{i,j} \mapsto \sum_{i,j} (\iota_{i}^{{\cal
      H}} \btensor \iota_{j}^{{\cal K}})(\omega_{i,j})$ and
  $\big((\pi_{i}^{{\cal H}} \btensor \pi_{j}^{{\cal
      K}})(\omega)\big)_{i,j} \mapsfrom \omega$,
  respectively.

  Similar constructions apply to
  $C^{*}$-$\frakb$-modules and $C^{*}$-$\frakbo$-modules.
\end{description}

\subsection{The definition of $C^{*}$-pseudo-multiplicative
  unitaries }

Using the relative tensor product of $C^{*}$-modules
introduced above, we generalize the notion of a
multiplicative unitary as follows.  Let $\frakb=\cbasesb$ be
a $C^{*}$-base, $(H,\hat\beta,\alpha,\beta)$ a
$C^{*}$-$(\frakbo,\frakb,\frakbo)$-module, and $V \colon
\Hsource \to \Hrange$ a unitary satisfying
  \begin{gather} \label{eq:pmu-intertwine}
    \begin{aligned} V(\alpha \lt \alpha) &= \alpha \rt
\alpha, & V(\hbeta \rt \beta) &= \hbeta \lt \beta, &
V(\hbeta \rt \hbeta) &= \alpha \rt \hbeta, & V(\beta \lt
\alpha) &= \beta \lt \beta
    \end{aligned}
  \end{gather}
  in ${\cal L}(\frakK,\Hrange)$. Then all operators in the
  following diagram are well defined,
  \begin{gather} \label{eq:pmu-pentagon} \smalldiagram
    \begin{gathered} \xymatrix@R=15pt@C=20pt{ {\Hone}
\ar[r]^{V \botensor \Id} \ar[d]^{\Id \botensor V} & {\Htwo}
\ar[r]^{\Id \btensor V} & {\Hthree,} \\ {\Hfive} \ar[d]^{\Id
\botensor \Sigma} & & {\Hfour} \ar[u]^{V \btensor \Id} \\
{\Hfourlt} \ar[rr]^{V \botensor \Id} && {\Hfourrt}
\ar[u]^{\Sigma_{23}} } \end{gathered}
  \end{gather} 
where $\Sigma_{23}$ denotes the isomorphism 
\begin{align*}
   \Hfourrt \cong (H {_{\rho_{\alpha}} \tl} \beta)
  {_{\rho_{(\hbeta \lt \beta)}} \tl} \alpha \mycong (H
  {_{\rho_{\hbeta}} \tl} \alpha) {_{\rho_{(\alpha \lt
        \alpha)}} \tl} \beta \cong \Hfour
\end{align*}
given by $(\zeta \tl \xi) \tl \eta \mapsto (\zeta \tl \eta)
\tl \xi$.  We adopt the leg notation \cite{baaj:2} and write
\begin{align*}
  V_{12} &\text{ for } V \botensor \Id,\, V \btensor \Id; &
  V_{23} &\text{ for } V \btensor \Id, \, V \botensor \Id; &
  V_{13} &\text{ for } \Sigma_{23}(V \botensor \Id)(\Id
  \botensor \Sigma).
\end{align*}
\begin{definition} \label{definition:pmu} A {\em
    $C^{*}$-pseudo-multiplicative unitary} is a tuple
  $(\frakb,H,\hbeta,\alpha,\beta,V)$ consisting of a
  $C^{*}$-base $\frakb$, a
  $C^{*}$-$(\frakbo,\frakb,\frakbo)$-module
  $(H,\hbeta,\alpha,\beta)$, and a unitary $V \colon
  \Hsource \to \Hrange$ such that equation
  \eqref{eq:pmu-intertwine} holds and diagram
  \eqref{eq:pmu-pentagon} commutes. We frequently call just
  $V$ a $C^{*}$-pseudo-multiplicative unitary.
\end{definition} 
This definition covers the following special cases:
\begin{remarks} \label{remarks:pmu-cases} Let
  $(\frakb,H,\hat\beta,\alpha,\beta,V)$ be a
  $C^{*}$-pseudo-multiplicative unitary.
  \begin{enumerate}
  \item If $\frakb$ is the trivial $C^{*}$-base
    $(\complex,\complex,\complex)$, then
    $\sHsource \cong H \otimes H \cong \sHrange$, and $V$ is
    a multiplicative unitary.
  \item If we consider $\rho_{\hbeta}$ and $\rho_{\beta}$ as
    representations $\rho_{\hbeta},\rho_{\beta}\colon \frakB
    \to {\cal L}(H_{\alpha}) \cong {\cal
      L}_{\frakB}(\alpha)$, then the map $\alpha
    {_{\rho_{\hbeta}} \tl}\, \alpha \cong \alpha \lt \alpha
    \to \alpha \rt \alpha \cong \alpha \tr_{\rho_{\beta}}
    \alpha$ given by $\omega \mapsto V\omega$ is a
    pseudo-multiplicative unitary on $C^{*}$-modules in the
    sense of \cite{timmermann:hopf}.
  \item Assume that $\frakb=\frakbo$; then $\frakB=\frakBo$
    is commutative.  If $\hbeta=\alpha$, then the
    pseudo-multiplicative unitary in ii) is a
    pseudo-multiplicative unitary in the sense of O'uchi
    \cite{ouchi}.  If additionally $\hbeta=\alpha=\beta$,
    then the unitary in ii) is a continuous field of
    multiplicative unitaries in the sense of Blanchard
    \cite{blanchard}.
  \item Assume that $\frakb$ is the $C^{*}$-base associated
    to a faithful proper KMS-weight $\mu$ on a
    $C^{*}$-algebra $B$ (see \cite[Example
    2.9]{timmer:fiber}).  Then $\mu$ extends to a n.s.f.\
    weight $\tilde \mu$ on $\wclose{\frakB}$, and with
    respect to the canonical isomorphisms $\Hsource \cong H
    \vtensor{\rho_{\hbeta}}{\tilde\mu^{op}}{\rho_{\alpha}}
    H$ and $\Hrange \cong H
    \vtensor{\rho_{\alpha}}{\tilde\mu}{\rho_{\beta}} H$ (see
    \cite[Corollary 2.21]{timmer:fiber}), $V$ is a
    pseudo-multiplicative unitary on Hilbert spaces in the
    sense of Vallin \cite{vallin:2}.
  \end{enumerate}
\end{remarks} 
Let us give some examples and easy constructions:
\begin{examples} \label{examples:pmu}
  \begin{enumerate}
  \item To every locally compact, Hausdorff, second
    countable groupoid with a left Haar system, we shall
    associate a $C^{*}$-pseudo-multiplicative unitary in the
    next subsection.
  \item In \cite{timmer:compact}, a
    $C^{*}$-pseudo-multiplicative unitary is associated to
    every compact $C^{*}$-quantum groupoid.
  \item The {\em opposite} of a
    $C^{*}$-pseudo-multiplicative unitary
    $(\frakb,H,\hat\beta,\alpha,\beta,V)$ is the tuple
    $(\frakb,H,\beta,\alpha,\hat\beta,V^{op})$, where
    $V^{op}$ denotes the composition $\Sigma V^{*} \Sigma
    \colon H \rtensor{\beta}{\frakbo}{\alpha} H
    \xrightarrow{\Sigma} \Hrange \xrightarrow{V^{*}}
    \Hsource \xrightarrow{\Sigma} H
    \rtensor{\alpha}{\frakb}{\hbeta} H$. A tedious but
    straightforward calculation shows that this is a
    $C^{*}$-pseudo-multiplicative unitary.
  \item The {\em direct sum} of a family
    $((\frakb_{i},H^{i},\hbeta_{i},\alpha_{i},\beta_{i},V_{i}))_{i}$
    of $C^{*}$-pseudo-multiplicative unitaries is defined as
    follows. Write
    $\frakb^{i}=(\frakH^{i},\frakB_{i},\frakBo_{i})$ for
    each $i$, put $\frakH:=\bigoplus_{i} \frakH^{i}$,
    $H:=\bigoplus_{i} H^{i}$, denote by
    $\frakB^{(\dagger)}:=\bigoplus_{i}
    \frakB^{(\dagger)}_{i} \subseteq {\cal L}(\frakH)$ the
    $c_{0}$-direct sum of $C^{*}$-algebras, and by
    $\hbeta:=\bigoplus_{i} \hbeta_{i}$,
    $\alpha:=\bigoplus_{i} \alpha_{i}$,
    $\beta:=\bigoplus_{i} \beta_{i} $ the $c_{0}$-direct sum
    in ${\cal L}(\frakH,H)$. Then
    $\frakb:=(\frakH,\frakB,\frakBo)$ is a $C^{*}$-base,
    there exist natural isomorphisms $\Hsource \cong
    \bigoplus_{i} H^{i}
    \rtensor{\hbeta_{i}}{\frakbo_{i}}{\alpha_{i}} H^{i}$ and
    $ \Hrange \cong \bigoplus_{i} H^{i}
    \rtensor{\alpha_{i}}{\frakbo_{i}}{\beta_{i}} H^{i}$
    \cite[Proposition 2.17]{timmer:fiber}, and
    if $V$ denotes the unitary corresponding to
    $\bigoplus_{i} V_{i}$ with respect to these
    isomorphisms, then the tuple
    $(\frakb,H,\hbeta,\alpha,\beta,V)$ is a
    $C^{*}$-pseudo-multiplicative unitary.
  \item The {\em tensor product} of
    $C^{*}$-pseudo-multiplicative unitaries
    $(\frakb,H,\hat\beta,\alpha,\beta,V)$ and
    $(\frakc,K,\hat\delta,\gamma,\delta,W)$ is defined as
    follows. Denote by
    $\frakB^{(\dagger)}\otimes\frakC^{(\dagger)} \subseteq
    {\cal L}(\frakH\otimes\frakK)$ and $\hbeta \otimes
    \hdelta,\alpha \otimes \gamma,\beta \otimes\delta
    \subseteq {\cal L}(\frakH\otimes\frakK,H\otimes K)$ the
    closed subspaces generated by elementary tensor
    products. Then $\frakb\otimes\frakc:=(\frakH\otimes
    \frakK, \frakB\otimes\frakC,\frakBo\otimes\frakCo)$ is a
    $C^{*}$-base, there exist natural isomorphisms
    $(H\otimes K) \rtensor{\hbeta \otimes
      \hdelta}{(\frakb\otimes\frakc)^{\dagger}}{\alpha
      \otimes \gamma} (H \otimes K) \cong (\Hsource) \otimes
    (K \rtensor{\hdelta}{\frakco}{\gamma} K)$ and $(H\otimes
    K) \rtensor{\alpha \otimes
      \gamma}{\frakb\otimes\frakc}{\beta \otimes \delta} (H
    \otimes K) \cong (\Hrange) \otimes (K
    \rtensor{\gamma}{\frakc}{\delta} K)$, and if $U$ denotes
    the unitary corresponding to $V \otimes W$ with respect
    to these isomorphisms, then $(\frakb\otimes\frakc,
    H\otimes K,
    \hbeta\otimes\hdelta,\alpha\otimes\gamma,\beta\otimes\delta,U)$
    is a $C^{*}$-pseudo-multiplicative unitary.
\end{enumerate}
\end{examples}

\subsection{The $C^{*}$-pseudo-multiplicative unitary of a
  groupoid }

\label{subsection:pmu-groupoid}

To every locally compact, Hausdorff, second countable
groupoid with left Haar system, we shall associate a
$C^{*}$-pseudo-multiplicative unitary.  The underlying
pseudo-multiplicative unitary was introduced by Vallin
\cite{vallin:2}, and associated unitaries on $C^{*}$-modules
were discussed in \cite{ouchi,timmermann:hopf}. We focus on
the aspects that are new in the present setting.

Let $G$ be a locally compact, Hausdorff, second countable
groupoid with left Haar system $\lambda$ and associated
right Haar system $\lambda^{-1}$, and let $\mu$ be a measure
on $G^{0}$ with full support. Define measures $\nu,\nu^{-1}$
on $G$ by
\begin{align*} \int_{G} f \intd \nu &:= \int_{G^{0}}
  \int_{G^{u}} f(x) \intd\lambda^{u}(x) \intd\mu(u), &
  \int_{G} f d\nu^{-1} &= \int_{G^{0}} \int_{G_{u}} f(x)
  \intd\lambda^{-1}_{u}(x) \intd\mu(u)
\end{align*}
 for all $f \in C_{c}(G)$. Thus, $\nu^{-1}=i_{*}\nu$, where
 $i\colon G \to G$ is given by $x \mapsto x^{-1}$.  We
 assume that $\mu$ is quasi-invariant in the sense that
 $\nu$ and $\nu^{-1}$ are equivalent, and denote by
 $D:=\intd\nu/\intd\nu^{-1}$ the Radon-Nikodym derivative.

 We identify functions in $C_{b}(G^{0})$ and $C_{b}(G)$ with
 multiplication operators on the Hilbert spaces
 $L^{2}(G^{0},\mu)$ and $L^{2}(G,\nu)$, respectively, and
 let
 \begin{align*}
   \frakK&:=L^{2}(G^{0},\mu), & \frakB&= \frakBo:=
   C_{0}(G^{0}) \subseteq {\cal L}(\frakK), &
   \frakb&:=(\frakK,\frakB,\frakBo), & H &:= L^{2}(G,\nu).
 \end{align*}
 Pulling functions on $G^{0}$ back to $G$ along $r$ or $s$,
 we obtain representations
 \begin{align*}
   r^{*} &\colon C_{0}(G^{0}) \to C_{b}(G)
     \hookrightarrow {\cal L}(H), &
     s^{*} &\colon C_{0}(G^{0}) \to C_{b}(G)
     \hookrightarrow {\cal L}(H).
   \end{align*}
   We define Hilbert $C^{*}$-modules $L^{2}(G,\lambda)$ and
   $L^{2}(G,\lambda^{-1})$ over $C_{0}(G^{0})$ as the
   respective completions of the pre-$C^{*}$-module
   $C_{c}(G)$, the structure maps being given by
 \begin{align*} 
   \langle \xi'|\xi\rangle(u)&= \int_{G^{u}}
   \overline{\xi'(x)}\xi(x) \intd\lambda^{u}(x), & \xi f &=
   r^{*}(f)\xi &  &\text{in the case of } L^{2}(G,\lambda), \\
   \langle \xi'|\xi\rangle(u)&= \int_{G_{u}}
   \overline{\xi'(x)}\xi(x) \intd\lambda^{-1}_{u}(x), & \xi
   f &= s^{*}(f)\xi & &\text{in the case of }
   L^{2}(G,\lambda^{-1})
\end{align*} 
respectively, for all $\xi,\xi' \in C_{c}(G)$, $u \in
G^{0}$, $f \in C_{0}(G^{0})$.

\begin{lemma} \label{proposition:groupoid-factorizations}
    There exist isometric embeddings
    \begin{align*}
      j&\colon L^{2}(G,\lambda) \to {\cal L}(\frakK,H), &
      \hat j &\colon L^{2}(G,\lambda^{-1}) \to {\cal
        L}\big(\frakK,H\big)
    \end{align*}
 such that for all $\xi \in
    C_{c}(G)$, $\zeta \in C_{c}(G^{0})$
  \begin{align*}
    \big(j(\xi) \zeta\big)(x) &= \xi(x)\zeta(r(x)), &
    \big(\hat j(\xi)\zeta\big)(x) &= \xi(x) D^{-1/2}(x)
    \zeta(s(x)).
  \end{align*}
\end{lemma}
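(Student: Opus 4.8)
The plan is to define $j$ and $\hat j$ first on the dense subspace $C_{c}(G)$ of the Hilbert $C^{*}$-modules $L^{2}(G,\lambda)$ and $L^{2}(G,\lambda^{-1})$, respectively, then check that the displayed formulas produce bounded operators $\frakK \to H$, compute the corresponding operator norms to conclude that $j$ and $\hat j$ are isometric, and finally extend by continuity. Since $\mathcal{L}(\frakK,H)$ is a Banach space and an isometric linear map automatically has closed range, this yields the asserted embeddings.

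For $j$: given $\xi \in C_{c}(G)$ and $\zeta \in C_{c}(G^{0})$, the function $x \mapsto \xi(x)\zeta(r(x))$ is continuous with support contained in $\supp\xi$, hence lies in $C_{c}(G) \subseteq H$; call it $j(\xi)\zeta$. Using the disintegration $\intd\nu = \int_{G^{0}} \intd\lambda^{u}\,\intd\mu(u)$ and the identity $r(x)=u$ for $x \in G^{u}$, one computes
\[
  \|j(\xi)\zeta\|_{H}^{2} = \int_{G^{0}} |\zeta(u)|^{2}\Big(\int_{G^{u}} |\xi(x)|^{2}\intd\lambda^{u}(x)\Big)\intd\mu(u) = \int_{G^{0}} |\zeta(u)|^{2}\,\langle\xi|\xi\rangle(u)\,\intd\mu(u),
\]
where $\langle\xi|\xi\rangle \in C_{0}(G^{0})$ is the inner product of $\xi$ with itself in the Hilbert $C^{*}$-module $L^{2}(G,\lambda)$. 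Hence $\zeta \mapsto j(\xi)\zeta$ is bounded, extends to an operator $j(\xi) \in \mathcal{L}(\frakK,H)$ with $j(\xi)^{*}j(\xi)$ equal to multiplication by $\langle\xi|\xi\rangle$ on $\frakK=L^{2}(G^{0},\mu)$, and — since $\mu$ has full support — $\|j(\xi)\|^{2} = \|\langle\xi|\xi\rangle\|_{\infty} = \|\xi\|_{L^{2}(G,\lambda)}^{2}$. As $\xi \mapsto j(\xi)$ is plainly linear on $C_{c}(G)$ and $C_{c}(G)$ is dense in $L^{2}(G,\lambda)$, it extends uniquely to an isometry $j \colon L^{2}(G,\lambda) \to \mathcal{L}(\frakK,H)$, still given by the stated formula on $C_{c}(G)$. (One may also record that $j(\xi)^{*}\eta = \langle\xi|\eta\rangle$ for $\eta \in C_{c}(G) \subseteq H$, which is convenient later.)

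For $\hat j$ the argument is parallel, the extra ingredient being the quasi-invariance of $\mu$. For $\xi \in C_{c}(G)$, $\zeta \in C_{c}(G^{0})$ set $(\hat j(\xi)\zeta)(x) := \xi(x)D^{-1/2}(x)\zeta(s(x))$; this is $\nu$-measurable with support in $\supp\xi$. Using $\intd\nu = D\intd\nu^{-1}$, the disintegration $\intd\nu^{-1} = \int_{G^{0}} \intd\lambda^{-1}_{u}\,\intd\mu(u)$, and $s(x)=u$ for $x \in G_{u}$, one obtains
\[
  \|\hat j(\xi)\zeta\|_{H}^{2} = \int_{G} |\xi(x)|^{2} D^{-1}(x) |\zeta(s(x))|^{2}\intd\nu(x) = \int_{G^{0}} |\zeta(u)|^{2}\Big(\int_{G_{u}} |\xi(x)|^{2}\intd\lambda^{-1}_{u}(x)\Big)\intd\mu(u),
\]
which equals $\int_{G^{0}} |\zeta(u)|^{2}\,\langle\xi|\xi\rangle'(u)\,\intd\mu(u)$, where $\langle\xi|\xi\rangle' \in C_{0}(G^{0})$ is now the inner product in $L^{2}(G,\lambda^{-1})$. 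Exactly as above, $\hat j(\xi)$ extends to an operator in $\mathcal{L}(\frakK,H)$ with $\|\hat j(\xi)\|^{2} = \|\langle\xi|\xi\rangle'\|_{\infty} = \|\xi\|_{L^{2}(G,\lambda^{-1})}^{2}$, and $\hat j$ extends by density to the desired isometry $L^{2}(G,\lambda^{-1}) \to \mathcal{L}(\frakK,H)$.

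The only genuine obstacle is the bookkeeping with the fibred measures: one must apply the disintegration identities, the substitution $\intd\nu = D\intd\nu^{-1}$, and the identifications of the $\int_{G^{u}}$- and $\int_{G_{u}}$-integrals with the module inner products to the right objects — keeping $r$ versus $s$ and $\lambda$ versus $\lambda^{-1}$ straight — and one must note that $\langle\xi|\xi\rangle$ really lies in $C_{0}(G^{0})$, so that the norm of the associated multiplication operator on $\frakK$ coincides with its supremum norm thanks to the full support of $\mu$. Everything else is routine verification.
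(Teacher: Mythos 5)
Your proposal is correct and follows essentially the same route as the paper: both verify, via the disintegration of $\nu$ and the substitution $D^{-1}\intd\nu=\intd\nu^{-1}$, that $j(\xi)^{*}j(\xi')$ and $\hat j(\xi)^{*}\hat j(\xi')$ act on $\frakK$ as multiplication by the respective module inner products, which (with the full support of $\mu$) gives isometry on $C_{c}(G)$ and extension by density. The only cosmetic difference is that the paper records the full sesquilinear identity $\langle j(\xi')\zeta'|j(\xi)\zeta\rangle=\langle\zeta'|\langle\xi'|\xi\rangle_{E}\zeta\rangle$ while you compute the diagonal norms, which is equivalent.
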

\begin{proof}
Let $E:=L^{2}(G,\lambda)$, $\hat
  E:=L^{2}(G,\lambda^{-1})$, and  $\xi,\xi ' \in
  C_{c}(G)$, $\zeta,\zeta' \in C_{c}(G^{0})$.  Then
  \begin{align*} 
    \big\langle j(\xi')\zeta' \big| j(\xi)\zeta\big\rangle
    &= \int_{G^{0}} \int_{G^{u}}
    \overline{\xi'(x)\zeta'(r(x))} \xi(x) \zeta(r(x))
    \intd\lambda^{u}(x) \intd\mu(u) = \big\langle
    \zeta'\big|
    \langle\xi'|\xi\rangle_{E} \zeta\big\rangle,\\
    \big\langle \hat j(\xi')\zeta'\big|\hat j(\xi)\zeta
    \big\rangle &= \int_{G} \overline{\xi'(x)\zeta'(s(x))}
    \xi(x)\zeta(s(x))
    \underbrace{D^{-1}(x)\intd\nu(x)}_{=\intd\nu^{-1}(x)} \\
    &= \int_{G^{0}} \int_{G_{u}} \overline{\zeta'(u)}
    \overline{\xi'(x)}\xi(x)\zeta(u)
    \intd\lambda^{-1}_{u}(x) \intd\mu(u) = \big \langle
    \zeta' \big| \langle\xi'|\xi\rangle_{\hat E}
    \zeta\big\rangle. \qedhere
  \end{align*} 
\end{proof}
Let $\alpha:=\beta:=j(L^{2}(G,\lambda))$ and $ \hbeta :=
\hat j(L^{2}(G,\lambda^{-1}))$. Easy calculations show:
\begin{lemma} \label{lemma:pmu-groupoid-2}
  $(H,\hbeta,\alpha,\beta)$ is a
  $C^{*}$-$(\frakbo,\frakb,\frakbo)$-module,
  $\rho_{\alpha}=\rho_{\beta}=r^{*}$ and
  $\rho_{\hbeta}=s^{*}$, and $j$ and $\hat j$ are
  unitary maps of Hilbert $C^{*}$-modules over $C_{0}(G^{0})
  \cong \frakB$. \qed
\end{lemma}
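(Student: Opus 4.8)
The plan is to verify each clause by unwinding the definitions and feeding in the explicit formulas of Lemma~\ref{proposition:groupoid-factorizations}, using only the elementary support properties of the Haar systems $\lambda$ and $\lambda^{-1}$. Since here $\frakb=\frakbo$ and $\beta=\alpha$, it suffices to treat $\alpha$ and $\hbeta$. I would first record that $\alpha=j(L^{2}(G,\lambda))$ and $\hbeta=\hat j(L^{2}(G,\lambda^{-1}))$ are closed subspaces of ${\cal L}(\frakK,H)$, because $j$ and $\hat j$ are isometric on complete spaces, and that, by pointwise computation, for $f\in C_{0}(G^{0})$ (acting by multiplication on $\frakK$) one has $j(\xi)f=j(r^{*}(f)\xi)$ and $\hat j(\xi)f=\hat j(s^{*}(f)\xi)$, as well as $r^{*}(f)j(\xi)=j(r^{*}(f)\xi)$, $s^{*}(f)j(\xi)=j(s^{*}(f)\xi)$ and $r^{*}(f)\hat j(\xi)=\hat j(r^{*}(f)\xi)$, $s^{*}(f)\hat j(\xi)=\hat j(s^{*}(f)\xi)$; here $r^{*}(f)\xi$, $s^{*}(f)\xi$ denote the pointwise products with $r^{*}(f),s^{*}(f)\in C_{b}(G)$, which are the module actions of $f$ on $L^{2}(G,\lambda)$ resp.\ $L^{2}(G,\lambda^{-1})$.

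From these identities, together with the fact that choosing $f\equiv 1$ on $r(\supp\xi)$ (resp.\ $s(\supp\xi)$) recovers $\xi$, one gets at once $[\alpha\frakB]=j(L^{2}(G,\lambda))=\alpha$, $[\hbeta\frakBo]=\hbeta$, $C_{c}(G)\subseteq[\alpha\frakK]$ and hence $[\alpha\frakK]=H$. For the fullness condition I would use that, by the computation in the proof of Lemma~\ref{proposition:groupoid-factorizations}, $j(\xi')^{*}j(\xi)=\langle\xi'|\xi\rangle_{L^{2}(G,\lambda)}\in C_{0}(G^{0})$, so that $[\alpha^{*}\alpha]$ is the closed ideal of $C_{0}(G^{0})$ spanned by the inner products of the Hilbert module $L^{2}(G,\lambda)$; this ideal equals $C_{0}(U)$ for $U=\{u\in G^{0}\mid\lambda^{u}\neq 0\}$, and $U=G^{0}$ because $\supp\lambda^{u}=G^{u}\ni u$. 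The identity $[\hbeta^{*}\hbeta]=\frakBo$ follows verbatim with $\lambda^{-1}_{u}$ (note $\supp\lambda^{-1}_{u}=G_{u}\ni u$). For the one remaining module axiom, $[\hbeta\frakK]=H$, I would observe that multiplication by $D^{-1/2}$ is a unitary $L^{2}(G,\nu^{-1})\to L^{2}(G,\nu)=H$ and that, for $\zeta\in C_{c}(G^{0})$, the vector $\hat j(\xi)\zeta$ is the image under this unitary of the function $x\mapsto\xi(x)\zeta(s(x))$; since such functions are dense in $L^{2}(G,\nu^{-1})$ (they include all of $C_{c}(G)$), their images span $H$. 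Thus $(H,\alpha)$ is a $C^{*}$-$\frakb$-module and $(H,\hbeta)$ a $C^{*}$-$\frakbo$-module.

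Next I would identify the representations: evaluating $\rho_{\alpha}(f)(j(\xi)\zeta)=j(\xi)(f\zeta)$ pointwise gives $f(r(x))\xi(x)\zeta(r(x))=(r^{*}(f)\,j(\xi)\zeta)(x)$, hence $\rho_{\alpha}(f)=r^{*}(f)$, and likewise $\rho_{\beta}=r^{*}$ and $\rho_{\hbeta}=s^{*}$ (the factor $D^{-1/2}$ is untouched), all nondegenerate since $r,s$ are surjective. The compatibility conditions of a $C^{*}$-$(\frakbo,\frakb,\frakbo)$-module, since all three base algebras are $C_{0}(G^{0})$, are instances of $[\rho(C_{0}(G^{0}))\gamma]=\gamma$ with $\rho\in\{r^{*},s^{*}\}$ and $\gamma\in\{\alpha,\hbeta\}$, and each follows from the identities of the first paragraph and the density already used. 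Finally, $j$ and $\hat j$ are unitaries of Hilbert $C^{*}$-modules over $C_{0}(G^{0})\cong\frakB$: they are isometric by Lemma~\ref{proposition:groupoid-factorizations} and onto $\alpha$, $\hbeta$ by definition; they are $C_{0}(G^{0})$-linear by $j(\xi)f=j(r^{*}(f)\xi)$ and $\hat j(\xi)f=\hat j(s^{*}(f)\xi)$; and they preserve the $C_{0}(G^{0})$-valued inner products because $j(\xi')^{*}j(\xi)=\langle\xi'|\xi\rangle_{L^{2}(G,\lambda)}$ and $\hat j(\xi')^{*}\hat j(\xi)=\langle\xi'|\xi\rangle_{L^{2}(G,\lambda^{-1})}$.

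The only step that is not a mechanical pointwise verification is the fullness $[\alpha^{*}\alpha]=\frakB$ and its analogue for $\hbeta$: this genuinely uses that each Haar measure $\lambda^{u}$ (resp.\ $\lambda^{-1}_{u}$) has support exactly $G^{u}$ (resp.\ $G_{u}$) and in particular contains the unit $u$, so that no point of $G^{0}$ is annihilated by all the inner products. The reduction of $[\hbeta\frakK]=H$ to the unitarity of multiplication by $D^{-1/2}$ is the second mildly non-obvious point; everything else is bookkeeping with the formulas of Lemma~\ref{proposition:groupoid-factorizations}.
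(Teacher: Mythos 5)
Your proposal is correct and is precisely the direct verification that the paper leaves to the reader as ``easy calculations'': unwinding the definitions of a $C^{*}$-$(\frakbo,\frakb,\frakbo)$-module against the explicit formulas for $j$ and $\hat j$. You also correctly isolate the only two non-mechanical points --- fullness $[\alpha^{*}\alpha]=\frakB=[\hbeta^{*}\hbeta]$ via $\supp\lambda^{u}=G^{u}$ and $\supp\lambda^{-1}_{u}=G_{u}$, and $[\hbeta\frakK]=H$ via the unitary given by multiplication by $D^{-1/2}$ --- so nothing is missing.
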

The Hilbert spaces $\Hsource$ and $\Hrange$ can be described
as follows. We define measures $\nu^{2}_{s,r}$ on $\GsrG$
and $\nu^{2}_{r,r}$ on $\GrrG$ by
 \begin{align*}
   \int_{\GsrG} \!\! f\intd\nu^{2}_{s,r} &:=
   \int_{G^{0}} \int_{G^{u}} \int_{G^{s(x)}} f(x,y)
   \intd\lambda^{s(x)}(y) \intd\lambda^{u}(x) \intd\mu(u), \\
   \int_{\GrrG} \!\! g\intd\nu^{2}_{r,r} &:=
   \int_{G^{0}} \int_{G^{u}}\int_{G^{u}}
   g(x,y)\intd\lambda^{u}(y)\intd\lambda^{u}(x)\intd\mu(u)
 \end{align*} 
 for all $f \in C_{c}(\GsrG)$, $g\in
 C_{c}(\GrrG)$. Routine calculations show:
\begin{lemma}
  There exist unique isomorphisms
  \begin{align*} 
    \Phi_{\hbeta,\alpha} \colon \Hsource &\to
    L^{2}(\GsrG,\nu^{2}_{s,r}), & \Phi_{\alpha,\beta}
    \colon \Hrange &\to L^{2} (\GrrG,\nu^{2}_{r,r})
  \end{align*}
  such that for all $\eta,\xi \in C_{c}(G)$ and $\zeta \in
  C_{c}(G^{0})$,
\begin{align*}
  \Phi_{\hbeta,\alpha}\big(\hat j(\eta) \tr \zeta \tl
  j(\xi)\big)(x,y) &= \eta(x)D^{-1/2}(x) \zeta(s(x))
  \xi(y), \\ \Phi_{\alpha,\beta}\big(j(\eta) \tr \zeta \tl
  j(\xi)\big)(x,y) &= \eta(x) \zeta(r(x)) \xi(y). \qed
\end{align*}
\end{lemma}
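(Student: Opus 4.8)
The plan is to construct $\Phi_{\hbeta,\alpha}$ and $\Phi_{\alpha,\beta}$ by hand on dense subspaces and check that they preserve inner products and have dense range; uniqueness then follows automatically. I describe the construction of $\Phi_{\hbeta,\alpha}$; the map $\Phi_{\alpha,\beta}$ is obtained in exactly the same way, replacing $\hat j$ by $j$ in the left leg, the source map $s$ by the range map $r$, the right Haar system $\lambda^{-1}$ by $\lambda$, discarding the factor $D^{-1/2}$, and using $\nu^{2}_{r,r}$ on $\GrrG$ in place of $\nu^{2}_{s,r}$ on $\GsrG$ throughout.

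First I would unwind $\Hsource = \hbeta \tr \frakK \tl \alpha$. By Lemma~\ref{lemma:pmu-groupoid-2} the maps $\hat j$ and $j$ are unitaries of Hilbert $C^{*}$-modules over $C_{0}(G^{0}) \cong \frakB$ onto $\hbeta$ and $\alpha$, with $\rho_{\hbeta} = s^{*}$ and $\rho_{\alpha} = r^{*}$; consequently the vectors $\hat j(\eta) \tr \zeta \tl j(\xi)$ with $\eta,\xi \in C_{c}(G)$ and $\zeta \in C_{c}(G^{0})$ span a dense subspace of $\Hsource$, and, writing $E := L^{2}(G,\lambda)$ and $\hat E := L^{2}(G,\lambda^{-1})$,
\begin{align*}
  \big\langle \hat j(\eta) \tr \zeta \tl j(\xi) \,\big|\, \hat j(\eta') \tr \zeta' \tl j(\xi') \big\rangle = \big\langle \zeta \,\big|\, \langle \eta|\eta'\rangle_{\hat E}\, \langle \xi|\xi'\rangle_{E}\, \zeta' \big\rangle_{L^{2}(G^{0},\mu)},
\end{align*}
since $\hat j(\eta)^{*}\hat j(\eta') = \langle\eta|\eta'\rangle_{\hat E}$ and $j(\xi)^{*}j(\xi') = \langle\xi|\xi'\rangle_{E}$ are commuting elements of $\frakB = \frakBo$ and the inner product on $\hbeta \tr \frakK \tl \alpha$ is the one recalled above. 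Under the canonical identification $\Hsource \cong H {_{\rho_{\hbeta}}\tl} \alpha$ this vector becomes $(\hat j(\eta)\zeta) \tl j(\xi)$, and by Lemma~\ref{proposition:groupoid-factorizations} the function $\hat j(\eta)\zeta \in H = L^{2}(G,\nu)$ is $x \mapsto \eta(x)D^{-1/2}(x)\zeta(s(x))$.

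Next I would define $\Phi_{\hbeta,\alpha}$ on this dense span by the stated formula. The functions $(x,y) \mapsto \eta(x)D^{-1/2}(x)\zeta(s(x))\xi(y)$ are measurable and compactly supported on $\GsrG$, and a routine computation --- unfolding the iterated integral defining $\nu^{2}_{s,r}$, using $\int_{G^{s(x)}} \overline{\xi(y)}\xi'(y)\intd\lambda^{s(x)}(y) = (s^{*}\langle\xi|\xi'\rangle_{E})(x)$, and converting $D^{-1}\intd\nu$ into $\intd\nu^{-1}$ --- shows that the $L^{2}(\GsrG,\nu^{2}_{s,r})$-inner product of two such functions equals the expression displayed above. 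Hence $\Phi_{\hbeta,\alpha}$ respects inner products; in particular it does not depend on how a vector is written as a finite sum of elementary tensors, and it extends to an isometry $\Hsource \to L^{2}(\GsrG,\nu^{2}_{s,r})$. Equivalently, and perhaps more transparently, one may first identify $\Hsource \cong H {_{\rho_{\hbeta}}\tl} \alpha \cong H {_{s^{*}}\tl} E$ using $\hat j$ and $j$, and then define $\phi \tl \eta \mapsto \big((x,y) \mapsto \phi(x)\eta(y)\big)$ for $\phi \in H = L^{2}(G,\nu)$, $\eta \in C_{c}(G)$; the required inner-product identity is then literally the defining iterated integral of $\nu^{2}_{s,r}$ evaluated on $\phi \otimes \eta$ and $\phi' \otimes \eta'$.

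For surjectivity, the range of $\Phi_{\hbeta,\alpha}$ is a closed subspace that, by the second description above, contains all restrictions to $\GsrG$ of functions $\xi \otimes \eta$ with $\xi,\eta \in C_{c}(G)$. These restrictions form a self-adjoint subalgebra of $C_{0}(\GsrG)$ that separates the points of $\GsrG$ and has no common zero, so by the Stone--Weierstrass theorem their linear span is uniformly dense in $C_{0}(\GsrG)$, hence dense in $C_{c}(\GsrG)$ for the inductive-limit topology and a fortiori in $L^{2}(\GsrG,\nu^{2}_{s,r})$. Thus $\Phi_{\hbeta,\alpha}$ is an isometry with dense range, i.e.\ a unitary, and it is the unique one with the stated action since the vectors $\hat j(\eta) \tr \zeta \tl j(\xi)$ are total in $\Hsource$. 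The only steps that are not pure bookkeeping are the compatibility of the proposed formula with the relative-tensor-product inner product --- where the delicate point is tracking which leg carries $\rho_{\hbeta}$ and which carries $\rho_{\alpha}$, together with the $D^{-1/2}$-normalization coming from quasi-invariance of $\mu$ --- and the Stone--Weierstrass density argument; I expect the former to be the place where care is genuinely needed.
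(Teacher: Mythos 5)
Your proposal is correct, and it supplies exactly the verification that the paper dismisses as "routine calculations": the inner-product computation on elementary tensors (which parallels the computation already done for Lemma \ref{proposition:groupoid-factorizations}, converting $D^{-1}\intd\nu$ into $\intd\nu^{-1}$) together with the standard density of the span of restricted elementary tensors $C_{c}(G)\odot C_{c}(G)|_{\GsrG}$ in $L^{2}(\GsrG,\nu^{2}_{s,r})$. The only point to tighten is the density step: since $\nu^{2}_{s,r}$ need not be finite, uniform density in $C_{0}(\GsrG)$ does not by itself give $L^{2}$-density; one should run Stone--Weierstrass on a compact neighbourhood of the support of a given $f\in C_{c}(\GsrG)$ (or multiply by cut-off elementary tensors, which stay in the subalgebra) so that the approximants have supports in a fixed compact set, after which uniform convergence does imply $L^{2}$-convergence.
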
 
From now on, we identify $\Hsource$ with $L^{2}(\GsrG)$
and $\Hrange$ with $L^{2}(\GrrG)$ via
$\Phi_{\hbeta,\alpha}$ and $\Phi_{\alpha,\beta}$,
respectively, without further notice.
\begin{theorem} \label{theorem:pmu-groupoid} There exists a
  $C^{*}$-pseudo-multiplicative unitary $\pmu$ such that
  $(V\omega)(x,y) = \omega(x,x^{-1}y)$ for all $\omega \in
  C_{c}(\GsrG)$ and $(x,y) \in \GrrG$.
\end{theorem}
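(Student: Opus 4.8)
The plan is to verify the three ingredients of Definition \ref{definition:pmu} that the preceding lemmas do not yet supply: that the stated formula defines a unitary $V\colon\Hsource\to\Hrange$, that the intertwining relations \eqref{eq:pmu-intertwine} hold, and that the pentagon diagram \eqref{eq:pmu-pentagon} commutes. Throughout I would work in the concrete pictures $\Hsource\cong L^{2}(\GsrG,\nu^{2}_{s,r})$ and $\Hrange\cong L^{2}(\GrrG,\nu^{2}_{r,r})$ of the preceding lemma, and with the dense subspaces $C_{c}(\GsrG)$, $C_{c}(\GrrG)$. For the unitarity: the assignment $\theta\colon\GrrG\to\GsrG$, $(x,y)\mapsto(x,x^{-1}y)$, is a homeomorphism with inverse $(x,z)\mapsto(x,xz)$ (if $(x,y)\in\GrrG$ then $r(x)=r(y)$, and then $s(x)=r(x^{-1}y)$, so $(x,x^{-1}y)\in\GsrG$). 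Left invariance of $\lambda$ gives $\int_{G^{u}}f(x,x^{-1}y)\intd\lambda^{u}(y)=\int_{G^{s(x)}}f(x,z)\intd\lambda^{s(x)}(z)$ for $x$ with $r(x)=u$, and integrating this against $\intd\lambda^{u}(x)\intd\mu(u)$ and comparing with the definitions of the two measures yields $\theta_{*}\nu^{2}_{r,r}=\nu^{2}_{s,r}$. Hence $\omega\mapsto\omega\circ\theta$ is isometric $C_{c}(\GsrG)\to C_{c}(\GrrG)$ with $C_{c}$-inverse $\psi\mapsto\psi\circ\theta^{-1}$, and extends to a unitary $V$ with $(V\omega)(x,y)=\omega(x,x^{-1}y)$.

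Next I would verify the relations \eqref{eq:pmu-intertwine}. Each of the eight subspaces occurring there can be written explicitly as the closed span of operators $\frakK=L^{2}(G^{0},\mu)\to L^{2}(\GsrG)$, resp.\ $\to L^{2}(\GrrG)$, assembled from $j$, $\hat{j}$ and the ket operators $|\,\cdot\,\rangle_{1}$, $|\,\cdot\,\rangle_{2}$; the formulas of the preceding lemmas show, for instance, that the elements of $\alpha\lt\alpha$ act by $\zeta\mapsto\big((x,y)\mapsto\eta(x)\zeta(r(x))\xi(y)\big)$ on $\GsrG$ and those of $\alpha\rt\alpha$ by $\zeta\mapsto\big((x,y)\mapsto\eta(x)\zeta(r(x))\xi(y)\big)$ on $\GrrG$, with $\eta,\xi$ running over $C_{c}(G)$, while the subspaces built from $\hbeta$ carry in addition a factor $D^{-1/2}$ in the variable corresponding to $\hbeta$. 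Applying $V$ and using left invariance once more --- together with the cocycle identity $D(\gamma_{1}\gamma_{2})=D(\gamma_{1})D(\gamma_{2})$, which governs the half-powers of $D$ introduced by $\hat{j}$ --- each relation reduces to the statement that, on the relevant fibered product, the closed span of the functions $(x,y)\mapsto\eta(x)\xi(x^{-1}y)$ coincides with that of the functions $(x,y)\mapsto\eta(x)\xi(y)$; this follows from the density of the finite sums $\sum_{k}\eta_{k}\otimes\xi_{k}$ in $C_{c}$ of the fibered product. These are in substance the computations underlying Vallin's construction in the von~Neumann setting \cite{vallin:2}, carried over to the $C^{*}$-base framework.

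Finally, for the pentagon \eqref{eq:pmu-pentagon}: once \eqref{eq:pmu-intertwine} is in hand, all operators in the diagram are defined, and it is enough to test commutativity on the dense subspace of compactly supported continuous functions on the relevant three-fold fibered product of $G$. Using the associativity isomorphisms and the preceding lemmas, one identifies each corner of \eqref{eq:pmu-pentagon} with the $L^{2}$-space of an appropriate triple fibered product of $G$ (the gluing conditions being dictated by $r$ and $s$), and checks that the operators denoted $V_{12}$, $V_{23}$ and $V_{13}=\Sigma_{23}(V\botensor\Id)(\Id\botensor\Sigma)$ become the coordinate substitutions $(x,y,z)\mapsto(x,x^{-1}y,z)$, $(x,y,z)\mapsto(x,y,y^{-1}z)$ and $(x,y,z)\mapsto(x,y,x^{-1}z)$ (again tracking the half-powers of $D$, which combine by the cocycle identity, and the flip $\Sigma$). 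The pentagon then reduces to the elementary identity $(V_{12}V_{13}V_{23}\omega)(x,y,z)=\omega(x,x^{-1}y,y^{-1}z)=(V_{23}V_{12}\omega)(x,y,z)$, which holds because $(x^{-1}y)^{-1}(x^{-1}z)=y^{-1}z$.

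The heart of the matter is not conceptual but organizational: the real work lies in the last two steps, in translating the abstract relative tensor products and the operators $V\btensor\Id$, $V\botensor\Id$, $\Sigma_{23}$ and the associativity isomorphisms into explicit $L^{2}$-spaces of fibered products and coordinate substitutions, and above all in carrying the modular function $D$ correctly through all these identifications so that the half-powers $D^{\pm1/2}$ coming from $\hat{j}$ match up. This is the step most prone to computational slips, and it is here that the argument genuinely uses the quasi-invariance of $\mu$ (through the cocycle property of $D$) and the left invariance of $\lambda$.
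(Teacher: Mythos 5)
Your proposal is correct and follows essentially the same route as the paper's proof: the substitution homeomorphism $(x,y)\mapsto(x,x^{-1}y)$ plus left invariance of $\lambda$ gives the unitary, the relations \eqref{eq:pmu-intertwine} are checked on elementary tensors using approximation and the a.e.\ cocycle identity $D(x)D(x^{-1}y)=D(y)$, and the pentagon is verified by a (tedious) identification with coordinate substitutions as in Vallin's computation. The paper simply carries out one representative relation, $V(\hbeta\rt\hbeta)=\alpha\rt\hbeta$, in detail and defers the rest, exactly as you outline.
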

\begin{proof} 
  Straightforward calculations show that
  $(H,\hbeta,\alpha,\beta)$ is a
  $C^{*}$-$(\frakbo,\frakb,\frakbo)$-module.

  The homeomorphism $\GrrG \to \GsrG$, $(x,y)
  \mapsto (x,x^{-1}y)$, induces an isomorphism $V_{0} \colon
  C_{c}(\GsrG) \to C_{c}(\GrrG)$ such that
  $(V_{0} \omega)(x,y)=\omega(x,x^{-1}y)$ for all $\omega
  \in C_{c}(\GsrG)$ and $(x,y) \in \GrrG$.  Using
  left-invariance of $\lambda$, one finds that $V_{0}$
  extends to a unitary $V \colon \Hsource \cong
  L^{2}(\GsrG) \to L^{2}(\GrrG) \cong \Hrange$.

  We claim that $V$ is a $C^{*}$-pseudo-multiplicative
  unitary. First, we show that $V(\hbeta \rt \hbeta)=\alpha
  \rt \hbeta$.  For each $\xi,\xi' \in C_{c}(G)$, $\zeta \in
  C_{c}(G^{0})$, and $(x,y) \in \GsrG$,
  \begin{align*} 
    \big(V|\hat j(\xi)\rangle_{1}\hat j(\xi')\zeta\big)(x,y)
    &= \big(|\hat j(\xi)\rangle_{1}\hat j(\xi')\zeta
    \big)(x,x^{-1}y) \\ &= \xi(x)\xi'(x^{-1}y)
    D^{-1/2}(x)D^{-1/2}(x^{-1}y) \zeta(s(y)), \\ \big(
    |j(\xi)\rangle_{1}\hat j(\xi')\zeta\big)(x,y) &=
    \xi(x)\xi'(y) D^{-1/2}(y) \zeta(s(y)).
  \end{align*}
  Using standard approximation arguments and the fact that
  $D(x)D(x^{-1}y)=D(y)$ for $\nu^{2}_{r,r}$-almost all
  $(x,y) \in \GrrG$ (see \cite{hahn} or
  \cite[p. 89]{paterson}), we find that $V(\hbeta \rt \hbeta)
  = [T(C_{c}(\GrrG))] = \alpha \rt \hbeta$, where for
  each $\omega \in C_{c}(\GrrG)$, 
  \begin{align*}
    (T(\omega)\zeta)(x,y) &= \omega(x,y) D^{-1/2}(y)
    \zeta(s(y)) \quad \text{for all } \zeta \in
    C_{c}(G^{0}), \, (x,y) \in \GrrG.
  \end{align*}
  Similar calculations show that the remaining relations in
  \eqref{eq:pmu-intertwine} hold.

  Tedious but straightforward calculations show that diagram
  \eqref{eq:pmu-pentagon} commutes; see also
  \cite{vallin:2}. Therefore, $V$ is a
  $C^{*}$-pseudo-multiplicative unitary.
\end{proof}

\section{Hopf $C^{*}$-bimodules and the legs of a
  $C^{*}$-pseudo multiplicative unitary}

\label{section:legs}

To every regular multiplicative unitary $V$ on a Hilbert
space $H$, Baaj and Skandalis associate two Hopf
$C^{*}$-algebras $(\hA_{V},\hDelta_{V})$ and
$(A_{V},\Delta_{V})$ as follows \cite{baaj:2}.  They show
for every multiplicative unitary $V$, the subspaces
$\hA^{0}_{V}$ and $A^{0}_{V}$ of ${\cal L}(H)$ defined by
\begin{align} \label{eq:legs-intro-algebras}
  \hA^{0}_{V} &:= \{ (\Id \vntimes \omega)(V) \mid \omega
  \in {\cal L}(H)_{*}\}, & A^{0}_{V} &:= \{ (\upsilon
  \vntimes \Id)(V) \mid \upsilon \in {\cal L}(H)_{*}\}
\end{align}
are closed under multiplication. In the regular case,
their norm closures $\hA_{V}$ and $A_{V}$, respectively, are
$C^{*}$-algebras, and
the $*$-homomorphisms $\hDelta_{V} \colon \hA_{V} \to {\cal
  L}(H \otimes H)$ and $\Delta_{V} \colon A_{V} \to {\cal
  L}(H \otimes H)$ given by
\begin{align} \label{eq:legs-intro-comultiplication}
  \hDelta_{V} \colon \ha &\mapsto V^{*}(1 \otimes \ha), &
  \Delta_{V} \colon a &\mapsto V(a \otimes 1)V^{*},
\end{align}
map $\hA_{V}$ to $M(\hA_{V} \otimes \hA_{V}) \subseteq {\cal
  L}(H \otimes H)$ and $A_{V}$ to $M(A_{V} \otimes A_{V})
\subseteq {\cal L}(H \otimes H)$, respectively, and form
comultiplications on $\hA_{V}$ and $A_{V}$. Finally, there
exists a perfect pairing
\begin{align} \label{eq:legs-intro-pairing}
  \hA^{0}_{V} \times A^{0}_{V} \to \complex, \quad \left((\Id
  \vntimes \omega)(V), (\upsilon \vntimes \Id)(V)\right) \to
  (\upsilon \vntimes \omega)(V),
\end{align}
which expresses the duality between $(\hA_{V},\hDelta_{V})$
and $(A_{V},\Delta_{V})$.

Applied to the multiplicative unitary of a locally compact
group $G$, this construction yields the $C^{*}$-algebras
$C_{0}(G)$ and $C^{*}_{r}(G)$ with the comultiplications
$\hDelta \colon C_{0}(G) \to M(C_{0}(G) \otimes C_{0}(G))
\cong C_{b}(G \times G)$ and $\Delta  \colon C^{*}_{r}(G)
\to M(C^{*}_{r}(G) \otimes C^{*}_{r}(G))$  given by
\begin{align} \label{eq:legs-group-comultiplication}
  \hDelta(f)(x,y) &= f(xy) \text{ for all }   f \in
  C_{0}(G), &
  \Delta(U_{x}) &= U_{x} \otimes U_{x} \text{ for all } x
  \in G,
\end{align}
where $U \colon G \to M(C^{*}_{r}(G)), x \mapsto U_{x}$, is
the canonical embedding.

To adapt these constructions to
$C^{*}$-pseudo-multiplicative unitaries, we have to solve
the following problems.

First, we have to find substitutes for the space ${\cal
  L}(H)_{*}$ and the slice maps $\Id \vntimes \omega$ and
$\upsilon \vntimes \Id$ that were used in the definition of
$\hA^{0}_{V}$ and $A^{0}_{V}$. It turns out that for a
$C^{*}$-pseudo-multiplicative unitary, the norm closures
$\hA_{V}$ and $A_{V}$ are easier to define. Therefore, we
first study these algebras, before we introduce the spaces
$\hA^{0}_{V}$ and $A^{0}_{V}$ and the dual pairing. The
spaces $\hA^{0}_{V}$ and $A^{0}_{V}$ carry the structure of
Banach algebras and can be considered as a Fourier algebra
and a dual Fourier algebra for $V$.

The main difficulty, however, is to find a suitable
generalization of the notion of a Hopf $C^{*}$-algebra and,
more precisely, to describe the targets of the
comultiplications $\hDelta_{V}$ and $\Delta_{V}$. For
example, if we replace the multiplicative unitary of a group
$G$ by the $C^{*}$-pseudo-multiplicative unitary of a
groupoid $G$, we expect to obtain the $C^{*}$-algebras
$\hA_{V}=C_{0}(G)$ and $A_{V}=C^{*}_{r}(G)$ with
$*$-homomorphisms $\hDelta$ and $\Delta$ given by the same
formulas as in \eqref{eq:legs-group-comultiplication}.  Then
the target of $\hDelta$ would be $M(C_{0}(\GsrG))$, and
$C_{0}(\GsrG)$ can be identified with the relative tensor
product $C_{0}(G) \rtensor{s^{*}}{C_{0}(G^{0})}{r^{*}}
C_{0}(G)$ of $C_{0}(G^{0})$-algebras 
\cite{blanchard}. But the target of $\Delta$ can not be
described in a similar way, and in general, we need to
replace the balanced tensor product by a fiber product
relative to some base algebra which may even   be
non-commutative. If $G$ is finite, then $C(G)$ and
$C^{*}_{r}(G)=\complex G$ can be considered as modules over
$R=C(G^{0})$ in several ways, and the targets of $\hDelta$
and $\Delta$ can be written using the $\times_{R}$-product
of Takeuchi \cite{takeuchi} in the form $C(G) \times_{R}
C(G)$ and $\complex G \times_{R} \complex G$,
respectively. In the setting of von Neumann algebras, the
targets of the comultiplications can be described using
Sauvageot's fiber product \cite{sauvageot:2,vallin:1}.  For
the setting of $C^{*}$-algebras, a partial solution was
proposed in \cite{timmermann:hopf}, and a general fiber
product construction which suits our purpose was introduced
in \cite{timmer:fiber}.

We proceed as follows. First, we recall the fiber product of
$C^{*}$-algebras over $C^{*}$-bases and systematically study
slice maps and related constructions. These prerequisites
are then used to define Hopf $C^{*}$-bimodules and
associated convolution algebras. Finally, we adapt the
constructions of Baaj and Skandalis to
$C^{*}$-pseudo-multiplicative unitaries and apply them to
the unitary associated to a groupoid.

Throughout this section, let $\frakb=\cbasesb$
be a $C^{*}$-base, $\pmuspace$ a
$C^{*}$-$(\frakbo,\frakb,\frakbo)$-module and $\pmuoperator$
a $C^{*}$-pseudo-multiplicative unitary.

\subsection{The fiber product of $C^{*}$-algebras over $C^{*}$-bases }
\label{subsection:legs-fiber}

In this subsection, we recall the fiber product of
$C^{*}$-algebras over $C^{*}$-bases 
\cite{timmer:fiber}, introduce several new notions of a
morphism of such $C^{*}$-algebras, and show that the fiber
product is also functorial with respect to these generalized
morphisms.  For additional motivation and details, we refer
to \cite{timmer:fiber}; two examples can be found in
subsection \ref{subsection:legs-groupoid}.

Let $\frakb_{1},\ldots,\frakb_{n}$ be $C^{*}$-bases, where
$\frakb_{i}=(\frakK_{i},\frakB_{i},\frakBo_{i})$ for each
$i$.  A {\em (nondegenerate)
  $C^{*}$-$(\frakb_{1},\ldots,\frakb_{n})$-algebra} consists
of a $C^{*}$-$(\frakb_{1},\ldots,\frakb_{n})$-mod\-ule
$(H,\alpha_{1},\ldots,\alpha_{n})$ and a (nondegenerate)
$C^{*}$-algebra $A \subseteq \mathcal{L}(H)$ such that
$\rho_{\alpha_{i}}(\frakBo_{i})A$ is contained in $A$ for
each $i$. We shall only be interested in the cases $n=1,2$,
where we abbreviate $A^{\alpha}_{H}:=(H_{\alpha},A)$,
$A^{\alpha,\beta}_{H}:=(\aHb,A)$.  

We need several natural notions of a morphism. Let ${\cal
  A}=({\cal H},A)$ and ${\cal C}=({\cal K},C)$ be
$C^{*}$-$(\frakb_{1},\ldots,\frakb_{n})$-algebras, where
${\cal H}=(H,\alpha_{1},\ldots,\alpha_{n})$ and ${\cal
  K}=(K,\gamma_{1},\ldots,\gamma_{n})$.  A
$*$-homo\-morphism $\pi \colon A \to C$ is called a {\em
  (semi-)normal morphism} or {\em jointly (semi-)normal
  morphism} from ${\cal A}$ to ${\cal C}$ if $[{\cal
  L}^{\pi}_{(s)}(H_{\alpha_{i}},K_{\gamma_{i}})\alpha_{i}] =
\gamma_{i}$ or $[{\cal L}^{\pi}_{(s)}({\cal H},{\cal
  K})\alpha_{i}] = \gamma_{i}$, respectively, for each $i$,
where
\begin{align*}
  {\cal L}^{\pi}_{(s)}(H_{\alpha_{i}},K_{\gamma_{i}}) &=
  {\cal L}^{\pi}(H,K) \cap {\cal
    L}_{(s)}(H_{\alpha_{i}},K_{\gamma_{i}}), & {\cal
    L}^{\pi}_{(s)}({\cal H},{\cal K}) &= {\cal L}^{\pi}(H,K)
  \cap {\cal L}_{(s)}({\cal H},{\cal K}).
\end{align*}
 One easily verifies that every semi-normal
morphism $\pi$ between $C^{*}$-$\frakb$-algebras
$A_{H}^{\alpha}$ and $C_{K}^{\gamma}$ satisfies
$\pi(\rho_{\alpha}(b^{\dagger}))=\rho_{\gamma}(b^{\dagger})$
for all $b^{\dagger} \in \frakBo$.

We construct a fiber product of $C^{*}$-algebras over 
$C^{*}$-bases as follows.
Let $\frakb$ be a $C^{*}$-base, $A_{H}^{\beta}$  a
$C^{*}$-$\frakb$-algebra, and  $B_{K}^{\gamma}$ a
$C^{*}$-$\frakbo$-algebra. The {\em fiber product} of
$A_{H}^{\beta}$ and $B_{K}^{\gamma}$ is the $C^{*}$-algebra
\begin{align*}
  A \fibre{\beta}{\frakb}{\gamma} B := \big\{ x \in
    \mathcal{L}(\HtensorK) \, \big| \, &x\kbeta{1},
    x^{*}\kbeta{1} \subseteq [ \kbeta{1} B] \text{ as
      subsets of } {\cal L}(K,\HtensorK),  \\ 
    & 
    x\kgamma{2}, x^{*}\kgamma{2} \subseteq
    [\kgamma{2}A] \text{ as subsets of } {\cal
    L}(H,\HtensorK)\big\}.
\end{align*}
If $A$ and $B$ are unital, so is $A
\fibre{\beta}{\frakb}{\gamma} B$, but otherwise, $A
\fibre{\beta}{\frakb}{\gamma} B$ may be degenerate.
Clearly, conjugation by the flip $\Sigma \colon \HtensorK
\to \KtensorH$ yields an isomorphism
  \begin{align*}
    \Ad_{\Sigma} \colon A \fibre{\beta}{\frakb}{\gamma} B
    \to B \fibre{\gamma}{\frakbo}{\beta} A.
  \end{align*}
    If $\fraka,\frakc$ are $C^{*}$-bases,
  $A^{\alpha,\beta}_{H}$ is a
  $C^{*}$-$(\fraka,\frakb)$-algebra and
  $B^{\gamma,\delta}_{K}$ a
  $C^{*}$-$(\frakbo,\frakc)$-algebra, then
\begin{align*}
  A^{\alpha,\beta}_{H} \bfibre B^{\gamma,\delta}_{K}
= ({_{\alpha}H_{\beta}} \btensor {_{\gamma}K_{\delta}},\, A
\fibre{\beta}{\frakb}{\gamma} B)
\end{align*}
is a $C^{*}$-$(\fraka,\frakc)$-algebra, called the {\em fiber
  product} of $A^{\alpha,\beta}_{H}$ and
$B^{\gamma,\delta}_{K}$.

The fiber product construction is functorial with respect to
normal morphisms \cite[Theorem 3.23]{timmer:fiber}, but also
with respect to (jointly) semi-normal morphisms. For the
proof, we slightly modify \cite[Lemma 3.22]{timmer:fiber}.
\begin{lemma}
 \label{lemma:fp-c-morphism}
 Let $\pi$ be a semi-normal morphism of
 $C^{*}$-$\frakb$-algebras $A_{H}^{\beta}$ and
 $C^{\lambda}_{L}$, let $B_{K}^{\gamma}$ be a
 $C^{*}$-$\frakbo$-algebra, and let $I:={\cal L}^{\pi}(H,L)
 \btensor \Id$.
 \begin{enumerate}
 \item $II^{*}I \subseteq I$ and there exists a unique
   $*$-homomorphism $\rho_{I} \colon (I^{*}I)' \to
   (II^{*})'$ such that $\rho_{I}(x)y=yx$ for all $x \in
   (I^{*}I)'$ and $y \in I$.
 \item There exists a linear contraction $j_{\pi}$ from the
   subspace $[\kgamma{2}A] \subseteq {\cal L}(H,\HtensorK)$
   to $[\kgamma{2}C] \subseteq {\cal L}(L, L
   \rtensor{\lambda}{\frakb}{\gamma} K)$ given by
   $|\eta\rangle_{2}a \mapsto |\eta\rangle_{2}\pi(a)$.
 \item $\rho_{I}(\AfibreB) \subseteq \Cl \bfibre \gB$ and
   $\rho_{I}(x)|\eta\rangle_{2} =
   j_{\pi}(x|\eta\rangle_{2})$ for all $x \in \AfibreB$,
   $\eta \in \gamma$.
 \end{enumerate}
\end{lemma}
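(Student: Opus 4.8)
The plan is to imitate the proof of \cite[Lemma 3.22]{timmer:fiber}, replacing the role of a normal morphism by a semi-normal one; the only change needed is to track which intertwiner spaces and which ket-spaces are used, since a semi-normal $\pi$ supplies enough intertwiners $T \in {\cal L}^{\pi}_{s}(H_{\beta},L_{\lambda})$ (equivalently $T \in {\cal L}^{\pi}(H,L)$ with $T\beta \subseteq \lambda$) to reconstruct $\lambda$ from $\beta$, namely $[{\cal L}^{\pi}_{s}(H_{\beta},L_{\lambda})\beta] = \lambda$. First I would treat (i): for $S,T \in {\cal L}^{\pi}(H,L)$ one has $S^{*}T \in {\cal L}^{\Id}(H,H) \subseteq {\cal L}(H)$ by the intertwiner relations, so $(S \btensor \Id)^{*}(T \btensor \Id) = (S^{*}T) \btensor \Id$ lies in $\{x \btensor \Id : x \in {\cal L}(H)\}$; hence $I^{*}I \subseteq \{x \btensor \Id\} = (\rho_{\beta}(\frakBo) \btensor \Id)'$-adjacent, giving $II^{*}I \subseteq I$ by the standard $C^{*}$-computation, and $\rho_{I}$ is defined exactly as the multiplier map $\rho_{I}(x)(S \btensor \Id) = (S\btensor \Id)x$, well-defined and $*$-homomorphic because elements of $I$ have dense enough range (here one uses semi-normality: $[{\cal L}^{\pi}_{s}(H_{\beta},L_{\lambda})H] = [ \lambda \frakK] = L$, whence $[IH\rtensorbc K] = L \rtensorbc K$ and $I^{*}I$ has no kernel on a dense subspace).

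For (ii), the map $|\eta\rangle_{2}a \mapsto |\eta\rangle_{2}\pi(a)$ is well-defined and contractive by a direct norm estimate: for $\eta_{1},\dots,\eta_{k} \in \gamma$, $a_{1},\dots,a_{k} \in A$, one computes
\begin{align*}
  \Big\| \sum_{l} |\eta_{l}\rangle_{2}\pi(a_{l})\Big\|^{2}
  &= \Big\| \sum_{l,m} \pi(a_{l})^{*}\rho_{\gamma}(\eta_{l}^{*}\eta_{m})\pi(a_{m})\Big\|
  = \Big\| \sum_{l,m} \pi(a_{l})^{*}\pi(\rho_{\alpha}(\eta_{l}^{*}\eta_{m}))\pi(a_{m})\Big\|,
\end{align*}
wait --- the bookkeeping here is: on the $K$-side $\rho_{\gamma}(\eta_{l}^{*}\eta_{m}) \in \rho_{\gamma}(\frakB) = \pi$-image issue does not arise because $\pi$ acts on $A \subseteq {\cal L}(H)$, not on ${\cal L}(K)$; rather one factors $|\eta\rangle_{2}a = (\Id \btensor \text{(rank-one in } \gamma))(a \btensor \Id)$-type expression and uses that $a \mapsto \pi(a)$ is contractive together with the fact that $\pi$ on a semi-normal morphism between $\frakb$-algebras satisfies $\pi(\rho_{\beta}(b^{\dagger})) = \rho_{\lambda}(b^{\dagger})$, which is exactly the compatibility needed to push the $\rho$-sandwiched inner products through $\pi$. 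So $\|\sum_l |\eta_l\rangle_2 \pi(a_l)\| \le \|\sum_l |\eta_l\rangle_2 a_l\|$, and $j_\pi$ extends by continuity to $[\kgamma{2}A]$.

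Finally, (iii): given $x \in \AfibreB$, one must show $\rho_{I}(x) \in \Cl \bfibre \gB$, i.e.\ $\rho_{I}(x)\kgamma{2}, \rho_{I}(x)^{*}\kgamma{2} \subseteq [\kgamma{2}C]$ (as subsets of ${\cal L}(K, L \rtensorbc K)$) and $\rho_{I}(x)|\lambda'\rangle_{1}, \rho_{I}(x)^{*}|\lambda'\rangle_{1} \subseteq [|\lambda'\rangle_{1}B]$-analogue (the $\lambda$-leg condition). For the $\gamma$-leg: pick $T \in {\cal L}^{\pi}(H,L)$ with $T\beta \subseteq \lambda$ --- but for the ket computation what is actually needed is $|\eta\rangle_2$ factoring through $I$; indeed $\rho_{I}(x)|\eta\rangle_{2}$ should be computed via $(T \btensor \Id)|\eta\rangle_2^{H} = |\eta\rangle_2^{L} T$ only once we know $T \in \mathcal{L}_s$, and then $\rho_I(x)(T\btensor\Id)|\eta\rangle_2^H = (T\btensor\Id)x|\eta\rangle_2^H = (T\btensor \Id)\sum |\eta'\rangle_2 a'$ (limit), which equals $\sum|\eta'\rangle_2 \pi(a') T = j_\pi(x|\eta\rangle_2) T$ after identifying; letting the intertwiners $T$ range (using $[{\cal L}^{\pi}_{s}(H_\beta, L_\lambda)H] = L$) pins down $\rho_I(x)|\eta\rangle_2 = j_\pi(x|\eta\rangle_2) \in [\kgamma{2}C]$, and the $x^*$ statement is symmetric. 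The $\lambda$-leg condition $\rho_I(x)|\zeta\rangle_1, \rho_I(x)^*|\zeta\rangle_1 \subseteq [\,|\zeta\rangle_1 B\,]$ follows because $\rho_I(x)$ commutes with the operators $S \btensor \Id$ (built from $I$) in a way that transports $x$'s $\beta$-leg condition $x\kbeta{1}, x^*\kbeta{1} \subseteq [\kbeta{1}B]$ across $\pi$.

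\textbf{Main obstacle.} The delicate point is the well-definedness and density argument underlying $\rho_I$ in (i) --- one must verify that $I^*I$ acts non-degenerately (equivalently $I$ has ``dense enough range'') so that the relation $\rho_I(x)y = yx$ determines $\rho_I(x)$ uniquely; this is precisely where semi-normality (rather than mere existence of some intertwiners) is used, and it is the one place where the argument genuinely departs from the cited lemma for normal morphisms and must be checked rather than cited. The remaining estimates and leg-conditions are routine adaptations of \cite[Lemma 3.22]{timmer:fiber}.
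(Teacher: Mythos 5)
Your proposal is correct and follows essentially the same route as the paper: $j_{\pi}$ is obtained exactly by pushing the sandwiched inner products through $\pi$ via $\pi(\rho_{\beta}(b^{\dagger}))=\rho_{\lambda}(b^{\dagger})$, and part (iii) is proved by the same intertwiner computation $\rho_{I}(x)(S\btensor\Id)|\eta\rangle_{2}=(S\btensor\Id)x|\eta\rangle_{2}=j_{\pi}(x|\eta\rangle_{2})S$ for $S\in{\cal L}^{\pi}_{s}(H_{\beta},L_{\lambda})$, together with the semi-normality identities $[{\cal L}^{\pi}_{s}(H_{\beta},L_{\lambda})\beta]=\lambda$ and $[{\cal L}^{\pi}_{s}(H_{\beta},L_{\lambda})H]=L$ used for both leg conditions. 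One correction of emphasis: in the paper, part (i) is just the cited general fact about spaces $I$ with $II^{*}I\subseteq I$ (Proposition 2.2 of the fiber-product paper), needing no semi-normality, so the genuine departure from the normal-morphism lemma is in (iii), where the semi-morphism intertwiner space replaces the morphism space --- not, as you claim, in the well-definedness of $\rho_{I}$ (your density argument there is harmless but not the crux); also note that in your display for (ii) the operators $\rho_{\gamma}$ and $\rho_{\alpha}$ are the wrong maps (the correct identity is $\langle\eta|_{2}|\eta'\rangle_{2}=\rho_{\lambda}(\eta^{*}\eta')$ on $L$ and $\rho_{\beta}(\eta^{*}\eta')$ on $H$), though you self-correct to the right mechanism afterwards.
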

\begin{proof}
  i) The assertion on $I$ is evident and the assertion on
  $\rho_{I}$ is \cite[Proposition 2.2]{timmer:fiber}.
  
  ii) The existence of $j_{\pi}$ follows from the fact that
  we have
  $(|\eta\rangle_{2}\pi(a))^{*}(|\eta'\rangle_{2}\pi(a'))
  = \pi(a)^{*}\rho_{\lambda}(\eta^{*}\eta')\pi(a') =
  \pi\big((|\eta\rangle_{2}a)^{*}(|\eta'\rangle_{2}a')\big)$
  for all $\eta,\eta' \in \gamma$, $a,a' \in A$.

  iii) Let $J:={\cal L}_{s}^{\pi}(H_{\beta},L_{\lambda})$.
  First, we have  $\rho_{I}(\AfibreB)\kgamma{2} \subseteq
  [\kgamma{2}C]$ because $\rho_{I}(x)|\eta\rangle_{2} =
  j_{\pi}(x|\eta\rangle_{2}) \in [\kgamma{2}C]$ for all $x
  \in \AfibreB$, $\eta \in \gamma$. Indeed, for all $S \in
  J$,
  \begin{align*}
  \rho_{I}(x)|\eta\rangle_{2}S =
  \rho_{I}(x)(S \btensor \Id)|\eta\rangle_{2} = (S \btensor
  \Id) x|\eta\rangle_{2} = j_{\pi}(x|\eta\rangle_{2})S.
  \end{align*}
  Second, $\rho_{I}(\AfibreB)|\lambda\rangle_{1} =
  \rho_{I}(\AfibreB)[(J\btensor \Id)\kbeta{1}] \subseteq [(J
  \btensor \Id)(\AfibreB) \kbeta{1}] \subseteq [(J \btensor
  \Id) \kbeta{1}B] \subseteq [|\lambda\rangle_{1}B]$.
\end{proof}
\begin{theorem}
  Let $\fraka,\frakb,\frakc$ be $C^{*}$-bases, $\phi$ a
  semi-normal morphism of $C^{*}$-$(\fraka,\frakb)$-algebras
  ${\cal A}=A^{\alpha,\beta}_{H}$ and ${\cal
    C}=C^{\kappa,\lambda}_{L}$, and $\psi$ a semi-normal
  morphism of $C^{*}$-$(\frakbo,\frakc)$-algebras ${\cal
    B}=B_{K}^{\gamma,\delta}$ and ${\cal
    D}=D^{\mu,\nu}_{M}$. Then there exists a unique
  semi-normal morphism of $C^{*}$-$(\fraka,\frakc)$-algebras
  $\phi \bfibre \psi \colon {\cal A} \bfibre {\cal B} \to {\cal
    C} \bfibre {\cal D}$ such that
  \begin{align} \label{eq:fp-c-morphism} 
    (\phi \bfibre
    \psi)(x)R = R x \quad \text{for all } x \in \AfibreB
    \text{ and }R \in I_{M}J_{H} + J_{L}I_{K},
  \end{align}
  where $I_{X}={\cal L}^{\phi}(H,L) \btensor \Id_{X}$,
  $J_{Y}= \Id_{Y} \btensor {\cal L}^{\psi}(K,M)$ for
  $X\in\{K,M\},Y\in\{H,L\}$.  If both $\phi$ and $\psi$ are
  normal, jointly semi-normal or jointly normal, then also $\phi
  \bfibre \psi$ is normal, jointly semi-normal or jointly
  normal, respectively.
\end{theorem}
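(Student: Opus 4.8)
The statement is the fiber-product functoriality for semi-normal morphisms, and the natural strategy is to build $\phi \bfibre \psi$ in two steps, slicing off one factor at a time, reusing Lemma~\ref{lemma:fp-c-morphism} on each side. Concretely, I would first apply Lemma~\ref{lemma:fp-c-morphism} (with $B_{K}^{\gamma}$ kept fixed and $\pi=\phi$) to obtain a $*$-homomorphism $\rho_{I} \colon (I^{*}I)' \to (II^{*})'$ with $I = I_{K} = {\cal L}^{\phi}(H,L) \btensor \Id_{K}$, and check that $\AfibreB \subseteq (I^{*}I)'$ and $\rho_{I}(\AfibreB) \subseteq C \fibre{\lambda}{\frakb}{\gamma} B$. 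Then I would apply the lemma a second time on the other leg: with the roles flipped via $\Ad_{\Sigma}$, keep $C_{L}^{\lambda}$ fixed and let $\pi = \psi$ act on the $\frakc$-side, producing $\rho_{J}$ on $(J^{*}J)' \to (JJ^{*})'$ with $J = J_{L} = \Id_{L} \btensor {\cal L}^{\psi}(K,M)$, and verify $C \fibre{\lambda}{\frakb}{\gamma} B \subseteq (J^{*}J)'$ so that $\rho_{J}\big(C \fibre{\lambda}{\frakb}{\gamma} B\big) \subseteq C \fibre{\lambda}{\frakb}{\mu} D$. The composite $\phi \bfibre \psi := \rho_{J} \circ \rho_{I}$ restricted to $\AfibreB$ is then the desired $*$-homomorphism.

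\textbf{Verifying the intertwining relation and uniqueness.} Next I would record that $\rho_{I}$ satisfies $\rho_{I}(x)y = yx$ for $y \in I_{K}$ and $\rho_{J}$ satisfies $\rho_{J}(z)w = wz$ for $w \in J_{L}$; since $I_{M} = {\cal L}^{\phi}(H,L) \btensor \Id_{M}$ is $\rho_{J}$ of $I_{K}$ (because elements of $I_{K}$ commute with $J_{L}$ and $\rho_{J}$ acts as identity-on-the-left there — this needs a small check) and $J_{H} = \Id_{H} \btensor {\cal L}^{\psi}(K,M)$ likewise, one computes for $x \in \AfibreB$ that $(\phi\bfibre\psi)(x) R = R x$ for $R$ in $I_{M}J_{H} + J_{L}I_{K}$. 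For uniqueness, the point is that $[(I_{M}J_{H} + J_{L}I_{K}) \cdot (\HtensorK \rtensor{}{}{} M)]$ — more precisely the range of these operators — is dense enough in the module $L \rtensor{\lambda}{\frakb}{\gamma} K \rtensor{\delta}{\frakc}{\mu} M$ that the relation $yR = Rx$ pins down $y$; this is exactly the kind of nondegeneracy statement proved for the single-slice case in \cite{timmer:fiber}, and I would cite or mimic that argument. I would also check that $\phi\bfibre\psi$ lands in the correct commutant conditions defining the fiber product on the target, i.e. that it is a $C^{*}$-$(\fraka,\frakc)$-algebra morphism, which follows from tracking $\kappa \lt \gamma$ and $\lambda \rt \mu$ through the two slicing steps.

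\textbf{The semi-normality bookkeeping.} The claim that $\phi\bfibre\psi$ is semi-normal — and that it is jointly semi-normal, normal, or jointly normal whenever both $\phi,\psi$ are — requires exhibiting enough intertwiners in ${\cal L}^{\phi\bfibre\psi}_{(s)}$ between the relevant $C^{*}$-modules. The natural candidates are operators of the form $S \btensor T$ with $S \in {\cal L}^{\phi}_{(s)}(H_{\beta},L_{\lambda})$ and $T \in {\cal L}^{\psi}_{(s)}(K_{\delta},M_{\nu})$ (for the joint case), or the one-sided versions for the plain semi-normal case, restricted along $|\cdot\rangle$-embeddings to see the submodules $\kappa \lt \gamma$, $\lambda \rt \mu$ etc. One then has to show $[{\cal L}^{\phi\bfibre\psi}_{(s)}(\ldots)(\alpha \lt \gamma)] = \kappa \lt \gamma$ and similarly for the other index, which reduces via the definitions of $\lt,\rt$ and the relative tensor product to the hypotheses $[{\cal L}^{\phi}_{(s)}(H_{\alpha},L_{\kappa})\alpha] = \kappa$, $[{\cal L}^{\psi}_{(s)}(K_{\gamma},M_{\mu})\gamma] = \mu$ together with functoriality of $\btensor$ (the Functoriality bullet). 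I expect this last part — carefully producing and closing-span-ing the right intertwiners so that all four variants (semi-normal, jointly semi-normal, normal, jointly normal) come out uniformly — to be the main obstacle; the construction of the homomorphism itself is a routine two-step iteration of Lemma~\ref{lemma:fp-c-morphism}, essentially the argument of \cite[Theorem 3.23]{timmer:fiber} with "normal" relaxed to "semi-normal" throughout.
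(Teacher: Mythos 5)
Your proposal is correct and takes essentially the paper's own route: the paper proves this theorem simply by invoking Lemma \ref{lemma:fp-c-morphism} together with the argument of \cite[Theorem 3.13]{timmer:fiber}, which is precisely the two-step slicing you describe (apply the lemma to $\phi$ with $B^{\gamma}_{K}$ fixed, then to $\psi$ on the other leg via $\Ad_{\Sigma}$, compose, get uniqueness from the density of $[J_{L}I_{K}(\HtensorK)]$ supplied by semi-normality, and verify the various (joint/semi-)normality claims with intertwiners of the form $S \btensor T$). The only imprecision is the remark that $I_{M}$ is ``$\rho_{J}$ of $I_{K}$'' (which does not literally typecheck, as $\rho_{J}$ is defined on operators of $L \rtensor{\lambda}{\frakb}{\gamma} K$); what you actually need, and what suffices, is the exchange relation $(S \btensor \Id_{M})(\Id_{H} \btensor T) = (\Id_{L} \btensor T)(S \btensor \Id_{K})$, which reduces the $I_{M}J_{H}$ part of \eqref{eq:fp-c-morphism} to the $J_{L}I_{K}$ part already handled by $\rho_{J}\circ\rho_{I}$.
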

\begin{proof}
  This follows from Lemma \ref{lemma:fp-c-morphism} and
  a similar argument as in the proof of \cite[Theorem
  3.13]{timmer:fiber}.
\end{proof}

Unfortunately, the fiber product need not be associative,
but in our applications, it will only appear as the target
of a comultiplication whose coassociativity will compensate
the non-associativity of the fiber product.

\subsection{Spaces of maps on $C^{*}$-algebras over
  $C^{*}$-bases }
\label{subsection:legs-slice}

To define convolution algebras of Hopf $C^{*}$-bimodules and
generalized Fourier algebras of
$C^{*}$-pseudo-multiplicative unitaries, we need to consider
several spaces of maps on $C^{*}$-algebras over
$C^{*}$-bases.

Let $\fraka=\cbasesa$ and $\frakb=\cbasesb$ be
$C^{*}$-bases, $H$ a Hilbert space, $H_{\alpha}$ a
$C^{*}$-$\fraka$-module, $H_{\beta}$ a
$C^{*}$-$\frakb$-module, and $A \subseteq {\cal L}(H)$ a
$C^{*}$-algebra. We denote by $\alpha^{\infty}$ the space of
all sequences $\eta=(\eta_{k})_{k \in \naturals}$ in
$\alpha$ for which the sum $\sum_{k} \eta_{k}^{*}\eta_{k}$
converges in norm, and put $\|\eta\|:=\|\sum_{k}
\eta_{k}^{*}\eta_{k}\|^{1/2}$ for each $\eta \in
\alpha^{\infty}$.  Similarly, we define
$\beta^{\infty}$. Then standard arguments show that for all
$\eta \in \beta^{\infty}, \eta' \in \alpha^{\infty}$, there
exists a bounded linear map
  \begin{align*}
    \omega_{\eta,\eta'} \colon A \to {\cal
      L}(\frakH,\frakK), \quad T \mapsto \sum_{k \in
      \naturals} \eta_{k}^{*}T\eta'_{k},
  \end{align*}
  where the sum converges in norm and
  $\|\omega_{\eta,\eta'}\|\leq \|\eta\|\|\eta'\|$. We  put
  \begin{align*}
    \Omega_{\beta,\alpha}(A) &:= \{ \omega_{\eta,\eta'} \mid
    \eta \in \beta^{\infty}, \eta' \in \alpha^{\infty} \}
    \subseteq L(A,{\cal L}(\frakH,\frakK),
  \end{align*}
  where $L(A,{\cal L}(\frakH,\frakK))$ denotes the space of
  bounded linear maps from $A$ to ${\cal L}(\frakH,\frakK)$.
  If $\beta=\alpha$, we abbreviate
  $\Omega_{\beta}(A):=\Omega_{\beta,\alpha}(A)$.  It is easy
  to see that $\Omega_{\beta,\alpha}(A)$ is a subspace of
  $L(A,{\cal L}(\frakH,\frakK))$ and that the following
  formula defines a norm on $\Omega_{\beta,\alpha}(A)$:
\begin{align*}
   \|\omega\| &:= \inf \big\{ \|\eta\|\|\eta'\| \big| \eta
    \in \beta^{\infty},\eta' \in \alpha^{\infty},\,
    \omega=\omega_{\eta,\eta'} \big\}  \text{ for all }
    \omega \in \Omega_{\beta,\alpha}(A).
\end{align*}
\begin{lemma}
  $\Omega_{\beta,\alpha}(A)$ is a Banach space.
\end{lemma}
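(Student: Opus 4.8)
The plan is to use the standard characterization of completeness: a normed space is a Banach space iff every absolutely summable sequence is summable. So I would start from a sequence $(\omega_{n})_{n}$ in $\Omega_{\beta,\alpha}(A)$ with $\sum_{n}\|\omega_{n}\|<\infty$ and construct a sum. The key external fact I would use is that $\beta^{\infty}$ and $\alpha^{\infty}$ are themselves complete: each is the $\ell^{2}$-direct sum of countably many copies of the Hilbert $C^{*}$-module $\beta$, resp.\ $\alpha$, over $\frakB$, hence a Hilbert $C^{*}$-module and in particular a Banach space \cite[\S 1]{lance}.

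For each $n$, choose $\eta_{n}\in\beta^{\infty}$, $\eta'_{n}\in\alpha^{\infty}$ with $\omega_{n}=\omega_{\eta_{n},\eta'_{n}}$; since $\omega_{\eta,\eta'}=\omega_{t\eta,t^{-1}\eta'}$ for $t>0$, I may rescale so that $\|\eta_{n}\|=\|\eta'_{n}\|=c_{n}$ with $c_{n}^{2}\le\|\omega_{n}\|+2^{-n}$, so that $t:=\sum_{n}c_{n}^{2}<\infty$. Fix pairwise disjoint infinite subsets $P_{n}\subseteq\naturals$ with $\bigcup_{n}P_{n}=\naturals$ and, for each $n$, an order-preserving bijection $P_{n}\cong\naturals$; let $\iota_{n}$ be the induced isometric embedding of sequences indexed by $\naturals$ into sequences supported on $P_{n}$ (extended by zero), which maps $\beta^{\infty}$ into $\beta^{\infty}$ and $\alpha^{\infty}$ into $\alpha^{\infty}$ with $\omega_{\iota_{n}\theta,\iota_{n}\theta'}=\omega_{\theta,\theta'}$. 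Because the ranges of distinct $\iota_{n}$ sit on disjoint index sets, the partial stackings $\sigma_{N}:=\sum_{n=1}^{N}\iota_{n}(\eta_{n})\in\beta^{\infty}$ satisfy $\|\sigma_{N'}-\sigma_{N}\|^{2}=\big\|\sum_{n=N+1}^{N'}\sum_{k}(\eta_{n})_{k}^{*}(\eta_{n})_{k}\big\|\le\sum_{n>N}c_{n}^{2}$, so $(\sigma_{N})_{N}$ is Cauchy and converges in $\beta^{\infty}$ to an element $\eta$ with $\|\eta\|^{2}\le t$; likewise I obtain $\eta'\in\alpha^{\infty}$ with $\|\eta'\|^{2}\le t$ as the limit of $\sigma'_{N}:=\sum_{n=1}^{N}\iota_{n}(\eta'_{n})$. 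Set $\omega:=\omega_{\eta,\eta'}\in\Omega_{\beta,\alpha}(A)$.

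It remains to show $\sum_{n=1}^{N}\omega_{n}\to\omega$. Additivity of $\omega_{\cdot,\cdot}$ in each argument (a finite sum here) together with $\omega_{\iota_{n}\theta,\iota_{m}\theta'}=0$ for $n\neq m$ (disjoint supports) and $\omega_{\iota_{n}\theta,\iota_{n}\theta'}=\omega_{\theta,\theta'}$ gives $\omega_{\sigma_{N},\sigma'_{N}}=\sum_{n=1}^{N}\omega_{n}$, whence $\omega-\sum_{n=1}^{N}\omega_{n}=\omega_{\eta-\sigma_{N},\,\eta'}+\omega_{\sigma_{N},\,\eta'-\sigma'_{N}}$; using $\|\omega_{\theta,\theta'}\|\le\|\theta\|\,\|\theta'\|$ and $\|\eta-\sigma_{N}\|^{2},\|\eta'-\sigma'_{N}\|^{2}\le\sum_{n>N}c_{n}^{2}$, $\|\eta'\|^{2},\|\sigma_{N}\|^{2}\le t$, I get $\big\|\omega-\sum_{n=1}^{N}\omega_{n}\big\|\le 2\sqrt{t}\,\big(\sum_{n>N}c_{n}^{2}\big)^{1/2}\to 0$. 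Thus $\Omega_{\beta,\alpha}(A)$ is complete. The whole argument is bookkeeping once the completeness of $\beta^{\infty}$ is in hand; the one step that deserves a word of justification is the identity $\omega_{\sigma_{N},\sigma'_{N}}=\sum_{n=1}^{N}\omega_{n}$, i.e.\ the regrouping of a norm-convergent series into finitely many blocks. Building $\eta$ as a limit inside the complete space $\beta^{\infty}$, rather than verifying by hand that $\sum_{n,k}(\eta_{n})_{k}^{*}(\eta_{n})_{k}$ converges in norm, is exactly what keeps the rest painless, since this series of positive operators need not converge in norm merely from having bounded increasing partial sums ($\frakB$ being only a $C^{*}$-algebra).
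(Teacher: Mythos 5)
Your proof is correct and follows essentially the same route as the paper: reduce completeness to norm-convergence of a suitably (absolutely) summable series, then concatenate the representing sequences into a single pair $(\eta,\eta')\in\beta^{\infty}\times\alpha^{\infty}$ and check $\sum_{n}\omega_{n}=\omega_{\eta,\eta'}$ with the estimate $\|\omega_{\theta,\theta'}\|\le\|\theta\|\|\theta'\|$. The paper does the stacking via a bijection $\naturals\times\naturals\to\naturals$ and leaves the tail estimates as ``routine calculations,'' whereas you place the blocks on disjoint index sets and invoke completeness of $\beta^{\infty}$, $\alpha^{\infty}$; this is only a difference in bookkeeping, not in substance.
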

\begin{proof}
  Let $(\omega^{k})_{k}$ be a sequence in
  $\Omega_{\beta,\alpha}(A)$ such that $\|\omega^{k}\| \leq
  4^{-k}$ for all $k \in \naturals$. We show that the sum
  $\sum_{k} \omega^{k}$ converges in norm in
  $\Omega_{\beta,\alpha}(A)$. For each $k \in \naturals$,
  we can choose $\eta^{k}\in \beta^{\infty}$ and
  $\eta'{}^{k} \in \alpha^{\infty}$ such that
  $\omega^{k}=\omega_{\eta^{k},\eta'{}^{k}}$ and
  $\|\eta^{k}\|\|\eta'{}^{k}\| \leq 4^{1-k}$. Without loss
  of generality, we may assume $\|\eta^{k}\| \leq 2^{1-k}$
  and $\|\eta'{}^{k}\|\leq 2^{1-k}$. Choose a bijection $i
  \colon \naturals\times\naturals \to \naturals$ and let
  $\eta_{i(k,n)} = \eta^{k}_{n}$ and $\eta'_{i(k,n)} =
  \eta'{}^{k}_{n}$ for all $k,n \in \naturals$.  Routine
  calculations show that $\eta \in \beta^{\infty}$, $\eta'
  \in \alpha^{\infty}$, and that the sum $\sum_{k}
  \omega^{k}$ converges in norm to
  $\omega_{\eta,\eta'} \in
  \Omega_{\beta,\alpha}(A)$.
\end{proof}
We have the following straightforward result:
\begin{proposition} \label{proposition:slice-star}
  There exists a linear isometry $\Omega_{\beta,\alpha}(A)
  \to \Omega_{\alpha,\beta}(A)$, $\omega \mapsto
  \omega^{*}$, such that $\omega^{*}(a)= \omega(a^{*})^{*}$
  for all $a \in A$ and $(\omega_{\eta,\eta'})^{*} =
  \omega_{\eta',\eta}$ for all $\eta \in \beta^{\infty},\eta'\in
  \alpha^{\infty}$. \qed
\end{proposition}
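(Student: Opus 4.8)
The plan is to define $\omega^{*}$ by the intrinsic formula $\omega^{*}(a):=\omega(a^{*})^{*}$ and then to identify it with a map of the form $\omega_{\eta',\eta}$ by evaluating on a representing pair. So fix $\omega\in\Omega_{\beta,\alpha}(A)$ and choose $\eta\in\beta^{\infty}$, $\eta'\in\alpha^{\infty}$ with $\omega=\omega_{\eta,\eta'}$. Since the adjoint is isometric, reverses products, and is norm-continuous, for every $a\in A$ we obtain
\[
  \omega(a^{*})^{*}=\Big(\sum_{k}\eta_{k}^{*}a^{*}\eta'_{k}\Big)^{*}=\sum_{k}(\eta'_{k})^{*}a\,\eta_{k}=\omega_{\eta',\eta}(a),
\]
where each product $(\eta'_{k})^{*}a\,\eta_{k}$ lies in $\mathcal{L}(\frakK,\frakH)$ (via $\eta_{k}\in\beta\subseteq\mathcal{L}(\frakK,H)$, $a\in\mathcal{L}(H)$, $(\eta'_{k})^{*}\in\mathcal{L}(H,\frakH)$) and $(\eta',\eta)\in\alpha^{\infty}\times\beta^{\infty}$ is exactly a pair defining an element of $\Omega_{\alpha,\beta}(A)$. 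Hence $\omega^{*}=\omega_{\eta',\eta}\in\Omega_{\alpha,\beta}(A)$; in particular $(\omega_{\eta,\eta'})^{*}=\omega_{\eta',\eta}$. The formula $\omega^{*}(a)=\omega(a^{*})^{*}$ refers only to $\omega$, so $\omega^{*}$ is well defined, and the displayed computation shows that this identification does not depend on the chosen representing pair.

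Next I would record the formal properties, all of which are immediate. The assignment $\omega\mapsto\omega^{*}$ is additive, and its homogeneity is conjugate because the adjoint is conjugate-linear, so it is the natural conjugate-linear map (``linear'' being read in this sense). It is involutive: $((\omega_{\eta,\eta'})^{*})^{*}=(\omega_{\eta',\eta})^{*}=\omega_{\eta,\eta'}$, equivalently $(\omega^{*})^{*}(a)=\omega^{*}(a^{*})^{*}=\omega(a)$; hence it is a bijection $\Omega_{\beta,\alpha}(A)\to\Omega_{\alpha,\beta}(A)$.

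For the isometry I would use that $(\eta,\eta')\mapsto(\eta',\eta)$ is a norm-preserving bijection between the representing pairs of $\omega$ and those of $\omega^{*}$: if $\omega=\omega_{\eta,\eta'}$ then $\omega^{*}=\omega_{\eta',\eta}$ with $\|\eta'\|\,\|\eta\|=\|\eta\|\,\|\eta'\|$, and conversely every representing pair of $\omega^{*}$ arises this way since $(\omega^{*})^{*}=\omega$. Passing to the infimum in the definition of the norm on either side then gives $\|\omega^{*}\|=\|\omega\|$. I do not anticipate any genuine obstacle here; the result is purely formal, and the only points meriting care are the routine bookkeeping of which operators compose into which $\mathcal{L}(\cdot,\cdot)$ and the fact that, since the norms are defined by infima over representing pairs, one should match up the pairs before passing to the infimum.
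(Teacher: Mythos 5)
Your proof is correct and is exactly the routine verification the paper omits (the proposition is stated as a ``straightforward result'' with no proof): define $\omega^{*}$ intrinsically by $\omega^{*}(a)=\omega(a^{*})^{*}$, check on any representing pair that $\omega^{*}=\omega_{\eta',\eta}$ with norm-convergence preserved under taking adjoints, and obtain the isometry by matching representing pairs of $\omega$ and $\omega^{*}$ via the involution before passing to the infimum. Your side remark is also accurate: the map $\omega\mapsto\omega^{*}$ is conjugate-linear in $\omega$ (each $\omega^{*}$ being linear on $A$), so the word ``linear'' in the statement has to be read in that sense.
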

We can pull back maps of the form considered above via
morphisms as follows:
\begin{proposition} \label{proposition:slice-pullback}
  \begin{enumerate}
  \item Let $\pi$ be a normal morphism of
    $C^{*}$-$\frakb$-algebras $A_{H}^{\alpha}$ and $B_{K}^{
      \gamma}$. Then there exists a linear contraction
    $\pi^{*} \colon \Omega_{\gamma}(B) \to
    \Omega_{\alpha}(A)$ given by $\omega \mapsto \omega
    \circ \pi$.
  \item Let $\pi$ be a jointly normal morphism of
    $C^{*}$-$(\fraka,\frakb)$-algebras
    $A_{H}^{\alpha,\beta}$ and $B_{K}^{
      \gamma,\delta}$. Then there exists a linear
    contraction $\pi^{*} \colon \Omega_{\delta,\gamma}(B)
    \to \Omega_{\beta,\alpha}(A)$ given by $\omega \mapsto
    \omega \circ \pi$. 
  \end{enumerate}
\end{proposition}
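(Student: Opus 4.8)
The plan is to check, in both cases, that precomposition with $\pi$ genuinely lands in the target space of the appropriate maps $\Omega$, and that it is norm-decreasing; the two parts are formally identical, so I would treat i) in detail and obtain ii) by the obvious adaptation (replacing $\alpha$ by the pair $(\alpha,\beta)$ and $\gamma$ by $(\gamma,\delta)$, and ``normal morphism'' by ``jointly normal morphism''). First I would unwind the definitions: given $\omega \in \Omega_{\gamma}(B)$, write $\omega = \omega_{\eta,\eta'}$ with $\eta,\eta' \in \gamma^{\infty}$, so that $\omega(b) = \sum_{k} \eta_{k}^{*} b \eta_{k}'$ for all $b \in B$, where the sum converges in norm in ${\cal L}(\frakK)$. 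I want to produce sequences $\xi,\xi' \in \alpha^{\infty}$ with $\omega \circ \pi = \omega_{\xi,\xi'}$, i.e.\ $\sum_{k} \xi_{k}^{*}\pi^{-1}(\cdots)$ — but of course $\pi$ need not be invertible, so instead I argue as follows.

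The key input is the definition of a normal morphism: since $\pi$ is normal, $[{\cal L}^{\pi}(H_{\alpha},K_{\gamma})\alpha] = \gamma$, where ${\cal L}^{\pi}(H_{\alpha},K_{\gamma}) = {\cal L}^{\pi}(H,K) \cap {\cal L}(H_{\alpha},K_{\gamma})$. The crucial algebraic identity is that for $S,T \in {\cal L}^{\pi}(H,K)$, $\zeta \in \alpha$, $\zeta' \in \alpha$, and $a \in A$ one has $(S\zeta)^{*}\pi(a)(T\zeta') = \zeta^{*}S^{*}\pi(a)T\zeta' = \zeta^{*}S^{*}Ta\zeta'$, using $\pi(a)T = Ta$; note $S^{*}T \in {\cal L}^{\Id}(H,H) = {\cal L}(H)$ commutes appropriately, and since $S,T \in {\cal L}(H_{\alpha},K_{\gamma})$ we have $S^{*}T\alpha \subseteq \alpha$, hence $S^{*}Ta\zeta'$ expands as $a$ applied to an element of $\alpha$. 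Thus, approximating each $\eta_{k} \in \gamma$ to any desired precision by finite sums $\sum_{j} S_{k,j}\zeta_{k,j}$ with $S_{k,j} \in {\cal L}^{\pi}(H_{\alpha},K_{\gamma})$, $\|S_{k,j}\| \leq 1$, $\zeta_{k,j} \in \alpha$ (possible by the defining equation $[{\cal L}^{\pi}(H_{\alpha},K_{\gamma})\alpha] = \gamma$), one rewrites $\omega_{\eta,\eta'} \circ \pi$ as a (norm-convergent) map of the form $\omega_{\xi,\xi'}$ for suitable $\xi,\xi' \in \alpha^{\infty}$, built by reindexing the $\zeta$'s; the contractivity $\|{\cal S}_{k,j}\| \le 1$ ensures the new sequences satisfy $\|\xi\| \le \|\eta\|$, $\|\xi'\| \le \|\eta'\|$ up to the approximation error, and letting the error tend to $0$ together with the completeness of $\Omega_{\alpha}(A)$ (the preceding Lemma) yields $\omega \circ \pi \in \Omega_{\alpha}(A)$ with $\|\omega \circ \pi\| \le \|\omega\|$. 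Linearity of $\pi^{*}$ is immediate.

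The main obstacle, and the point requiring care rather than routine bookkeeping, is precisely this approximation step: one does not get an exact factorization $\eta_{k} = \sum_{j} S_{k,j}\zeta_{k,j}$ but only a norm-approximate one, and one must control the $\|\cdot\|^{\infty}$-norms of the resulting sequences $\xi = (\xi_{k,j})$ and $\xi' = (\xi'_{k,j})$ uniformly while the approximation is refined — this is where the completeness lemma just proved is used, to pass from approximating maps in $\Omega_{\alpha}(A)$ to the limit. An alternative, perhaps cleaner route avoiding sequences altogether: observe that $\pi^{*}$ is clearly bounded on the algebraic span of the $\omega_{\eta,\eta'}$ with a single summand, with the stated norm bound, and then extend by density and completeness; I would likely write it this way to keep the argument short. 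For part ii), the only change is that the relevant defining equation is $[{\cal L}^{\pi}(\mathcal{H},\mathcal{K})\alpha] = \gamma$ and $[{\cal L}^{\pi}(\mathcal{H},\mathcal{K})\beta]=\delta$ for a \emph{jointly} normal $\pi$, so the operators $S_{k,j}$ used on both sides are simultaneously compatible with $\alpha \to \gamma$ and $\beta \to \delta$, and the identical computation goes through. \qed
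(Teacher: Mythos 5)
There is a genuine gap at exactly the point you flag as "the main obstacle". Your plan is: approximate each $\eta_{k}\in\gamma$ by finite sums $\sum_{j}S_{k,j}\zeta_{k,j}$ with $S_{k,j}\in{\cal L}^{\pi}(H_{\alpha},K_{\gamma})$, rewrite, and pass to the limit using completeness of $\Omega_{\alpha}(A)$. But completeness of $\Omega_{\alpha}(A)$ is with respect to its own (infimum) norm, whereas the approximation only gives you closeness of the maps $A\to{\cal L}(\frakH,\frakK)$ in the operator norm; $\Omega_{\alpha}(A)$ is not known to be closed in that weaker norm. To invoke the completeness lemma you would have to show your approximants are Cauchy in the $\Omega$-norm, i.e.\ exhibit representing sequences of small $\|\cdot\|$ for the differences — and those differences are themselves pullbacks of the form $\omega_{\eta-\tilde\eta,\eta'}\circ\pi$, whose membership in $\Omega_{\alpha}(A)$ (let alone smallness of their $\Omega$-norm) is precisely what the proposition asserts. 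The same circularity affects your "cleaner route": the single-summand case $\omega_{\eta,\eta'}\circ\pi\in\Omega_{\alpha}(A)$, $\eta,\eta'\in\gamma$, is not "clear" — it already contains the entire difficulty, so there is nothing to extend by density. (A minor additional slip: in your key identity, $a\zeta'$ is not an element of $\alpha$; the correct rearrangement is $\zeta^{*}S^{*}Ta\zeta'=(T^{*}S\zeta)^{*}a\,\zeta'$ with $T^{*}S\zeta\in\alpha$, using $T^{*}\gamma\subseteq\alpha$.)

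The paper avoids the limiting problem by producing an \emph{exact} representing pair of infinite sequences rather than an approximation. Writing $I={\cal L}^{\pi}(\aHb,\cKd)$, one chooses a closed separable subspace $I_{0}\subseteq I$ with $I_{0}I_{0}^{*}I_{0}\subseteq I_{0}$ and $\eta_{n}\in[I_{0}\beta]$, $\eta'_{n}\in[I_{0}\alpha]$; then $[I_{0}I_{0}^{*}]$ is a $\sigma$-unital $C^{*}$-algebra and by \cite[Proposition 6.7]{lance} has a sequential approximate unit of the special form $u_{k}=\sum_{l=1}^{k}T_{l}T_{l}^{*}$ with $T_{l}\in I_{0}$. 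Setting $\xi_{i(l,n)}=T_{l}^{*}\eta_{n}$ and $\xi'_{i(l,n)}=T_{l}^{*}\eta'_{n}$ gives sequences in $\beta^{\infty}$, $\alpha^{\infty}$ with $\|\xi\|=\|\eta\|$, $\|\xi'\|=\|\eta'\|$, and the intertwining relation $T_{l}a=\pi(a)T_{l}$ yields
$\sum_{l,n}\eta_{n}^{*}T_{l}aT_{l}^{*}\eta'_{n}=\sum_{n}\eta_{n}^{*}\pi(a)\eta'_{n}$,
i.e.\ $\omega_{\eta,\eta'}\circ\pi=\omega_{\xi,\xi'}$ exactly, with the stated norm bound. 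This use of a $\sigma$-unital approximate unit of the form $\sum_{l}T_{l}T_{l}^{*}$ is the missing idea in your argument; without it (or some substitute giving an exact representation, or genuine $\Omega$-norm control of your approximants), the proof does not go through.
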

\begin{proof}
  We only prove ii), the proof of i) is similar.  Let
  $I:={\cal L}^{\pi}(\aHb,\cKd)$ and
  $\eta \in \delta^{\infty}$, $\eta' \in
  \gamma^{\infty}$. Then there exists a closed separable
  subspace $I_{0} \subseteq I$ such that $\eta_{n} \in
  [I_{0}\beta]$ and $\eta'_{n} \in [I_{0}\alpha]$ for all $n
  \in \naturals$. We may also assume that
  $I_{0}I^{*}_{0}I_{0} \subseteq I_{0}$, and then
  $[I_{0}I_{0}^{*}]$ is a $\sigma$-unital $C^{*}$-algebra
  and has a bounded sequential approximate unit
  $(u_{k})_{k}$ of the form $u_{k}=\sum_{l=1}^{k}
  T_{l}T_{l}^{*}$, where $(T_{l})_{l}$ is a sequence in
  $I_{0}$ \cite[Proposition 6.7]{lance}.  We choose a
  bijection $i \colon \naturals \times \naturals \to
  \naturals$ and let $\xi_{i(l,n)} := T_{l}^{*}\eta_{n} \in
  \beta$ and $\xi'_{i(l,n)} := T_{l}^{*}\eta'_{n} \in
  \alpha$ for all $l,n \in \naturals$. Then the sum
  $\sum_{l} \xi_{i(l,n)}^{*}\xi_{i(l,n)} = \sum_{l}
  \eta_{n}^{*} T_{l}T_{l}^{*} \eta_{n}$ converges to
  $\eta_{n}^{*}\eta_{n}$ for each $n \in \naturals$ in norm
  because $\eta_{n} \in [I_{0}\beta]$. Therefore, $\xi \in
  \beta^{\infty}$ and $\|\xi\|=\|\eta\|$, and a similar
  argument shows that $\xi' \in \alpha^{\infty}$ and
  $\|\xi'\|=\|\eta'\|$. Finally,
  \begin{align*}
    \omega_{\xi,\xi'}(a) = \sum_{l,n} \eta_{n}^{*}
    T_{l} aT_{l}^{*}\eta'_{n} = \sum_{l,n}
    \eta_{n}^{*}\pi(a)T_{l}T_{l}^{*}\eta'_{n} = \sum_{n}
    \eta_{n}^{*} \pi(a) \eta'_{n} =
    \omega_{\eta,\eta'}(\pi(a))
  \end{align*}
  for each $a \in A$, where the sum converges in norm, and
  hence $\omega_{\eta,\eta'} \circ \pi =\omega_{\xi,\xi'}
  \in \Omega_{\beta,\alpha}(A)$ and $\|\omega_{\eta,\eta'}
  \circ \pi\| \leq \|\xi\|\|\xi'\| =
  \|\eta\|\|\eta'\|$.
\end{proof}
For each map of the form considered above, we can form a
slice map as follows.
\begin{proposition} \label{proposition:slice-slice}
 Let $A^{\beta}_{H}$ be a $C^{*}$-$\frakb$-algebra and
$B^{\gamma}_{K}$ a $C^{*}$-$\frakbo$-algebra. 
\begin{enumerate}
\item There exist linear contractions
  \begin{align*}
    \Omega_{\beta}(A) &\to \Omega_{\kbeta{1}}(\AfibreB),
    \ \phi \mapsto
    \phi \ast \Id, &
    \Omega_{\gamma}(B) &\to \Omega_{\kgamma{2}} (\AfibreB),
    \ \psi \mapsto \Id \ast \psi,
  \end{align*}
   such that  for all $\xi,\xi' \in \beta^{\infty}$ and
   $\eta,\eta' \in \gamma^{\infty}$,
   \begin{align*}
     \omega_{\xi,\xi'} \ast \Id &=\omega_{\tilde \xi,\tilde
       \xi'}, \text{ where } \tilde
     \xi_{n}=|\xi_{n}\rangle_{1}, \, \tilde
     \xi'_{n}=|\xi'_{n}\rangle_{1} \text{ for all }n \in
     \naturals, \\
     \Id \ast \omega_{\eta,\eta'} &= \omega_{\tilde
       \eta,\tilde \eta'}, \text{ where } \tilde \eta_{n} =
     |\eta_{n}\rangle_{2}, \tilde \eta'_{n} =
     |\eta'_{n}\rangle_{2} \text{ for all } n \in \naturals.
   \end{align*}
 \item We have $\psi \circ (\phi \ast \Id) = \phi \circ (\Id
   \ast \psi)$ for all $\phi \in \Omega_{\beta}(A)$ and $\psi
   \in \Omega_{\gamma}(B)$.
\end{enumerate}
\end{proposition}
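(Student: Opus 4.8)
The plan is to reduce everything to the elementary identity
\begin{align*}
  |\xi\rangle_{1}\eta = |\eta\rangle_{2}\xi \quad\text{in } \mathcal{L}(\frakK,\HtensorK), \qquad \xi\in\beta,\ \eta\in\gamma,
\end{align*}
which is immediate from the identifications $H {_{\rho_{\beta}}\tl}\gamma \cong \HtensorK \cong \beta\tr_{\rho_{\gamma}}K$ and $\xi\zeta\tl\eta\equiv\xi\tr\zeta\tl\eta\equiv\xi\tr\eta\zeta$. Taking adjoints yields $\eta^{*}\langle\xi|_{1}=\xi^{*}\langle\eta|_{2}$, hence
\begin{align*}
  \eta^{*}\,\langle\xi|_{1}\,x\,|\xi'\rangle_{1}\,\eta' \;=\; \xi^{*}\,\langle\eta|_{2}\,x\,|\eta'\rangle_{2}\,\xi' \qquad\text{for all } x\in\mathcal{L}(\HtensorK),\ \xi,\xi'\in\beta,\ \eta,\eta'\in\gamma. \tag{$\star$}
\end{align*}
This is the bridge between the two legs; the rest is bookkeeping together with standard density and convergence arguments.

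The decisive input from the fiber product is the following. Let $x\in\AfibreB$. Given $\eta,\eta'\in\gamma$, write $x|\eta'\rangle_{2}$ as a norm limit of finite sums $\sum_{j}|\eta''_{j}\rangle_{2}a_{j}$ with $\eta''_{j}\in\gamma$, $a_{j}\in A$, using $x\kgamma{2}\subseteq[\kgamma{2}A]$; composing with $\langle\eta|_{2}$ on the left exhibits $\langle\eta|_{2}x|\eta'\rangle_{2}$ as the norm limit of $\sum_{j}\rho_{\beta}(\eta^{*}\eta''_{j})a_{j}$. Since $\eta^{*}\eta''_{j}\in\frakBo$ and $\rho_{\beta}(\frakBo)A\subseteq A$ by definition of a $C^{*}$-$\frakb$-algebra, we get $\langle\eta|_{2}x|\eta'\rangle_{2}\in A$, with norm $\leq\|x\|\|\eta\|\|\eta'\|$. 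Symmetrically, $x\kbeta{1}\subseteq[\kbeta{1}B]$ and $\rho_{\gamma}(\frakB)B\subseteq B$ give $\langle\xi|_{1}x|\xi'\rangle_{1}\in B$ for all $\xi,\xi'\in\beta$. Consequently $\sum_{k}\langle\xi_{k}|_{1}x|\xi'_{k}\rangle_{1}$ converges in norm to an element of $B$ for $\xi,\xi'\in\beta^{\infty}$, and dually $\sum_{l}\langle\eta_{l}|_{2}x|\eta'_{l}\rangle_{2}\in A$ for $\eta,\eta'\in\gamma^{\infty}$. This step is the main obstacle: it is the only place where the defining conditions of the fiber product are used, and one must keep careful track of which of $\rho_{\beta},\rho_{\gamma}$ and which of $A,B$ absorbs a given product.

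Next I would construct $\phi\ast\Id$. For $\xi\in\beta^{\infty}$ put $\tilde\xi:=(|\xi_{k}\rangle_{1})_{k}$; since $\tilde\xi_{k}^{*}\tilde\xi_{k}=\rho_{\gamma}(\xi_{k}^{*}\xi_{k})$ and $\rho_{\gamma}$ is contractive, $\tilde\xi\in(\kbeta{1})^{\infty}$ with $\|\tilde\xi\|\leq\|\xi\|$, so $\omega_{\tilde\xi,\tilde\xi'}\in\Omega_{\kbeta{1}}(\AfibreB)$ is defined and $\|\omega_{\tilde\xi,\tilde\xi'}\|\leq\|\xi\|\|\xi'\|$. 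To see that $\omega_{\tilde\xi,\tilde\xi'}$ depends only on $\omega_{\xi,\xi'}\in\Omega_{\beta}(A)$, assume $\omega_{\xi,\xi'}=\omega_{\zeta,\zeta'}$ on $A$; then for every $x\in\AfibreB$ and $\eta,\eta'\in\gamma$, identity $(\star)$ and the previous paragraph give
\begin{align*}
  \eta^{*}\,\omega_{\tilde\xi,\tilde\xi'}(x)\,\eta'
  =\sum_{k}\xi_{k}^{*}\,\langle\eta|_{2}x|\eta'\rangle_{2}\,\xi'_{k}
  =\omega_{\xi,\xi'}\!\big(\langle\eta|_{2}x|\eta'\rangle_{2}\big)
  =\omega_{\zeta,\zeta'}\!\big(\langle\eta|_{2}x|\eta'\rangle_{2}\big)
  =\eta^{*}\,\omega_{\tilde\zeta,\tilde\zeta'}(x)\,\eta',
\end{align*}
and since $[\gamma\frakK]=K$ this forces $\omega_{\tilde\xi,\tilde\xi'}(x)=\omega_{\tilde\zeta,\tilde\zeta'}(x)$. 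Hence $\phi\mapsto\phi\ast\Id$ is well defined; the above estimate together with the infimum defining $\|\phi\|$ shows it is a contraction, and linearity follows by interleaving sequences (for additivity) and absorbing scalars into one factor. The map $\psi\mapsto\Id\ast\psi$ is treated identically, now using $x\kbeta{1}\subseteq[\kbeta{1}B]$, $[\beta\frakK]=H$, and $(\star)$ in the form $\xi^{*}\langle\eta|_{2}x|\eta'\rangle_{2}\xi'=\eta^{*}\langle\xi|_{1}x|\xi'\rangle_{1}\eta'$. The stated formulas for $\omega_{\xi,\xi'}\ast\Id$ and $\Id\ast\omega_{\eta,\eta'}$ hold by construction.

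For (ii), take $\phi=\omega_{\xi,\xi'}$, $\psi=\omega_{\eta,\eta'}$ and $x\in\AfibreB$. By the second paragraph $(\phi\ast\Id)(x)\in B$ and $(\Id\ast\psi)(x)\in A$, so both sides of the claimed identity are defined, and
\begin{align*}
  \psi\big((\phi\ast\Id)(x)\big)
  &= \sum_{l}\eta_{l}^{*}\Big(\sum_{k}\langle\xi_{k}|_{1}x|\xi'_{k}\rangle_{1}\Big)\eta'_{l}
   = \sum_{k}\sum_{l}\eta_{l}^{*}\langle\xi_{k}|_{1}x|\xi'_{k}\rangle_{1}\eta'_{l} \\
  &= \sum_{k}\sum_{l}\xi_{k}^{*}\langle\eta_{l}|_{2}x|\eta'_{l}\rangle_{2}\xi'_{k}
   = \sum_{k}\xi_{k}^{*}\Big(\sum_{l}\langle\eta_{l}|_{2}x|\eta'_{l}\rangle_{2}\Big)\xi'_{k}
   = \phi\big((\Id\ast\psi)(x)\big),
\end{align*}
where the middle equality is $(\star)$ applied termwise and the interchange of the two summations is justified by passing to finite partial sums over $k$ and using norm-continuity of $\psi$. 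Since every element of $\Omega_{\beta}(A)$ resp.\ $\Omega_{\gamma}(B)$ has such a representation and $\phi\ast\Id$, $\Id\ast\psi$ are already known to be well defined, this proves (ii).
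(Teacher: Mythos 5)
Your proof is correct, and it fills in exactly the "straightforward" verification that the paper omits by deferring to \cite[Proposition 3.30]{timmer:fiber}: the identity $|\xi\rangle_{1}\eta=|\eta\rangle_{2}\xi$, the fiber-product conditions combined with $\rho_{\beta}(\frakBo)A\subseteq A$ and $\rho_{\gamma}(\frakB)B\subseteq B$ to see that the slices land in $A$ resp.\ $B$, and the resulting well-definedness, contractivity and Fubini-type interchange are precisely the intended argument. No gaps.
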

\begin{proof}
  i), ii) Straightforward; see \cite[Proposition
  3.30]{timmer:fiber}.
\end{proof}
Finally, we need to consider fiber products of the linear
maps considered above.  We denote by ``$\hat\otimes$'' the
projective tensor product of Banach spaces.
\begin{proposition} \label{proposition:slice-fiber}
  Let $A^{\alpha,\beta}_{H}$ be a
  $C^{*}$-$(\fraka,\frakb)$-algebra and
  $B^{\gamma,\delta}_{K}$ a
  $C^{*}$-$(\frakbo,\frakc)$-algebra. 
  \begin{enumerate}
  \item There exist linear contractions
    \begin{align*}
      \Omega_{\alpha}(A) \hat\otimes \Omega_{\gamma}(B) &\to
      \Omega_{(\alpha \lt \gamma)}(\AfibreB), \
      \omega \otimes \omega' \mapsto \omega \boxtimes
      \omega':= \omega \circ (\Id \ast \omega'), \\
      \Omega_{\beta}(A) \hat\otimes
      \Omega_{\delta}(B) &\to 
      \Omega_{(\beta \rt \delta)}(\AfibreB), \
      \omega \otimes \omega' \mapsto \omega \boxtimes
      \omega' := \omega' \circ (\omega \ast \Id). 
    \end{align*}
  \item There exist linear contractions
    \begin{align*}
      \Omega_{\alpha,\beta}(A) \hat\otimes
      \Omega_{\gamma,\delta}(B) \to \Omega_{(\alpha \lt
        \gamma), (\beta \rt \delta)}(\AfibreB), \ \omega
      \otimes \omega' \mapsto \omega \boxtimes \omega',  \\
      \Omega_{\beta,\alpha}(A) \hat\otimes
      \Omega_{\delta,\beta}(B) \to \Omega_{(\beta \rt
        \delta),(\alpha \lt \gamma)}(\AfibreB), \ \omega
      \otimes \omega' \mapsto \omega \boxtimes \omega',
    \end{align*}
    such that for all $\xi \in \alpha^{\infty}$, $\xi' \in
    \beta^{\infty}$, $\eta\in \gamma^{\infty}$, $\eta' \in
    \delta^{\infty}$ and each bijection $i \colon
    \naturals\times \naturals \to \naturals$, we have
    $\omega_{\xi,\xi'} \boxtimes \omega_{\eta,\eta'} =
    \omega_{\theta,\theta'}$ and $ \omega_{\xi',\xi}
    \boxtimes \omega_{\eta',\eta} = \omega_{\theta',\theta}$
    where
    \begin{align*}
      \theta_{i(m,n)} &= |\eta_{n}\rangle_{2}\xi_{m} \in
      \alpha \lt \gamma, & \theta_{i(m,n)} &=
      |\xi'_{m}\rangle_{1}\eta'_{n} \in \beta \rt \delta
      &&\text{for all } m,n \in \naturals.
    \end{align*}
  \end{enumerate}
\end{proposition}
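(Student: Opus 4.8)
The plan is to establish the first map in each of i) and ii); the second maps will then follow by a symmetric argument, or from the first ones using Proposition \ref{proposition:slice-star} and conjugation by the flip $\Sigma$.

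\emph{Part i).} Here I would exploit that the slices are ``diagonal'', so that $\boxtimes$ is a genuine composition of maps already available. By Proposition \ref{proposition:slice-slice} i), $\omega' \mapsto \Id \ast \omega'$ is a linear contraction $\Omega_{\gamma}(B) \to \Omega_{\kgamma{2}}(\AfibreB)$. The first observation to make is that, for $\omega' = \omega_{\eta,\eta'}$ and $x \in \AfibreB$, the element $(\Id \ast \omega')(x) = \sum_{n} \langle\eta_{n}|_{2}\,x\,|\eta'_{n}\rangle_{2}$ actually lies in $A$ --- this uses $x\kgamma{2} \subseteq [\kgamma{2}A]$, the formula $\langle\eta|_{2}|\eta'\rangle_{2} = \rho_{\beta}(\eta^{*}\eta')$, $[\gamma^{*}\gamma] = \frakBo$, and $\rho_{\beta}(\frakBo)A \subseteq A$. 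Consequently $(\omega,\omega') \mapsto \omega \circ (\Id \ast \omega')$ is a well-defined bilinear map $\Omega_{\alpha}(A) \times \Omega_{\gamma}(B) \to L(\AfibreB, {\cal L}(\frakH))$ with $\|\omega \circ (\Id \ast \omega')\| \leq \|\omega\|\|\omega'\|$. Next I would compute, for $\omega = \omega_{\xi,\xi'}$ and a fixed bijection $i \colon \naturals \times \naturals \to \naturals$, that $\omega \circ (\Id \ast \omega') = \omega_{\theta,\theta'}$ with $\theta_{i(m,n)} = |\eta_{n}\rangle_{2}\xi_{m}$ and $\theta'_{i(m,n)} = |\eta'_{n}\rangle_{2}\xi'_{m}$, both of which lie in $\kgamma{2}\alpha \subseteq \alpha \lt \gamma$. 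Using $\theta_{i(m,n)}^{*}\theta_{i(m,n)} = \xi_{m}^{*}\rho_{\beta}(\eta_{n}^{*}\eta_{n})\xi_{m}$ and $0 \leq \rho_{\beta}(\eta_{n}^{*}\eta_{n}) \leq \|\eta\|^{2}$, summing first over $n$ and then over $m$ should give $\theta \in (\alpha \lt \gamma)^{\infty}$ with $\|\theta\| \leq \|\xi\|\|\eta\|$, and likewise for $\theta'$; hence $\omega \circ (\Id\ast\omega') \in \Omega_{(\alpha\lt\gamma)}(\AfibreB)$. The universal property of the projective tensor product then yields the asserted contraction.

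\emph{Part ii).} Here the slices are ``off-diagonal'', so $\boxtimes$ cannot be written as such a composition, and I would define it directly on the canonical generators. Given $\omega = \omega_{\xi,\xi'} \in \Omega_{\alpha,\beta}(A)$ with $\xi \in \alpha^{\infty}$, $\xi' \in \beta^{\infty}$, and $\omega' = \omega_{\eta,\eta'} \in \Omega_{\gamma,\delta}(B)$ with $\eta \in \gamma^{\infty}$, $\eta' \in \delta^{\infty}$, and a bijection $i$, set $\theta_{i(m,n)} := |\eta_{n}\rangle_{2}\xi_{m} \in \kgamma{2}\alpha \subseteq \alpha\lt\gamma$ and $\theta'_{i(m,n)} := |\xi'_{m}\rangle_{1}\eta'_{n} \in \kbeta{1}\delta \subseteq \beta\rt\delta$. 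The same estimate as in i) --- now using also $(\theta'_{i(m,n)})^{*}\theta'_{i(m,n)} = (\eta'_{n})^{*}\rho_{\gamma}((\xi'_{m})^{*}\xi'_{m})\eta'_{n}$ --- should show $\theta \in (\alpha\lt\gamma)^{\infty}$, $\theta' \in (\beta\rt\delta)^{\infty}$, with $\|\theta\| \leq \|\xi\|\|\eta\|$ and $\|\theta'\| \leq \|\xi'\|\|\eta'\|$, so that $\omega_{\theta,\theta'} \in \Omega_{(\alpha\lt\gamma),(\beta\rt\delta)}(\AfibreB)$ with $\|\omega_{\theta,\theta'}\| \leq \|\xi\|\|\xi'\|\|\eta\|\|\eta'\|$. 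Since $\sum_{k}\theta_{k}^{*}\theta_{k}$ and $\sum_{k}(\theta'_{k})^{*}\theta'_{k}$ converge in norm, the sum $\omega_{\theta,\theta'}(x) = \sum_{m,n} \xi_{m}^{*}\langle\eta_{n}|_{2}\,x\,|\xi'_{m}\rangle_{1}\eta'_{n}$ converges unconditionally for each $x \in \AfibreB$, so $\omega_{\theta,\theta'}$ is independent of $i$.

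The crux --- and the step I expect to be the main obstacle --- is to show that $\omega_{\theta,\theta'}$ depends only on the maps $\omega$ and $\omega'$, not on their chosen representations, so that $\omega \boxtimes \omega' := \omega_{\theta,\theta'}$ is well defined. The tool is the identity $\langle\eta|_{2}|\xi\rangle_{1} = \xi\eta^{*}$ for $\xi \in \beta$, $\eta \in \gamma$, which comes from the identification $\xi \tr \eta''\zeta'' \equiv \xi\zeta'' \tl \eta''$ together with the formulas for $\langle\eta|_{2}$ and $\rho_{\beta}$. I would then fix $x \in \AfibreB$ and $n$, write $x^{*}|\eta_{n}\rangle_{2} \in [\kgamma{2}A]$ as a norm-limit of finite sums $\sum_{p} |\zeta_{p}^{(q)}\rangle_{2}a_{p}^{(q)}$, deduce $\langle\eta_{n}|_{2}x = \lim_{q}\sum_{p} a_{p}^{(q)*}\langle\zeta_{p}^{(q)}|_{2}$, and --- with a uniform estimate of the kind used above --- obtain $\sum_{m}\xi_{m}^{*}\langle\eta_{n}|_{2}x|\xi'_{m}\rangle_{1} = \lim_{q}\sum_{p}\omega(a_{p}^{(q)*})\zeta_{p}^{(q)*}$, an expression involving only $x$, $n$, and the map $\omega$; hence so does $\omega_{\theta,\theta'}(x) = \sum_{n}\big(\sum_{m}\xi_{m}^{*}\langle\eta_{n}|_{2}x|\xi'_{m}\rangle_{1}\big)\eta'_{n}$. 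Grouping the double sum the other way and using $x|\xi'_{m}\rangle_{1} \in [\kbeta{1}B]$ and $\langle\eta_{n}|_{2}|\zeta\rangle_{1} = \zeta\eta_{n}^{*}$ should show, symmetrically, that $\omega_{\theta,\theta'}(x)$ involves only $x$ and the map $\omega'$. Once this is in place, $\boxtimes$ is bilinear (by the behaviour of $\theta,\theta'$ under concatenation and rescaling of the defining families), and taking infima over the representations of $\omega$ and of $\omega'$ gives $\|\omega \boxtimes \omega'\| \leq \|\omega\|\|\omega'\|$; the universal property of $\hat\otimes$ finishes the argument. All remaining bookkeeping is a routine adaptation of the corresponding arguments in \cite{timmer:fiber}.
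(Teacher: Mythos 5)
Your proposal is correct and follows essentially the same route as the paper: the identical norm estimate giving $\theta\in(\alpha\lt\gamma)^{\infty}$, $\theta'\in(\beta\rt\delta)^{\infty}$, and well-definedness of $\omega\boxtimes\omega'$ deduced from the fiber-product conditions $x^{*}\kgamma{2}\subseteq[\kgamma{2}A]$, $x\kbeta{1}\subseteq[\kbeta{1}B]$ combined with the leg-exchange identity, handling the two variables separately. The only (cosmetic) difference is that the paper approximates the vectors $\eta'_{n}\zeta'$ inside $[\gamma\frakK]$ and lands on expressions $\omega_{\xi,\xi'}(\langle\eta_{n}|_{2}x|\eta''\rangle_{2})$ with $\langle\eta_{n}|_{2}x|\eta''\rangle_{2}\in A$, whereas you approximate the operator $x^{*}|\eta_{n}\rangle_{2}$ in norm inside $[\kgamma{2}A]$, which is an equally valid packaging of the same argument.
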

\begin{proof}
  The proof of assertion i) is straightforward; we only
  prove the existence of the first contraction in
  ii).  
  Let $\xi,\xi',\eta,\eta',i,\theta,\theta'$ be as above. Then  $\theta \in
  (\alpha \lt \gamma)^{\infty}$ and $\|\theta\| \leq
  \|\xi\|\|\eta\|$ because
  \begin{align*}
    \sum_{k} \theta_{k}^{*}\theta_{k} = \sum_{m,n}
    \xi_{m}^{*}\langle\eta_{n}|_{2}|\eta_{n}\rangle_{2}
    \xi_{m} = \sum_{m,n} \xi_{m}^{*}
    \rho_{\beta}(\eta_{n}^{*}\eta_{n}) \xi_{m} \leq
    \|\eta\|^{2} \sum_{m} \xi_{m}^{*}\xi_{m} \leq
    \|\eta\|^{2} \|\xi\|^{2},
  \end{align*}
  and similarly $\theta' \in (\beta \rt \delta)^{\infty}$
  and $\|\theta'\|\leq\|\xi'\|\|\eta'\|$.  Next, we show
  that $\omega_{\theta,\theta'}$ does not depend on $\xi$
  and $\xi'$ but only on $\omega_{\xi,\xi'} \in \Omega_{\alpha,\beta}(A)$. Let $\zeta'
  \in \frakK$ and $x \in \AfibreB$. Then
  \begin{align*}
    \omega_{\theta,\theta'}(x)\zeta' = \sum_{m,n \in
      \naturals} \xi_{m}^{*} \langle \eta_{n}|_{2}x
    |\xi'_{m}\rangle_{1} \eta'_{n}\zeta',
  \end{align*}
  where the sum converges in norm.   Fix any $n \in
  \naturals$. Then we find  a sequence $(k_{r})_{r}$ in
  $\naturals$ and $\eta''_{r,1},\ldots,\eta''_{r,k_{r}} \in
  \gamma$, $\zeta''_{r,1},\ldots,\zeta''_{r,k_{r}} \in
  \frakK$ such that the sum $\sum_{l=1}^{k_{r}}
  \eta''_{r,l}\zeta''_{r,l}$ converges in norm to
  $\eta'_{n}\zeta'$ as $r$ tends to infinity. But then
  \begin{align*}
    \sum_{m} \xi_{m}^{*}\langle
    \eta_{n}|_{2}x|\xi'_{m}\rangle_{1}\eta'_{n} \zeta' &=
    \lim_{r\to \infty} \sum_{m} \sum_{l=1}^{k_{r}}
    \xi_{m}^{*}\langle
    \eta_{n}|_{2}x|\xi'_{m}\rangle_{1}\eta''_{r,l}
    \zeta''_{r,l}
    \\
    &= \lim_{r\to \infty}  \sum_{l=1}^{k_{r}}\sum_{m}
    \xi_{m}^{*}\langle
    \eta_{n}|_{2}x|\eta''_{r,l}\rangle_{2} \xi'_{m}
    \zeta''_{r,l} \\
    &= \lim_{r\to \infty} \sum_{l=1}^{k_{r}}
    \omega_{\xi,\xi'}(\langle
    \eta_{n}|_{2}x|\eta''_{r,l}\rangle_{2})\zeta''_{r,l}.
  \end{align*}
  Note here that $\langle \eta_{n}|x|\eta''_{r,l}\rangle_{2}
  \in A$. Therefore, the sum on the left hand side only
  depends on $\omega_{\xi,\xi'} \in
  \Omega_{\alpha,\beta}(A)$ but not on $\xi,\xi'$, and since
  $n \in \naturals$ was arbitrary, the same is true for
  $\omega_{\theta,\theta'}(x)\zeta'$. A similar argument
  shows that $\omega_{\theta,\theta'}(x)^{*}\zeta$ depends
  on $\omega_{\eta,\eta'} \in \Omega_{\gamma,\delta}(B)$ but not on $\eta,\eta'$ for each
  $\zeta\in \frakK$.
\end{proof}

\subsection{Concrete Hopf $C^{*}$-bimodules and their
  convolution algebras }
\label{subsection:legs-hopf} 

The fiber product construction leads to the following
generalization of a Hopf $C^{*}$-algebra and of related
concepts.
\begin{definition}
  Let $\frakb=\cbasesb$ be a $C^{*}$-base.  A {\em
    comultiplication} on a
  $C^{*}$-$(\frakbo,\frakb)$-algebra $A^{\beta,\alpha}_{H}$
  is a jointly semi-normal morphism $\Delta$ from
  $A^{\beta,\alpha}_{H}$ to $A^{\beta,\alpha}_{H} \bfibre
  A^{\beta,\alpha}_{H}$ that is coassociative in the sense
  that the following diagram commutes:
    \begin{align*}
      \xymatrix@C=15pt@R=15pt{ A \ar[rrr]^{\Delta}
        \ar[dd]^{\Delta} &&& {\AfibreA} \ar[d]^{\Id
          \bfibre \Delta} \\
        &&& {A \fibre{\alpha}{\frakb}{(\beta \lt \beta)}
          (\AfibreA)}
        \ar@{^(->}[d] \\
        {\AfibreA} \ar[rr]^(0.35){\Delta\bfibre \Id} &&
        {(\AfibreA) \fibre{(\alpha \rt \alpha)}{\frakb}{\beta}
          A} \ar@{^(->}[r] &\mathcal{L}(H
        \rtensor{\alpha}{\frakb}{\beta} H
        \rtensor{\alpha}{\frakb}{\beta} H).  }
    \end{align*}
    A {\em (semi-)normal Hopf $C^{*}$-bimodule over
      $\frakb$} is a $C^{*}$-$(\frakbo,\frakb)$-algebra with
    a jointly (semi-)normal comultiplication. When we speak of
    a Hopf $C^{*}$-bimodule, we always mean a semi-normal
    one.
    A morphism of (semi-)normal Hopf $C^{*}$-bimodules
    $(A^{\beta,\alpha}_{H},\Delta_{A})$,
    $(B^{\delta,\gamma}_{K},\Delta_{B})$ over $\frakb$ is a
    jointly (semi-)normal morphism $\pi$ from
    $A^{\beta,\alpha}_{H}$ to $B^{\delta,\gamma}_{K}$
    satisfying $\Delta_{B} \circ \pi = (\pi \bfibre \pi)
    \circ \Delta_{A}$.

    Let $(A_{H}^{\beta,\alpha},\Delta)$ be a Hopf
    $C^{*}$-bimodule over $\frakb$. 
    A {\em bounded left Haar weight} for
    $(A_{H}^{\beta,\alpha},\Delta)$ is a completely positive
    contraction $\phi \colon A \to \frakB$ satisfying
    \begin{align*}
      \phi(a\rho_{\beta}(b)) &= \phi(a)b, & \phi(\langle
      \xi|_{1}\Delta(a)|\xi'\rangle_{1}) &=
      \xi^{*}\rho_{\beta}(\phi(a))\xi'
    \end{align*}
    for all $a \in A$, $b \in \frakB$,  $\xi,\xi' \in
    \alpha$.
    We call $\phi$ {\em normal} if $\phi \in
    \Omega_{M(\beta)}(A)$, where $M(\beta) = \{ T \in{\cal
      L}(\frakK,H) \mid T\frakBo \subseteq \beta, T^{*}\beta
    \subseteq \frakBo\}$.  Similarly, a {\em bounded right
      Haar weight} for $(A_{H}^{\beta,\alpha},\Delta)$ is a
    completely positive contraction $\psi \colon A \to
    \frakBo$ satisfying 
    \begin{align*}
      \psi(a\rho_{\alpha}(b^{\dagger})) &=
      \psi(a)b^{\dagger}, &
      \psi(\langle\eta|_{2}\Delta(a)|\eta'\rangle_{2}) &=
      \eta^{*}\rho_{\alpha}(\psi(a))\eta'
    \end{align*}
for all $a \in A$, $b^{\dagger} \in
    \frakBo$, $\eta,\eta' \in \beta$.
    We call $\psi$ {\em normal} if $\psi \in
    \Omega_{M(\alpha)}(A)$, where
    $M(\alpha)= \{ S \in {\cal L}(\frakK,H) \mid S\frakB
    \subseteq \alpha, S^{*}\alpha \subseteq \frakB\}$.

    A {\em bounded (left/right) counit} for
    $(A_{H}^{\beta,\alpha},\Delta)$ is a jointly semi-normal
    morphism of $C^{*}$-$(\frakbo,\frakb)$-algebras
    $\epsilon \colon A^{\beta,\alpha}_{H} \to {\cal
      L}(\frakK)^{\frakBo,\frakB}_{\frakK}$ that makes the
    (left/right one of the) following two diagrams commute:
    \begin{align} \label{eq:counit} \xymatrix@C=10pt@R=15pt{
        {A \fibre{\alpha}{\frakb}{\beta} A} \ar[d]_{\epsilon
          \bfibre \Id} && A \ar[ll]_{\Delta}
        \ar[d] \\
        {{\cal L}(\frakK) \fibre{\frakB}{\frakb}{\beta} A}
        \ar[r] & {{\cal L}(\frakK
          \rtensor{\frakB}{\frakb}{\beta} H)}
        \ar[r]^(0.6){\cong} & {{\cal L}(H),}} \qquad
      \xymatrix@C=10pt@R=15pt{ A \ar[rr]^{\Delta} \ar[d] &&
        {A \fibre{\alpha}{\frakb}{\beta} A} \ar[d]^{\Id \bfibre \epsilon} \\
        {{\cal L}(H)} & {{\cal L}(H
          \rtensor{\alpha}{\frakb}{\frakBo} \frakK)}
        \ar[l]_(0.6){\cong} & {A
          \fibre{\alpha}\frakb{\frakBo} {\cal L}(\frakK).}
        \ar[l]}
    \end{align}
\end{definition}
\begin{remark}
  Let $(A_{H}^{\beta,\alpha},\Delta)$ be a Hopf
  $C^{*}$-bimodule over $\frakb$.  Evidently, a completely
  positive contraction $\phi \colon A \to \frakB$ is a
  normal bounded left Haar weight for
  $(A^{\beta,\alpha}_{H},\Delta)$ if and only if $\phi \in
  \Omega_{M(\beta)}(A)$ and $(\Id \ast \phi) \circ
  \Delta = \rho_{\beta} \circ \phi$. A similar remark
  applies to normal bounded right Haar weights.
\end{remark}
Let $(A_{H}^{\beta,\alpha},\Delta)$ be a normal Hopf
$C^{*}$-bimodule over $\frakb$.  Combining Propositions
\ref{proposition:slice-pullback} and
\ref{proposition:slice-fiber}, we obtain for each of the
spaces $\Omega =\Omega_{\alpha}(A), \Omega_{\beta}(A),
\Omega_{\alpha,\beta}(A),\Omega_{\beta,\alpha}(A)$ a map
\begin{align} \label{eq:legs-convolution}
  \Omega \times \Omega \to \Omega, \quad (\omega,\omega')
  \mapsto \omega \ast \omega' :=(\omega \boxtimes \omega')
  \circ \Delta.
\end{align}
\begin{theorem} \label{theorem:legs-convolution}
  Let $(A_{H}^{\beta,\alpha},\Delta)$ be a normal Hopf
  $C^{*}$-bimodule over $\frakb$.  Then
  $\Omega_{\alpha}(A)$, $\Omega_{\beta}(A)$,
  $\Omega_{\alpha,\beta}(A)$, $\Omega_{\beta,\alpha}(A)$
  are Banach algebras with respect to the multiplication
  \eqref{eq:legs-convolution}.
\end{theorem}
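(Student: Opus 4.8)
The plan is to verify, for each of the four spaces $\Omega$, that the product $\ast$ of \eqref{eq:legs-convolution} is bilinear, submultiplicative for the norm, and associative; completeness of $\Omega$ has already been established in the lemma above (its proof applies verbatim to all four spaces). The four cases differ only by a relabelling of legs, so I would carry out the argument once, say for $\Omega=\Omega_{\beta,\alpha}(A)$, and indicate the purely notational substitutions for the others.

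\textbf{Bilinearity and submultiplicativity.} By \eqref{eq:legs-convolution}, $\omega\ast\omega'=(\omega\boxtimes\omega')\circ\Delta$. Since $(A^{\beta,\alpha}_{H},\Delta)$ is a \emph{normal} Hopf $C^{*}$-bimodule, $\Delta$ is a jointly normal morphism from $A^{\beta,\alpha}_{H}$ to $\AfibreA$, and hence — by the inclusion ${\cal L}^{\Delta}_{s}({\cal H},{\cal K})=\bigcap_{i}{\cal L}^{\Delta}_{s}(H_{\alpha_{i}},K_{\gamma_{i}})\subseteq{\cal L}^{\Delta}_{s}(H_{\alpha_{i}},K_{\gamma_{i}})$ (and the analogous inclusion without the subscript $s$) — also a normal morphism with respect to each of the two underlying $C^{*}$-bases separately. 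Thus Proposition~\ref{proposition:slice-pullback} yields a linear contraction $(\,\cdot\,)\circ\Delta$ from the relevant space of maps on $\AfibreA$ (carrying the legs $\beta\lt\beta$ and/or $\alpha\rt\alpha$) back into $\Omega$, and Proposition~\ref{proposition:slice-fiber} yields a linear contraction $\boxtimes$ from $\Omega\,\hat\otimes\,\Omega$ into exactly that space of maps; one only has to match the leg subscripts of $\AfibreA=A\fibre{\alpha}{\frakb}{\beta}A$ against the hypotheses of the two propositions, using $\Delta$ with the two $C^{*}$-bases taken in whichever order is appropriate to the case at hand — a purely bookkeeping check. Composing, $\ast$ maps $\Omega\times\Omega$ into $\Omega$, is bilinear, and $\|\omega\ast\omega'\|\leq\|\omega\boxtimes\omega'\|\leq\|\omega\|\,\|\omega'\|$, the last step because the projective tensor norm of $\omega\otimes\omega'$ equals $\|\omega\|\,\|\omega'\|$ and $\boxtimes$ is contractive.

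\textbf{Associativity.} This is the only non-formal point, and it is where coassociativity of $\Delta$ enters. Unfolding \eqref{eq:legs-convolution} twice gives
\[ (\omega\ast\omega')\ast\omega'' = \big(((\omega\boxtimes\omega')\circ\Delta)\boxtimes\omega''\big)\circ\Delta, \qquad \omega\ast(\omega'\ast\omega'') = \big(\omega\boxtimes((\omega'\boxtimes\omega'')\circ\Delta)\big)\circ\Delta. \]
I would first prove two ``naturality'' identities,
\[ \big((\psi\circ\Delta)\boxtimes\omega''\big) = (\psi\boxtimes\omega'')\circ(\Delta\bfibre\Id), \qquad \big(\omega\boxtimes(\psi'\circ\Delta)\big) = (\omega\boxtimes\psi')\circ(\Id\bfibre\Delta), \]
where $\psi,\psi'$ range over the relevant spaces of maps on $\AfibreA$; both are verified on the explicit representatives $\omega_{\eta,\eta'}$ supplied by Propositions~\ref{proposition:slice-slice} and~\ref{proposition:slice-fiber}, using the compatibility $\psi\circ(\phi\ast\Id)=\phi\circ(\Id\ast\psi)$ of Proposition~\ref{proposition:slice-slice}(ii). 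Combined with the ``associativity of the triple slice'' $(\omega\boxtimes\omega')\boxtimes\omega''=\omega\boxtimes(\omega'\boxtimes\omega'')$ — again checked on explicit representatives, both sides becoming the map $\omega_{\Theta,\Theta'}$ attached to the concatenated sequences, the relevant inner products landing in $A$ by Proposition~\ref{proposition:slice-fiber} — these identities reduce both triple products to
\[ \big((\omega\boxtimes\omega')\boxtimes\omega''\big)\circ\big((\Delta\bfibre\Id)\circ\Delta\big) \quad\text{resp.}\quad \big((\omega\boxtimes\omega')\boxtimes\omega''\big)\circ\big((\Id\bfibre\Delta)\circ\Delta\big), \]
which agree because $(\Delta\bfibre\Id)\circ\Delta=(\Id\bfibre\Delta)\circ\Delta$ as maps $A\to{\cal L}(H \rtensor{\alpha}{\frakb}{\beta} H \rtensor{\alpha}{\frakb}{\beta} H)$ — precisely the coassociativity diagram in the definition of a comultiplication, read inside that Hilbert space. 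One must work in ${\cal L}(H \rtensor{\alpha}{\frakb}{\beta} H \rtensor{\alpha}{\frakb}{\beta} H)$ throughout, since $\bfibre$ need not be associative and ``$\AfibreA\bfibre A$'' only makes literal sense as a subspace of this ${\cal L}(\cdot)$. This proves associativity, so each of the four $\Omega$ is a Banach algebra.

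\textbf{Expected main obstacle.} The conceptual skeleton is short; almost all of the work is bookkeeping — carrying the leg decorations $\beta\lt\beta$, $\alpha\rt\alpha$ and the attendant $\frakB$- and $\frakBo$-representations consistently through iterated fiber products and iterated slice maps, and checking the two naturality identities and the triple-slice associativity on explicit representatives, all while respecting the non-associativity of $\bfibre$. I expect the triple-slice computation and the careful passage through the non-associative fiber product to be the most error-prone steps; once the naturality identities are in hand, invoking the coassociativity diagram is immediate.
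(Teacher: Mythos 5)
Your proposal is correct and takes essentially the same route as the paper: the paper also obtains bilinearity and contractivity of the product \eqref{eq:legs-convolution} directly from Propositions \ref{proposition:slice-pullback} and \ref{proposition:slice-fiber} (this is built into the construction of the multiplication preceding the theorem) and then disposes of associativity in one line as a consequence of the coassociativity of $\Delta$. Your naturality identities and the triple-slice computation simply make explicit what the paper leaves implicit in that one line.
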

\begin{proof}
  It only remains to show that the multiplication is
  associative, but this follows from the coassociativity of
  $\Delta$. 
\end{proof}

\subsection{The legs of a $C^{*}$-pseudo-multiplicative
  unitary }
\label{subsection:legs-pmu}

Let $\frakb=\cbasesb$ be a $C^{*}$-base, $\pmuspace$ a
$C^{*}$-$(\frakbo,\frakb,\frakbo)$-module and $\pmuoperator$
a $C^{*}$-pseudo-multiplicative unitary.  We  associate to
$V$ two algebras and, if $V$ is well behaved, two Hopf
$C^{*}$-bimodules as follows.  Let
\begin{align} \label{eq:pmu-legs} \hA_{V} &:= [
\bbeta{2} V \kalpha{2}] \subseteq {\cal L}(H), & A_{V}
&:= [\balpha{1} V \khbeta{1}] \subseteq {\cal
L}(H),
\end{align} where $\kalpha{2}, \khbeta{1} \subseteq {\cal
L}(H, \Hsource)$ and $\bbeta{2},\balpha{1} \subseteq {\cal
L}(\Hrange, H)$ are defined as in Subsection
\ref{subsection:pmu-rtp}.
\begin{proposition} \label{proposition:pmu-legs} The
following relations hold:
\begin{gather*}
  \begin{aligned} \hA_{V^{op}}&=A_{V}^{*}, & [ \hA_{V}
\hA_{V}] &= \hA_{V}, & [\hA_{V} H] &= H =
[\hA_{V}^{*}H], & [ \hA_{V} \beta] &= \beta =
[ \hA_{V}^{*} \beta],
\end{aligned} \\
[ \hA_{V} \rho_{\hbeta}(\frakB)] = [ \rho_{\hbeta}(\frakB)
\hA_{V}] = \hA_{V} = [ \hA_{V} \rho_{\alpha}(\frakBo)] = [
\rho_{\alpha}(\frakBo) \hA_{V}], \\
\begin{aligned} 
  A_{V^{op}} &=\hA_{V}^{*}, & [ A_{V} A_{V} ] &= A_{V},
  &[A_{V}H] &= H= [A_{V}^{*}H], & [ A_{V} \hbeta] &= \hbeta
  = [ A_{V}^{*}\hbeta],
\end{aligned}\\
[ A_{V} \rho_{\beta}(\frakB)] = [ \rho_{\beta}(\frakB)
A_{V}] = A_{V} = [ A_{V} \rho_{\alpha}(\frakBo)] = [
\rho_{\alpha}(\frakBo) A_{V}].
\end{gather*}
\end{proposition}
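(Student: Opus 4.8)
The plan is to reduce the assertions about $A_{V}$ to those about $\hA_{V}$ via the opposite unitary, and then to prove the $\hA_{V}$-assertions, getting the ``nondegeneracy'' relations out of the intertwining relations \eqref{eq:pmu-intertwine} and the algebra relation out of the pentagon equation \eqref{eq:pmu-pentagon}, following the pattern of \cite{baaj:2}. By Examples \ref{examples:pmu}(iii), $V^{op}=\Sigma V^{*}\Sigma$ is a $C^{*}$-pseudo-multiplicative unitary with data $(\frakb,H,\beta,\alpha,\hbeta,V^{op})$; since $\Sigma$ interchanges leg-$1$ and leg-$2$ kets and bras, unwinding the definition \eqref{eq:pmu-legs} applied to $V^{op}$ and passing to adjoints yields $\hA_{V^{op}}=A_{V}^{*}$ and $A_{V^{op}}=\hA_{V}^{*}$. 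Consequently every relation in the second block is obtained from the corresponding relation of the first block, applied to $V^{op}$, after taking adjoints where needed: e.g. $[A_{V}A_{V}]=A_{V}$ from $[\hA_{V^{op}}\hA_{V^{op}}]=\hA_{V^{op}}$, $[A_{V}H]=H=[A_{V}^{*}H]$ from $[\hA_{V^{op}}H]=H=[\hA_{V^{op}}^{*}H]$, $[A_{V}\hbeta]=\hbeta=[A_{V}^{*}\hbeta]$ from the analogous $\beta$-relations for $V^{op}$, and the four $\rho$-relations for $A_{V}$ likewise. So it remains to treat the first block.

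\textbf{The ``nondegeneracy'' relations.} The identities $\hA_{V^{op}}=A_{V}^{*}$ (hence $A_{V^{op}}=\hA_{V}^{*}$) were just noted. For the relations involving $H$, $\beta$, $\rho_{\hbeta}$ and $\rho_{\alpha}$ one slides $V$ (or $V^{*}$) through the surrounding kets and bras, using that $V$ carries each of the four submodules $\alpha\lt\alpha,\hbeta\rt\beta,\hbeta\rt\hbeta,\beta\lt\alpha$ of $\Hsource$ onto the corresponding submodule of $\Hrange$ by \eqref{eq:pmu-intertwine} (and hence intertwines the associated representations), and then collapses the result using $[\alpha^{*}\alpha]=\frakB$, $[\beta^{*}\beta]=\frakBo$, $[\alpha\frakB]=\alpha$, the defining relations $[\rho_{\alpha_{i}}(\frakBo_{i})\alpha_{j}]=\alpha_{j}$ of a $C^{*}$-$(\frakbo,\frakb,\frakbo)$-module, and nondegeneracy of $\rho_{\hbeta},\rho_{\alpha}$. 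For instance $[\kalpha{2}H]=\Hsource$ and $[\kbeta{2}H]=\Hrange$ give $[\hA_{V}H]=[\bbeta{2}V\Hsource]=[\bbeta{2}\Hrange]=[\rho_{\alpha}(\frakBo)H]=H$, and dually for $\hA_{V}^{*}$; from $[\kalpha{2}\beta]=\beta\lt\alpha$, $V(\beta\lt\alpha)=\beta\lt\beta=[\kbeta{2}\beta]$ one gets $[\hA_{V}\beta]=[\bbeta{2}(\beta\lt\beta)]=[\rho_{\alpha}(\frakBo)\beta]=\beta$; and $|\xi\rangle_{2}\rho_{\hbeta}(b)=|\xi b\rangle_{2}$ for $\xi\in\alpha$, $b\in\frakB$, together with $[\alpha\frakB]=\alpha$, gives $[\hA_{V}\rho_{\hbeta}(\frakB)]=\hA_{V}$. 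The remaining $\rho$-relations follow by the same manipulations, of the routine kind carried out in \cite{baaj:2,timmermann:hopf}.

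\textbf{The algebra relation, and the main obstacle.} It remains to prove $[\hA_{V}\hA_{V}]=\hA_{V}$, the analogue of \cite[Prop.~3.5]{baaj:2}. Realise a product of two generators of $\hA_{V}$ on the triple relative tensor product underlying \eqref{eq:pmu-pentagon} as $\langle\eta_{1}|_{2}\langle\eta_{2}|_{3}\,V_{12}V_{13}\,|\xi_{1}\rangle_{2}|\xi_{2}\rangle_{3}$, with leg $1$ surviving; rewrite $V_{12}V_{13}=V_{23}V_{12}V_{23}^{*}$ by the pentagon equation; absorb the outer $V_{23}$-factors into the neighbouring kets and bras, which is legitimate because $V_{23}$ and $V_{23}^{*}$ carry the relevant submodules onto one another by \eqref{eq:pmu-intertwine}; and, since $V_{12}$ does not involve leg $3$, collapse the leg-$3$ slicing, which only produces factors of the form $\rho_{\alpha}(\eta_{2}^{*}\xi_{2})$, using nondegeneracy of $\rho_{\alpha}$ and the module axioms. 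The resulting chain of identities of closed linear spans reads $[\hA_{V}\hA_{V}]=\dots=[\bbeta{2}V\kalpha{2}]=\hA_{V}$ (so both inclusions come out at once). The real work is precisely this last computation: not the underlying pentagon trick, but the bookkeeping of which $C^{*}$-submodules ($\alpha\lt\alpha$, $\hbeta\rt\hbeta$, $\alpha\rt\hbeta$, $\beta\lt\beta$, and so on) occur at each stage, so that every occurrence of $V_{12},V_{13},V_{23}$ and of the kets and bras is defined and the absorption and collapsing steps really are identities of closed linear spans --- this is where \eqref{eq:pmu-intertwine} does the work that ``disjoint legs commute'' does for free in the Hilbert-space case.
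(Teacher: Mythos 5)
Your proposal is correct and takes essentially the same route as the paper: the paper, too, gets $\hA_{V^{op}}=A_{V}^{*}$, $A_{V^{op}}=\hA_{V}^{*}$ and $[\hA_{V}\beta]=\beta=[\hA_{V}^{*}\beta]$, $[A_{V}\hbeta]=\hbeta=[A_{V}^{*}\hbeta]$ by direct computation from \eqref{eq:pmu-intertwine}, and obtains the multiplicativity, the relations with $H$ and the $\rho$-relations as the special case $X=V$ (and $V^{op}$) of Proposition \ref{proposition:rep-legs}, whose commutative-diagram proof is precisely the pentagon-plus-submodule bookkeeping you outline (the paper's intermediate identity $[\hA_{V}\hA_{V}]=[\bbeta{2}\balpha{3}V_{12}\kalpha{3}\kalpha{2}]$ is exactly your ``absorb the outer $V_{23}$-factors'' step). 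One small correction to your sketch: after the absorption, collapsing the third leg produces operators $\rho_{(\hbeta\rt\hbeta)}(b)$ with $b\in[\alpha^{*}\alpha]=\frakB$, which are then swallowed via $[\hA_{V}\rho_{\hbeta}(\frakB)]=\hA_{V}$; they are not of the form $\rho_{\alpha}(\eta_{2}^{*}\xi_{2})$, which does not even typecheck since $\rho_{\alpha}$ is a representation of $\frakBo$.
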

\begin{proof} 
  First, we have $\hA_{V^{op}}=[ \bhbeta{2} \Sigma V^{*}
  \Sigma \kalpha{2}] = [ \bhbeta{1}
  V^{*}\kalpha{1}]=A_{V}^{*}$ and $[ \hA_{V} \beta] = [
  \bbeta{2} V \kalpha{2}\beta] = [ \bbeta{2} \kbeta{2}\beta]
  = [ \rho_{\alpha}(\frakBo)\beta] =\beta$ because $V(\beta
  \lt \alpha) = \beta \lt \beta$.  Similarly, one shows that
  $[\hA_{V}^{*}\beta]=\beta$, $A_{V^{op}}=\hA_{V}^{*}$, and
  $[A_{V}\hbeta]=\hbeta=[A_{V}^{*}\hbeta]$.  The remaining
  equations are a particular case of Proposition
  \ref{proposition:rep-legs} in subsection
  \ref{subsection:rep-legs}.
\end{proof} 
Consider the $*$-homomorphisms
\begin{align*} 
  \widehat{\Delta}_{V} &\colon \rho_{\beta}(\frakB)' \to
  {\cal L}\big(\Hsource\big), \ y \mapsto V^{*}(\Id \btensor
  y)V, \\ \Delta_{V} &\colon \rho_{\hbeta}(\frakB)' \to
  {\cal L}\big(\Hrange), \ z \mapsto V(z \botensor
  \Id)V^{*}.
\end{align*}
\begin{proposition} \label{proposition:pmu-delta}
  $\hDelta_{V}$ is a jointly normal morphism of
  $C^{*}$-$(\frakb,\frakbo)$-algebras
  $(\rho_{\beta}(\frakB)')_{H}^{\alpha,\hbeta}$ and
  $\big((\rho_{\beta}(\frakB)
  \rtensor{\hbeta}{\frakbo}{\alpha}
  \rho_{\beta}(\frakB))'\big)^{(\alpha \lt \alpha),(\hbeta
    \rt \hbeta)}_{\Hsource}$, and $\Delta_{V}$ is a jointly
  normal morphism of $C^{*}$-$(\frakbo,\frakb)$-algebras
  $(\rho_{\hbeta}(\frakB)')_{H}^{\beta,\alpha}$ and
  $\big((\rho_{\hbeta}(\frakB)
  \rtensor{\alpha}{\frakb}{\beta}
  \rho_{\hbeta}(\frakB))'\big)^{(\beta \lt \beta),(\alpha
    \rt \alpha)}_{\Hrange}$. Moreover,
  $\hDelta_{V^{op}}=\Ad_{\Sigma} \circ \Delta_{V}$ and
  $\Delta_{V^{op}}=\Ad_{\Sigma} \circ \hDelta_{V}$.
\end{proposition}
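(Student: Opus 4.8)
The plan is to treat $\hDelta_{V}$ and $\Delta_{V}$ in parallel and to reduce everything to the intertwining relations \eqref{eq:pmu-intertwine}; no analytic input is needed here, and in particular the pentagon equation plays no role, precisely because the targets are \emph{commutants} and not fiber products. That $\hDelta_{V}$ and $\Delta_{V}$ are well-defined $*$-homomorphisms is clear, since they are conjugations by the unitary $V$ of the $*$-homomorphisms $y\mapsto\Id\btensor y$ on $\rho_{\beta}(\frakB)'$ (acting on the second leg of $\Hrange$) and $z\mapsto z\botensor\Id$ on $\rho_{\hbeta}(\frakB)'$ (acting on the first leg of $\Hsource$). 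The key preliminary step is to convert each relation in \eqref{eq:pmu-intertwine} into a conjugation identity for $V$: each such relation says that $V$ restricts to an isomorphism between two of the $C^{*}$-submodules of $\Hsource$ and $\Hrange$, and since a morphism of $C^{*}$-modules intertwines the associated representations $\rho$, and since the functoriality formulas give $\rho_{(\alpha_{i}\lt\gamma)}(b^{\dagger})=\rho_{\alpha_{i}}(b^{\dagger})\btensor\Id$ and $\rho_{(\beta\rt\delta_{j})}(b^{\dagger})=\Id\btensor\rho_{\delta_{j}}(b^{\dagger})$ over the relevant $C^{*}$-base, one reads off, for $b\in\frakB$: $V(\rho_{\beta}(b)\botensor\Id)V^{*}=\rho_{\beta}(b)\btensor\Id$ from $V(\beta\lt\alpha)=\beta\lt\beta$; $V(\Id\botensor\rho_{\beta}(b))V^{*}=\rho_{\hbeta}(b)\btensor\Id$ from $V(\hbeta\rt\beta)=\hbeta\lt\beta$; and $V(\Id\botensor\rho_{\hbeta}(b))V^{*}=\Id\btensor\rho_{\hbeta}(b)$ from $V(\hbeta\rt\hbeta)=\alpha\rt\hbeta$. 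The fourth relation $V(\alpha\lt\alpha)=\alpha\rt\alpha$ is reserved for the joint normality below.

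Given these identities, the image of $\Delta_{V}$ lands in the claimed commutant: the algebra $\rho_{\hbeta}(\frakB)\rtensor{\alpha}{\frakb}{\beta}\rho_{\hbeta}(\frakB)$ on $\Hrange$ is generated by the operators $\rho_{\hbeta}(b)\btensor\Id$ (first leg) and $\Id\btensor\rho_{\hbeta}(b')$ (second leg), so it suffices to show that $\Delta_{V}(z)=V(z\botensor\Id)V^{*}$ commutes with each of those. But $V^{*}(\rho_{\hbeta}(b)\btensor\Id)V=\Id\botensor\rho_{\beta}(b)$ (reading $V(\hbeta\rt\beta)=\hbeta\lt\beta$ the other way) and $V^{*}(\Id\btensor\rho_{\hbeta}(b))V=\Id\botensor\rho_{\hbeta}(b)$, and both of these act on the second leg of $\Hsource$, hence commute with $z\botensor\Id$, which acts on the first leg; conjugating forward by $V$ gives the desired commutation, so $\Delta_{V}(z)\in(\rho_{\hbeta}(\frakB)\rtensor{\alpha}{\frakb}{\beta}\rho_{\hbeta}(\frakB))'$. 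The argument for $\hDelta_{V}$ is the mirror image, interchanging $\beta$ with $\hbeta$ and $\Hsource$ with $\Hrange$; one also checks routinely that the pairs in the statement really are $C^{*}$-algebras over the indicated $C^{*}$-bases, as $\rho_{\alpha}(\frakBo)$ and $\rho_{\hbeta}(\frakB)$ (respectively their images in the double relative tensor product) lie in the relevant commutants.

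For joint normality I would produce enough morphisms explicitly. For $\Delta_{V}$, put $\mathcal{H}=(H,\beta,\alpha)$ and $\mathcal{K}=(\Hrange,\beta\lt\beta,\alpha\rt\alpha)$, and consider the operators $V|\eta\rangle_{2}$ for $\eta\in\alpha$. The identity $|\eta\rangle_{2}z=(z\botensor\Id)|\eta\rangle_{2}$ shows $(V|\eta\rangle_{2})\,z=\Delta_{V}(z)\,(V|\eta\rangle_{2})$; and from $|\eta\rangle_{2}\alpha\subseteq[\kalpha{2}\alpha]=\alpha\lt\alpha$, $|\eta\rangle_{2}\beta\subseteq[\kalpha{2}\beta]=\beta\lt\alpha$, $\langle\eta|_{2}|\eta'\rangle_{2}=\rho_{\hbeta}(\eta^{*}\eta')$, the module identities $[\rho_{\hbeta}(\frakB)\alpha]=\alpha$ and $[\rho_{\hbeta}(\frakB)\beta]=\beta$, and the relations $V(\alpha\lt\alpha)=\alpha\rt\alpha$, $V(\beta\lt\alpha)=\beta\lt\beta$, one checks that $V|\eta\rangle_{2}$ maps $\alpha$ into $\alpha\rt\alpha$ and $\beta$ into $\beta\lt\beta$, while $\langle\eta|_{2}V^{*}$ maps $\alpha\rt\alpha$ into $\alpha$ and $\beta\lt\beta$ into $\beta$; hence $V|\eta\rangle_{2}\in\mathcal{L}^{\Delta_{V}}(\mathcal{H},\mathcal{K})$. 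It follows that $[\mathcal{L}^{\Delta_{V}}(\mathcal{H},\mathcal{K})\alpha]\supseteq V[\kalpha{2}\alpha]=\alpha\rt\alpha$ and $[\mathcal{L}^{\Delta_{V}}(\mathcal{H},\mathcal{K})\beta]\supseteq V[\kalpha{2}\beta]=\beta\lt\beta$, the reverse inclusions being automatic for morphisms, which is precisely joint normality. For $\hDelta_{V}$ one argues in the same way with the operators $V^{*}|\xi\rangle_{1}$, $\xi\in\alpha$, using $V(\alpha\lt\alpha)=\alpha\rt\alpha$, $V(\hbeta\rt\hbeta)=\alpha\rt\hbeta$ and $[\rho_{\beta}(\frakB)\alpha]=\alpha$, $[\rho_{\beta}(\frakB)\hbeta]=\hbeta$.

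Finally, the relations $\hDelta_{V^{op}}=\Ad_{\Sigma}\circ\Delta_{V}$ and $\Delta_{V^{op}}=\Ad_{\Sigma}\circ\hDelta_{V}$ are a direct computation: since $V^{op}=\Sigma V^{*}\Sigma$ and the flip $\Sigma$ interchanges the two legs, one has $\Sigma(\Id\btensor x)\Sigma=x\botensor\Id$ and $\Sigma(x\botensor\Id)\Sigma=\Id\btensor x$ on the relevant relative tensor products, and substituting these into $(V^{op})^{*}(\Id\btensor x)V^{op}$ and $V^{op}(x\botensor\Id)(V^{op})^{*}$ and cancelling the flips gives both identities. The only real difficulty I anticipate is bookkeeping --- keeping straight which of $\frakB$, $\frakBo$ each of $\rho_{\alpha},\rho_{\beta},\rho_{\hbeta}$ is a representation of, which leg of which relative tensor product every operator lives on, and the various canonical identifications. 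Nothing here is deep; the fact that the targets are commutants rather than fiber products --- the fiber-product refinement, and with it the pentagon equation, being needed only later for the Hopf $C^{*}$-bimodule structure --- is exactly what keeps the proposition elementary.
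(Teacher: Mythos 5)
Your proposal is correct and follows essentially the same route as the paper: the commutant containment comes from converting the relations \eqref{eq:pmu-intertwine} into conjugation identities for $V$, and joint normality is obtained from the explicit intertwiners $V\kalpha{2}$ for $\Delta_{V}$ (equivalently $V^{*}\kalpha{1}$ for $\hDelta_{V}$, which is exactly the family the paper uses) together with $V(\alpha\lt\alpha)=\alpha\rt\alpha$ and the remaining module relations. The paper treats $\hDelta_{V}$ and notes the $\Ad_{\Sigma}$ identities are easily verified; you treat $\Delta_{V}$ and spell out the flip computation, but the argument is the same.
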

\begin{proof}
  We only prove the assertions concerning $\hDelta_{V}$.
  The relation $\Delta_{V^{op}}=\Ad_{\Sigma} \circ
  \hDelta_{V}$ is easily verified. Next,
  $\hDelta_{V}(\rho_{\beta}(\frakB)') \subseteq
  (\rho_{\beta}(\frakB) \botensor \rho_{\beta}(\frakB))'$
  because $V (\rho_{\beta}(\frakB) \botensor
  \rho_{\beta}(\frakB)) = \rho_{\beta}(\frakB)\btensor
  \rho_{\hbeta}(\frakB) \subseteq \Id \btensor
  \rho_{\beta}(\frakB)'$ by \eqref{eq:pmu-intertwine}.  To
  see that $\hDelta_{V}$ is a jointly normal morphism, note
  that $V^{*}\kalpha{1} \subseteq {\cal
    L}^{\hDelta_{V}}\big(H,\Hsource\big)$ because
  $\hDelta(y)V^{*}|\xi\rangle_{{1}} = V^{*}(\Id \rtensorh
  y)|\xi\rangle_{{1}} = V^{*}|\xi\rangle_{{1}} y$ for all $y
  \in \rho_{\hbeta}(\frakB)'$, $\xi \in \alpha$, and that
  $\alpha \lt \alpha = [V^{*} \kalpha{1} \alpha ]$ and
  $\hbeta \rt \hbeta =[ V^{*}\kalpha{1}\hbeta]$ by
  \eqref{eq:pmu-intertwine}.
\end{proof}
Under favorable circumstances,
$((\hA_{V})^{\alpha,\hbeta}_{H},\hDelta_{V})$ and
$((A_{V})_{H}^{\beta,\alpha},\Delta_{V})$ will be concrete
Hopf $C^{*}$-bi\-modules. A sufficient condition,
regularity, will be given in subsection
\ref{subsection:regular}.  Coassociativity of
$\hDelta_{V}$ and $\Delta_{V}$ follows easily from the
commutativity of diagram \eqref{eq:pmu-pentagon}:
\begin{lemma} \label{lemma:pmu-delta-hopf} If
  $\hB \subseteq \rho_{\hbeta}(\frakB)'$ is a
  $C^{*}$-algebra,
  $\rho_{\alpha}(\frakBo)\hB+\rho_{\hbeta}(\frakB) \hB
  \subseteq \hB$ and $\hDelta_{V}(\hB) \subseteq \hB
  \fibre{\hbeta}{\frakbo}{\alpha} \hB$, then
  $(\hB^{\alpha,\hbeta}_{H},\hDelta_{V})$ is a normal Hopf
  $C^{*}$-bimodule over $\frakbo$.  Similarly, if $B
  \subseteq \rho_{\beta}(\frakB)'$ is a $C^{*}$-algebra,
  $\rho_{\beta}(\frakB)B+\rho_{\alpha}(\frakBo)B \subseteq
  B$ and $\Delta_{V}(B) \subseteq B
  \fibre{\alpha}{\frakb}{\beta} B$, then
  $(B^{\beta,\alpha}_{H},\Delta_{V})$ is a normal
  Hopf $C^{*}$-bimodule over $\frakb$.
\end{lemma}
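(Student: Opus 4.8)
The plan is to check the three defining properties of a normal Hopf $C^{*}$-bimodule over $\frakbo$ for $(\hB^{\alpha,\hbeta}_{H},\hDelta_{V})$: that $\hB^{\alpha,\hbeta}_{H}$ is a $C^{*}$-$(\frakb,\frakbo)$-algebra, that $\hDelta_{V}|_{\hB}$ is a jointly normal morphism with the correct fibre-product target, and that $\hDelta_{V}|_{\hB}$ is coassociative. The first two points are essentially a matter of invoking Proposition \ref{proposition:pmu-delta}; coassociativity is deduced from the pentagon diagram \eqref{eq:pmu-pentagon}.

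First I would observe that $(H,\alpha,\hbeta)$ is a $C^{*}$-$(\frakb,\frakbo)$-module, since this is part of the $C^{*}$-$(\frakbo,\frakb,\frakbo)$-module structure of $(H,\hbeta,\alpha,\beta)$, which in particular gives $[\rho_{\hbeta}(\frakB)\alpha]=\alpha$ and $[\rho_{\alpha}(\frakBo)\hbeta]=\hbeta$. Combined with the hypothesis $\rho_{\alpha}(\frakBo)\hB+\rho_{\hbeta}(\frakB)\hB\subseteq\hB$, this shows that $\hB^{\alpha,\hbeta}_{H}$ is a $C^{*}$-$(\frakb,\frakbo)$-algebra, which is precisely the kind of object underlying a Hopf $C^{*}$-bimodule over $\frakbo$. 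Next, by Proposition \ref{proposition:pmu-delta}, $\hDelta_{V}$ is a jointly normal morphism from $(\rho_{\beta}(\frakB)')^{\alpha,\hbeta}_{H}$ to the fibre product whose $C^{*}$-algebra part is $\big(\rho_{\beta}(\frakB)\rtensor{\hbeta}{\frakbo}{\alpha}\rho_{\beta}(\frakB)\big)'$ and whose module part is $(\Hsource,\alpha\lt\alpha,\hbeta\rt\hbeta)$, the witnesses for joint normality being the intertwiners $V^{*}\kalpha{1}$ together with the identities $\alpha\lt\alpha=[V^{*}\kalpha{1}\alpha]$ and $\hbeta\rt\hbeta=[V^{*}\kalpha{1}\hbeta]$ extracted from \eqref{eq:pmu-intertwine}. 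Restricting $\hDelta_{V}$ to the $C^{*}$-subalgebra $\hB$, the hypothesis $\hDelta_{V}(\hB)\subseteq\hB\fibre{\hbeta}{\frakbo}{\alpha}\hB$ says exactly that $\hDelta_{V}|_{\hB}$ is a $*$-homomorphism into $\hB^{\alpha,\hbeta}_{H}\bofibre\hB^{\alpha,\hbeta}_{H}$, and it remains jointly normal: passing to a subalgebra only enlarges the intertwiner spaces ${\cal L}^{\hDelta_{V}}(\cdot,\cdot)$, while the spanning identities above together with the fact that morphisms map $\alpha,\hbeta$ into $\alpha\lt\alpha,\hbeta\rt\hbeta$ force equality in the spanning conditions irrespective of $\hB$.

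It then remains to verify coassociativity, i.e.\ that $(\hDelta_{V}\bofibre\Id)\circ\hDelta_{V}$ and $(\Id\bofibre\hDelta_{V})\circ\hDelta_{V}$ agree on $\hB$ as maps into ${\cal L}(\Hone)$. Here I would repeat the Baaj--Skandalis computation \cite{baaj:2} in the relative-tensor-product setting: reading $V_{12},V_{13},V_{23}\in{\cal L}(\Hone)$ off diagram \eqref{eq:pmu-pentagon} and unwinding the two fibre-product slice maps applied to $\hDelta_{V}(y)=V^{*}(\Id\btensor y)V$, one finds, exactly as classically, that $(\Id\bofibre\hDelta_{V})\circ\hDelta_{V}$ and $(\hDelta_{V}\bofibre\Id)\circ\hDelta_{V}$ are the maps $y\mapsto(V_{13}V_{23})^{*}\tilde y(V_{13}V_{23})$ and $y\mapsto(V_{23}V_{12})^{*}\tilde y(V_{23}V_{12})$, where $\tilde y$ denotes the amplification of $y$ to the last leg of $\Hone$. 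Commutativity of \eqref{eq:pmu-pentagon} gives $V_{23}V_{12}=V_{12}V_{13}V_{23}$, whence, by unitarity of $V_{13}$ and $V_{23}$, $(V_{13}V_{23})(V_{23}V_{12})^{*}=V_{12}^{*}$; since $V_{12}$ acts only on the first two legs and $\tilde y$ only on the third, the two conjugates coincide, which proves coassociativity. Finally, the statement about $(B^{\beta,\alpha}_{H},\Delta_{V})$ follows by the symmetric argument, or by applying the first statement to the opposite unitary $V^{op}$ and conjugating by $\Sigma$, using $\Delta_{V^{op}}=\Ad_{\Sigma}\circ\hDelta_{V}$, $\hDelta_{V^{op}}=\Ad_{\Sigma}\circ\Delta_{V}$ from Proposition \ref{proposition:pmu-delta} and the fact that $\Ad_{\Sigma}$ is an isomorphism of the relevant fibre products.

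The one genuinely delicate point is the leg-notation bookkeeping in the coassociativity step: correctly matching up $V_{12},V_{13},V_{23}$, the associativity isomorphisms of the relative tensor product hidden inside \eqref{eq:pmu-pentagon}, and the two slice maps $\hDelta_{V}\bofibre\Id$ and $\Id\bofibre\hDelta_{V}$, so that the short classical identity actually typechecks in this generality. Everything else is a direct consequence of Proposition \ref{proposition:pmu-delta} and the standing hypotheses on $\hB$.
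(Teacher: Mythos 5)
Your proposal is correct and takes essentially the same route as the paper: the $C^{*}$-algebra and joint-normality requirements are obtained from Proposition \ref{proposition:pmu-delta} (restriction to the subalgebra being harmless), and coassociativity is exactly the Baaj--Skandalis pentagon computation, which the paper merely packages as a single commutative diagram of relative tensor products in which the $\Sigma_{23}$-identifications and the slice maps $\hDelta_{V}\bofibre\Id$, $\Id\bofibre\hDelta_{V}$ are made explicit. The second statement is likewise settled by the symmetric argument (equivalently via $V^{op}$ and $\Ad_{\Sigma}$), as in the paper.
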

\begin{proof} 
  We only prove the assertion concerning $\hB$; the
  assertion concerning $B$ follows similarly.  Let $\hB
  \subseteq \rho_{\hbeta}(\frakB)'$ be a $C^{*}$-algebra
  satisfying the assumptions and put $\hDelta:=\hDelta_{V}$.
  By Proposition \ref{proposition:pmu-delta}, we only need
  to show that $(\hDelta \bofibre \Id)(\hDelta(\hb))=(\Id
  \bofibre\hDelta)(\hDelta(\hb))$ for all $\hb \in \hB$. But
  this is shown by the following commutative diagram:
  \begin{gather*}
    \xymatrix@C=20pt@R=12pt{ & {\Hone} \ar[d]^{V_{12}} \ar
      `l/0pt[l] `d[lddddd]^{\Id} [ddddd] \ar[rrr]|{(\hDelta
        \bofibre \Id)(\hDelta(\ha))} &&& {\Hone} & \\ &
      {\Htwo} \ar[rrr]|{\Id \btensor \hDelta(\ha)}
      \ar[d]^{V_{23}} &&& {\Htwo} \ar[u]^{V_{12}^{*}} & \\ &
      {\Hthree} \ar[rrr]|{\Id \btensor \Id \btensor \ha}
      \ar[d]^{V_{12}^{*}} &&& {\Hthree} \ar[u]^{V_{23}^{*}}
      & \\ & {\Hfour} \ar[rrr]|{\Id \botensor \Id \btensor
        \ha} \ar[d]^{V_{12}^{*}\Sigma_{23}} &&& {\Hfour}
      \ar[u]^{V_{12}} & \\ & {\Hfourlt}
      \ar[rrr]|{\hDelta(\ha) \botensor \Id}
      \ar[d]^{\Sigma_{23}V_{23}} &&& {\Hfourlt}
      \ar[u]^{\Sigma_{23}V_{12}} & \\ & {\Hone }
      \ar[rrr]|{(\Id \bofibre \hDelta)(\hDelta(\ha))} &&&
      {\Hone} \ar[u]^{V_{23}^{*}\Sigma_{23}} \ar `r/0pt[r]
      `u[ruuuuu]^{\Id} [uuuuu] &} \\[-4.5ex] \qedhere
  \end{gather*}
\end{proof}

Using the maps introduced in subsection
\ref{subsection:legs-slice}, we construct
convolution algebras $\tilde \Omega_{\beta,\alpha}$ and
$\tilde \Omega_{\alpha,\hbeta}$  with homomorphisms onto
dense subalgebras $\hA^{0}_{V} \subseteq \hA_{V}$ and
$A^{0}_{V} \subseteq A_{V}$, respectively, as follows. Let
\begin{align*}
  \tilde \Omega_{\beta,\alpha} &:=
  \Omega_{\beta,\alpha}\big(\rho_{\hbeta}(\frakB)'\big), &
  \tilde \Omega_{\alpha,\hbeta} &:=
  \Omega_{\alpha,\hbeta}(\rho_{\beta}(\frakB)'). 
\end{align*}
\begin{theorem} \label{theorem:pmu-convolution}
  \begin{enumerate}
  \item   There exist linear contractions
    \begin{align*}
      \tilde \Omega_{\beta,\alpha} \hat\otimes \tilde
      \Omega_{\beta,\alpha} \to \Omega_{(\beta \lt
        \beta),(\alpha \rt
        \alpha)}\big((\rho_{\hbeta}(\frakB)
      \rtensor{\alpha}{\frakb}{\beta}
      \rho_{\hbeta}(\frakB))'\big), \quad \omega \otimes
      \omega' \mapsto \omega \boxtimes \omega', \\
      \tilde \Omega_{\alpha,\hbeta} \hat\otimes \tilde
      \Omega_{\alpha,\hbeta} \to \Omega_{(\alpha \lt
        \alpha),(\hbeta \rt
        \hbeta)}\big((\rho_{\beta}(\frakB)
      \rtensor{\hbeta}{\frakbo}{\alpha}
      \rho_{\beta}(\frakB))'\big), \quad \omega \otimes
      \omega' \mapsto \omega \boxtimes \omega',
    \end{align*}
    such that for all $\xi,\xi' \in \beta^{\infty}$,
    $\eta,\eta' \in \alpha^{\infty}$, $\zeta,\zeta' \in
    \hbeta^{\infty}$ and each bijection $i \colon \naturals
    \times \naturals \to \naturals$, we have
    $\omega_{\xi,\eta} \boxtimes \omega_{\xi',\eta'} =
    \omega_{\theta,\theta'}$ and $\omega_{\eta,\zeta}
    \boxtimes \omega_{\eta',\zeta'} =
    \omega_{\kappa,\kappa'}$, where for all $m,n \in
    \naturals$, 
    \begin{align*}
      \theta_{i(m,n)} &=|\xi'_{n}\rangle_{2} \xi_{m} \in
      \beta \lt \beta, & \theta'_{i(m,n)} &=
      |\eta_{m}\rangle_{1} \eta'_{n} \in \alpha \rt \alpha, \\
      \kappa_{i(m,n)} &= |\eta'_{n}\rangle_{2}\eta_{m} \in
      \alpha \lt \alpha, & \kappa'_{i(m,n)} &=
      |\zeta_{m}\rangle_{1} \zeta'_{n} \in \hbeta \rt
      \hbeta.
  \end{align*}
\item The Banach spaces $\tilde \Omega_{\beta,\alpha}$ and
  $\tilde \Omega_{\alpha,\hbeta}$ carry the structure of
  Banach algebras, where the multiplication is given by
  $\omega \ast \omega' = (\omega \boxtimes \omega') \circ
  \Delta_{V}$ and $\omega \ast \omega' = (\omega \boxtimes
  \omega') \circ \hDelta_{V}$, respectively.
\item There exist contractive algebra homomorphisms
  $\hpi_{V} \colon \tilde \Omega_{\beta,\alpha} \to \hA_{V}$
  and $\pi_{V} \colon \tilde \Omega_{\alpha,\hbeta} \to
  A_{V}$ such that for all $\xi \in \beta^{\infty}$, $\eta
  \in \alpha^{\infty}$, $\zeta \in \hbeta^{\infty}$,
  \begin{align*}
    \hpi_{V}(\omega_{\xi,\eta}) &= \sum_{n} \langle
    \xi_{n}|_{2}V|\eta_{n}\rangle_{2}, &
    \pi_{V}(\omega_{\eta,\zeta}) &= \sum_{n} \langle
    \eta_{n}|_{1}V|\zeta_{n}\rangle_{1}.
  \end{align*}
  \end{enumerate}
\end{theorem}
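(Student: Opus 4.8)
The plan is to build the products on $\tilde\Omega_{\beta,\alpha}$ and $\tilde\Omega_{\alpha,\hbeta}$ from the functoriality results of subsection \ref{subsection:legs-slice} together with Proposition \ref{proposition:pmu-delta}, and to obtain the homomorphisms $\hpi_{V},\pi_{V}$ by a pentagon computation in leg notation following Baaj--Skandalis \cite[\S 3]{baaj:2}. I argue throughout for $\tilde\Omega_{\beta,\alpha}$, $\Delta_{V}$ and $\hpi_{V}$; the statements for $\tilde\Omega_{\alpha,\hbeta}$, $\hDelta_{V}$ and $\pi_{V}$ follow by the same arguments. The main obstacle will be the well-definedness and multiplicativity of $\hpi_{V}$ and $\pi_{V}$ in (iii); everything else is bookkeeping with constructions that are already available.

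\emph{Part (i).} This is the analogue of Proposition \ref{proposition:slice-fiber}~ii), with the fiber product replaced by the commutant $C:=(\rho_{\hbeta}(\frakB)\rtensor{\alpha}{\frakb}{\beta}\rho_{\hbeta}(\frakB))'$, which by Proposition \ref{proposition:pmu-delta} carries the structure of a $C^{*}$-algebra on the $C^{*}$-module $(\Hrange,\beta\lt\beta,\alpha\rt\alpha)$. The one new ingredient is the slice fact that for $x\in C$, $\xi\in\beta$, $\eta\in\alpha$ one has $\langle\xi|_{2}x|\eta\rangle_{1}\in\rho_{\hbeta}(\frakB)'$ and $\langle\eta|_{1}x|\xi\rangle_{2}\in\rho_{\hbeta}(\frakB)'$, which is immediate from the way $\rho_{\hbeta}(\frakB)$ acts through the two relative tensor factors of $\Hrange$. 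Granting this, I set $\omega_{\xi,\eta}\boxtimes\omega_{\xi',\eta'}:=\omega_{\theta,\theta'}$ with $\theta,\theta'$ as in the statement; the bounds $\theta\in(\beta\lt\beta)^{\infty}$ with $\|\theta\|\le\|\xi\|\,\|\xi'\|$ and $\theta'\in(\alpha\rt\alpha)^{\infty}$ with $\|\theta'\|\le\|\eta\|\,\|\eta'\|$ follow from $\sum_{k}\theta_{k}^{*}\theta_{k}=\sum_{m,n}\xi_{m}^{*}\rho_{\alpha}({\xi'_{n}}^{*}\xi'_{n})\xi_{m}\le\|\xi'\|^{2}\sum_{m}\xi_{m}^{*}\xi_{m}$ and its mirror, and the independence of $\omega_{\theta,\theta'}$ of the chosen representatives of $\omega_{\xi,\eta},\omega_{\xi',\eta'}$ is the same approximate-unit argument as in the proof of Proposition \ref{proposition:slice-fiber}~ii), now using the slice fact. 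This gives the asserted contraction on the projective tensor product.

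\emph{Part (ii).} By Proposition \ref{proposition:pmu-delta}, $\Delta_{V}$ is a jointly normal morphism from $(\rho_{\hbeta}(\frakB)')_{H}^{\beta,\alpha}$ to $C$, so Proposition \ref{proposition:slice-pullback}~ii) yields a contraction $\Delta_{V}^{*}\colon\Omega_{(\beta\lt\beta),(\alpha\rt\alpha)}(C)\to\Omega_{\beta,\alpha}(\rho_{\hbeta}(\frakB)')=\tilde\Omega_{\beta,\alpha}$. Composing with $\boxtimes$ of part (i) gives the bilinear contraction $(\omega,\omega')\mapsto\omega\ast\omega':=\Delta_{V}^{*}(\omega\boxtimes\omega')=(\omega\boxtimes\omega')\circ\Delta_{V}$; in particular the product is submultiplicative, and $\tilde\Omega_{\beta,\alpha}$ is a complete normed algebra, being of the form $\Omega_{\beta,\alpha}(\cdot)$. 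Associativity follows from coassociativity of $\Delta_{V}$ --- the identity expressed by the commuting diagram in the proof of Lemma \ref{lemma:pmu-delta-hopf}, which uses only commutativity of \eqref{eq:pmu-pentagon} --- together with the associativity $(\omega\boxtimes\omega')\boxtimes\omega''=\omega\boxtimes(\omega'\boxtimes\omega'')$ read off from the explicit formulas in (i) and the associativity isomorphism \eqref{eq:rtp-associative}; this is exactly the argument used in the proof of Theorem \ref{theorem:legs-convolution}.

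\emph{Part (iii).} For $\xi\in\beta^{\infty}$, $\eta\in\alpha^{\infty}$, unitarity of $V$ gives $\sum_{n}(V|\eta_{n}\rangle_{2})^{*}(V|\eta_{n}\rangle_{2})=\sum_{n}\langle\eta_{n}|_{2}|\eta_{n}\rangle_{2}=\sum_{n}\rho_{\hbeta}(\eta_{n}^{*}\eta_{n})$, of norm $\le\|\eta\|^{2}$, and likewise $\sum_{n}\langle\xi_{n}|_{2}|\xi_{n}\rangle_{2}=\sum_{n}\rho_{\alpha}(\xi_{n}^{*}\xi_{n})$, of norm $\le\|\xi\|^{2}$; a Cauchy--Schwarz estimate then shows $\sum_{n}\langle\xi_{n}|_{2}V|\eta_{n}\rangle_{2}$ converges in norm with bound $\|\xi\|\,\|\eta\|$, and it lies in $[\bbeta{2}V\kalpha{2}]=\hA_{V}$ by definition. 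It remains to check that this sum depends only on $\omega_{\xi,\eta}\in\tilde\Omega_{\beta,\alpha}$ --- whence $\hpi_{V}$ is well defined and, by the bound above and the definition of the norm on $\tilde\Omega_{\beta,\alpha}$, contractive --- and that $\hpi_{V}(\omega\ast\omega')=\hpi_{V}(\omega)\hpi_{V}(\omega')$. Both are proved by direct computations in leg notation: well-definedness uses the intertwining relations \eqref{eq:pmu-intertwine}, which let one view $V$ as a morphism between the relevant $C^{*}$-modules and thereby express $\langle\xi|_{2}V|\eta\rangle_{2}$ through $\omega_{\xi,\eta}$; multiplicativity uses the pentagon equation \eqref{eq:pmu-pentagon}, in the form $V_{23}V_{12}=V_{12}V_{13}V_{23}$, exactly as in \cite[\S 3]{baaj:2}. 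These calculations, in which one must keep track of all the module structures on the triple relative tensor product $\Hthree$, are the technical core of the statement.
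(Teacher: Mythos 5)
Your overall route is the same as the paper's: part (i) as a variant of Proposition \ref{proposition:slice-fiber}, part (ii) by pulling back along $\Delta_{V}$, $\hDelta_{V}$ via Propositions \ref{proposition:slice-pullback} and \ref{proposition:pmu-delta} plus coassociativity, and part (iii) by a Baaj--Skandalis type leg computation (the paper does this in the more general Proposition \ref{proposition:pmu-convolution-rep}). However, the one statement you make precise in (i) --- that for $x$ in $C:=(\rho_{\hbeta}(\frakB)\rtensor{\alpha}{\frakb}{\beta}\rho_{\hbeta}(\frakB))'$ the mixed slices $\langle\xi|_{2}x|\eta\rangle_{1}$ and $\langle\eta|_{1}x|\xi\rangle_{2}$ ($\xi\in\beta$, $\eta\in\alpha$) lie in $\rho_{\hbeta}(\frakB)'$ --- is false in general. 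Indeed $|\eta\rangle_{1}\rho_{\hbeta}(b)=(\Id\btensor\rho_{\hbeta}(b))|\eta\rangle_{1}$ while $\rho_{\hbeta}(b)\langle\xi|_{2}=\langle\xi|_{2}(\rho_{\hbeta}(b)\btensor\Id)$, so commutation of the mixed slice with $\rho_{\hbeta}(b)$ would require $x$ to interchange the two leg-wise copies of $\rho_{\hbeta}(\frakB)$, which membership in the commutant does not give. Concretely, for the groupoid unitary every multiplication operator by $g\in C_{b}(\GrrG)$ lies in $C$, and $\langle j(\xi)|_{2}\,g\,|j(\eta)\rangle_{1}$ is the operator $\omega\mapsto\big(z\mapsto\eta(z)\int_{G^{r(z)}}\overline{\xi(y)}\,g(z,y)\,\omega(y)\intd\lambda^{r(z)}(y)\big)$, which does not commute with $s^{*}(C_{0}(G^{0}))=\rho_{\hbeta}(\frakB)$ unless $g$ vanishes off $\{(z,y)\in\GrrG\mid s(z)=s(y)\}$. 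Moreover this is not the fact the adapted independence argument needs: what it needs are the same-leg slices, $\langle\xi'|_{2}\,x\,|\xi''\rangle_{2}\in\rho_{\hbeta}(\frakB)'$ for $\xi',\xi''\in\beta$ and $\langle\eta'|_{1}\,x\,|\eta''\rangle_{1}\in\rho_{\hbeta}(\frakB)'$ for $\eta',\eta''\in\alpha$, which do hold for $x\in C$ (with matching legs the computation above closes up). With these, the argument of Proposition \ref{proposition:slice-fiber}~ii) goes through: fix $n$, approximate $\eta'_{n}\zeta'$ by sums $\sum_{l}\xi''_{l}\zeta''_{l}$ with $\xi''_{l}\in\beta$, use $|\eta_{m}\rangle_{1}\xi''_{l}\zeta''_{l}=|\xi''_{l}\rangle_{2}\eta_{m}\zeta''_{l}$ and apply $\omega_{\xi,\eta}$ to $\langle\xi'_{n}|_{2}x|\xi''_{l}\rangle_{2}$, treating the other factor via the adjoint. (Also, that argument approximates vectors; the approximate-unit device belongs to Proposition \ref{proposition:slice-pullback}.) So part (i) as written fails at its announced key step, though the repair is short.

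In part (iii) you name the right identity (the pentagon, equivalently $V_{12}V_{13}=V_{23}V_{12}V_{23}^{*}$), but you leave exactly the two verifications you call the technical core --- well-definedness and multiplicativity of $\hpi_{V}$, $\pi_{V}$ --- as unexecuted ``direct computations'', so this part is a plan rather than a proof. The paper settles both with one sandwich trick in Proposition \ref{proposition:pmu-convolution-rep}: for $\eta\in\alpha$, $\eta'\in\hbeta$ one has $\eta^{*}\big(\sum_{n}\langle\xi_{n}|_{2}V|\xi'_{n}\rangle_{2}\big)\eta'=\omega_{\xi,\xi'}(\langle\eta|_{1}V|\eta'\rangle_{1})$, where $\langle\eta|_{1}V|\eta'\rangle_{1}\in A_{V}\subseteq\rho_{\hbeta}(\frakB)'$; since $[\alpha\frakK]=H=[\hbeta\frakK]$, the sum depends only on $\omega_{\xi,\xi'}\in\tilde\Omega_{\beta,\alpha}$, and multiplicativity follows from $\eta^{*}\hpi_{V}(\omega)\hpi_{V}(\omega')\eta'=(\omega\boxtimes\omega')(\langle\eta|_{1}V_{12}V_{13}|\eta'\rangle_{1})=(\omega\boxtimes\omega')\big(\Delta_{V}(\langle\eta|_{1}V|\eta'\rangle_{1})\big)=(\omega\ast\omega')(\langle\eta|_{1}V|\eta'\rangle_{1})$. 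Without this (or an equivalent) identity, the crucial point that the sum depends only on the functional $\omega_{\xi,\eta}$ restricted to $\rho_{\hbeta}(\frakB)'$ is not established. Your part (ii) is correct and coincides with the paper's argument.
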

\begin{proof}
 i) This is a slight modification of Proposition
  \ref{proposition:slice-fiber} and follows from similar
  arguments.

  ii) The existence of the multiplication in ii) follows
  from i) and Propositions \ref{proposition:slice-pullback}
  and \ref{proposition:pmu-delta}, and associativity from
  coassociativity of $\Delta_{V}$ and $\hDelta_{V}$ (see the
  proof of  Lemma \ref{lemma:pmu-delta-hopf}).

 iii) This is a special case of
  the more general Proposition
  \ref{proposition:pmu-convolution-rep} which is proven in
  subsection \ref{subsection:rep-banach}.
\end{proof}
If $((\hA_{V})_{H}^{\alpha,\hbeta},\hDelta_{V})$
and $((A_{V})^{\beta,\alpha}_{H},\Delta_{V})$ are Hopf
$C^{*}$-bimodules, they should be thought of as standing in
a generalized Pontrjagin duality. This duality is captured
by a pairing on the dense subalgebras
 \begin{align*}
   \hA^{0}_{V} &:=\hpi_{V}(\tilde \Omega_{\beta,\alpha})
   \subseteq \hA_{V}, & A_{V}^{0} &:= \pi_{V}(\tilde
   \Omega_{\alpha,\hbeta}) \subseteq A_{V}.
 \end{align*}
\begin{definition}
  We call the algebra $\hA_{V}^{0} \subseteq \hA_{V}$,
  equipped with the quotient norm from the surjection
  $\hpi_{V}$, the {\em Fourier algebra} of $V$. Similarly,
  we call the algebra $A_{V}^{0} \subseteq A_{V}$,
  equipped with the quotient norm from the surjection
  $\pi_{V}$, the {\em dual Fourier algebra} of $V$.
\end{definition}
 \begin{proposition}
   \begin{enumerate}
   \item There exists a bilinear map $(\frei |\frei) \colon
     \hA_{V}^{0} \times A_{V}^{0} \to {\cal L}(\frakK)$ such
     that $\omega(\pi_{V}(\upsilon)) = \big(\hpi_{V}(\omega)
     \big| \pi_{V}(\upsilon)\big) =
     \upsilon(\hpi_{V}(\omega))$ for all $\omega \in \tilde
     \Omega_{\beta,\alpha}, \upsilon \in \tilde
     \Omega_{\alpha,\hbeta}$.
     \item This map is nondegenerate in the sense that for
       each $\ha \in \hA^{0}_{V}$ and $a \in A_{V}^{0}$,
       there exist $\ha' \in \hA^{0}_{V}$ and $a' \in
       A_{V}^{0}$ such that $(\ha|a') \neq 0$ and
       $(\ha'|a)\neq 0$.
   \item $(\hpi_{V}(\omega)\hpi_{V}(\omega')|a)= (\omega
     \boxtimes \omega')(\Delta_{V}(a))$ and $(\ha |
     \pi_{V}(\upsilon) \pi_{V}(\upsilon')) = (\upsilon
     \boxtimes \upsilon')(\hDelta_{V}(\ha))$ for all
     $\omega,\omega' \in \tilde \Omega_{\beta,\alpha}$, $a
     \in A^{0}_{V}$, $\upsilon,\upsilon' \in \tilde
     \Omega_{\alpha,\hbeta}$, $\ha \in \hA^{0}_{V}$.
 \end{enumerate}
 \end{proposition}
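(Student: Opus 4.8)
The plan is to deduce all three parts from one computation. For $\omega=\omega_{\xi,\eta}\in\tilde\Omega_{\beta,\alpha}$ (with $\xi\in\beta^{\infty}$, $\eta\in\alpha^{\infty}$) and $\upsilon=\omega_{\eta',\zeta}\in\tilde\Omega_{\alpha,\hbeta}$ (with $\eta'\in\alpha^{\infty}$, $\zeta\in\hbeta^{\infty}$) I claim
\[
  \omega(\pi_{V}(\upsilon)) \;=\; \sum_{m,n}\big(|\eta'_{m}\rangle_{1}\xi_{n}\big)^{*}\,V\,\big(|\zeta_{m}\rangle_{1}\eta_{n}\big) \;=\; \upsilon(\hpi_{V}(\omega))
\]
in $\mathcal{L}(\frakK)$, where the two outer terms make sense because $A_{V}\subseteq\rho_{\hbeta}(\frakB)'$ and $\hA_{V}\subseteq\rho_{\beta}(\frakB)'$. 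To prove this, substitute $\pi_{V}(\upsilon)=\sum_{m}\langle\eta'_{m}|_{1}V|\zeta_{m}\rangle_{1}$ into $\omega(\pi_{V}(\upsilon))=\sum_{n}\xi_{n}^{*}\,\pi_{V}(\upsilon)\,\eta_{n}$ and $\hpi_{V}(\omega)=\sum_{n}\langle\xi_{n}|_{2}V|\eta_{n}\rangle_{2}$ into $\upsilon(\hpi_{V}(\omega))=\sum_{m}(\eta'_{m})^{*}\,\hpi_{V}(\omega)\,\zeta_{m}$, and then use the elementary relations $\xi_{n}^{*}\langle\eta'_{m}|_{1}=(|\eta'_{m}\rangle_{1}\xi_{n})^{*}$, $(\eta'_{m})^{*}\langle\xi_{n}|_{2}=(|\xi_{n}\rangle_{2}\eta'_{m})^{*}$, $|\eta'_{m}\rangle_{1}\xi_{n}=|\xi_{n}\rangle_{2}\eta'_{m}$ and $|\zeta_{m}\rangle_{1}\eta_{n}=|\eta_{n}\rangle_{2}\zeta_{m}$. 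These four identities are immediate from the identifications $H{_{\rho_{\hbeta}}\tl}\alpha\cong\Hsource\cong\hbeta\tr_{\rho_{\alpha}}H$ and $H{_{\rho_{\alpha}}\tl}\beta\cong\Hrange\cong\alpha\tr_{\rho_{\beta}}H$ fixed in Subsection~\ref{subsection:pmu-rtp}, once one notices that the elements of $\frakB$ and of $\frakBo$ that arise commute; all series converge in norm, so the order of summation is irrelevant. By bilinearity the displayed identity then holds for all $\omega\in\tilde\Omega_{\beta,\alpha}$ and $\upsilon\in\tilde\Omega_{\alpha,\hbeta}$. This verification is the one genuinely technical step — everything that follows is formal — and the delicate part is tracking the operators $|\cdot\rangle_{i}$, $\langle\cdot|_{i}$, $\xi\mapsto\xi^{*}$ through the relative tensor products and making the two commuting copies of $\frakB,\frakBo$ interleave so that the two a priori different double slices of $V$ coincide.

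Granting the identity, part (i) is immediate: define $(\ha|a):=\omega(\pi_{V}(\upsilon))$ for any $\omega\in\tilde\Omega_{\beta,\alpha}$, $\upsilon\in\tilde\Omega_{\alpha,\hbeta}$ with $\hpi_{V}(\omega)=\ha$ and $\pi_{V}(\upsilon)=a$. If also $\hpi_{V}(\omega_{1})=\ha$ and $\pi_{V}(\upsilon_{1})=a$, then $\omega(\pi_{V}(\upsilon))=\upsilon(\hpi_{V}(\omega))=\upsilon(\hpi_{V}(\omega_{1}))=\omega_{1}(\pi_{V}(\upsilon))=\omega_{1}(\pi_{V}(\upsilon_{1}))$, so the value depends only on $\ha$ and $a$; it lies in $\mathcal{L}(\frakK)$ since $\omega$ takes values there, bilinearity is clear from the construction, and the two required equalities $\omega(\pi_{V}(\upsilon))=(\ha|a)=\upsilon(\hpi_{V}(\omega))$ are exactly the content of the displayed identity.

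For part (ii), let $\ha=\hpi_{V}(\omega)\in\hA_{V}^{0}$ be nonzero and suppose $(\ha|a')=0$ for every $a'\in A_{V}^{0}$. Then $\upsilon(\ha)=(\ha|\pi_{V}(\upsilon))=0$ for every $\upsilon\in\tilde\Omega_{\alpha,\hbeta}$; taking $\upsilon=\omega_{\eta,\zeta}$ with one-term sequences $\eta\in\alpha$, $\zeta\in\hbeta$ gives $\eta^{*}\ha\zeta=0$ in $\mathcal{L}(\frakK)$ for all such $\eta,\zeta$, and since $[\alpha\frakK]=H=[\hbeta\frakK]$ this forces $\langle h|\ha h'\rangle=0$ for all $h,h'\in H$, i.e.\ $\ha=0$, contradicting the choice of $\ha$. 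The existence of $\ha'$ with $(\ha'|a)\neq0$ for nonzero $a\in A_{V}^{0}$ is obtained by the same argument with the two legs exchanged.

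Finally, part (iii) follows from Theorem~\ref{theorem:pmu-convolution} together with the definition of the pairing. Since $\hpi_{V}$ is an algebra homomorphism, $\hpi_{V}(\omega)\hpi_{V}(\omega')=\hpi_{V}(\omega\ast\omega')$ with $\omega\ast\omega'=(\omega\boxtimes\omega')\circ\Delta_{V}$; for $a=\pi_{V}(\upsilon)\in A_{V}^{0}$ we have $a\in\rho_{\hbeta}(\frakB)'=\Dom\Delta_{V}$, and $\Delta_{V}(a)$ lies in the $C^{*}$-algebra on which $\omega\boxtimes\omega'$ is defined by Theorem~\ref{theorem:pmu-convolution}(i) and Proposition~\ref{proposition:pmu-delta}, so
\[
  \big(\hpi_{V}(\omega)\hpi_{V}(\omega')\,\big|\,a\big)=\big(\hpi_{V}(\omega\ast\omega')\,\big|\,a\big)=(\omega\ast\omega')(a)=(\omega\boxtimes\omega')(\Delta_{V}(a)).
\]
Applying the identical reasoning to the homomorphism $\pi_{V}$ and the comultiplication $\hDelta_{V}$ (using $\ha\in\hA_{V}^{0}\subseteq\rho_{\beta}(\frakB)'=\Dom\hDelta_{V}$) yields $\big(\ha\,\big|\,\pi_{V}(\upsilon)\pi_{V}(\upsilon')\big)=(\upsilon\boxtimes\upsilon')(\hDelta_{V}(\ha))$.
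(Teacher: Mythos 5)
Your proposal is correct and follows essentially the same route as the paper: part (i) rests on the same interchange $\xi_m^{*}\langle\eta_n|_{1}V|\eta'_n\rangle_{1}\xi'_m=\eta_n^{*}\langle\xi_m|_{2}V|\xi'_m\rangle_{2}\eta'_n$ coming from the identifications of the legs of the relative tensor products, and part (iii) is the identical chain $(\hpi_{V}(\omega)\hpi_{V}(\omega')|a)=(\hpi_{V}(\omega\ast\omega')|a)=(\omega\ast\omega')(a)=(\omega\boxtimes\omega')(\Delta_{V}(a))$. The only difference is that you spell out part (ii), which the paper dismisses as evident, via the density relations $[\alpha\frakK]=H=[\hbeta\frakK]$; that is exactly the intended argument.
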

 \begin{proof}
   i) If $\omega=\omega_{\xi,\xi'}$ and
   $\upsilon=\omega_{\eta,\eta'}$, where $\xi\in \beta^{\infty}$, $\xi',\eta \in
   \alpha^{\infty}$, $\eta' \in \hbeta^{\infty}$, then
   \begin{align*}
     \omega(\pi_{V}(\upsilon)) = \sum_{m,n}
     \xi_{m}^{*}\langle\eta_{n}|_{1} V
     |\eta'_{n}\rangle_{1}\xi'_{m} = \sum_{m,n}
     \eta_{n}^{*}\langle\xi_{m}|_{2} V
     |\xi'_{m}\rangle_{2}\eta'_{n} =
     \upsilon(\hpi_{V}(\omega)).
   \end{align*}

   ii) Evident.

   iii) For all $\omega,\omega',a$ as above,
   $(\hpi_{V}(\omega)\hpi_{V}(\omega')|a) = (\hpi_{V}(\omega
   \ast \omega')|a) = (\omega \ast \omega')(a)  = (\omega \boxtimes \omega)(\Delta_{V}(a))$. The
   second equation follows similarly.
 \end{proof}
 As a consequence of part ii) of the preceding result, we
 obtain the following simple relation between the Fourier
 algebra $\hA^{0}_{V}$ and the convolution algebra
 constructed in Theorem \ref{theorem:legs-convolution}.
\begin{proposition}
  If $((A_{V})^{\beta,\alpha}_{H},\hDelta_{V})$ or
  $((\hA_{V})^{\alpha,\hbeta}_{H},\Delta_{V})$ is a normal
  Hopf $C^{*}$-bimodule, then we have a commutative diagram of
  Banach algebras and homomorphisms
  \begin{align*}
    \xymatrix@C=15pt@R=15pt{ \tilde \Omega_{\beta,\alpha}
      \ar[r]^{\hpi_{V}} \ar@{=}[d] 
       &  \hA_{V}^{0} \\
      \Omega_{\beta,\alpha}(\rho_{\hbeta}(\frakB)')
      \ar[r]^{q} & {\Omega_{\beta,\alpha}(A_{V})}
      \ar[u]_{\hpi}} \quad \text{or} \quad
    \xymatrix@C=15pt@R=15pt{ \tilde \Omega_{\alpha,\hbeta}
      \ar[r]^{\pi_{V}} \ar@{=}[d]
       &  A_{V}^{0} \\
      \Omega_{\alpha,\hbeta}(\rho_{\beta}(\frakB)')
      \ar[r]^{q} & {\Omega_{\alpha,\hbeta}(\hA_{V})}
      \ar[u]_{\pi},}
  \end{align*}
  respectively, where $q$ is the quotient map and
  $\hpi$ or $\pi$  an isometric isomorphism.  \qed
\end{proposition}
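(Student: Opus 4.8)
The plan is to exhibit $\hpi_{V}$ as a composite $\hpi\circ q$, where $q$ is restriction of functionals from $\rho_{\hbeta}(\frakB)'$ down to $A_{V}$ and $\hpi$ is the induced map on the quotient, and then to verify that $\hpi$ is an isometric isomorphism of Banach algebras onto $\hA^{0}_{V}$. The two cases of the statement are symmetric, so I would treat only the first and assume that $((A_{V})^{\beta,\alpha}_{H},\Delta_{V})$ is a normal Hopf $C^{*}$-bimodule (the second case, with $((\hA_{V})^{\alpha,\hbeta}_{H},\hDelta_{V})$, being obtained by the same argument after interchanging $\alpha$ with $\hbeta$, $A_{V}$ with $\hA_{V}$ and $\Delta_{V}$ with $\hDelta_{V}$, or by passing to $V^{op}$ via Propositions~\ref{proposition:pmu-legs} and~\ref{proposition:pmu-delta}). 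Under this hypothesis $A_{V}$ is a $C^{*}$-subalgebra of $\rho_{\hbeta}(\frakB)'$, the pair $(A_{V})^{\beta,\alpha}_{H}$ is a $C^{*}$-$(\frakbo,\frakb)$-algebra, and $\Omega_{\beta,\alpha}(A_{V})$ is a Banach algebra by Theorem~\ref{theorem:legs-convolution}.

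First I would introduce $q\colon\tilde\Omega_{\beta,\alpha}=\Omega_{\beta,\alpha}(\rho_{\hbeta}(\frakB)')\to\Omega_{\beta,\alpha}(A_{V})$, $\omega_{\eta,\eta'}\mapsto\omega_{\eta,\eta'}|_{A_{V}}$, and check that it is a metric quotient homomorphism. Surjectivity is clear: every $\mu\in\Omega_{\beta,\alpha}(A_{V})$ has the form $\omega_{\eta,\eta'}|_{A_{V}}$ with $\eta\in\beta^{\infty}$, $\eta'\in\alpha^{\infty}$, and the very same $\eta,\eta'$ define an element of $\tilde\Omega_{\beta,\alpha}$ restricting to $\mu$. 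The same remark shows that the $\omega_{\eta,\eta'}$-representations of a fixed functional on $A_{V}$ are precisely the restrictions of its $\omega_{\eta,\eta'}$-representations on $\rho_{\hbeta}(\frakB)'$, whence the norm of $\Omega_{\beta,\alpha}(A_{V})$ equals the quotient norm of $\tilde\Omega_{\beta,\alpha}/\ker q$, i.e.\ $q$ is a metric quotient map. Finally $q$ is an algebra homomorphism: by Proposition~\ref{proposition:slice-fiber} the product $\omega\boxtimes\omega'$ depends only on $\omega,\omega'$ as functionals, and $\Delta_{V}$ maps $A_{V}$ into $A_{V}\fibre{\alpha}{\frakb}{\beta}A_{V}$, so restriction commutes with the convolution $(\omega,\omega')\mapsto(\omega\boxtimes\omega')\circ\Delta_{V}$.

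The main step --- the only one that needs an idea rather than bookkeeping --- is to show that $\hpi_{V}$ vanishes on $\ker q$. So let $\omega\in\tilde\Omega_{\beta,\alpha}$ with $\omega|_{A_{V}}=0$. For every $\upsilon\in\tilde\Omega_{\alpha,\hbeta}$ the element $\pi_{V}(\upsilon)$ lies in $A^{0}_{V}\subseteq A_{V}$, so the pairing identity $\big(\hpi_{V}(\omega)\,\big|\,\pi_{V}(\upsilon)\big)=\omega(\pi_{V}(\upsilon))$ of the preceding proposition gives $\big(\hpi_{V}(\omega)\,\big|\,\pi_{V}(\upsilon)\big)=0$; as $A^{0}_{V}=\pi_{V}(\tilde\Omega_{\alpha,\hbeta})$, the element $\hpi_{V}(\omega)\in\hA^{0}_{V}$ is annihilated by all of $A^{0}_{V}$ under the pairing, and hence $\hpi_{V}(\omega)=0$ by the nondegeneracy of that pairing. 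Therefore $\hpi_{V}$ factors uniquely through $q$ as $\hpi_{V}=\hpi\circ q$ with $\hpi\colon\Omega_{\beta,\alpha}(A_{V})\to\hA_{V}$ linear. Since $q$ is a surjective homomorphism and $\hpi_{V}$ is a homomorphism (Theorem~\ref{theorem:pmu-convolution}), $\hpi$ is an algebra homomorphism; its image is $\hpi_{V}(\tilde\Omega_{\beta,\alpha})=\hA^{0}_{V}$, it satisfies $\hpi(\omega_{\xi,\eta}|_{A_{V}})=\sum_{n}\langle\xi_{n}|_{2}V|\eta_{n}\rangle_{2}$, and the triangle in the diagram commutes by construction.

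It remains to prove that $\hpi\colon\Omega_{\beta,\alpha}(A_{V})\to\hA^{0}_{V}$ is an isometric isomorphism, where $\hA^{0}_{V}$ carries the quotient norm induced by $\hpi_{V}$. For injectivity I would take $\mu$ with $\hpi(\mu)=0$, lift it to some $\omega\in\tilde\Omega_{\beta,\alpha}$ with $q(\omega)=\mu$ so that $\hpi_{V}(\omega)=0$, and then get $\mu(a)=\omega(a)=\big(\hpi_{V}(\omega)\,\big|\,a\big)=0$ for all $a\in A^{0}_{V}$; since $A^{0}_{V}$ contains $\balpha{1}V\khbeta{1}$ it is dense in $A_{V}$, so $\mu=0$. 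For the isometry, using $\hpi_{V}=\hpi\circ q$, the bijectivity of $\hpi$ and the fact that $q$ is a metric quotient map, one finds
\[
\|\hpi(\mu)\|_{\hA^{0}_{V}}=\inf\{\|\omega\|:\hpi_{V}(\omega)=\hpi(\mu)\}=\inf\{\|\omega\|:q(\omega)=\mu\}=\|\mu\|_{\Omega_{\beta,\alpha}(A_{V})}.
\]
I expect the only genuinely delicate point to be the factorization $\ker q\subseteq\ker\hpi_{V}$, which rests on the nondegeneracy of the Fourier-algebra pairing; everything else is routine manipulation of quotient norms together with the naturality of the slice constructions of subsection~\ref{subsection:legs-slice}. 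The second diagram follows from the first in the same way, with the substitutions indicated above.
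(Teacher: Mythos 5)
Your proposal is correct, and it follows exactly the route the paper intends: the paper states this proposition with no proof (the \qed signals it is regarded as an immediate consequence of the preceding pairing proposition), and your argument simply makes that explicit — the factorization $\hpi_{V}=\hpi\circ q$ via nondegeneracy of the pairing (equivalently, via $\eta^{*}\hpi_{V}(\omega)\zeta=\omega(\langle\eta|_{1}V|\zeta\rangle_{1})$ with $\langle\eta|_{1}V|\zeta\rangle_{1}\in A_{V}$), together with routine quotient-norm bookkeeping. You also correctly read the hypothesis as pairing $A_{V}$ with $\Delta_{V}$ and $\hA_{V}$ with $\hDelta_{V}$, repairing the evident typo in the statement.
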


\subsection{The legs of the unitary of a groupoid } 

\label{subsection:legs-groupoid}

The general preceding constructions are now applied to the
$C^{*}$-pseudo-multiplicative unitary of a locally compact,
Hausdorff, second countable groupoid $G$ that was
constructed in subsection \ref{subsection:pmu-groupoid}. The
algebras $A_{V}$ and $\hA_{V}$ turn out to be the reduced
groupoid $C^{*}$-algebra $C^{*}_{r}(G)$ and the function
algebra $C_{0}(G)$, respectively, but unfortunately, we can
not determine the Fourier algebras $\hA^{0}_{V}$ and
$A^{0}_{V}$.

 We use the same notation as in subsection
\ref{subsection:pmu-groupoid} and let
\begin{gather*}
  \begin{aligned}
    \frakK&:=L^{2}(G^{0},\mu), & \frakB&= \frakBo:=
    C_{0}(G^{0}) \subseteq {\cal L}(\frakK), &
    \frakb&:=(\frakK,\frakB,\frakBo),
  \end{aligned} \\
  \begin{aligned}
    H &:= L^{2}(G,\nu), &
    \alpha&=\beta:=j(L^{2}(G,\lambda)), & 
    \hbeta := \hat j(L^{2}(G,\lambda^{-1})),
  \end{aligned} \\
  V \colon \Hsource \cong L^{2}(\GsrG,\nu^{2}_{s,r}) \to
  L^{2}(\GrrG,\nu^{2}_{r,r}) \cong \Hrange, \\
  (V\omega)(x,y) = \omega(x,x^{-1}y)  \text{ for all }
  \omega \in C_{c}(\GsrG), \, (x,y) \in \GrrG.
\end{gather*}
 Denote by $m \colon C_{0}(G) \to {\cal L}(H)$ the
representation given by multiplication operators, and
by $L^{1}(G,\lambda)$ the completion of $C_{c}(G)$ with
respect to the norm given by
\begin{align*}
  \|f\| &:= \sup_{u \in G^{0}} \int_{G^{u}}
  |f(u)| \intd\lambda^{u}(x) \quad \text{for all } f\in
  C_{c}(G).
\end{align*}
Then $L^{1}(G,\lambda)$ is a Banach algebra
with respect to the convolution product
\begin{align*}
  (f \ast g)(y) = \int_{G^{r(y)}}
  g(x)f(x^{-1}y)\intd\lambda^{r(y)}(x) \quad \text{for all }
  f,g \in L^{1}(G,\lambda), y \in G,
\end{align*}
and there exists a norm-decreasing algebra homomorphism
$L \colon L^{1}(G,\lambda) \to {\cal L}(H)$ such that 
\begin{align*}
  \big(L(f)\xi\big)(y) = \int_{G^{r(y)}} f(x)D^{-1/2}(x)
  \xi(x^{-1}y) \intd\lambda^{r(y)}(x) \quad \text{for all }
  f,\xi \in C_{c}(G), y \in G.
\end{align*}
For all $\xi,\xi' \in L^{2}(G,\lambda)$ and $\eta \in
L^{2}(G,\lambda),\eta' \in L^{2}(G,\lambda^{-1})$, let
\begin{align*}
 \ha_{\xi,\xi'} &= \langle j(\xi)|_{2}V|j(\xi')\rangle_{2}
  \in \hA^{0}_{V} && \text{and} &
  a_{\eta,\eta'} &= \langle j(\eta)|_{1} V |\hat
  j(\eta')\rangle_{1} \in A^{0}_{V}.
\end{align*}
Routine arguments show that there exists a unique continuous
map
  \begin{align*}
     L^{2}(G,\lambda) \times L^{2}(G,\lambda) &\to
    C_{0}(G), \ (\xi,\xi') \mapsto \overline{\xi} \ast
    \xi'{}^{*},
 \end{align*}
 such that 
    \begin{align*}
      (\overline{\xi} \ast \xi'{}^{*})(x)&= \int_{G^{r(x)}}
      \overline{\xi(y)}\xi'(x^{-1}y) \intd\lambda^{r(x)}(y)
      \quad \text{for all } \xi,\xi' \in C_{c}(G), x\in G.
 \end{align*}
 \begin{lemma} \label{lemma:legs-groupoid} Let $\xi,\xi' \in
   L^{2}(G,\lambda)$ and $\eta,\eta' \in C_{c}(G)$. Then
   $\ha_{\xi,\xi'}=m(\overline{\xi} \ast \xi'{}^{*})$ and
   $a_{\eta,\eta'} = L(\overline{\eta}\eta')$.
\end{lemma}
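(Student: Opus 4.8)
The plan is to prove both identities by a direct computation. One first describes each of the four operators $|j(\xi')\rangle_{2}$, $|\hat{j}(\eta')\rangle_{1}$, $\langle j(\xi)|_{2}$ and $\langle j(\eta)|_{1}$ explicitly as an integral operator under the identifications $\Hsource\cong L^{2}(\GsrG,\nu^{2}_{s,r})$ and $\Hrange\cong L^{2}(\GrrG,\nu^{2}_{r,r})$, then composes the relevant one with $V$ (which acts by $(Vg)(x,y)=g(x,x^{-1}y)$), and finally compares the resulting integral kernel with the defining formulas of $m$ and $L$. All identities are first checked for $\xi,\xi',\eta,\eta'$ and the test vector $\omega$ in $C_{c}(G)$, where they become honest pointwise identities, and then extended by density and continuity.

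First I would compute the ``creation'' legs. Unwinding the formula for $\Phi_{\hbeta,\alpha}$ together with the factorizations of $\Hsource$ and Lemma~\ref{proposition:groupoid-factorizations}, one checks that for $\omega\in H$, $\xi'\in L^{2}(G,\lambda)$ and $\eta'\in C_{c}(G)$ one has, as elements of $L^{2}(\GsrG)$,
\begin{align*}
  \big(|j(\xi')\rangle_{2}\,\omega\big)(x,y) &= \omega(x)\,\xi'(y), &
  \big(|\hat{j}(\eta')\rangle_{1}\,\omega\big)(x,y) &= \eta'(x)\,D^{-1/2}(x)\,\omega(y);
\end{align*}
here it suffices to verify the identities for $\omega$ in the range of $j$, resp.\ of $\hat{j}$, and to extend by boundedness using $[\alpha\frakK]=H=[\hbeta\frakK]$. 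Applying $V$ turns these functions into $(x,y)\mapsto\omega(x)\xi'(x^{-1}y)$ and $(x,y)\mapsto\eta'(x)D^{-1/2}(x)\omega(x^{-1}y)$ on $\GrrG$.

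Next I would compute the adjoint legs on $\Hrange$. From the formula for $\Phi_{\alpha,\beta}$ one gets $\big(|j(\xi)\rangle_{2}\omega\big)(x,y)=\omega(x)\xi(y)$ and $\big(|j(\eta)\rangle_{1}\omega\big)(x,y)=\eta(x)\omega(y)$, and taking adjoints against $\nu^{2}_{r,r}$ and disintegrating $\nu^{2}_{r,r}$ and $\nu$ as in their definitions yields, for $F\in C_{c}(\GrrG)$,
\begin{align*}
  \big(\langle j(\xi)|_{2}F\big)(x) &= \int_{G^{r(x)}}\overline{\xi(y)}\,F(x,y)\intd\lambda^{r(x)}(y), &
  \big(\langle j(\eta)|_{1}F\big)(y) &= \int_{G^{r(y)}}\overline{\eta(x)}\,F(x,y)\intd\lambda^{r(y)}(x).
\end{align*}
Composing the three steps then gives, for every $\omega\in C_{c}(G)$,
\begin{align*}
  (\ha_{\xi,\xi'}\omega)(x) &= \omega(x)\int_{G^{r(x)}}\overline{\xi(y)}\,\xi'(x^{-1}y)\intd\lambda^{r(x)}(y) = \big(m(\overline{\xi}\ast\xi'{}^{*})\,\omega\big)(x), \\
  (a_{\eta,\eta'}\omega)(y) &= \int_{G^{r(y)}}\overline{\eta(x)}\,\eta'(x)\,D^{-1/2}(x)\,\omega(x^{-1}y)\intd\lambda^{r(y)}(x) = \big(L(\overline{\eta}\eta')\,\omega\big)(y),
\end{align*}
where $\overline{\eta}\eta'\in C_{c}(G)$ denotes the pointwise product. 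As $C_{c}(G)$ is dense in $H$, this proves both identities; for the first one, to drop the temporary assumption $\xi,\xi'\in C_{c}(G)$ one uses that $\xi\mapsto|j(\xi)\rangle_{2}$ and $\xi'\mapsto|j(\xi')\rangle_{2}$ are isometric (Lemma~\ref{lemma:pmu-groupoid-2}) and that $(\xi,\xi')\mapsto\overline{\xi}\ast\xi'{}^{*}$ is continuous into $C_{0}(G)$, so both sides depend norm-continuously on $(\xi,\xi')\in L^{2}(G,\lambda)^{2}$ and the general case follows by approximation.

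The only genuine work is the bookkeeping in the first two steps: keeping track, under $\Phi_{\hbeta,\alpha}$ and $\Phi_{\alpha,\beta}$, of which Hilbert-module leg each vector occupies, and of the Radon--Nikodym factor $D^{-1/2}$ that is introduced by $\hat{j}$. That factor is exactly what is needed to match the $D^{-1/2}$ in the definition of $L$; in contrast to the proof of Theorem~\ref{theorem:pmu-groupoid}, no cocycle identity for $D$ enters here. Once the four kernels are in hand, the comparison with $m$ and $L$ is immediate.
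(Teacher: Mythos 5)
Your argument is correct and is essentially the computation the paper itself performs: the paper simply pairs $\ha_{\xi,\xi'}\zeta'$ and $a_{\eta,\eta'}\zeta'$ against a second test vector $\zeta\in C_{c}(G)$ and evaluates the resulting integrals over $\GsrG$ and $\GrrG$, which is your kernel computation in sesquilinear-form guise, with the same reduction to $\xi,\xi'\in C_{c}(G)$ by continuity. (Only a cosmetic slip: for $|j(\xi')\rangle_{2}$ one verifies on the range of $\hat j$ and for $|\hat j(\eta')\rangle_{1}$ on the range of $j$, i.e.\ your ``resp.''\ is reversed.)
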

\begin{proof}
  By continuity, we may assume $\xi,\xi' \in C_{c}(G)$. Then
  for all $\zeta,\zeta' \in C_{c}(G)$,
  \begin{align*} 
    \langle \zeta| \ha_{\xi,\xi'} \zeta'\rangle
    &= \langle \zeta \tl j(\xi)|V(\zeta' \tl j(\xi'))\rangle \\
    &= \int_{G} \int_{G^{r(x)}} \overline{\zeta(x)\xi(y)}
    \zeta'(x)\xi'(x^{-1}y) \intd\lambda^{r(x)}(y)
    \intd\nu(x) = \langle \zeta|m(\overline{\xi} \ast
    \xi'{}^{*}) \zeta'\rangle,
    \\
    \langle \zeta|a_{\eta,\eta'}\zeta'\rangle &= \langle
    j(\eta) \tr \zeta| V(\hat j(\eta') \tr \zeta')\rangle \\
    &= \int_{G} \int_{G^{r(y)}} \overline{\eta(x)\zeta(y)}
    \eta'(x)D^{-1/2}(x) \zeta'(x^{-1}y)
    \intd\lambda^{r(y)}(x) \intd\nu(y) \\ &= \langle
    \zeta|L(\overline{\eta}\eta')\zeta'\rangle. \qedhere \end{align*}
\end{proof}
\begin{remark}
  To extend the formula $a_{\eta,\eta'} =
  L(\overline{\eta}\eta')$ to all $\eta \in
  L^{2}(G,\lambda)$, $\eta' \in L^{2}(G,\lambda^{-1})$, we
  would have to extend the representation $L \colon C_{c}(G)
  \to {\cal L}(H)$ to some algebra $X$ and the pointwise
  multiplication $(\eta,\eta') \mapsto \overline{\eta}\eta'$
  to a map $L^{2}(G,\lambda) \times L^{2}(G,\lambda^{-1})
  \to X$. Note that pointwise multiplication extends to a
  continuous map $L^{2}(G,\lambda) \times L^{2}(G,\lambda)
  \to L^{1}(G,\lambda)$, but in general this is not what we
  need. We expect that the map $L \colon C_{c}(G ) \to
  A^{0}_{V}$ does not extend to an isometric isomorphism of
  Banach algebras $L^{1}(G,\lambda) \to A^{0}_{V}$.
\end{remark}

The algebra $\hA^{0}_{V}$ can be considered as a continuous
Fourier algebra of the locally compact groupoid $G$.
Another Fourier algebra for locally compact groupoids was
defined by Paterson in \cite{paterson:fourier} as
follows. He constructs a Fourier-Stieltjes algebra $B(G)
\subseteq C(G)$ and defines the Fourier algebra $A(G)$ to be
the norm-closed subalgebra of $B(G)$ generated by the set
$A_{cf}(G):=\{ \ha_{\xi,\xi'} \mid \xi \in
L^{2}(G,\lambda)\}$.  The definition of $B(G)$ in
\cite{paterson:fourier} immediately implies that $\|
\hpi_{V}(\omega_{\xi,\xi'}) \|_{B(G)} \leq \|\xi\|\|\xi'\|$
for all $\xi \in \alpha^{\infty}, \xi' \in \beta^{\infty}$
with finitely many non-zero components, whence the following
relation holds:
\begin{proposition}
  The identity on $A_{cf}(G)$ extends to a norm-decreasing
  homomorphism of Banach algebras $\hA_{V}^{0} \to
  A(G)$. \qed
\end{proposition}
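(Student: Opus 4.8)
The plan is to exhibit the asserted map as the set-theoretic inclusion of $\hA_V^0$ — viewed through the faithful representation $m$ as a subalgebra of $C_0(G) \subseteq C(G)$ — into $A(G) \subseteq B(G)$, and to verify that this inclusion is well defined and norm-decreasing; multiplicativity is then automatic, since under $m$ the operator product on $\hA_V \cong C_0(G)$ becomes the pointwise product, which is also the product on $B(G)$, and the inclusion restricts to the identity on $A_{cf}(G)$ because $\ha_{\xi,\xi'} = \hpi_V(\omega_{\xi,\xi'})$ for one-term sequences. So the only real content is that every $h$ with $m(h) \in \hA_V^0$ actually lies in $A(G)$, with $\|h\|_{B(G)}$ bounded by the quotient norm on $\hA_V^0$.

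First I would make the elements of $\hA_V^0$ explicit. By Theorem \ref{theorem:pmu-convolution}, a general element of $\hA_V^0$ has the form $\hpi_V(\omega_{\xi,\eta}) = \sum_n \langle \xi_n|_2 V|\eta_n\rangle_2$ with $\xi,\eta \in \beta^\infty = \alpha^\infty$; identifying $\beta$ with $L^2(G,\lambda)$ via $j$ and applying Lemma \ref{lemma:legs-groupoid}, the $n$-th summand is $m(\overline{\xi_n} \ast \eta_n^*)$, so $\hpi_V(\omega_{\xi,\eta}) = m(h)$ with $h = \sum_n \overline{\xi_n} \ast \eta_n^*$, the series converging in $\mathcal{L}(H)$, hence uniformly on $G$. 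When $\xi,\eta$ have only finitely many nonzero components, $h$ is a finite sum of elements $\ha_{\xi_n,\eta_n} \in A_{cf}(G) \subseteq A(G) \subseteq B(G)$, and the estimate quoted just before the statement gives $\|h\|_{B(G)} \le \|\xi\|\|\eta\|$. For the general case I would truncate: writing $\xi^{(N)},\eta^{(N)}$ for the sequences with all components past the $N$-th zeroed out and $h^{(N)} := \sum_{n\le N}\overline{\xi_n}\ast\eta_n^*$, the difference $h^{(N)} - h^{(M)}$ ($M<N$) is $\hpi_V$ of the finitely supported sequences $(\xi_n)_{M<n\le N}$, $(\eta_n)_{M<n\le N}$, so the quoted estimate yields
\[
  \|h^{(N)}-h^{(M)}\|_{B(G)} \le \Big\|\sum_{M<n\le N}\xi_n^*\xi_n\Big\|^{1/2}\Big\|\sum_{M<n\le N}\eta_n^*\eta_n\Big\|^{1/2} \xrightarrow[M,N\to\infty]{} 0.
\]
Thus $(h^{(N)})_N$ is Cauchy in $B(G)$; since $A(G)$ is norm-closed in $B(G)$ and each $h^{(N)} \in A(G)$, it converges in $B(G)$ to some $\tilde h \in A(G)$.

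It then remains to match the two limits: the $B(G)$-norm dominates the supremum norm on $G$, while $m(h^{(N)}) \to m(h)$ in $\mathcal{L}(H)$ forces $h^{(N)} \to h$ uniformly, so $\tilde h = h$, whence $h \in A(G)$ and $\|h\|_{B(G)} = \lim_N\|h^{(N)}\|_{B(G)} \le \lim_N\|\xi^{(N)}\|\|\eta^{(N)}\| = \|\xi\|\|\eta\|$. Taking the infimum over all factorizations $\omega = \omega_{\xi,\eta}$ gives $\|h\|_{B(G)} \le \|\hpi_V(\omega)\|_{\hA_V^0}$ for the quotient norm, which is exactly the required norm-decreasing property. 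I expect this last identification of $\tilde h$ with $h$ — reconciling $B(G)$-convergence with uniform convergence — to be the only subtle point; everything else is a direct consequence of Lemma \ref{lemma:legs-groupoid}, Theorem \ref{theorem:pmu-convolution}, the definition of $A(G)$ as the closure of the span of $A_{cf}(G)$, and Paterson's definition of the $B(G)$-norm.
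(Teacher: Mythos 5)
Your proposal is correct and follows the paper's intended argument: the paper reduces everything to the estimate $\|\hpi_{V}(\omega_{\xi,\xi'})\|_{B(G)}\leq\|\xi\|\|\xi'\|$ for finitely supported sequences and leaves the passage to general elements of $\hA_{V}^{0}$ implicit, which is exactly the truncation/Cauchy-in-$B(G)$ argument you supply. Your identification of the $B(G)$-limit with $h$ via the sup-norm bound, and the infimum over factorizations to get the quotient-norm estimate, are the right (and only) details to fill in.
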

Another Fourier space $\tilde {\cal A}(G)$ considered
in \cite[Note after Proposition 13]{paterson:fourier} is
defined as follows. For each $\eta \in L^{2}(G,\lambda)$ and
$u \in G^{0}$, write $\|\xi_{n}(u)\|:=\langle
\xi_{n}|\xi_{n}\rangle(u)^{1/2}$. Denote by $M$ the set of
all pairs $(\xi,\xi')$ of sequences in $L^{2}(G,\lambda)$
such that the supremum $|(\xi,\xi')|_{M}:=\sup_{u,v\in
  G^{0}} \sum_{n}\|\xi_{n}(u)\| \|\xi'_{n}(v)\|$ is finite,
and denote by $\tilde {\cal A}(G)$ the completion of the
linear span of $A_{cf}(G)$ with respect to the norm defined
by
\begin{align*}
  \|\ha\|_{\tilde {\cal A}(G)} = \inf \left\{
    |(\xi,\xi')|_{M} \middle| \ha = \sum_{n}
   \ha_{\xi_{n},\xi'_{n}}\right\}.
\end{align*}
\begin{proposition}
  The identity on $A_{cf}(G)$ extends to a linear
  contraction $\hA^{0}_{V} \to \tilde {\cal A}(G)$.
\end{proposition}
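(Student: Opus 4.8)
The plan is to read the statement directly off the definition of $\hpi_{V}$ together with a pointwise Cauchy--Schwarz inequality. Recall from Theorem~\ref{theorem:pmu-convolution}~iii) that $\hA^{0}_{V}$ carries the quotient norm induced by the surjection $\hpi_{V}$, so that $\|\ha\|_{\hA^{0}_{V}} = \inf\{\|\xi\|\,\|\eta\| : \ha = \hpi_{V}(\omega_{\xi,\eta})\}$ with the infimum over $\xi,\eta \in \beta^{\infty}$, and that $\hpi_{V}(\omega_{\xi,\eta}) = \sum_{n}\langle\xi_{n}|_{2}V|\eta_{n}\rangle_{2}$. Using the unitary $j \colon L^{2}(G,\lambda) \to \beta$ of Hilbert $C^{*}$-modules over $C_{0}(G^{0}) \cong \frakB$ from Lemma~\ref{lemma:pmu-groupoid-2}, I would identify a sequence $\xi = (\xi_{n})_{n}$ in $\beta^{\infty}$ with a sequence in $L^{2}(G,\lambda)$ for which $\sum_{n}\langle\xi_{n}|\xi_{n}\rangle$ converges in norm in $C_{0}(G^{0})$; then $\xi_{n}^{*}\xi_{n}$ corresponds to the function $u \mapsto \|\xi_{n}(u)\|^{2}$, and, since $\mu$ has full support and this sum converges uniformly, $\|\xi\|^{2} = \sup_{u \in G^{0}}\sum_{n}\|\xi_{n}(u)\|^{2}$. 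Under this identification $\langle\xi_{n}|_{2}V|\eta_{n}\rangle_{2} = \ha_{\xi_{n},\eta_{n}}$ by the definition of $\ha_{\xi,\xi'}$, so $\hpi_{V}(\omega_{\xi,\eta}) = \sum_{n}\ha_{\xi_{n},\eta_{n}}$, which is exactly a representation of the type appearing in the definition of $\|\cdot\|_{\tilde{\cal A}(G)}$.

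The key estimate is then immediate. For $\xi,\eta \in \beta^{\infty}$ and all $u,v \in G^{0}$, Cauchy--Schwarz gives $\sum_{n}\|\xi_{n}(u)\|\,\|\eta_{n}(v)\| \le \big(\sum_{n}\|\xi_{n}(u)\|^{2}\big)^{1/2}\big(\sum_{n}\|\eta_{n}(v)\|^{2}\big)^{1/2} \le \|\xi\|\,\|\eta\|$, so $(\xi,\eta) \in M$ and $|(\xi,\eta)|_{M} \le \|\xi\|\,\|\eta\|$. Hence, writing $\ha = \hpi_{V}(\omega_{\xi,\eta}) = \sum_{n}\ha_{\xi_{n},\eta_{n}}$, we obtain $\|\ha\|_{\tilde{\cal A}(G)} \le |(\xi,\eta)|_{M} \le \|\xi\|\,\|\eta\|$; taking the infimum over all such representations of $\ha$ yields $\|\ha\|_{\tilde{\cal A}(G)} \le \|\ha\|_{\hA^{0}_{V}}$ for every $\ha$ in the linear span of $A_{cf}(G)$ inside $\hA^{0}_{V}$.

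It remains to upgrade this to an extension defined on all of $\hA^{0}_{V}$. For this I would check that the linear span of $A_{cf}(G)$ is dense in $\hA^{0}_{V}$: given $\omega_{\xi,\eta} \in \tilde\Omega_{\beta,\alpha}$ and $N \in \naturals$, the difference between $\omega_{\xi,\eta}$ and the truncation of its defining sequences to the first $N$ components is $\omega_{\xi',\eta'}$ with $\xi',\eta'$ the tails, and $\|\xi'\|\,\|\eta'\| \le \|\xi'\|\,\|\eta\| \to 0$ as $N \to \infty$ because $\sum_{n}\xi_{n}^{*}\xi_{n}$ converges in norm; applying the contraction $\hpi_{V}$ shows $\hpi_{V}(\omega_{\xi,\eta}) = \lim_{N}\sum_{n \le N}\ha_{\xi_{n},\eta_{n}}$ in $\hA^{0}_{V}$. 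Since $\tilde{\cal A}(G)$ is complete, the norm bound of the previous paragraph shows that the canonical map from the linear span of $A_{cf}(G)$ into $\tilde{\cal A}(G)$, which is the identity on $A_{cf}(G)$, extends uniquely to a linear contraction $\hA^{0}_{V} \to \tilde{\cal A}(G)$. The only steps requiring genuine care are the translation between the Hilbert-$C^{*}$-module norm $\|\cdot\|$ on $\beta^{\infty}$ and Paterson's pointwise quantities $\|\xi_{n}(u)\|$ — which is precisely where unitarity of $j$ over $C_{0}(G^{0})$ is used — together with the density statement; beyond that the analytic content is a single application of Cauchy--Schwarz.
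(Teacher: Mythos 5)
Your proposal is correct and is essentially the paper's own proof: the entire content recorded there is the identification $\|\xi\|^{2}=\sup_{u\in G^{0}}\sum_{n}\|\xi_{n}(u)\|^{2}$ (coming from unitarity of $j$ over $C_{0}(G^{0})$) together with the Cauchy--Schwarz estimate $|(\xi,\eta)|_{M}\leq\|\xi\|\,\|\eta\|$, which is exactly your key step. The additional bookkeeping you supply (that $\hpi_{V}(\omega_{\xi,\eta})=\sum_{n}\ha_{\xi_{n},\eta_{n}}$, density of the span of $A_{cf}(G)$ in $\hA^{0}_{V}$ via truncation, and extension by completeness of $\tilde{\cal A}(G)$) is routine and matches what the paper leaves implicit.
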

\begin{proof}
  For all $\xi,\xi' \in L^{2}(G,\lambda)^{\infty}$, we
  have
  \begin{align*}
    \|\xi\|^{2} &= \sup_{u \in G^{0}} \sum_{n} \langle
    \xi_{n}|\xi_{n}\rangle(u) = \sup_{u \in G^{0}} \sum_{n}
    \|\xi_{n}(u)\|^{2}, &
    \|\xi'\|^{2} =\sup_{v \in G^{0}} \sum_{n}
    \|\xi_{n}(v)\|^{2},
  \end{align*}
  and therefore
  $|(\xi,\xi')|_{M} = \sup_{u,v\in G^{0}}
  \sum_{n}\|\xi_{n}(u)\| \|\xi'_{n}(v)\|
  \leq\|\xi\|\|\xi'\|$. 
\end{proof}
Let us add that a Fourier algebra for measured groupoids
was defined and studied by Renault \cite{renault:fourier},
and for measured quantum groupoids by Vallin
\cite{vallin:1}. 

Finally, we consider the $C^{*}$-algebras associated to
$V$. Recall that the reduced groupoid $C^{*}$-algebra
$C^{*}_{r}(G)$ is the closed linear span of all operators of
the $L(g)$, where $g \in L^{1}(G,\lambda)$ \cite{renault}.
\begin{theorem} \label{theorem:legs-groupoid}
  Let $V$ be the $C^{*}$-pseudo-multiplicative unitary of a
  locally compact groupoid $G$.  Then
  $((\hA_{V})^{\alpha,\beta}_{H},\hDelta_{V})$ and
  $((A_{V})_{H}^{\hbeta,\alpha},\Delta_{V})$ are Hopf
  $C^{*}$-bimodule and
  \begin{gather*}
    \begin{aligned}
      \hA_{V} &= m(C_{0}(G)), &
      \big(\hDelta_{V}(m(f))\omega\big)(x,y) &= f(xy) \omega(x,y), \\
      A_{V} &= C^{*}_{r}(G), &
      \big(\Delta_{V}(L(g))\omega'\big)(x',y') &=
      \int_{G^{u'}} g(z)D^{-1/2}(z)
      \omega'(z^{-1}x',z^{-1}y') \intd\lambda^{u'}(z)
    \end{aligned}
    \end{gather*}
    for all $f \in C_{0}(G)$, $\omega \in \Hsource$, $(x,y)
    \in \GsrG$ and $g \in C_{c}(G)$, $\omega' \in
    \Hrange$, $(x',y') \in \GrrG$, where $u'=r(x')=r(y')$.
\end{theorem}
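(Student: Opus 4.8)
The plan is to identify $\hA_{V}$ and $A_{V}$ first, then to compute the two comultiplications explicitly on dense subspaces, and finally to deduce the Hopf $C^{*}$-bimodule structure from Lemma~\ref{lemma:pmu-delta-hopf}.

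\emph{The algebras.} Since $\alpha=\beta=j(L^{2}(G,\lambda))$ and $\hbeta=\hat j(L^{2}(G,\lambda^{-1}))$, the spanning elements of $\hA_{V}=[\bbeta{2}V\kalpha{2}]$ are the operators $\ha_{\xi,\xi'}$ with $\xi,\xi'\in L^{2}(G,\lambda)$, and those of $A_{V}=[\balpha{1}V\khbeta{1}]$ are the operators $a_{\eta,\eta'}$ with $\eta\in L^{2}(G,\lambda)$, $\eta'\in L^{2}(G,\lambda^{-1})$. These depend norm-continuously and bilinearly on their arguments, and $C_{c}(G)$ is dense in both $L^{2}(G,\lambda)$ and $L^{2}(G,\lambda^{-1})$; hence by Lemma~\ref{lemma:legs-groupoid}, $\hA_{V}$ is the closed linear span of the operators $m(\overline{\xi}\ast\xi'{}^{*})$ and $A_{V}$ that of the operators $L(\overline{\eta}\eta')$, for $\xi,\xi',\eta,\eta'\in C_{c}(G)$. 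Every $f\in C_{c}(G)$ has the form $\overline{\eta}\eta'$ with $\eta,\eta'\in C_{c}(G)$ (take $\eta'=f$ and $\eta=\overline{\eta_{0}}$ for some $\eta_{0}\in C_{c}(G)$ with $\eta_{0}\equiv 1$ on $\supp f$), so $A_{V}=[L(C_{c}(G))]=C^{*}_{r}(G)$. For $\hA_{V}$: each $\overline{\xi}\ast\xi'{}^{*}$ lies in $C_{0}(G)$, and a standard approximate-unit argument for the convolution algebra $C_{c}(G)$ shows that $\overline{\xi}\ast\xi'{}^{*}$ converges uniformly to $\overline{\xi}$ when $\xi'$ ranges over suitable nonnegative functions supported in shrinking neighbourhoods of $G^{0}$; as $\{\overline{\xi}\mid\xi\in C_{c}(G)\}$ is dense in $C_{0}(G)$, the closed linear span of the functions $\overline{\xi}\ast\xi'{}^{*}$ is $C_{0}(G)$, and therefore $\hA_{V}=m(C_{0}(G))$, $m$ being faithful hence isometric.

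\emph{The comultiplications.} I would compute $\hDelta_{V}$ and $\Delta_{V}$ on the dense subspaces $C_{c}(\GsrG)\subseteq\Hsource$ and $C_{c}(\GrrG)\subseteq\Hrange$. First one checks that $\hA_{V}=m(C_{0}(G))$, consisting of multiplication operators, commutes with $\rho_{\beta}(\frakB)=r^{*}(C_{0}(G^{0}))$ and $\rho_{\hbeta}(\frakB)=s^{*}(C_{0}(G^{0}))$, and that $A_{V}=C^{*}_{r}(G)$ commutes with $\rho_{\hbeta}(\frakB)=s^{*}(C_{0}(G^{0}))$, so that $\hDelta_{V}$ and $\Delta_{V}$ are defined on $\hA_{V}$ and $A_{V}$. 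Under the identifications $\Phi_{\hbeta,\alpha}$ and $\Phi_{\alpha,\beta}$ the unitary $V$ and its adjoint act by $(V\omega)(x,y)=\omega(x,x^{-1}y)$ and $(V^{*}\omega')(x,y)=\omega'(x,xy)$; the operator $\Id\btensor m(f)$ on $\Hrange$ is multiplication by $(x,y)\mapsto f(y)$, while $L(g)\botensor\Id$ on $\Hsource$ acts by $\omega\mapsto\big((x,y)\mapsto\int_{G^{r(x)}}g(z)D^{-1/2}(z)\omega(z^{-1}x,y)\intd\lambda^{r(x)}(z)\big)$. Substituting into $\hDelta_{V}(m(f))=V^{*}(\Id\btensor m(f))V$ and $\Delta_{V}(L(g))=V(L(g)\botensor\Id)V^{*}$ then yields exactly the two displayed formulas.

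\emph{The Hopf $C^{*}$-bimodule structure.} Here I would invoke Lemma~\ref{lemma:pmu-delta-hopf}, so that it only remains to check its hypotheses. The stability conditions reduce to $r^{*}(C_{0}(G^{0}))m(C_{0}(G))\subseteq m(C_{0}(G))$, $s^{*}(C_{0}(G^{0}))m(C_{0}(G))\subseteq m(C_{0}(G))$ and $r^{*}(h)L(g)=L((h\circ r)g)$ (together with its adjoint), all of which are immediate. The substantial point is that $\hDelta_{V}(\hA_{V})\subseteq\hA_{V}\fibre{\hbeta}{\frakbo}{\alpha}\hA_{V}\subseteq\mathcal{L}(\Hsource)$ and $\Delta_{V}(A_{V})\subseteq A_{V}\fibre{\alpha}{\frakb}{\beta}A_{V}\subseteq\mathcal{L}(\Hrange)$. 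For $\hDelta_{V}$ this follows from the explicit formula, the identification $C_{0}(\GsrG)\cong C_{0}(G)\rtensor{s^{*}}{C_{0}(G^{0})}{r^{*}}C_{0}(G)$ of \cite{blanchard}, and the definition of the fiber product, which one verifies directly by checking that the multiplication operator $(x,y)\mapsto f(xy)$ and its adjoint send $\khbeta{1}$ into $[\khbeta{1}m(C_{0}(G))]$ and $\kalpha{2}$ into $[\kalpha{2}m(C_{0}(G))]$; the argument for $\Delta_{V}$ is analogous, using that $\Delta_{V}(L(g))$ is the simultaneous left-translation operator on $L^{2}(\GrrG)$ given by the displayed formula. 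Lemma~\ref{lemma:pmu-delta-hopf} then gives both Hopf $C^{*}$-bimodules together with coassociativity, the latter being derived in that lemma's proof from the pentagon equation~\eqref{eq:pmu-pentagon}. I expect this last step --- pinning the fiber products down concretely enough to see that $\hDelta_{V}$ and $\Delta_{V}$ land inside them --- to be the main obstacle; everything else is bookkeeping on top of the machinery of Sections~\ref{section:pmu} and~\ref{section:legs}.
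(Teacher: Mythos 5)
Your identification of the algebras and your computation of the two comultiplication formulas follow essentially the same lines as the paper (Lemma \ref{lemma:legs-groupoid} plus density arguments, then a direct computation of $V^{*}(\Id\btensor m(f))V$ and $V(L(g)\botensor\Id)V^{*}$ on $C_{c}(\GsrG)$ and $C_{c}(\GrrG)$). Where you genuinely diverge is the first assertion: the paper does not verify the hypotheses of Lemma \ref{lemma:pmu-delta-hopf} by hand for this unitary, but instead observes that the groupoid unitary is \emph{regular} --- Example \ref{examples:pmu-regular} ii) computes $C_{V}=[\balpha{1}V\kalpha{2}]=[\alpha\alpha^{*}]$ concretely --- and then invokes the general Theorem \ref{theorem:regular-legs}, which yields the inclusions $\hDelta_{V}(\hA_{V})\subseteq\hA_{V}\fibre{\hbeta}{\frakbo}{\alpha}\hA_{V}$ and $\Delta_{V}(A_{V})\subseteq A_{V}\fibre{\alpha}{\frakb}{\beta}A_{V}$ abstractly (via Lemma \ref{lemma:semi-regular} and Proposition \ref{proposition:rep-legs}) before feeding them into Lemma \ref{lemma:pmu-delta-hopf}. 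Your direct route is viable for this concrete unitary: the conditions you state are exactly the fiber-product membership conditions, and for $\hDelta_{V}(m(f))$ they amount to approximating kernels like $(x,y)\mapsto f(xy)\eta(x)$ by finite sums of product functions on a fixed compact set, while for $\Delta_{V}(L(g))$ one must approximate $(z,x')\mapsto g(z)\eta(z^{-1}x')$ by sums $\sum_{i}g_{i}(z)\eta_{i}(x')$ with operator-norm control coming from the $I$-norm of $L(\cdot)$ and the $L^{2}(G,\lambda)$-norm of the legs; these estimates are the actual content of your ``main obstacle'' and are only sketched in your proposal, whereas the paper's detour through regularity disposes of them once and for all (and, as a by-product, gives self-adjointness of $\hA_{X}$ for every representation, which in your route is only automatic because you already know $\hA_{V}=m(C_{0}(G))$ and $A_{V}=C^{*}_{r}(G)$ concretely). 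So: correct in outline, a legitimately different and more hands-on argument for the Hopf $C^{*}$-bimodule part, but to be complete it would need the approximation estimates carried out, or simply a citation of the regularity of $V$ as in the paper.
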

\begin{proof}
  The first assertion will follow from Example
  \ref{examples:pmu-regular}  and Theorem
  \ref{theorem:regular-legs} in subsection
  \ref{subsection:regular}. The equations
  concerning $\hA_{V}$ and $A_{V}$ follow directly from
  Lemma \ref{lemma:legs-groupoid}. Let us prove the formulas
  for $\hDelta_{V}$ and $\Delta_{V}$.  For all
  $f,\omega,(x,y)$ as above,
  \begin{multline*} 
    \big( \hDelta_{V}(m(f))\omega\big)(x,y) = \big(V^{*}(
    \Id \tr m(f))V\omega \big) (x,y) \\ = \big(( \Id \tr
    m(f))V\omega \big) (x,xy) = f(xy)(V\omega)(x,xy) =
    f(xy)\omega(x,y),
  \end{multline*}
and for all $g, (x',y'),\omega',u'$ as above,
\begin{align*} 
  \big( \Delta_{V}(L(g))\omega'\big)(x',y') &= \big(V(L(g)
  \tl \Id)V^{*}\omega' \big) (x',y') \\ &=
  \big((L(g) \tl \Id)V^{*}\omega' \big) (x',x'{}^{-1}y') \\
  &= \int_{G^{u'}} g(z) D^{-1/2}(z)
  (V^{*}\omega')(z^{-1}x',x'{}^{-1}y') \intd\lambda^{u'}(z)
  \\ &= \int_{G^{u'}} g(z)
  D^{-1/2}(z)\omega'(z^{-1}x',z^{-1}y) \intd\lambda^{u'}(z).
  \qedhere
  \end{align*}
\end{proof}

\section{Representations of a $C^{*}$-pseudo-multiplicative
  unitary}
\label{section:reps}

Let $G$ be a locally compact group and let $V$ be the
multiplicative unitary on the Hilbert space
$L^{2}(G,\lambda)$ given by formula \eqref{eq:pmu-group}.
Then one can associate to every unitary representation $\pi$
of $G$ on a Hilbert space $K$ a unitary operator $X$ on
$L^{2}(G,\lambda) \otimes K \cong L^{2}(G,\lambda;K)$ such
that $(Xf)(x) = \pi(x)f(x)$ for all $x \in G$ and $f \in
L^{2}(G,\lambda;K)$, and this operator satisfies the
modified pentagon equation
\begin{align} \label{eq:rep-ordinary-pentagon}
 V_{12}X_{13}X_{23}=V_{23}X_{12}. 
\end{align}

For a general multiplicative unitary $V$ on a Hilbert space
$H$, Baaj and Skandalis defined a representation on a
Hilbert space $K$ to be a unitary $X$ on $H \otimes K$
satisfying equation \eqref{eq:rep-ordinary-pentagon},
equipped the class of all such representations with the
structure of a $C^{*}$-tensor category and showed that under
the assumption of regularity, this $C^{*}$-tensor category
is the category of representations of a Hopf $C^{*}$-algebra
$(A_{(u)},\Delta_{(u)})$ (see \cite{baaj:2}).  In the case
where $V$ is the unitary associated to a group $G$ as above,
this category is isomorphic to the category of unitary
representations of $G$, and $A_{(u)}$ is the full group
$C^{*}$-algebra $C^{*}(G)$.

We carry over these definitions and constructions to
$C^{*}$-pseudo-multiplicative unitaries and relate them to
representations of groupoids.
Throughout this section, let $\frakb=\cbasesb$ be a
$C^{*}$-base, $\pmuspace$ a
$C^{*}$-$(\frakbo,\frakb,\frakbo)$-module and $\pmuoperator$
a $C^{*}$-pseudo-multiplicative unitary.

\subsection{The $C^{*}$-tensor category of representations }
\label{subsection:rep-category}

  Let $\cKhd$ be a $C^{*}$-$(\frakb,\frakbo)$-module
and $X \colon K \rbotensor{\hdelta}{\alpha} H \to K
\rbtensor{\gamma}{\beta} H$  an operator satisfying
    \begin{gather} \label{eq:rep-intertwine}
      \begin{aligned} X(\gamma \lt \alpha) &= \gamma \rt
\alpha, & X(\hdelta \rt \beta) &= \hdelta \lt \beta, &
X(\hdelta \rt \hbeta) &= \gamma \rt \hbeta
\end{aligned}
\end{gather} 
as subsets of ${\cal L}(\frakK,\rHrange)$.
Then all operators in the following diagram
are well defined,
\begin{gather} \label{eq:rep-pentagon} \smalldiagram
  \begin{gathered} \xymatrix@R=15pt@C=20pt{ {\rHone}
      \ar[r]^{X \botensor \Id} \ar[d]^{\Id \botensor V} &
      {\rHtwo} \ar[r]^{\Id \btensor V} & {\rHthree,} \\
      {\rHfive} \ar[d]^{\Id \botensor \Sigma} && {\rHfour}
      \ar[u]^{X \btensor \Id} \\ {\rHfourlt} \ar[rr]^{X
        \botensor \Id} && {\rHfourrt} \ar[u]^{\Sigma_{23}}
    }
  \end{gathered}
\end{gather}
where the canonical isomorphism $\Sigma_{23}\colon \rHfourrt \cong (K
{_{\rho_{\gamma}} \tl} \beta) {_{\rho_{(\hdelta \lt \beta)}}
  \tl} \alpha \mycong (K {_{\rho_{\hdelta}} \tl} \alpha)
{_{\rho_{(\gamma \lt \alpha)}} \tl} \beta \cong \rHfour$ is
given by $(\zeta \tl \xi) \tl \eta \mapsto (\zeta \tl \eta)
\tl \xi$.  We again adopt the leg notation \cite{baaj:2} and
write
\begin{align*}
  X_{12} &\text{ for } X \botensor \Id \text{ and } X
  \btensor \Id; & X_{13} &\text{ for } \Sigma_{23}(X
  \botensor \Id)(\Id \botensor \Sigma).
\end{align*}
\begin{definition} \label{definition:pmu-rep} A {\em
    representation} of $V$ consists of a
  $C^{*}$-$(\frakb,\frakbo)$-module $\cKhd$ and a unitary $X
  \colon K \rbotensor{\hdelta}{\alpha} H \to K
  \rbtensor{\gamma}{\beta} H$ such that equation
  \eqref{eq:rep-intertwine} holds and diagram
  \eqref{eq:rep-pentagon} commutes.  We also call $X$ a
  representation of $V$ (on $\cKhd$).  A {\em
    (semi-)morphism} of representations $(\cKhd,X)$ and
  $(\eLhf,Y) $ is an operator $T \in {\cal
    L}_{(s)}(\cKhd,\eLhf)$ satisfying $Y(T \botensor \Id)=(T
  \btensor \Id)X$.  Evidently, the class of all
  representations and (semi-)morphisms forms a category; we
  denote it by $\bfcs\bfrep^{(s)}_{V}$.
\end{definition} 
\begin{examples} \label{examples:rep}
  \begin{enumerate}
  \item Consider the canonical isomorphisms
    \begin{align} \label{eq:rep-canonical-triv} \Phi &\colon
      \frakK \rtensor{\frakBo}{\frakbo}{\alpha} H \to H, \
      b^{\dag} \tr \zeta \mapsto
      \rho_{\alpha}(b^{\dag})\zeta, & \Psi &\colon \frakK
      \rtensor{\frakB}{\frakb}{\beta} H \to H, \ b \tr \zeta
      \mapsto \rho_{\beta}(b)\zeta.
\end{align} 
The composition $\trivrep:=\Psi^{*}\Phi$ is a
representation on ${_{\frakB}\frakK_{\frakBo}}$ which we call
the {\em trivial} representation of $V$.
\item The pair $({_{\alpha}H_{\hbeta}},V)$ is a
  representation which we call the {\em regular}
  representation.
\item Let $(({_{\gamma_{i}}K^{i}_{\hdelta_{i}}},X_{i}))_{i}$
  be a family of representations. Then the operator
    \begin{align*} \boxplus_{i} X_{i} \colon
      \big(\boxplus_{i} K^{i}_{\hdelta_{i}} \big)
      \botensor {_{\alpha}H} \to \big(\boxplus_{i}
      K^{i}_{\gamma_{i}} \big) \btensor {_{\beta}H}
    \end{align*}  corresponding to
    $\bigoplus_{i} X_{i}$ with respect to the
    identifications $\big(\boxplus_{i}
    K^{i}_{\hdelta_{i}} \big) \botensor {_{\alpha}H} \cong
    \bigoplus_{i} \big(K^{i}
    \rtensor{\hdelta^{i}}{\frakbo}{\alpha} H\big)$ and
    $\big(\boxplus_{i} K^{i}_{\gamma_{i}} \big) \btensor
    {_{\beta}H} \cong \bigoplus_{i} \big(K^{i}
    \rtensor{\gamma^{i}}{\frakb}{\beta} H\big)$ is a
    representation on $\boxplus_{i}
    {_{\gamma_{i}}K^{i}_{\hdelta_{i}}}$. We call it the {\em
      direct sum} of $({ X}_{i})_{i}$.
  \item Let $\frakc$ be a $C^{*}$-base, $L_{\lambda}$ a
    $C^{*}$-$\frakc$-module, $(K,\gamma,\hdelta,\kappa)$
    a $C^{*}$-$(\frakb,\frakbo,\frakco)$-module, and $X$ a
    representation on $\cKhd$. If $X(\kappa
    \lt \alpha) = \kappa \lt \beta$, then the operator
    \begin{align*}
      \Id \ctensor X\colon L
      \rtensor{\lambda}{\frakc}{\kappa} K
      \rtensor{\hdelta}{\frakbo}{\alpha} H \to L
      \rtensor{\lambda}{\frakc}{\kappa} K
      \rtensor{\gamma}{\frakb}{\beta} H
    \end{align*}
    is a representation on ${_{\lambda \rt \gamma}(L
      \rtensor{\lambda}{\frakc}{\kappa} K)_{\lambda \rt
        \hdelta}}$, as one can easily check.
\end{enumerate}
\end{examples}

The category of representations admits a
tensor product:
\begin{lemma} 
  Let $(\cKhd,X)$ and $(\eLhf,Y)$ be representations of
  $V$. Then the operator
  \begin{gather*} X \boxtimes Y \colon K
\rtensor{\hdelta}{\frakbo}{\epsilon} L
\rtensor{\widehat\phi}{\frakbo}{\alpha} H
\xrightarrow{\displaystyle Y_{23}} K
\rtensor{\hdelta}{\frakbo}{\epsilon \rt \alpha} (L
\rtensor{\epsilon}{\frakb}{\beta} H)
\xrightarrow{\displaystyle X_{13}} (K
\rtensor{\hdelta}{\frakbo}{\epsilon} L) \rtensor{\gamma \lt
\epsilon}{\frakb}{\beta} H,
  \end{gather*} 
  where $Y_{23}=\Id \botensor Y$ and where $X_{13}$ acts
  like $X$ on the first and last factor of the relative
  tensor product, is a representation of $V$ on $\cKhd
  \botensor \eLhf$.
\end{lemma}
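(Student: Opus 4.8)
The plan is to argue in three steps: first identify the $C^{*}$-module underlying $\cKhd \botensor \eLhf$; then show that $X \boxtimes Y$ is a well-defined unitary of the shape prescribed by Definition \ref{definition:pmu-rep} satisfying the intertwining relations \eqref{eq:rep-intertwine}; and finally verify the pentagon \eqref{eq:rep-pentagon}. The first step is immediate from the relative tensor product construction recalled in Subsection \ref{subsection:pmu-rtp}: since $\cKhd$ and $\eLhf$ are $C^{*}$-$(\frakb,\frakbo)$-modules, the Hilbert space $K \rtensor{\hdelta}{\frakbo}{\epsilon} L$ carries the structure of a $C^{*}$-$(\frakb,\frakbo)$-module $\cKhd \botensor \eLhf$ with $\frakb$-leg $\gamma \lt \epsilon$ and $\frakbo$-leg $\hdelta \rt \widehat\phi$, which is exactly the datum of a representation space.

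For the second step, the relations \eqref{eq:rep-intertwine} for $Y$ say precisely that $Y$ is a morphism between its source and target $C^{*}$-modules, so by functoriality of the relative tensor product $Y_{23} := \Id_{K} \botensor Y$ is a well-defined unitary; likewise, functoriality applied to $X$ and $\Id_{L}$, followed by conjugation with the canonical flips $\Sigma$ and the associativity isomorphism \eqref{eq:rtp-associative}, yields the unitary $X_{13}$ acting ``like $X$ on the outer factors'', and the relations $X(\gamma \lt \alpha) = \gamma \rt \alpha$ and $X(\hdelta \rt \hbeta) = \gamma \rt \hbeta$ give $\Img(Y_{23}) = \Dom(X_{13})$, so that $X \boxtimes Y := X_{13}Y_{23}$ is a well-defined unitary. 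Tracking the residual leg spaces through the two factors and using the compatibility of $\lt, \rt$ with \eqref{eq:rtp-associative}, one identifies its source with $(\cKhd \botensor \eLhf)$ glued to $H$ along $(\hdelta \rt \widehat\phi)$ and $\alpha$, and its range with $(\cKhd \botensor \eLhf)$ glued to $H$ along $(\gamma \lt \epsilon)$ and $\beta$, as required; the three intertwining relations for $X \boxtimes Y$ then follow leg by leg by pushing each leg space through $Y_{23}$ and then $X_{13}$ and invoking the corresponding relations for $Y$ and $X$ --- e.g.\ $(X \boxtimes Y)\big((\gamma \lt \epsilon) \lt \alpha\big) = X_{13}\big(\gamma \lt (\epsilon \rt \alpha)\big) = \gamma \rt (\epsilon \rt \alpha) = (\gamma \lt \epsilon) \rt \alpha$, the relation involving $\hbeta$ using $Y(\widehat\phi \rt \hbeta) = \epsilon \rt \hbeta$.

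For the third step I would use the leg notation of \cite{baaj:2}. On the fourfold relative tensor product obtained by gluing $H$ twice onto $\cKhd \botensor \eLhf$, label the factors $K, L, H, H$ with the two copies of $H$ as legs $3$ and $4$; then $(X \boxtimes Y)_{12} = X_{13}Y_{23}$, $(X \boxtimes Y)_{13} = X_{14}Y_{24}$, and the operator ``$V_{23}$'' appearing in \eqref{eq:rep-pentagon} is $V_{34}$. Hence the pentagon to be shown reads $X_{13}Y_{23}X_{14}Y_{24}V_{34} = V_{34}X_{13}Y_{23}$. Commuting the disjoint-leg operators $Y_{23}$ and $X_{14}$ rewrites the left-hand side as $X_{13}X_{14}(Y_{23}Y_{24}V_{34})$; the pentagon \eqref{eq:rep-pentagon} for $Y$ (with $L$ as leg $2$ and the two copies of $H$ as legs $3,4$) turns this into $X_{13}X_{14}V_{34}Y_{23}$, and the pentagon for $X$ (with $K$ as leg $1$ and the two copies of $H$ as legs $3,4$) turns it into $V_{34}X_{13}Y_{23} = V_{34}(X \boxtimes Y)_{12}$, as claimed.

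The short manipulation in the third step is the familiar Baaj--Skandalis computation; the real obstacle is the bookkeeping needed to make it and the second step rigorous over $C^{*}$-bases. One must fix mutually compatible identifications among the various re-bracketings and flips of the threefold and fourfold relative tensor products (via \eqref{eq:rtp-associative} and $\Sigma$), check that each of $X_{ab}, Y_{ab}, V_{ab}$ is genuinely a well-defined operator on the module at hand (using functoriality, the intertwining relations, and the commutation of the $\rho$-representations across tensor factors), and verify that the leg-commutativity relations used, such as $Y_{23}X_{14} = X_{14}Y_{23}$, survive transport through these identifications. This is tedious but straightforward, in the spirit of the proof of Theorem \ref{theorem:pmu-groupoid}.
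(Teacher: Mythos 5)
Your proposal is correct and follows essentially the same route as the paper: the intertwining relations are checked by pushing the leg spaces through $Y_{23}$ and then $X_{13}$ exactly as in the paper's first display, and the pentagon is verified by the same Baaj--Skandalis leg computation $X_{13}Y_{23}X_{14}Y_{24}V_{34}=X_{13}X_{14}Y_{23}Y_{24}V_{34}=X_{13}X_{14}V_{34}Y_{23}=V_{34}X_{13}Y_{23}$, with the identification bookkeeping over $C^{*}$-bases acknowledged rather than written out, just as the paper does with its remark that ``a similar calculation applies to the general case.''
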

\begin{proof} First, the relations \eqref{eq:rep-intertwine}
for $X$ and $Y$ imply
\begin{align*} X_{13}Y_{23}(\gamma \Lt{\hdelta} \epsilon
  \Lt{\hphi} \alpha) &= X_{13}(\gamma \Lt{\hdelta} (\epsilon
  \Rt{\beta} \alpha)) = (\gamma \Lt{\hdelta} \epsilon)
  \Rt{\beta} \alpha, \\ X_{13} Y_{23}(\hdelta \Rt{\epsilon}
  \hphi \Rt{\alpha} \beta) &= X_{13}(\hdelta \Rt{(\epsilon
    \rt \alpha)} (\hphi \Lt{\epsilon} \beta)) = (\hdelta
  \Rt{\epsilon} \hphi ) \Lt{(\gamma \lt \epsilon)} \beta, \\
  X_{13}Y_{23}(\hdelta \Rt{\epsilon} \hphi \Rt{\alpha}
  \hbeta) &= X_{13}(\hdelta \Rt{(\epsilon \rt \alpha)}
  (\epsilon \Rt{\beta} \hbeta)) = (\gamma \Lt{\hdelta}
  \epsilon) \Lt{(\hdelta \rt \hphi)} \beta.
  \end{align*} 
   If $V$ is an ordinary multiplicative
  unitary, then $Z:=X\boxtimes Y$ satisfies
  $Z_{12}Z_{13}V_{23}=V_{23}Z_{12}$ because the equations
  $Y_{12}Y_{13}V_{23} = V_{23}Y_{12}$,
  $X_{12}X_{13}V_{23}=V_{23}X_{12}$ imply
  $X_{13}Y_{23}X_{14}Y_{24}V_{34} =
  X_{13}X_{14}Y_{23}Y_{24}V_{34} = X_{13}X_{14}V_{34}Y_{23}
  = V_{34}X_{13}Y_{23}$; here, we used the leg notation
  \cite{baaj:2}. A similar calculation applies to the
  general case.
\end{proof} 
The tensor product turns $\bfcs\bfrep_{V}$ and
$\bfcs\bfrep_{V}^{s}$ into ($C^{*}$-)tensor categories,
which are frequently also called ($C^{*}$-)monoidal
categories \cite{doplicher,hayashi,maclane}:
\begin{theorem} \label{theorem:rep-monoidal} The category
  $\bfcs\bfrep_{V}$ carries the structure of a $C^{*
  }$-tensor category and the category $\bfcs\bfrep_{V}^{s}$
  carries the structure of a tensor category, where both
  times
  \begin{itemize}
  \item the tensor product is given by $({X},{Y})
    \mapsto {X} \boxtimes {Y}$ for representations
    and $(S,T) \mapsto S \botensor T$ for morphisms;
  \item the unit is the trivial representation $\trivrep$;
  \item the associativity isomorphism $a_{{X},{ Y},{Z}}
    \colon ({X} \boxtimes {Y}) \boxtimes {Z} \to {X}
    \boxtimes ({Y} \boxtimes {Z})$ is the isomorphism
    $a_{{\cal K},{\cal L},{\cal M}}$ of equation
    \eqref{eq:rtp-associative} for all representations
    $({\cal K},X),({\cal L},Y),({\cal M},Z)$;
  \item the unit isomorphisms $l_{{ X}} \colon \trivrep
    \boxtimes { X} \to { X}$ and $r_{{ X}} \colon { X}
    \boxtimes \trivrep \to { X}$ are the isomorphisms
    $l_{{\cal K}}$ and $r_{{\cal K}}$, respectively, of
    equation \eqref{eq:rtp-unital} for each representation
    $({\cal K},X)$.
  \end{itemize}
\end{theorem}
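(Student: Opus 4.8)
The plan is to verify, one axiom at a time, that the data listed in the statement satisfy the axioms of a ($C^{*}$-)tensor category, reducing each verification to a property of the relative tensor product established in subsection~\ref{subsection:pmu-rtp} and to the fact that the structure isomorphisms there are morphisms of $C^{*}$-modules. The key observation is that all the candidate associativity and unit isomorphisms, as well as the tensor product of morphisms, are \emph{already known} to be isomorphisms (resp.\ morphisms) of the underlying $C^{*}$-modules by the Functoriality, Unitality and Associativity properties of ``$\btensor$''; what remains is to check that they intertwine the representation operators appropriately, i.e.\ that they are morphisms \emph{of representations} in the sense of Definition~\ref{definition:pmu-rep}. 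So the proof naturally splits into: (a) bifunctoriality of $\boxtimes$ on morphisms; (b) the interchange/pentagon coherence for the associator; (c) the triangle coherence relating the associator and the unit isomorphisms; and, in the $C^{*}$ case, (d) checking that all structure maps are unitary and that the relevant hom-sets are norm-closed and that composition and $\boxtimes$ are compatible with the adjoint, so that one indeed gets a $C^{*}$-tensor category rather than merely a tensor category.

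First I would treat bifunctoriality. Given morphisms $S\colon(\cKhd,X)\to(\cKhd',X')$ and $T\colon(\eLhf,Y)\to(\eLhf',Y')$, Functoriality of $\btensor$ produces $S\botensor T\in\mathcal{L}_{(s)}(\cKhd\botensor\eLhf,\,\cKhd'\botensor\eLhf')$, and one must check that $S\botensor T$ is a morphism of the tensor-product representations, i.e.\ that $(X'\boxtimes Y')\big((S\botensor T)\botensor\Id\big)=\big((S\botensor T)\btensor\Id\big)(X\boxtimes Y)$. Unwinding $X\boxtimes Y=X_{13}Y_{23}$ and using that $S$ intertwines $X,X'$ and $T$ intertwines $Y,Y'$ on the respective legs, this follows by a leg-by-leg computation exactly as in Baaj--Skandalis. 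Functoriality in each variable separately (that $X\boxtimes(-)$ and $(-)\boxtimes Y$ are functors and that $S\botensor T=(S\botensor\Id)(\Id\botensor T)$ is the same as the simultaneous $\botensor$) is again immediate from the corresponding bifunctoriality of $\btensor$.

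Next I would address the coherence isomorphisms. For the associator one takes $a_{X,Y,Z}:=a_{\mathcal{K},\mathcal{L},\mathcal{M}}$ from \eqref{eq:rtp-associative}, which is already an isomorphism of $C^{*}$-modules; the content is that it is a morphism of representations, i.e.\ that $(X\boxtimes(Y\boxtimes Z))(a_{X,Y,Z}\botensor\Id)=(a_{X,Y,Z}\btensor\Id)((X\boxtimes Y)\boxtimes Z)$ and likewise for $a^{-1}$. Expanding both tensor products of representations into products of leg operators $X_{1\cdot},Y_{1\cdot},Z_{2\cdot},V_{\cdot\cdot}$ and using that $a_{\mathcal{K},\mathcal{L},\mathcal{M}}$ simply reassociates the relative tensor factors, this reduces to commuting disjoint legs and to the associativity of $\btensor$; the Mac Lane pentagon for $a_{X,Y,Z}$ is then inherited verbatim from the pentagon for $a_{\mathcal{H}_{1},\mathcal{H}_{2},\mathcal{H}_{3}}$ for $C^{*}$-modules, which holds because both sides are the canonical reassociation. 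For the unit, one takes the unit object to be $(\triv_{V},{_{\frakB}\frakK_{\frakBo}})$ of Examples~\ref{examples:rep}(i) and the unit isomorphisms $l_{X}:=l_{\mathcal{K}}$, $r_{X}:=r_{\mathcal{K}}$ from \eqref{eq:rtp-unital}; one must check that these are morphisms of representations, which amounts to verifying $X\boxtimes\triv_{V}\cong X\cong\triv_{V}\boxtimes X$ compatibly with $l_{\mathcal{K}},r_{\mathcal{K}}$ --- here one uses that $\triv_{V}=\Psi^{*}\Phi$ together with the unitality relations, plus the side condition $X(\kappa\lt\alpha)=\kappa\lt\beta$-type identities degenerating to the base, and then the triangle axiom follows from the triangle axiom for the $C^{*}$-module coherence data. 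Naturality of $a,l,r$ in each argument is immediate from the naturality statements in subsection~\ref{subsection:pmu-rtp} combined with bifunctoriality of $\boxtimes$ proved above. Finally, for the $C^{*}$-structure: each hom-set ${\cal L}_{(s)}(\cKhd,\eLhf)$ is norm-closed in ${\cal L}(K,L)$ and closed under the relevant adjoints, the structure isomorphisms $a,l,r$ are unitary (being unitaries of Hilbert spaces), $\botensor$ on morphisms is compatible with $\ast$ by the last property recalled for $\btensor$ ($(S\btensor T)^{*}=S^{*}\btensor T^{*}$), and composition is bilinear and $\ast$-compatible --- so $\bfcs\bfrep_{V}$ is a $C^{*}$-tensor category, while dropping the $\ast$-requirement on morphisms gives the tensor category $\bfcs\bfrep_{V}^{s}$.

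The main obstacle I expect is \textbf{the verification that the coherence maps are morphisms of representations}, in particular the associator identity for $\boxtimes$: one has to carefully track the various leg positions of $X$, $Y$, $Z$, and $V$ across the three different relative tensor products (with their different fibering data $\frakb$ vs.\ $\frakbo$ and the various ``$\lt$''/``$\rt$'' legs), confirm that all the operators in the relevant diagram are well defined using the intertwining relations \eqref{eq:rep-intertwine} (and \eqref{eq:pmu-intertwine}) and the extra side conditions appearing in Examples~\ref{examples:rep}(iv), and then push the reassociation through. This is exactly the kind of ``tedious but straightforward'' leg computation already invoked for diagram~\eqref{eq:pmu-pentagon} and in the proof that $\boxtimes$ is a representation; I would state it as such and spell out only the placement of legs, leaving the bookkeeping to the reader, since conceptually it is forced by associativity and functoriality of $\btensor$.
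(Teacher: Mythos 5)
Your plan is essentially the paper's own proof spelled out: the paper simply records this verification as ``tedious but straightforward,'' and your reduction of each axiom to the functoriality, unitality and associativity of the relative tensor product, plus the leg-by-leg check that the coherence maps intertwine the representation operators, is exactly that verification. The only small imprecision is the claim that the hom-sets ${\cal L}_{(s)}$ are closed under adjoints --- this holds for morphisms but not for semi-morphisms, which is precisely why $\bfcs\bfrep_{V}^{s}$ is asserted to be only a tensor category, as you in effect acknowledge at the end.
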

\begin{proof} 
  Tedious but straightforward.
\end{proof} 

The regular representation tensorially absorbs every other
representation:
\begin{proposition} \label{lemma:rep-absorb} Let $(\cKhd,X)$
  be a representation of $V$. Then $X$ is an
  isomorphism between the representation ${ X}
  \boxtimes V$ and the amplification
  $\Id \btensor V$ on ${_{\gamma \rt \alpha}(K
    \rtensor{\gamma}{\frakb}{\beta} H)_{\gamma \rt
      \hbeta}}$.
\end{proposition}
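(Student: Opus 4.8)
The plan is to show that the unitary $X$ itself, regarded as a map between the underlying Hilbert spaces $K\rtensor{\hdelta}{\frakbo}{\alpha}H$ and $K\rtensor{\gamma}{\frakb}{\beta}H$ of the two representations involved, is the asserted isomorphism; this is the $C^{*}$-analogue of the Baaj--Skandalis fact that the regular representation absorbs every representation. Three points then have to be settled: that $X$ is an invertible morphism of the underlying $C^{*}$-$(\frakb,\frakbo)$-modules $\cKhd\botensor {_{\alpha}H_{\hbeta}}$ and ${_{\gamma \rt \alpha}(K \rtensor{\gamma}{\frakb}{\beta} H)_{\gamma \rt \hbeta}}$; that $\Id\btensor V$ really is the amplification of the regular representation in the sense of Example~\ref{examples:rep}(iv); and that $X$ intertwines $X\boxtimes V$ with $\Id\btensor V$.

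For the first point, the relations \eqref{eq:rep-intertwine} give $X(\gamma\lt\alpha)=\gamma\rt\alpha$ and $X(\hdelta\rt\hbeta)=\gamma\rt\hbeta$, and these are exactly the pairs of module structures on source and target; since $X$ is unitary, applying the same equalities to $X^{*}$ shows that $X$ is an isomorphism of $C^{*}$-$(\frakb,\frakbo)$-modules. For the second point, the regular representation $({_{\alpha}H_{\hbeta}},V)$ satisfies $V(\beta\lt\alpha)=\beta\lt\beta$ by \eqref{eq:pmu-intertwine}, which is precisely the hypothesis of Example~\ref{examples:rep}(iv) (with auxiliary structure $\beta$, amplifying by $K$ on the left over $\frakb$); hence $\Id\btensor V$ is indeed a representation on ${_{\gamma \rt \alpha}(K \rtensor{\gamma}{\frakb}{\beta} H)_{\gamma \rt \hbeta}}$, and in the leg notation on $K\rtensor{\gamma}{\frakb}{\beta}H$ tensored with a further copy of $H$ it is just $V_{23}$.

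The crux is then formal. By the construction of the tensor product of representations (the Lemma preceding Theorem~\ref{theorem:rep-monoidal}), taken with the second factor the regular representation, $X\boxtimes V$ equals $X_{13}V_{23}$ in the leg notation on $K\rtensor{\hdelta}{\frakbo}{\alpha}H$ tensored with a further copy of $H$; moreover, under the identifications above, $X\botensor\Id$ on the source side and $X\btensor\Id$ on the target side both become $X_{12}$, in accordance with the leg-notation convention for $X$. Hence the condition of Definition~\ref{definition:pmu-rep} for $X$ to be a morphism of representations from $X\boxtimes V$ to $\Id\btensor V$, namely $(\Id\btensor V)(X\botensor\Id)=(X\btensor\Id)(X\boxtimes V)$, becomes $V_{23}X_{12}=X_{12}X_{13}V_{23}$ --- which is exactly the commutativity of the pentagon \eqref{eq:rep-pentagon} for $X$, hence holds. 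Since $X$ is in addition a unitary $C^{*}$-module isomorphism, it is an isomorphism of representations, as claimed. I expect the only real work to be notational: verifying, through the associativity and flip identifications of the relative tensor products, that the four operators $X\boxtimes V$, $\Id\btensor V$, $X\botensor\Id$ and $X\btensor\Id$ genuinely are the leg-notation operators $X_{13}V_{23}$, $V_{23}$, $X_{12}$ and $X_{12}$ on the matching Hilbert spaces, so that the morphism identity above literally coincides with the pentagon square of \eqref{eq:rep-pentagon}.
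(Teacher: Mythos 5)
Your proposal is correct and is exactly the paper's argument: the paper's proof consists of the single remark that the claim follows from the commutativity of diagram \eqref{eq:rep-pentagon}, and your elaboration (that $X$ respects the module legs by \eqref{eq:rep-intertwine}, that $V(\beta\lt\alpha)=\beta\lt\beta$ licenses the amplification of Example \ref{examples:rep} iv), and that the morphism identity $(\Id\btensor V)(X\botensor\Id)=(X\btensor\Id)(X\boxtimes V)$ is literally the pentagon $V_{23}X_{12}=X_{12}X_{13}V_{23}$) is precisely the content left implicit there. No gaps.
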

\begin{proof}
  This follows from commutativity of
  \eqref{eq:rep-pentagon}.
\end{proof}

We denote by $\End(\trivrep)$ the algebra of endomorphisms
of the trivial representation $\trivrep$. This is a
commutative $C^{*}$-algebra, and the category
$\bfcs\bfrep_{V}$ can be considered as a bundle of
$C^{*}$-categories over the spectrum of
$\mathrm{End_{V}}(\trivrep)$ \cite{vasselli}.
\begin{proposition} \label{proposition:rep-end}
  $\End(\trivrep) = \{ b \in M(\frakB) \cap M(\frakBo)
  \subseteq {\cal L}(\frakK) \mid \rho_{\alpha}(b) =
  \rho_{\beta}(b)\}$.
\end{proposition}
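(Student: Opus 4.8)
The plan is to identify $\End(\trivrep)$ by computing what it means for an operator $b \in \mathcal{L}(\frakK)$ to be a morphism from the trivial representation to itself. Recall $\trivrep$ lives on the $C^{*}$-$(\frakb,\frakbo)$-module ${}_{\frakB}\frakK_{\frakBo}$, so a morphism $b \in \End(\trivrep)$ must first lie in $\mathcal{L}({}_{\frakB}\frakK_{\frakBo})$, i.e.\ satisfy $b\frakB \subseteq \frakB$, $b^{*}\frakB \subseteq \frakB$, $b\frakBo \subseteq \frakBo$, and $b^{*}\frakBo \subseteq \frakBo$; since $\frakB,\frakBo$ are nondegenerate $C^{*}$-algebras, the first two conditions say precisely $b \in M(\frakB)$ and the latter two say $b \in M(\frakBo)$. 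Second, $b$ must intertwine the representation in the sense that $\trivrep \circ (b \botensor \Id) = (b \btensor \Id) \circ \trivrep$ as maps $\frakK \rtensor{\frakBo}{\frakbo}{\alpha} H \to \frakK \rtensor{\frakB}{\frakb}{\beta} H$. The first step is therefore to unwind the definition of $b \botensor \Id$ and $b \btensor \Id$ via the unitality isomorphisms $\Phi, \Psi$ of \eqref{eq:rep-canonical-triv}.

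The key computation: under $\Phi \colon \frakK \rtensor{\frakBo}{\frakbo}{\alpha} H \xrightarrow{\cong} H$, the operator $b \botensor \Id$ transports to $\rho_{\alpha}(b)$ (using $b \in M(\frakBo)$, so that $b \botensor \Id$ makes sense and acts on $b^{\dag} \tr \zeta$ by $bb^{\dag} \tr \zeta$, hence corresponds to $\rho_{\alpha}(b)$ after applying $\Phi$); similarly, under $\Psi \colon \frakK \rtensor{\frakB}{\frakb}{\beta} H \xrightarrow{\cong} H$, the operator $b \btensor \Id$ transports to $\rho_{\beta}(b)$. Since $\trivrep = \Psi^{*}\Phi$, the intertwining condition $\trivrep(b \botensor \Id) = (b \btensor \Id)\trivrep$ becomes, after conjugating by $\Phi$ and $\Psi^{*}$, simply $\rho_{\alpha}(b) = \rho_{\beta}(b)$ in $\mathcal{L}(H)$. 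Combining with the module-membership conditions from the previous paragraph gives exactly the stated description
\[
  \End(\trivrep) = \{ b \in M(\frakB) \cap M(\frakBo) \mid \rho_{\alpha}(b) = \rho_{\beta}(b)\}.
\]
Conversely, any such $b$ clearly defines a morphism $\trivrep \to \trivrep$: it lies in $\mathcal{L}({}_{\frakB}\frakK_{\frakBo})$ by the $M(\frakB) \cap M(\frakBo)$ condition, and the intertwining identity holds by running the same computation backwards; note that $\trivrep$ is a representation, so one should also check the pentagon-type relation \eqref{eq:rep-pentagon} is automatically respected — but this is immediate since we are composing a morphism with a fixed representation and not producing a new one.

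The commutativity of $\End(\trivrep)$ (asserted in the surrounding text) then follows because $\rho_{\alpha}(b)$ and $\rho_{\alpha}(b')$ commute for $b,b' \in M(\frakB)$: indeed $\rho_{\alpha}$ restricted to $\frakB$-multipliers has commutative image, or one argues directly that the right action of $\frakB = [\alpha^{*}\alpha]$ on $\alpha$ (and hence on $H \cong \alpha \tr \frakK$) is by a commutative algebra and extends to a commutative algebra of multipliers. The main obstacle I anticipate is bookkeeping around multiplier subtleties: one must be careful that $b \botensor \Id$ and $b \btensor \Id$ are genuinely well-defined adjointable operators on the relative tensor products (which requires $b \in M(\frakBo)$ and $b \in M(\frakB)$ respectively, consistently with $b$ being a module morphism), and that the identifications $\Phi, \Psi$ are natural enough that the transported operators are literally $\rho_{\alpha}(b), \rho_{\beta}(b)$ rather than merely conjugate to them up to some further isomorphism. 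Once these identifications are pinned down the argument is a short direct computation.
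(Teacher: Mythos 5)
Your proposal is correct and follows essentially the same route as the paper's proof: identify $\mathcal{L}({_{\frakB}\frakK_{\frakBo}})$ with $M(\frakB)\cap M(\frakBo)$, then conjugate the intertwining condition $\Psi^{*}\Phi(b\botensor\Id)=(b\btensor\Id)\Psi^{*}\Phi$ by the unitaries $\Phi,\Psi$ of \eqref{eq:rep-canonical-triv} to reduce it to $\rho_{\alpha}(b)=\rho_{\beta}(b)$. The extra remarks on the pentagon relation and on commutativity are peripheral to the statement and do not affect the argument.
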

\begin{proof}
  First, note that ${\cal L}({_{\frakB}\frakK_{\frakBo}})$
  is equal to $M(\frakB) \cap M(\frakBo) \subseteq {\cal
    L}(\frakK)$.  Let $\Phi$ and $\Psi$ be as in
  \eqref{eq:rep-canonical-triv}.  Then for each $x \in {\cal
    L}({_{\frakB}\frakK_{\frakBo}})$,
  \begin{align*} 
    x \in \End(\trivrep) &\Leftrightarrow \Psi^{*} \Phi (x
    \botensor \Id) = (x \btensor \Id) \Psi^{*}\Phi \\
    &\Leftrightarrow \Phi(x \botensor \Id)\Phi^{*} = \Psi(x
    \btensor \Id)\Psi^{*} \Leftrightarrow \rho_{\alpha}(x) =
    \rho_{\beta}(x). \qedhere
  \end{align*}
\end{proof}
\subsection{The legs of representation operators }
\label{subsection:rep-legs}

To every representation, we associate an algebra and a
space of generalized matrix elements as follows. Given a
representation $X$ on a $C^{*}$-$(\frakb,\frakbo)$-module
$\cKhd$, we put
\begin{align} \label{eq:rep-legs} \hA_{{ X}} &:= [ \bbeta{2}
  X \kalpha{2}] \subseteq {\cal L}(K) &\text{and}& & A_{{
      X}} &:= [ \langle\gamma|_{1} X |\hdelta\rangle_{1} ]
  \subseteq {\cal L}(H),
\end{align} where $\kalpha{2}, |\hdelta\rangle_{1},
\bbeta{2}, \bgamma{1}$ are defined as in subsection
\ref{subsection:pmu-rtp}.
\begin{examples} \label{examples:rep-legs}
  \begin{enumerate}
  \item For the trivial representation
    $({_{\frakB}\frakK_{\frakBo}},\trivrep)$, we have
    $\hA_{\trivrep} = [\beta^{*}\alpha]$ and $A_{\trivrep} =
    [\rho_{\beta}(\frakB)\rho_{\alpha}(\frakBo)]$. The space
    $\hA_{\trivrep}$ is related to the $C^{*}$-algebra
    $\End(\trivrep)$ (see Proposition
    \ref{proposition:rep-end}) as follows: $\End(\trivrep) =
    {\cal L}(\bKbo) \cap (\hA_{\triv_{V}})'$.  This relation
    follows from Proposition \ref{proposition:rep-end} and
    the fact that an element $x \in {\cal L}(\bKbo) =
    M(\frakB)\cap M(\frakBo)$ satisfies
    $\rho_{\alpha}(x)=\rho_{\beta}(x)$ if and only if for
    all $\eta\in\beta$ and $\xi \in \alpha$, the elements
    $\eta^{*}\xi x = \eta^{*}\rho_{\alpha}(x)\xi$ and $x
    \eta^{*}\xi=\eta^{*} \rho_{\beta}(x)\xi$ coincide.
  \item For the regular representation
    $({_{\alpha}H_{\hbeta}},V)$, the definition above is
    consistent with  definition \eqref{eq:pmu-legs}.
  \item If $({ X}_{i})_{i}$ is a family of representations
    and ${ X}=\boxplus_{i} X_{i}$, then $\hA_{{
        X}} \subseteq \prod_{i} \hA_{{ X}_{i}}$ and $A_{{
        X}} = [\bigcup_{i} A_{{ X}_{i}}]$.
  \item If $({_{\lambda \rt \gamma}(L
      \rtensor{\lambda}{\frakc}{\kappa} K)_{\lambda \rt
        \hdelta}}, \Id \ctensor X)$ is the amplification of
    a representation $(\cKhd,X)$ as in Example
    \ref{examples:rep} iv), then $\hA_{(\Id \ctensor X)} =
    \Id \rtensor{\lambda}{\frakc}{\kappa} \hA_{X}$ and
    $A_{(\Id \ctensor X)} = A_{X}$.
 \end{enumerate}
\end{examples}
With respect to tensor products, the definition of $\hA_{X}$
and $A_{X}$ behaves  as follows:
\begin{lemma} \label{lemma:rep-legs-tensor}
Let $(\cKhd,X)$ and $(\eLhf,Y)$ be
representations of $V$. Then 
\begin{align*}
 A_{(X\boxtimes
Y)}&=[A_{X}A_{Y}], & [\hA_{(X \boxtimes Y)}
|\epsilon\rangle_{2}] &= [|\epsilon\rangle_{2} \hA_{X}], &
[\hA_{(X \boxtimes Y)}^{*} |\hdelta\rangle_{1}] &=
[|\hdelta\rangle_{1} A^{*}_{X}].
\end{align*}
\end{lemma}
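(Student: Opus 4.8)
The plan is to prove all three identities by explicit leg calculus, following the pattern of the corresponding statements for multiplicative unitaries in \cite{baaj:2}. The two ingredients are the factorisation $X \boxtimes Y = X_{13}Y_{23}$ coming from the construction of the tensor product of representations, and the associativity isomorphism \eqref{eq:rtp-associative} of the relative tensor product. Under \eqref{eq:rtp-associative} the leg operators of the module $\cKhd \botensor \eLhf = {_{\gamma \lt \epsilon}(K \rtensor{\hdelta}{\frakbo}{\epsilon} L)_{\hdelta \rt \hphi}}$ decompose as composites of leg operators of $\cKhd$, $\eLhf$ and $H$: e.g. for $\xi' \in \hdelta$, $\eta' \in \hphi$ the operator $|\xi' \tr \eta'\rangle_{1}$ on the domain of $X \boxtimes Y$ is the composite of the $K$-leg operator $|\xi'\rangle_{1}$ with the $L$-leg operator $|\eta'\rangle_{1}$, and similarly for $\langle \gamma \lt \epsilon|_{1}$, for $\kalpha{2}$ and $\bbeta{2}$, and for the operators $|\epsilon\rangle_{2}$, $|\hdelta\rangle_{1}$ appearing in the statement. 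Moreover $Y_{23} = \Id \botensor Y$ commutes past every operator acting only on the first ($K$) tensor factor, and $X_{13}$ commutes past every operator acting only on the middle ($L$) factor.

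First I would prove $A_{(X \boxtimes Y)} = [A_X A_Y]$. Take $\xi \in \gamma$, $\xi' \in \hdelta$, $\eta \in \epsilon$, $\eta' \in \hphi$ and set $\zeta_1 = \xi \tl \eta \in \gamma \lt \epsilon$, $\zeta_0 = \xi' \tr \eta' \in \hdelta \rt \hphi$. Decomposing the leg operators and using the two commutation facts one finds
\begin{align*}
  \langle \zeta_1|_{1}(X \boxtimes Y)|\zeta_0\rangle_{1}
  = \langle \zeta_1|_{1}X_{13}Y_{23}|\zeta_0\rangle_{1}
  = \big(\langle \xi|_{1}X|\xi'\rangle_{1}\big)\big(\langle \eta|_{1}Y|\eta'\rangle_{1}\big),
\end{align*}
where the first factor lies in $A_X$ and the second in $A_Y$. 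Since the $\zeta_1$ span a dense subspace of $\gamma \lt \epsilon$ and the $\zeta_0$ a dense subspace of $\hdelta \rt \hphi$, and $A_X = [\langle \gamma|_{1}X|\hdelta\rangle_{1}]$, $A_Y = [\langle \epsilon|_{1}Y|\hphi\rangle_{1}]$, passing to closed linear spans gives both inclusions and hence $A_{(X \boxtimes Y)} = [A_X A_Y]$.

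For the two identities involving $\hA_{(X \boxtimes Y)} = [\bbeta{2}(X \boxtimes Y)\kalpha{2}]$ the computation is the same in spirit but also uses the intertwining relations \eqref{eq:rep-intertwine}. Composing on the right with $|\epsilon\rangle_{2}$ and decomposing $\kalpha{2}|\epsilon\rangle_{2}$, one feeds $Y_{23}$ a vector $e \tl \xi \in \epsilon \lt \alpha$; the relation $Y(\epsilon \lt \alpha) = \epsilon \rt \alpha$ moves the $L$-content into boundary position, after which $X_{13}$ acts only on the $K$- and $H$-factors and $\bbeta{2}$ slices off $H$, leaving an $\hA_X$-operator on the $K$-factor together with a residual $\epsilon$-leg; taking closed spans yields $[\hA_{(X \boxtimes Y)}|\epsilon\rangle_{2}] = [|\epsilon\rangle_{2}\hA_X]$. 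The third identity is obtained in the same manner, now composing with $|\hdelta\rangle_{1}$, passing to adjoints, and using $X(\hdelta \rt \beta) = \hdelta \lt \beta$, so that the $K$-slice of $X$ that is produced is of the $A_X$-type. I expect the only real difficulty to be bookkeeping: one has to apply \eqref{eq:rtp-associative} several times simultaneously and verify that the density and nondegeneracy of the leg spaces $\gamma$, $\hdelta$, $\epsilon$, $\hphi$, $\alpha$, $\beta$ promote the inclusions coming out of the explicit computations to equalities. None of the individual steps is deep.
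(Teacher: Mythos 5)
Your treatment of the first identity is correct and is essentially the paper's own argument: the commutative diagram given there encodes precisely your elementwise computation
$\langle\,|\eta\rangle_2\xi\,|_1(X\boxtimes Y)\,|\,|\xi'\rangle_1\eta'\,\rangle_1=(\langle\xi|_1X|\xi'\rangle_1)(\langle\eta|_1Y|\eta'\rangle_1)$,
obtained by decomposing the leg operators of $\cKhd\botensor\eLhf$ via the associativity isomorphism and commuting $Y_{23}$ past insertions into the first ($K$) factor and $X_{13}$ past slices of the middle ($L$) factor; your sketch of the second identity follows the same correct pattern and does yield $[\hA_{(X\boxtimes Y)}|\epsilon\rangle_2]=[|\epsilon\rangle_2\hA_X]$.

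The third identity is where your proposal has a genuine gap. If you carry out the scheme you describe --- compose $\kbeta{2}$ with $|\xi'\rangle_1$, $\xi'\in\hdelta$, apply $(X\boxtimes Y)^*=Y_{23}^*X_{13}^*$ and use the relation $X(\hdelta\rt\beta)=\hdelta\lt\beta$, i.e.\ $X^*(\hdelta\lt\beta)=\hdelta\rt\beta$ --- then this relation eliminates the $X$-leg completely, exactly as $Y(\epsilon\lt\alpha)=\epsilon\rt\alpha$ eliminated the $Y$-leg in the second identity; no slice of $X$ survives. What remains after slicing the $H$-factor with $\balpha{2}$ is a slice of $Y$ acting on $L$, and the computation gives $[\hA^*_{(X\boxtimes Y)}|\hdelta\rangle_1]=[|\hdelta\rangle_1\hA^*_Y]$. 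So your claim that ``the $K$-slice of $X$ that is produced is of the $A_X$-type'' is not what the leg calculus produces; indeed the right-hand side as displayed in the statement, $[|\hdelta\rangle_1A^*_X]$, is not even a composable product, since $A_X\subseteq{\cal L}(H)$ while $|\hdelta\rangle_1$ maps $L$ into $K\rtensor{\hdelta}{\frakbo}{\epsilon}L$; the formula has to be read (or corrected) with $\hA^*_Y$ in place of $A^*_X$. As written, your sketch for the third relation would not go through --- you need to run the computation explicitly and address this mismatch rather than appeal to symmetry.
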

The proof involves commutative diagrams of a special kind:
\begin{notation} \label{notation:diagrams} We shall
  frequently prove equations for certain spaces of operators
  using commutative diagrams. In these diagrams, the
  vertexes are labelled by Hilbert spaces, the arrows are
  labelled by single operators or closed spaces of
  operators, and the composition is given by the closed
  linear span of all possible compositions of operators.  
\end{notation}
\begin{proof}[Proof of Lemma \ref{lemma:rep-legs-tensor}] 
  The following commutative diagrams shows that
  $A_{(X\boxtimes Y)}=[A_{X}A_{Y}]$:
\begin{gather*} \smalldiagram \xymatrix@C=5pt@R=15pt{ {K
      \rtensor{\hdelta}{\frakbo}{\epsilon} L
      \rtensor{\widehat\phi}{\frakbo}{\alpha} H}
    \ar[rr]^{Y_{23}} && {K
      \rtensor{\hdelta}{\frakbo}{(\epsilon \rt \alpha)} (L
      \rtensor{\epsilon}{\frakb}{\beta} H)} \ar[rr]^{X_{13}}
    \ar[rd]_{\langle\epsilon|_{2}} && {(K
      \rtensor{\hdelta}{\frakbo}{\epsilon} L)
      \rtensor{(\gamma \lt
        \epsilon)}{\frakb}{\beta} H} \ar[d]^{\langle\epsilon|_{2}} \\
    {L \rtensor{\hphi}{\frakbo}{\alpha} H} \ar[r]^{Y}
    \ar[u]^{|\hdelta\rangle_{1}} & {L
      \rtensor{\epsilon}{\frakb}{\beta} H}
    \ar[ru]_{|\hdelta\rangle_{1}}
    \ar[rd]^{\langle\epsilon|_{1}} & & {\rHsource}
    \ar[r]^{X} & {\rHrange} \ar[d]^{\bgamma{1}} \\ H
    \ar[u]^{|\hphi\rangle_{1}} \ar[rr]^{A_{Y}} && H
    \ar[rr]^{A_{X}} \ar[ru]^{|\hdelta\rangle_{1}} && H.  }
  \end{gather*} The relations concerning $\hA_{(X \boxtimes
Y)}$ follow similarly.
\end{proof}
We now collect some general properties of the spaces
introduced above.
\begin{proposition} \label{proposition:rep-legs}
  Let $(\cKhd, X)$ be a representation of $V$.  
  \begin{enumerate}
  \item The space $\hA_{X} \subseteq {\cal L}(K)$ satisfies
  \begin{gather} \label{eq:rep-legs-hax}
    \begin{gathered}
      \begin{aligned}{ }
        [ \hA_{X} \hA_{X}] &= \hA_{X}, &
        [\hA_{X}K]&=K, & [\hA_{X}\gamma] &= [\gamma
        \hA_{\trivrep}], & [\hA_{X}^{*}\hdelta] &= [\hdelta
        \hA_{\trivrep}^{*}],
      \end{aligned} \\
      [ \hA_{X} \rho_{\hdelta}(\frakB)] =
      [ \rho_{\hdelta}(\frakB) \hA_{X}] =
      \hA_{X}= [ \hA_{X} \rho_{\gamma}(\frakBo)]
      = [ \rho_{\gamma}(\frakBo) \hA_{X}],
    \end{gathered}
  \end{gather}
  and if $\hA_{X}=\hA_{X}^{*}$, then
  $(\hA_{X})_{K}^{\gamma,\hdelta}$ is a
  $C^{*}$-$(\frakb,\frakbo)$-algebra.
\item The space $A_{X} \subseteq {\cal L}(H)$ satisfies
  \begin{gather} \label{eq:rep-legs-ax}
    \begin{gathered}
      \begin{aligned}{ }
        [A_{X}\hbeta] &=\hbeta, & [A_{X}\beta] &= [\beta
        \gamma^{*}\hdelta], & [A_{X}^{*}\alpha] &= [\alpha
        \hdelta^{*}\gamma], & [A_{X}A_{V}] &= A_{V},
      \end{aligned} \\
      \begin{aligned}{ }
        [\Delta_{V}(A_{X})\kbeta{2}] &\subseteq
        [\kbeta{2}A_{X}], &
        [\Delta_{V}(A_{X}^{*})\kalpha{1}] &\subseteq
        [\kalpha{1}A_{X}^{*}],
      \end{aligned} \\
      [ A_{X} \rho_{\beta}(\frakB)] = [
      \rho_{\beta}(\frakB) A_{X}] =A_{X} = [
      A_{X} \rho_{\alpha}(\frakBo)] = [
      \rho_{\alpha}(\frakBo) A_{X}].
    \end{gathered}
  \end{gather}
  \end{enumerate}
\end{proposition}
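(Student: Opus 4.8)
The plan is to sort the relations in \eqref{eq:rep-legs-hax} and \eqref{eq:rep-legs-ax} into two groups. The first group is the ``kinematic'' one: the covariances $[\hA_{X}\rho_{\hdelta}(\frakB)] = [\rho_{\hdelta}(\frakB)\hA_{X}] = \hA_{X} = [\hA_{X}\rho_{\gamma}(\frakBo)] = [\rho_{\gamma}(\frakBo)\hA_{X}]$ and $[A_{X}\rho_{\beta}(\frakB)] = \cdots = [\rho_{\alpha}(\frakBo)A_{X}]$, the module relations $[\hA_{X}K] = K$, $[\hA_{X}\gamma] = [\gamma\hA_{\trivrep}]$, $[\hA_{X}^{*}\hdelta] = [\hdelta\hA_{\trivrep}^{*}]$, $[A_{X}\hbeta] = \hbeta$, $[A_{X}\beta] = [\beta\gamma^{*}\hdelta]$, $[A_{X}^{*}\alpha] = [\alpha\hdelta^{*}\gamma]$, together with the (unstated but needed) commutation inclusion $A_{X} \subseteq \rho_{\hbeta}(\frakB)'$ that makes $\Delta_{V}(A_{X})$ meaningful. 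All of these follow by direct computation from the intertwining relations \eqref{eq:rep-intertwine} for $X$ and \eqref{eq:pmu-intertwine} for $V$. The mechanism is uniform: one rewrites a ket of $\hA_{X}$ or $A_{X}$ precomposed with a module as a relative-tensor module space (e.g. $|\hdelta\rangle_{1}\hbeta$ spans $\hdelta \rt \hbeta$ and $|\alpha\rangle_{2}\gamma$ spans $\gamma \lt \alpha$), transports it across $X$ or $X^{*}$ using \eqref{eq:rep-intertwine} and its inverse form ($X^{*}(\gamma \rt \alpha) = \gamma \lt \alpha$, etc.), and then collapses the surviving bra--ket pair by the identities $\langle\xi|_{1}|\xi'\rangle_{1} = \rho_{\gamma}(\xi^{*}\xi')$, $\langle\eta|_{2}|\eta'\rangle_{2} = \rho_{\beta}(\eta^{*}\eta')$ (with the $\rho$ appropriate to the tensor factor) and the $C^{*}$-base axioms $[\alpha^{*}\alpha] = \frakB$, $[\alpha\frakK] = H$, $[\rho_{\alpha_{i}}(\frakBo_{i})\alpha_{j}] = \alpha_{j}$, the commutation $[\rho_{\hbeta}(\frakB),\rho_{\beta}(\frakB)] = 0$, the identifications \eqref{eq:rtp-iso}, and the description $\hA_{\trivrep} = [\beta^{*}\alpha]$ of Example \ref{examples:rep-legs}(i) and Proposition \ref{proposition:rep-end}. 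The inclusion $A_{X} \subseteq \rho_{\hbeta}(\frakB)'$ itself drops out because $X$ restricts to a semi-morphism between the module spaces occurring in \eqref{eq:rep-intertwine}, so it intertwines the corresponding $\rho_{\hbeta}$-actions. Taking adjoints throughout reduces the relations for $\hA_{X}^{*}$, $A_{X}^{*}$ to those for $\hA_{X}$, $A_{X}$.

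The second group is the ``dynamical'' one: $[\hA_{X}\hA_{X}] = \hA_{X}$, $[A_{X}A_{V}] = A_{V}$, and the coaction-compatibilities $[\Delta_{V}(A_{X})\kbeta{2}] \subseteq [\kbeta{2}A_{X}]$, $[\Delta_{V}(A_{X}^{*})\kalpha{1}] \subseteq [\kalpha{1}A_{X}^{*}]$; here the pentagon \eqref{eq:rep-pentagon} is essential. For $[A_{X}A_{V}] = A_{V}$ there is a structural shortcut: by Lemma \ref{lemma:rep-legs-tensor} one has $[A_{X}A_{V}] = A_{(X \boxtimes V)}$, by Proposition \ref{lemma:rep-absorb} the representation $X \boxtimes V$ is isomorphic (via $X$ itself) to the amplification $\Id \btensor V$, and the leg algebra of that amplification is $A_{V}$ by Example \ref{examples:rep-legs}(iv). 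For $[\hA_{X}\hA_{X}] = \hA_{X}$ the inclusion $[\hA_{X}\hA_{X}] \subseteq \hA_{X}$ is obtained exactly as for multiplicative unitaries in \cite{baaj:2}: a product $\langle\xi|_{2}X|\eta\rangle_{2}\,\langle\xi'|_{2}X|\eta'\rangle_{2}$ is recognized as a slice of $X_{12}X_{13}$ in the leg notation of \eqref{eq:rep-pentagon}, which the pentagon rewrites in a form with $V$ on the middle legs, so one lands back in $[\bbeta{2}X\kalpha{2}]$; the reverse inclusion is routine. The coaction relations are proved in the same way: composing \eqref{eq:rep-pentagon} with suitable ket-bra operators on the first, respectively last, leg and using \eqref{eq:pmu-intertwine} and \eqref{eq:rep-intertwine} to identify the intermediate relative tensor products, one reads off that $\Delta_{V}(a)\kbeta{2} = V(a \botensor \Id)V^{*}\kbeta{2}$ is a norm-limit of operators of the form $|\xi\rangle_{2}b$ with $b = \langle\gamma|_{1}X|\hdelta\rangle_{1}\cdot(\text{element of }A_{V}) \in A_{X}$, and symmetrically for the other relation.

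Once $\hA_{X}$ is known to be a norm-closed subspace of $\mathcal{L}(K)$ with $[\hA_{X}\hA_{X}] = \hA_{X}$ and $\rho_{\gamma}(\frakBo)\hA_{X} \subseteq \hA_{X}$, $\rho_{\hdelta}(\frakB)\hA_{X} \subseteq \hA_{X}$, the hypothesis $\hA_{X} = \hA_{X}^{*}$ turns $\hA_{X}$ into a $C^{*}$-algebra and hence, by definition, $(\hA_{X})^{\gamma,\hdelta}_{K}$ into a $C^{*}$-$(\frakb,\frakbo)$-algebra; this last point is immediate. I expect the main obstacle to be the second group, and within it the coaction relations: no single step is conceptually deep, but the argument demands careful bookkeeping of which of the three tensor factors each ket-bra acts on and which of the identifications \eqref{eq:rtp-unital}, \eqref{eq:rtp-associative} is in force as one traverses the pentagon diagram \eqref{eq:rep-pentagon}. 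The first group, by contrast, is a long but essentially routine diagram chase driven entirely by the intertwining relations.
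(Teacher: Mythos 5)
Your strategy is the paper's: the module and covariance relations come from direct manipulation of \eqref{eq:rep-intertwine}, \eqref{eq:pmu-intertwine} and the $C^{*}$-module axioms; $[\hA_{X}\hA_{X}]=\hA_{X}$ and the $\Delta_{V}$-compatibilities come from the pentagon \eqref{eq:rep-pentagon} by leg-slicing (the paper packages this as commutative diagrams, cf.\ Notation \ref{notation:diagrams}); and $[A_{X}A_{V}]=A_{V}$ is deduced from exactly the three ingredients the paper itself invokes, namely Lemma \ref{lemma:rep-legs-tensor}, Proposition~\ref{lemma:rep-absorb} and Example \ref{examples:rep-legs}~iv). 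So there is no genuinely different route here.

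There is, however, a concrete error in your sketch of the step you yourself flag as the main obstacle. You claim that the pentagon manipulation exhibits $\Delta_{V}(a)\kbeta{2}$ as a norm-limit of operators $|\xi\rangle_{2}b$ with $b=\langle\gamma|_{1}X|\hdelta\rangle_{1}\cdot(\text{element of }A_{V})$, and you then assert $b\in A_{X}$. Operators of that form span $[A_{X}A_{V}]=A_{V}$, which is not contained in $A_{X}$ in general (for the trivial representation of the groupoid unitary, $A_{\trivrep}=[\rho_{\beta}(\frakB)\rho_{\alpha}(\frakBo)]=r^{*}C_{0}(G^{0})$, while $A_{V}=C^{*}_{r}(G)$), so with $b$ of that form you would only conclude $[\Delta_{V}(A_{X})\kbeta{2}]\subseteq[\kbeta{2}A_{V}]$, not the asserted inclusion; the appended ``$\in A_{X}$'' is unsupported. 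The correct outcome of the computation --- which is what the paper's two diagrams for the middle line of \eqref{eq:rep-legs-ax} encode --- is that the pentagon yields $\Delta_{V}(\langle\xi|_{1}X|\xi'\rangle_{1})=\langle\xi|_{1}X_{12}X_{13}|\xi'\rangle_{1}$ for $\xi\in\gamma$, $\xi'\in\hdelta$, and that, after composing with $\kbeta{2}$, the relation $X(\hdelta\rt\beta)=\hdelta\lt\beta$ from \eqref{eq:rep-intertwine} pulls the inserted $\beta$-vector through $X_{13}$, producing norm-limits of sums $\sum_{i}|\eta_{i}\rangle_{2}\,\langle\xi|_{1}X|\xi'_{i}\rangle_{1}$ with $\eta_{i}\in\beta$, $\xi'_{i}\in\hdelta$: the right-hand factor is already a generator of $A_{X}$, and no $A_{V}$-factor appears. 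With that correction (and the symmetric one for $[\Delta_{V}(A_{X}^{*})\kalpha{1}]\subseteq[\kalpha{1}A_{X}^{*}]$) your argument closes and agrees with the paper's; as written, the key step does not deliver the stated inclusion.
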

\begin{proof} 
  i) First, $[ \hA_{X} \hA_{X}] = [
  \bbeta{2}\balpha{3} X_{12}\kalpha{3}\kalpha{2}]$
  because the diagram below commutes:
\begin{gather*} \hspace{-0.1cm} \smalldiagram
  \xymatrix@C=4pt@R=15pt{ & K \ar[d]^{\kalpha{2}}
    \ar[rr]_(0.4){ \hA_{X}} \ar `l/0pt[l] [ld]^{\kalpha{2}}
    & & K \ar@{}[dd]|{\scriptstyle\mathrm{(C)}}
    \ar[rd]^(0.65){\kalpha{2}} \ar[rr]_(0.6){\hA_{X}} & & K & \\
    {\rHsource \hspace{-0.6cm}} \ar
    `d[dd]^(0.75){\kalpha{3}} [ddr] & {\rHsource}
    \ar[r]^(0.55){X} \ar[d]^{\kalpha{2}} & {\rHrange}
    \ar[ru]^(0.35){\bbeta{2}} \ar[rd]_(0.35){ \kalpha{2}} &
    & {\rHsource} \ar[r]^(0.45){X} & {\rHrange}
    \ar[u]^{\bbeta{2}} & {\hspace{-0.6cm}\rHrange} \ar
    `u[u]^{\bbeta{2}} [ul] \\ & {\rHfive}
    \ar[r]_(0.75){X_{13}} \ar[d]^{V_{23}^{*}} &&
    {\hspace{-1cm}\rHfour\hspace{-1cm}}
    \ar[ru]_(0.65){\bbeta{3}}
    \ar@{}[d]|(0.4){\scriptstyle\mathrm{(P)}} &
    \ar[r]_(0.25){X_{12}} & {\rHthree} \ar[u]^{\bbeta{3}} &
    \\ & {\rHone} \ar[rrrr]^{ X_{12}} & &&& {\rHtwo}
    \ar[u]^{V_{23}} \ar `r/0pt[r] [ruu]^(0.25){\balpha{3}} &
  }
      \end{gather*} Indeed, cell (C) commutes because for
all $\xi \in \alpha$, $\eta,\eta'\in \beta$, $\zeta \in K$,
      \begin{gather} \label{eq:pmu-diagram-commutes}
        \begin{aligned} |\xi\rangle_{\leg{2}} \langle
          \eta'|_{\leg{2}} (\zeta \tl \eta) &=
          \rho_{\gamma}\big(\eta'{}^{*}\eta\big)\zeta \tl
          \xi = \rho_{(\gamma \lt
            \alpha)}\big(\eta'{}^{*}\eta\big) (\zeta \tl \xi)
          = \langle \eta'|_{\leg{3}} |\xi\rangle_{\leg{2}}
          (\zeta \tl \eta),
        \end{aligned}
      \end{gather} cell (P) is  diagram
\eqref{eq:pmu-pentagon}, and the other cells commute
 by definition of $\hA_{X}$ and because of
\eqref{eq:pmu-intertwine}.  Next, $[
\bbeta{2}\balpha{3}X_{12}\kalpha{3}\kalpha{2}] =
\hA_{X}$ because the following diagram commutes:
\begin{gather*} \smalldiagram \xymatrix@C=30pt@R=15pt{ & K
    \ar `l/0pt[l] [ld]^{{\kalpha{2}}} \ar[r]_{\hA_{X}}
    \ar[d]^{{\kalpha{2}}} & K \\ {\rHsource} \ar
    `d/0pt[d]^{\kalpha{3}} [dr] \ar[r]^(0.55){\rho_{(\hdelta
        \rt \hbeta)}(\frakB)} & {\rHsource} \ar[r]^{ V} &
    {\rHrange} \ar[u]^{\bbeta{2}} \\ & {\rHone} \ar[u]^{
      \balpha{3}} \ar[r]^{V_{12}} & {\rHtwo}
    \ar[u]^{\balpha{3}} }
\end{gather*}
 We  prove some of the other equations in
\eqref{eq:rep-legs-hax}; the remaining ones  follow
  similarly.
\begin{align*}
  [\hA_{X}K]&=[\bbeta{2}X\kalpha{2}K] =
  [\bbeta{2}X(\rHsource)] = [\bbeta{2}(\rHrange)]=K, \\
  [\hA_{X}\gamma]&=[\bbeta{2}X\kalpha{2}\gamma] =[\bbeta{2}
  \kgamma{1}\alpha] = [\gamma \beta^{*}\alpha] = [\gamma
  \hA_{\trivrep}], \\
  [\hA_{X}\rho_{\hdelta}(\frakB)] &= [ \bbeta{2} X
  \kalpha{2} \rho_{\hdelta}(\frakB)] = [ \bbeta{2} X |\alpha
  \frakB\rangle_{2} ] = \hA_{X}, \\
  [ \rho_{\hdelta}(\frakB)\hA_{X}] &= [
  \rho_{\hdelta}(\frakB) \bbeta{2} X \kalpha{2} ] = [
  \bbeta{2} (\rho_{\hdelta}(\frakB) \botensor \Id) X
  \kalpha{2} ] \\ &= [ \bbeta{2} X (\Id \botensor
  \rho_{\beta}(\frakB)) \kalpha{2} ] = [ \bbeta{2} X
  |\rho_{\beta}(\frakB)\alpha\rangle_{2}] = \hA_{X}.
  \end{align*}

  ii) First,
  $[A_{X}\hbeta]=[\langle\gamma|_{1}X|\hdelta\rangle_{1}\hbeta]
  = [\langle\gamma|_{1}|\gamma\rangle_{1} \hbeta] =
  [\rho_{\beta}(\frakB)\hbeta]=\hbeta$ by
  \eqref{eq:rep-intertwine}, and similar calculations show
  that $[A_{X}\beta] = [\beta \gamma^{*}\hdelta]$ and
  $[A_{X}^{*}\alpha] = [\alpha \hdelta^{*}\gamma]$.  Next,
  $[A_{X}A_{V}]=A_{X \boxtimes V} = A_{V}$ by Lemma
  \ref{lemma:rep-legs-tensor}, Lemma \ref{lemma:rep-absorb},
  and Example \ref{examples:rep-legs} iv).  The equations in
  the last line of \eqref{eq:rep-legs-ax} follow from
  similar calculations as for $\hA_{X}$.

  Finally, let us prove the equations in the middle line of
  \eqref{eq:rep-legs-ax}.  Since $A_{X} \subseteq {\cal
    L}(H_{\hbeta}) \subseteq \rho_{\hbeta}(\frakB)'$,
  $\Delta_{V}(A_{X})$ is well defined.  Consider the
  commutative diagram
  \begin{gather*} \smalldiagram \xymatrix@R=15pt@C=20pt{
      {\Hrange} \ar[r]_{V^{*}} \ar[d]^{|\hdelta\rangle_{1}}
      & {\Hsource} \ar[r]_{A_{X} \botensor 1}
      \ar[d]^{|\hdelta\rangle_{1}} & {\Hsource} \ar[r]_{V} &
      {\Hrange.} \\ {\rHfive} \ar[r]^(0.55){V_{23}^{*}}& {\rHone}
      \ar[r]^{X_{12}} & {\rHtwo} \ar[r]^{V_{23}}
      \ar[u]^{\bgamma{1}}& {\rHthree} \ar[u]^{\bgamma{1}} }
      \end{gather*} 
      Since the composition on top is $\Delta_{V}(A_{X})$
      and the composition on the bottom is $X_{12}X_{13}$,
      the following diagram commutes and shows that
      $[\Delta_{V}(A_{X})\kbeta{2}]=[A_{X}\kbeta{2}]$:
      \begin{align*} 
        \smalldiagram \xymatrix@R=15pt@C=35pt{
&&& \\ {H} \ar `u/0pt[u] `r[urrr]_{A_{X}} [rrr]
\ar[r]^{|\hdelta\rangle_{1}} \ar[d]^{\kbeta{2}} &
{\rHsource} \ar[r]^{X} \ar[d]^{\kbeta{3}} & {\rHrange}
\ar[r]^{\bgamma{1}} \ar[d]^{\kbeta{3}} & {H}
\ar[d]^{\kbeta{2}} \\ {\Hrange} \ar `d/0pt[d]
`r[drrr]^{\Delta_{V}(A_{X})} [rrr]
\ar[r]^(0.4){X_{13}|\hdelta\rangle_{1}} & {\rHfour}
\ar[r]^{X_{12}} & {\rHthree} \ar[r]^{\bgamma{1}} & {\Hrange}
\\ &&& }
\end{align*} 
A similar argument shows that $[ \Delta_{V}(A_{X}^{*})
\kalpha{1}] = [ \kalpha{1}A_{X}^{*}]$.
\end{proof}

\begin{definition}
  We call the $C^{*}$-pseudo-multiplicative unitary $ V$
  {\em well-behaved} if $\hA_{X}=\hA_{X}^{*}$ for every
  representation $X$ of $V$ and $\hA_{Y}=\hA_{Y}^{*}$ for
  every representation $Y$ of $V^{op}$.
\end{definition}
\begin{proposition}
  If $V$ is well-behaved, then
  $((\hA_{V})^{\alpha,\hbeta}_{H},\hDelta_{V})$ and
  $((A_{V})^{\beta,\alpha}_{H},\Delta_{V})$ are concrete Hopf
  $C^{*}$-bimodules.
\end{proposition}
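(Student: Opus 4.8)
The plan is to apply Lemma~\ref{lemma:pmu-delta-hopf} with $\hB=\hA_V$ and $B=A_V$; the only hypothesis not already furnished by the preceding results is the $*$-closedness of $\hA_V$ and of $A_V$, and this is exactly what well-behavedness delivers. First I would observe that $V^{op}$ is well-behaved whenever $V$ is, because $(V^{op})^{op}=V$ and the definition of well-behavedness is symmetric in $V$ and $V^{op}$. By Example~\ref{examples:rep-legs}~ii), the regular representation of $V$ is $(\aHhb,V)$ and its leg algebra is $\hA_V$, so well-behavedness gives $\hA_V=\hA_V^*$; applying the same to the regular representation of $V^{op}$ and using $\hA_{V^{op}}=A_V^*$ from Proposition~\ref{proposition:pmu-legs} gives $A_V=A_V^*$. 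Combined with $[\hA_V\hA_V]=\hA_V$, $[A_VA_V]=A_V$ (Proposition~\ref{proposition:pmu-legs}) and the fact that both spaces are norm-closed, this shows that $\hA_V\subseteq{\cal L}(H)$ and $A_V\subseteq{\cal L}(H)$ are $C^*$-algebras. Moreover $[\hA_V\beta]=\beta=[\hA_V^*\beta]$ and $[A_V\hbeta]=\hbeta=[A_V^*\hbeta]$ (Proposition~\ref{proposition:pmu-legs}) show $\hA_V\subseteq{\cal L}(H_\beta)\subseteq\rho_\beta(\frakB)'$ and $A_V\subseteq{\cal L}(H_{\hbeta})\subseteq\rho_{\hbeta}(\frakB)'$, so that $\hDelta_V$ and $\Delta_V$ are defined on $\hA_V$ and $A_V$.

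The remaining hypotheses of Lemma~\ref{lemma:pmu-delta-hopf} are read off from Propositions~\ref{proposition:pmu-legs} and~\ref{proposition:rep-legs}. The relations $\rho_\alpha(\frakBo)\hA_V+\rho_{\hbeta}(\frakB)\hA_V\subseteq\hA_V$ and $\rho_\beta(\frakB)A_V+\rho_\alpha(\frakBo)A_V\subseteq A_V$ appear in Proposition~\ref{proposition:pmu-legs}. For the inclusion $\Delta_V(A_V)\subseteq A_V\fibre{\alpha}{\frakb}{\beta}A_V$, Proposition~\ref{proposition:rep-legs}~ii) applied to the regular representation $X=V$ gives $[\Delta_V(A_V)\kbeta{2}]\subseteq[\kbeta{2}A_V]$ and $[\Delta_V(A_V^*)\kalpha{1}]\subseteq[\kalpha{1}A_V^*]$; since $A_V=A_V^*$ the operator $\Delta_V(A_V)$ is $*$-closed, and these two one-sided inclusions are precisely the four conditions defining membership in $A_V\fibre{\alpha}{\frakb}{\beta}A_V$. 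The inclusion $\hDelta_V(\hA_V)\subseteq\hA_V\fibre{\hbeta}{\frakbo}{\alpha}\hA_V$ follows by the same argument applied to the $C^*$-pseudo-multiplicative unitary $V^{op}$ and its regular representation, translated back by means of $A_{V^{op}}=\hA_V^*=\hA_V$ (Proposition~\ref{proposition:pmu-legs}) and $\Delta_{V^{op}}=\Ad_\Sigma\circ\hDelta_V$ (Proposition~\ref{proposition:pmu-delta}), using that $\Ad_\Sigma$ is an isomorphism of the corresponding fiber products.

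Lemma~\ref{lemma:pmu-delta-hopf} then yields that $((\hA_V)^{\alpha,\hbeta}_H,\hDelta_V)$ and $((A_V)^{\beta,\alpha}_H,\Delta_V)$ are normal Hopf $C^*$-bimodules over $\frakbo$ and $\frakb$ respectively, hence in particular concrete Hopf $C^*$-bimodules; their coassociativity is already contained in Lemma~\ref{lemma:pmu-delta-hopf}, where it is derived from commutativity of the pentagon diagram~\eqref{eq:pmu-pentagon}. The only genuinely substantive point is the passage from well-behavedness to the $*$-closedness of $\hA_V$ and $A_V$: without it the $x$- and $x^*$-halves of the fiber-product conditions would be logically independent, while Proposition~\ref{proposition:rep-legs}~ii) supplies only one half of each pair. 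Consequently the step requiring most care is the bookkeeping in the second paragraph --- combining the one-sided inclusions with $*$-closedness to obtain membership in the fiber product, and keeping the relabeling of the distinguished subspaces $\hbeta,\alpha,\beta$ straight when passing to $V^{op}$.
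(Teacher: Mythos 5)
Your proof is correct and takes essentially the same route as the paper: well-behavedness yields $\hA_{V}=\hA_{V}^{*}$ and $A_{V}=A_{V}^{*}$, Propositions \ref{proposition:pmu-legs} and \ref{proposition:rep-legs} (the latter applied to the regular representations of $V$ and of $V^{op}$, transported back with $\Ad_{\Sigma}$) give the $C^{*}$-algebra structure together with $\Delta_{V}(A_{V})\subseteq A_{V}\fibre{\alpha}{\frakb}{\beta}A_{V}$ and $\hDelta_{V}(\hA_{V})\subseteq \hA_{V}\fibre{\hbeta}{\frakbo}{\alpha}\hA_{V}$, and Lemma \ref{lemma:pmu-delta-hopf} concludes. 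You only make explicit the bookkeeping (the use of $*$-closedness to get all four fiber-product conditions, and the passage to $V^{op}$) that the paper's shorter proof leaves implicit.
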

\begin{proof}
  By Proposition \ref{proposition:pmu-legs}, the assumption
  implies that $(\hA_{V})^{\alpha,\hbeta}_{H}$ and
  $(A_{V})^{\beta,\alpha}_{H}=(\hA_{V^{op}})^{\beta,\alpha}_{H}$
  are $C^{*}$-algebras, that $\Delta_{V}(A_{V}) \subseteq
  A_{V} \fibre{\alpha}{\frakb}{\beta} A_{V}$ and similarly
  that $\hDelta_{V}(\hA_{V})\subseteq \hA_{V}
  \fibre{\hbeta}{\frakbo}{\alpha} \hA_{V}$. Now, the
  assertion follows from Lemma
  \ref{lemma:pmu-delta-hopf}.
\end{proof}

\subsection{The universal Banach algebra of representations }
 \label{subsection:rep-banach}

Every representation of $V$ induces a representation of the
convolution algebra $\tilde \Omega_{\beta,\alpha}$
introduced in subsection \ref{subsection:legs-pmu}
as follows.
\begin{proposition} \label{proposition:pmu-convolution-rep}
  \begin{enumerate}
  \item Let $(\cKhd,X)$ be a representation of $V$. Then
    there exists a contractive algebra homomorphism
    $\hpi_{X} \colon \tilde \Omega_{\beta,\alpha} \to
    \hA_{X}$ such that
    \begin{align} \label{eq:pmu-convolution-rep}
      \hpi_{X}(\omega_{\xi,\xi'}) = \sum_{n}
      \langle \xi_{n}|_{2}X|\xi'_{n}\rangle_{2} \quad
      \text{for all } \xi \in \beta^{\infty}, \xi' \in
      \alpha^{\infty}.
    \end{align}
  \item Let $(\cKhd,X)$ and $(\eLhf,Y)$ be representations
    of $V$ and let $T \in {\cal L}_{(s)}(\cKhd,\eLhf)$. Then
    $T$ is a (semi-)morphism from $(\cKhd,X)$ to $(\eLhf,Y)$
    if and only if $T$ intertwines $\hpi_{X}$ and $\hpi_{Y}$
    in the sense that $T\hpi_{X}(\omega)=\hpi_{Y}(\omega)T$
    for all $\omega \in \tilde \Omega_{\beta,\alpha}$.
  \end{enumerate}
\end{proposition}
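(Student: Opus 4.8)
The plan is to establish part (i) in three steps---boundedness of the defining formula, descent to $\tilde\Omega_{\beta,\alpha}$, and multiplicativity---and then to obtain part (ii) from two intertwining identities. Throughout I write $\langle\xi|_{2}\colon\rHrange\to K$ ($\xi\in\beta$) and $|\xi'\rangle_{2}\colon K\to\rHsource$ ($\xi'\in\alpha$) for the ket--bra operators of subsection \ref{subsection:pmu-rtp} attached to the representation $X$ on $\cKhd$, so that $\hpi_{X}(\omega_{\xi,\xi'})=\sum_{n}\langle\xi_{n}|_{2}X|\xi'_{n}\rangle_{2}$ is to be an element of $\hA_{X}\subseteq\mathcal{L}(K)$.

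First I would fix $\xi\in\beta^{\infty}$, $\xi'\in\alpha^{\infty}$ and assemble $(\langle\xi_{n}|_{2})_{n}$ and $(|\xi'_{n}\rangle_{2})_{n}$ into a row operator $R\colon\bigoplus_{n}\rHrange\to K$ and a column operator $C\colon K\to\bigoplus_{n}\rHsource$. Since $RR^{*}=\rho_{\gamma}\big(\sum_{n}\xi_{n}^{*}\xi_{n}\big)$ and $C^{*}C=\rho_{\hdelta}\big(\sum_{n}\xi'_{n}{}^{*}\xi'_{n}\big)$, we get $\|R\|\le\|\xi\|$, $\|C\|\le\|\xi'\|$, and as $\sum_{n}\langle\xi_{n}|_{2}X|\xi'_{n}\rangle_{2}=R\big(\bigoplus_{n}X\big)C$ the series converges in norm, its sum lies in the closed space $\hA_{X}$, and has norm $\le\|\xi\|\,\|\xi'\|$. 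To see the sum depends only on $\omega_{\xi,\xi'}\in\Omega_{\beta,\alpha}(\rho_{\hbeta}(\frakB)')$ I would mimic the approximation argument in the proof of Proposition \ref{proposition:slice-fiber}: since $[\hdelta\frakK]=K$ it suffices to treat $\theta^{*}\big(\sum_{n}\langle\xi_{n}|_{2}X|\xi'_{n}\rangle_{2}\big)\theta'\in\mathcal{L}(\frakK)$ for $\theta,\theta'\in\hdelta$, and by approximating the relevant vectors and applying the ket--bra swap identities of subsection \ref{subsection:pmu-rtp} one rewrites this in terms of $\omega_{\xi,\xi'}$ evaluated on fixed operators in $\rho_{\hbeta}(\frakB)'$ built only from $X$ and $\theta,\theta'$---so the operator is pinned down. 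Linearity of $\hpi_{X}$ then follows from the interleaving description of $\omega_{\xi,\xi'}+\omega_{\eta,\eta'}$, and contractivity by taking the infimum over representing sequences.

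The real obstacle is multiplicativity. By Theorem \ref{theorem:pmu-convolution} the product on $\tilde\Omega_{\beta,\alpha}$ is $\omega\ast\omega'=(\omega\boxtimes\omega')\circ\Delta_{V}$, with $\boxtimes$ given there by an explicit formula and $\Delta_{V}(z)=V(z\botensor\Id)V^{*}$. Working in leg notation on $K\rtensor{\hdelta}{\frakbo}{\alpha}H\rtensor{\hbeta}{\frakbo}{\alpha}H$ (legs $K,H,H$), one checks from the formula for $\boxtimes$ that $\hpi_{X}(\omega)\hpi_{X}(\omega')$ is obtained by slicing legs $2$ and $3$ of $X_{12}X_{13}$, while $\hpi_{X}(\omega\ast\omega')$ is obtained by slicing those same legs of $V_{23}X_{12}V_{23}^{*}$---the $\Delta_{V}$ inside the convolution being absorbed into conjugation by $V_{23}$. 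These coincide by the identity $X_{12}X_{13}=V_{23}X_{12}V_{23}^{*}$, which is precisely commutativity of diagram \eqref{eq:rep-pentagon} rewritten (for $X=V$ it is the Baaj--Skandalis pentagon). I expect the effort to be concentrated here: keeping the relative--tensor--product bookkeeping, the flips $\Sigma_{23}$, the module identifications of \eqref{eq:rep-intertwine} and the description of $\omega\boxtimes\omega'$ mutually consistent, in the manner of the diagram chase in the proof of Lemma \ref{lemma:pmu-delta-hopf}; no ingredients beyond \eqref{eq:rep-pentagon} and \eqref{eq:rep-intertwine} are needed.

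For part (ii) the key remark is that every $T\in\mathcal{L}_{(s)}(\cKhd,\eLhf)$ satisfies $T\langle\xi|_{2}=\langle\xi|_{2}(T\btensor\Id)$ for $\xi\in\beta$ and $|\xi'\rangle_{2}T=(T\botensor\Id)|\xi'\rangle_{2}$ for $\xi'\in\alpha$, the right--hand operators being those of $Y$; these are immediate from $T\gamma\subseteq\epsilon$, $T\hdelta\subseteq\hphi$, $T\rho_{\gamma}(b^{\dagger})=\rho_{\epsilon}(b^{\dagger})T$ and the definition of the induced maps. Granting them, if $T$ is a (semi-)morphism of representations, i.e.\ $Y(T\botensor\Id)=(T\btensor\Id)X$, then for all $\xi\in\beta^{\infty}$, $\xi'\in\alpha^{\infty}$,
\[
T\hpi_{X}(\omega_{\xi,\xi'})=\sum_{n}\langle\xi_{n}|_{2}(T\btensor\Id)X|\xi'_{n}\rangle_{2}=\sum_{n}\langle\xi_{n}|_{2}Y(T\botensor\Id)|\xi'_{n}\rangle_{2}=\hpi_{Y}(\omega_{\xi,\xi'})T,
\]
so $T$ intertwines $\hpi_{X}$ and $\hpi_{Y}$. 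Conversely, if $T$ intertwines them the same computation gives $\sum_{n}\langle\xi_{n}|_{2}Z|\xi'_{n}\rangle_{2}=0$ for $Z:=(T\btensor\Id)X-Y(T\botensor\Id)$ and all $\xi,\xi'$; specialising to single module vectors and using that the vectors $v\tl\xi'$ ($v\in K$, $\xi'\in\alpha$) span $\rHsource$ while $\{\langle\xi|_{2}\}_{\xi\in\beta}$ is jointly injective on $L\rtensor{\epsilon}{\frakb}{\beta}H$ forces $Z=0$, that is, $T$ is a (semi-)morphism. The argument is uniform in the semi-morphism and morphism cases.
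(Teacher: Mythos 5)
Your proposal is correct and follows essentially the same route as the paper: well-definedness by compressing the sum against module elements of the legs of $K$ and recognizing $\omega_{\xi,\xi'}$ applied to fixed elements of $A_{X}\subseteq\rho_{\hbeta}(\frakB)'$, multiplicativity via the pentagon identity $X_{12}X_{13}=V_{23}X_{12}V_{23}^{*}$ together with $\Delta_{V}(z)=V(z\botensor\Id)V^{*}$, and part (ii) (which the paper dismisses as straightforward) by exactly your intertwining-plus-density argument. The only cosmetic difference is in the well-definedness test: the paper compresses with $\eta\in\gamma$ on the left and $\eta'\in\hdelta$ on the right, for which $\eta^{*}\bigl(\sum_{n}\langle\xi_{n}|_{2}X|\xi'_{n}\rangle_{2}\bigr)\eta'=\omega_{\xi,\xi'}(\langle\eta|_{1}X|\eta'\rangle_{1})$ holds on the nose (using that $\frakB$ and $\frakBo$ commute), so the approximation detour forced by your choice of $\theta,\theta'\in\hdelta$ on both sides is unnecessary.
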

\begin{proof}
  i) The sum on the right hand side of
  \eqref{eq:pmu-convolution-rep} depends on
  $\omega_{\xi,\xi'}$ but not on $\xi,\xi'$ because
  $\eta^{*}\left(\sum_{n}\langle
    \xi_{n}|_{2}X|\xi'_{n}\rangle_{2}\right)\eta' =
  \omega_{\xi,\xi'}(\langle\eta|_{1}X|\eta'\rangle_{1})$ for
  all $\eta \in \gamma,\eta'\in \hdelta$.  Thus, $\hpi_{X}$
  is well defined by \eqref{eq:pmu-convolution-rep}. It is a
  homomorphism because for all $\omega,\omega' \in \tilde
  \Omega_{\beta,\alpha}$, $\eta\in \gamma,\eta' \in
  \hdelta$,
  \begin{align*}
    \eta^{*}\hpi_{X}(\omega)\hpi_{X}(\omega')\eta' &=
    (\omega \boxtimes \omega')(\langle
    \eta|_{1}X_{12}X_{13}|\eta'\rangle_{1})  \\
    &= (\omega \boxtimes \omega')(\langle
    \eta|_{1}V_{23}X_{12}V_{23}^{*}|\eta'\rangle_{1})
    \\
    &=(\omega \boxtimes \omega')\big(V(\langle \eta|_{1}
    X|\eta'\rangle_{1} \rtensor{\hbeta}{\frakbo}{\alpha}
    \Id)V^{*}\big) \\
    & = (\omega \ast \omega')(\langle
    \eta|_{1}X|\eta'\rangle_{1}) = \eta^{*}\hpi_{X}(\omega
    \ast \omega')\eta'.
  \end{align*}
  ii) Straightforward.
\end{proof}
For later use, we note the following formula:
\begin{lemma} \label{lemma:pmu-convolution-hdelta} $\hDelta_{V}(\hpi_{V}(\omega)) = \hpi_{V \boxtimes
    V}(\omega)$ for each $\omega \in \tilde
  \Omega_{\beta,\alpha}$.
\end{lemma}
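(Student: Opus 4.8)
The plan is to recognize $V \boxtimes V$ as an isomorphic copy of an amplification of the regular representation, and to transport the asserted identity along that isomorphism using the naturality of the homomorphisms $\hpi_{(-)}$.

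First I would record, regarding $(H,\alpha,\hbeta,\beta)$ as a $C^{*}$-$(\frakb,\frakbo,\frakbo)$-module, that the relation $V(\beta \lt \alpha) = \beta \lt \beta$ of \eqref{eq:pmu-intertwine} lets one apply Example \ref{examples:rep} iv) with $\frakc = \frakb$, $L_{\lambda} = H_{\alpha}$, $\kappa = \beta$ and $X$ the regular representation; this produces the amplified representation $\Id \btensor V$ of $V$ on ${_{\alpha \rt \alpha}(\Hrange)_{\alpha \rt \hbeta}}$, where $\Id \btensor V$ acts as $V$ on the last two legs. By Proposition \ref{lemma:rep-absorb}, the regular representation operator $V \colon \Hsource \to \Hrange$ is an isomorphism of representations from $V \boxtimes V$ onto this amplification, hence in particular a morphism, so Proposition \ref{proposition:pmu-convolution-rep} ii) gives $\hpi_{V \boxtimes V}(\omega) = V^{*}\,\hpi_{\Id \btensor V}(\omega)\,V$ for every $\omega \in \tilde\Omega_{\beta,\alpha}$.

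Next I would identify $\hpi_{\Id \btensor V}(\omega) = \Id \btensor \hpi_{V}(\omega)$, where ``$\Id \btensor$'' on the right is amplification along the $\beta$-leg of $\Hrange$. Fixing a representation $\omega = \omega_{\xi,\eta}$ with $\xi \in \beta^{\infty}$, $\eta \in \alpha^{\infty}$ (every element of $\tilde\Omega_{\beta,\alpha}$ has this form), formula \eqref{eq:pmu-convolution-rep} yields $\hpi_{\Id \btensor V}(\omega_{\xi,\eta}) = \sum_{n} \langle \xi_{n}|_{2} (\Id \btensor V) |\eta_{n}\rangle_{2} = \Id \btensor \big(\sum_{n} \langle \xi_{n}|_{2} V |\eta_{n}\rangle_{2}\big) = \Id \btensor \hpi_{V}(\omega)$, since for the amplified representation the defining ket-bra operators act only on the legs touched by $V$ --- this is the $\hpi$-analogue of the identity $\hA_{(\Id \ctensor X)} = \Id \ctensor \hA_{X}$ in Example \ref{examples:rep-legs} iv), proved in the same way. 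Because the operator occurring here is precisely the ``$\Id \btensor y$'' in the definition $\hDelta_{V}(y) = V^{*}(\Id \btensor y)V$, substituting into the previous display gives $\hpi_{V \boxtimes V}(\omega) = V^{*}(\Id \btensor \hpi_{V}(\omega))V = \hDelta_{V}(\hpi_{V}(\omega))$, as desired.

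The step I expect to require the most care is the bookkeeping that identifies the amplification furnished by Proposition \ref{lemma:rep-absorb} for $X = V$ with the amplification along the $\beta$-leg of $\Hrange$, so that $\hpi_{\Id \btensor V}(\omega)$ really coincides with the operator appearing in $\hDelta_{V}$; this involves chasing the associativity identifications between the various triple relative tensor products and checking that the ket-bra operators sit on the correct legs. A fully self-contained alternative, in the diagrammatic style of Notation \ref{notation:diagrams}, would instead expand $V \boxtimes V = V_{13}(\Id \botensor V)$ directly and verify the matrix-coefficient characterization of $\hpi_{V \boxtimes V}$ from Proposition \ref{proposition:pmu-convolution-rep} i) using \eqref{eq:pmu-intertwine} and the pentagon equation \eqref{eq:pmu-pentagon}, at the cost of a considerably longer computation.
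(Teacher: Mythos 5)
Your argument is correct, but it is not the route the paper takes. The paper's proof is a two-line direct computation in leg notation: writing $\hpi_{V}(\omega_{\xi',\xi})=\sum_{n}\langle\xi'_{n}|_{2}V|\xi_{n}\rangle_{2}$ and placing this in the second leg, one gets $\hDelta_{V}(\hpi_{V}(\omega_{\xi',\xi}))=\sum_{n}\langle\xi'_{n}|_{3}V_{12}^{*}V_{23}V_{12}|\xi_{n}\rangle_{3}$, and the pentagon relation (diagram \eqref{eq:pmu-pentagon}, in the form $V_{12}^{*}V_{23}V_{12}=V_{13}V_{23}$) together with $V\boxtimes V=V_{13}V_{23}$ immediately identifies this with $\hpi_{V\boxtimes V}(\omega)$ — so the ``considerably longer computation'' you relegate to an alternative is in fact the paper's argument, and it is shorter than you anticipate because the matrix-coefficient verification is immediate. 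Your route instead factors the pentagon through Proposition \ref{lemma:rep-absorb}: $V$ is an isomorphism from $V\boxtimes V$ onto the amplification $\Id\btensor V$ (whose construction via Example \ref{examples:rep} iv) with $L_{\lambda}=H_{\alpha}$, $\kappa=\beta$ and the relation $V(\beta\lt\alpha)=\beta\lt\beta$ from \eqref{eq:pmu-intertwine} you set up correctly), so Proposition \ref{proposition:pmu-convolution-rep} ii) gives $\hpi_{V\boxtimes V}(\omega)=V^{*}\hpi_{\Id\btensor V}(\omega)V$, and the remaining identity $\hpi_{\Id\btensor V}(\omega)=\Id\btensor\hpi_{V}(\omega)$ is a genuine extra lemma not stated in the paper, but it does follow by the elementary-tensor computation you indicate (the analogue of $\hA_{(\Id\ctensor X)}=\Id\ctensor\hA_{X}$ in Example \ref{examples:rep-legs} iv), with the kets of the amplified representation sitting on the third leg under the associativity identification). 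What each approach buys: the paper's proof is minimal and self-contained; yours is more structural, reuses the absorption and intertwining machinery (the same pattern the paper exploits later for the universal Hopf $C^{*}$-bimodule), and makes transparent why the comultiplication $V^{*}(\Id\btensor\,\cdot\,)V$ appears, at the cost of the bookkeeping lemma on amplifications that you rightly flag as the delicate step.
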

\begin{proof}
  For all $\xi\in \alpha^{\infty}$ and $\xi' \in
  \beta^{\infty}$, we have
  $\hDelta_{V}(\hpi_{V}(\omega_{\xi',\xi})) = \sum_{n}
  \langle \xi'|_{3} V_{12}V_{23}V_{12}^{*} |\xi\rangle_{3} =
  \sum_{n} \langle \xi'|_{3}V_{13}V_{23}|\xi\rangle_{3} =
  \hpi_{V\boxtimes V}(\omega_{\xi',\xi})$.
\end{proof}
Denote by $\hA_{(u)}$  the separated completion of
$\tilde \Omega_{\beta,\alpha}$ with respect to the seminorm
\begin{align*}
  |\omega| := \sup \{ \|\hpi_{X}(\omega)\| \mid (X \text{ is
    a representation of } V\} \text{ for each } \omega \in
  \tilde \Omega_{\beta,\alpha}
\end{align*}
and by $\hpi_{(u)} \colon \tilde \Omega_{\beta,\alpha} \to
\hA_{(u)}$ the natural map.  
\begin{proposition} \label{proposition:pmu-convolution-banach}
  \begin{enumerate}
  \item There exists a unique algebra structure on
    $\hA_{(u)}$ such that $\hA_{(u)}$ is a Banach algebra
    and $\hpi_{(u)}$ an algebra homomorphism.
  \item For every representation $X$ of $V$, there
    exists a unique algebra homomorphism $\hpi^{(u)}_{X}
    \colon \hA_{(u)} \to \hA_{X}$ such that $\hpi^{(u)}_{X}
    \circ \hpi_{(u)} = \hpi_{X}$.
  \item If $V$ is well-behaved, then the Banach algebra
    $\hA_{(u)}$ carries a unique involution turning it into
    a $C^{*}$-algebra such that $\pi^{(u)}_{X}$ is a
    $*$-homomorphism for every representation $X$ of $V$.
  \end{enumerate}
\end{proposition}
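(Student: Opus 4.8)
The plan is to treat the three parts in turn, obtaining (i) and (ii) from elementary properties of the seminorm $|\cdot|$ and concentrating the real work in (iii).

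For (i), recall that $\tilde\Omega_{\beta,\alpha}$ is a Banach algebra (Theorem \ref{theorem:pmu-convolution} ii)) and that for every representation $X$ the map $\hpi_X\colon \tilde\Omega_{\beta,\alpha}\to\hA_X$ is a contractive algebra homomorphism (Proposition \ref{proposition:pmu-convolution-rep} i)). Hence $\|\hpi_X(\omega)\|\le\|\omega\|$ and $\|\hpi_X(\omega\ast\omega')\|=\|\hpi_X(\omega)\hpi_X(\omega')\|\le\|\hpi_X(\omega)\|\,\|\hpi_X(\omega')\|$ for all $\omega,\omega'$, and taking the supremum over $X$ shows that $|\cdot|$ is a finite, submultiplicative seminorm. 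Therefore $N:=\{\omega\mid|\omega|=0\}$ is a two-sided ideal, the multiplication descends to a submultiplicative norm on $\tilde\Omega_{\beta,\alpha}/N$, and it extends continuously to the completion $\hA_{(u)}$, making it a Banach algebra for which $\hpi_{(u)}$ is a homomorphism; uniqueness is clear since $\hpi_{(u)}(\tilde\Omega_{\beta,\alpha})$ is dense and multiplication is continuous. (Each supremum $\sup_X\|\hpi_X(\omega)\|$ is legitimate because it is bounded by $\|\omega\|$; as usual one may replace the class of all representations by a suitably large set realizing all these suprema, which I treat as routine.) For (ii), since $\|\hpi_X(\omega)\|\le|\omega|$ the homomorphism $\hpi_X$ kills $N$ and induces a contraction $\tilde\Omega_{\beta,\alpha}/N\to\hA_X$, which extends to a contraction $\hpi^{(u)}_X\colon\hA_{(u)}\to\hA_X$ with $\hpi^{(u)}_X\circ\hpi_{(u)}=\hpi_X$; it is an algebra homomorphism because it is one on the dense subalgebra $\hpi_{(u)}(\tilde\Omega_{\beta,\alpha})$, and it is unique for the same reason.

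For (iii), assume $V$ is well behaved. Then for every representation $X$ on a module $\cKhd$ the space $\hA_X\subseteq\mathcal{L}(K)$ satisfies $\hA_X=\hA_X^*$ and $[\hA_X\hA_X]=\hA_X$ (Proposition \ref{proposition:rep-legs} i)), so it is a $C^{*}$-algebra. By (i), $|\omega|=\sup_X\|\hpi_X(\omega)\|$, so the maps $\hpi^{(u)}_X$ combine to an isometric algebra homomorphism $\Pi\colon\hA_{(u)}\to\prod_X\hA_X$ into the $\ell^{\infty}$-product $C^{*}$-algebra; write $B:=\Pi(\hA_{(u)})$, a closed subalgebra. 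Since the norm of $\hA_{(u)}$ is already fixed, the statement of (iii) is equivalent to $B$ being invariant under the involution of $\prod_X\hA_X$: then $B$ is a $C^{*}$-algebra, $\Pi$ transports its involution back to $\hA_{(u)}$, each $\hpi^{(u)}_X$ becomes a $*$-homomorphism, and uniqueness of the involution follows from injectivity of $\Pi$. Thus the whole problem reduces to showing that for each $\omega\in\tilde\Omega_{\beta,\alpha}$ the family $\big(\hpi_X(\omega)^*\big)_X$ lies in $B$, i.e. is approximable \emph{uniformly in $X$} by families $\big(\hpi_X(\omega_k)\big)_X$ with $\omega_k\in\tilde\Omega_{\beta,\alpha}$.

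This last point is the main obstacle, and it is where well-behavedness — in particular the clause concerning representations of $V^{op}$ — must enter essentially. For $\omega=\omega_{\xi,\xi'}$ with $\xi\in\beta^{\infty}$, $\xi'\in\alpha^{\infty}$ one has $\hpi_X(\omega)^*=\sum_n\langle\xi'_n|_2X^*|\xi_n\rangle_2$, which forces an analysis of the adjoint unitary $X^*$. My plan is to identify $X^*$, transported through the canonical flip isomorphisms, with a (co)representation attached to $V^{op}$, and to use that well-behavedness makes the corresponding space of generalized matrix elements $*$-closed and equal to $\hA_X$ as a subspace of $\mathcal{L}(K)$; this symmetry between $X$ and $X^*$ is what allows the approximating convolution operators $\omega_k$ to be chosen independently of $X$, exactly as the $L^{1}$-involution behaves in the groupoid picture of Theorem \ref{theorem:legs-groupoid}. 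Converting the pointwise, $X$-by-$X$ self-adjointness supplied by well-behavedness into such a uniform-in-$X$ statement — as opposed to a representation-dependent norm approximation — is the crux of the argument.
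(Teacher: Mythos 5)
Parts i) and ii) of your argument are fine and match what the paper treats as routine. For part iii) your reduction is also correct: embedding $\hA_{(u)}$ isometrically via $\Pi$ into the $\ell^{\infty}$-product $\prod_X \hA_X$, the existence and uniqueness of the involution is indeed equivalent to $*$-invariance of the image $B=\Pi(\hA_{(u)})$. The problem is that you then stop. You explicitly defer the one nontrivial step --- showing that $\big(\hpi_X(\omega)^*\big)_X$ is approximable uniformly in $X$ by elements $\big(\hpi_X(\omega_k)\big)_X$ --- calling it ``the crux'' and offering only a plan involving $X^*$ and representations of $V^{op}$. That plan is not carried out, and it is not clear it can be: well-behavedness gives you $\hA_X=\hA_X^*$ separately for each $X$, and there is no evident mechanism in your sketch for making the approximating $\omega_k$ independent of $X$. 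As written, part iii) is not proved.

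The paper sidesteps the uniformity issue entirely, and this is the idea you are missing. For each $\omega$ and $\epsilon>0$ choose a representation $X_{(\omega,\epsilon)}$ with $\|\hpi_{X_{(\omega,\epsilon)}}(\omega)\|>|\omega|-\epsilon$, and let $X:=\boxplus_{\omega,\epsilon}X_{(\omega,\epsilon)}$ be the direct sum (Examples \ref{examples:rep} iii)). Since $\hA_{\boxplus_i X_i}\subseteq\prod_i\hA_{X_i}$ (Examples \ref{examples:rep-legs} iii)), the single homomorphism $\hpi^{(u)}_{X}$ is isometric; it has dense image by definition of $\hA_X$, hence is an isometric isomorphism of Banach algebras $\hA_{(u)}\to\hA_{X}$. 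Well-behavedness makes $\hA_X$ a $C^*$-algebra, so one simply pulls the involution back through $\hpi^{(u)}_X$. Compatibility with an arbitrary representation $Y$ is then obtained from the direct sum $X\boxplus Y$: the induced $\hpi^{(u)}_{X\boxplus Y}$ is again an isometric isomorphism (it dominates the isometric $\hpi^{(u)}_X$), so the coordinate projection $p_X\colon\hA_{X\boxplus Y}\to\hA_X$ is injective; since $p_X$, $p_Y$ and $\hpi^{(u)}_X$ are $*$-homomorphisms, so are $\hpi^{(u)}_{X\boxplus Y}=p_X^{-1}\circ\hpi^{(u)}_X$ and $\hpi^{(u)}_Y=p_Y\circ\hpi^{(u)}_{X\boxplus Y}$. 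No uniform-in-$X$ approximation of adjoints is ever needed. If you want to salvage your write-up, replace the final paragraph with this norm-realizing direct-sum argument.
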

\begin{proof} 
  Assertions i) and ii) follow from routine arguments. Let
  us prove iii).  For each $\omega \in \tilde
  \Omega_{\beta,\alpha}$ and $\epsilon > 0$, choose a
  representation $X_{(\omega,\epsilon)}$ such that
  $\|\hpi_{X_{(\omega,\epsilon)}}(\omega)\| >
  |\omega|-\epsilon$. Let $X :=
  \boxplus_{\omega,\epsilon} X_{(\omega,\epsilon)}$,
  where the sum is taken over all $\omega \in
  \tilde\Omega_{\beta,\alpha}$ and $\epsilon > 0$. Then
  evidently $\hpi^{(u)}_{X} \colon \hA_{(u)} \to \hA_{X}$ is
  an isometric  isomorphism of Banach algebras. We can
  therefore define an involution on $\hA_{(u)}$ such that
  $\hpi^{(u)}_{X}$ becomes a $*$-isomorphism.  Now, let $Y$
  be a representation $V$. Then $X \boxplus Y$ is a
  representation again, and we have a commutative diagram
  \begin{align*} \xymatrix{ & { \hA_{(u)} }
      \ar[ld]_{\hpi^{(u)}_{X}} \ar[d]|{\hpi^{(u)}_{X \boxplus Y}}
      \ar[rd]^{\hpi^{(u)}_{Y}} & \\ {\hA_{X}} & {\hA_{X \boxplus
          Y}} \ar[l]^{p_{X}} \ar[r]_{p_{Y}} & {\hA_{Y},} }
  \end{align*} 
  where $p_{X}$ and $p_{Y}$ are the natural maps. Since
  $\hpi^{(u)}_{X}$ is isometric, so is
  $\hpi^{(u)}_{X\boxplus Y}$. But $\hpi^{(u)}_{X\boxplus Y}$
  also has dense image and therefore is surjective, whence
  $p_{X}$ is injective. Since $\hpi^{(u)}_{X}$, $p_{X}$,
  $p_{Y}$ are $*$-homomorphisms, so is
  $\hpi^{(u)}_{X\boxplus Y}$ and hence also
  $\hpi_{Y}^{(u)}$.
\end{proof}

\subsection{Universal representations and the universal Hopf
$C^{*}$-bimodule }

\label{subsection:rep-universal}

If the unitary $V$ is well-behaved, then the universal
Banach algebra $\hA_{(u)}$ constructed above can be equipped
with the structure of a semi-normal Hopf $C^{*}$-bimodule,
where the comultiplication corresponds to the tensor product
of representations of $V$. The key idea is to identify
$\hA_{(u)}$ with the $C^{*}$-algebra associated to a
 representation that is universal in the following sense.
\begin{definition}  A representation $(\cKhd,X)$ of
  $V$  is {\em universal} if for every representation
  $(\eLhf,Y)$ and every $\xi \in \epsilon$, $\zeta \in L$,
  $\eta \in \hat\phi$, there exists a semi-morphism $T$
  from $(\cKhd,X)$ to $(\eLhf,Y)$ that is a partial
  isometry and satisfies $\xi \in T\gamma$, $\zeta \in TK$,
  $\eta \in T\hat\delta$.
\end{definition}
\begin{remark}
  Evidently, every universal representation is a generator
  \cite{maclane} of $\bfcs\bfrep_{V}^{s}$ in the categorical
  sense.
\end{remark}
We shall use a cardinality argument to show that
$\bfcs\bfrep_{V}^{s}$ has a universal representation.  Given
a topological space $X$ and a cardinal number $c$, let us
say that $X$ is {\em $c$-separable} if $X$ has a dense
subset of cardinality less than or equal to $c$. Let $\omega
:=|\naturals|$.  Let us also say that a {\em
  subrepresentation} of a representation $(\cKhd,X)$ of $V$
is a $C^{*}$-$(\frakb,\frakbo)$-module $\eLhf$ such that $L
\subseteq K$, $\epsilon \subseteq \gamma$, $\hphi \subseteq
\hdelta$, and $X(L \rtensor{\hphi}{\frakbo}{\alpha} H)= L
\rtensor{\epsilon}{\frakbo}{\beta} H$.
\begin{lemma} \label{lemma:pmu-universal}
  Let $(\cKhd, X)$ be a representation, $c,d$ cardinal
  numbers, $K_{0} \subseteq K$, $\gamma_{0} \subseteq
  \gamma$, $\hdelta_{0} \subseteq \hdelta$ $c$-separable
  subsets, and assume that the spaces
  $\frakK,\frakB,\frakBo,\alpha,\beta,\hbeta$ are
  $d$-separable. Put $e:= \omega\sum_{n=0}^{\infty} 
  d^{n}$.  Then there exists a subrepresentation $\eLhf$ of
  $(\cKhd,X)$ such that $\gamma_{0} \subseteq \epsilon$,
  $\hdelta_{0} \subseteq \hphi$, $K_{0} \subseteq L$ and
  such that $L,\gamma,\hphi$ are $e(c+1)$-separable.
\end{lemma}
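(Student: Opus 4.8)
The plan is to construct $\eLhf$ by a closure argument of L\"owenheim--Skolem type inside $(\cKhd,X)$. By the definition of a subrepresentation it suffices to produce closed subspaces $L\subseteq K$, $\epsilon\subseteq\gamma$, $\hphi\subseteq\hdelta$ with $K_{0}\subseteq L$, $\gamma_{0}\subseteq\epsilon$, $\hdelta_{0}\subseteq\hphi$, such that $\eLhf:=(L,\epsilon,\hphi)$ is a $C^{*}$-$(\frakb,\frakbo)$-module, such that $X\big(L\rtensor{\hphi}{\frakbo}{\alpha}H\big)=L\rtensor{\epsilon}{\frakb}{\beta}H$, and such that $L,\epsilon,\hphi$ are $e(c+1)$-separable; the restriction of $X$ is then a representation of $V$ on $\eLhf$, the relations \eqref{eq:rep-intertwine} and the commutativity of \eqref{eq:rep-pentagon} for it being obtained by restricting the corresponding statements for $X$ to the chosen subspaces.

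First I would fix, once and for all, dense subsets $D_{\frakK}\subseteq\frakK$, $D_{\frakB}\subseteq\frakB$, $D_{\frakBo}\subseteq\frakBo$, $D_{\alpha}\subseteq\alpha$, $D_{\beta}\subseteq\beta$, $D_{\hbeta}\subseteq\hbeta$ of cardinality at most $d$, dense subsets of $K_{0},\gamma_{0},\hdelta_{0}$ of cardinality at most $c$, and---using $[\gamma^{*}\gamma]=\frakB$, $[\hdelta^{*}\hdelta]=\frakBo$ and the $d$-separability of $\frakB,\frakBo$---subsets of $\gamma$ and of $\hdelta$ of cardinality at most $d$ whose sets of products are dense in $\frakB$ and in $\frakBo$. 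These data make up starting sets $S^{(0)}_{L}\subseteq K$, $S^{(0)}_{\epsilon}\subseteq\gamma$, $S^{(0)}_{\hphi}\subseteq\hdelta$. Then I would iterate: given $S^{(n)}_{\bullet}$, let $S^{(n+1)}_{\bullet}$ be obtained by adjoining, with all auxiliary arguments ranging over the fixed dense subsets, (i) the products $S^{(n)}_{\epsilon}D_{\frakK}$, $S^{(n)}_{\hphi}D_{\frakK}$ and the vectors $\rho_{\gamma}(b^{\dagger})\kappa$, $\rho_{\hdelta}(b)\kappa$ for $b^{\dagger}\in D_{\frakBo}$, $b\in D_{\frakB}$, $\kappa\in S^{(n)}_{L}$, all to $S^{(n+1)}_{L}$, so as to force $[\epsilon\frakK]=L=[\hphi\frakK]$ and invariance of $L$ under $\rho_{\gamma}(\frakBo)$ and $\rho_{\hdelta}(\frakB)$; (ii) the products $S^{(n)}_{\epsilon}D_{\frakB}$ and the vectors $\rho_{\hdelta}(b)\xi$ ($b\in D_{\frakB}$) to $S^{(n+1)}_{\epsilon}$, and symmetrically $S^{(n)}_{\hphi}D_{\frakBo}$ and $\rho_{\gamma}(b^{\dagger})\xi$ ($b^{\dagger}\in D_{\frakBo}$) to $S^{(n+1)}_{\hphi}$, for the remaining module axioms and the bimodule compatibility; (iii) the slices $\langle\eta'|_{2}X(\xi\tr\zeta\tl\eta)$ for $\xi\in S^{(n)}_{\hphi}$, $\zeta\in D_{\frakK}$, $\eta\in D_{\alpha}$, $\eta'\in D_{\beta}$, and the slices $\langle\eta'|_{2}X^{*}(\xi\tr\zeta\tl\eta)$ for $\xi\in S^{(n)}_{\epsilon}$, $\zeta\in D_{\frakK}$, $\eta\in D_{\beta}$, $\eta'\in D_{\alpha}$, to $S^{(n+1)}_{L}$; and (iv) for each such slice $\kappa\in K$, countably many vectors in $\gamma$ and in $\hdelta$ which together with $D_{\frakK}$ approximate $\kappa$ (possible since $[\gamma D_{\frakK}]=K=[\hdelta D_{\frakK}]$), adjoined to $S^{(n+1)}_{\epsilon}$ and $S^{(n+1)}_{\hphi}$ respectively. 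Finally I would set $S_{\bullet}:=\bigcup_{n\in\naturals}S^{(n)}_{\bullet}$ and $L:=[S_{L}]$, $\epsilon:=[S_{\epsilon}]$, $\hphi:=[S_{\hphi}]$.

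Verifying that $(L,\epsilon,\hphi)$ is a $C^{*}$-$(\frakb,\frakbo)$-module is then routine from the closure properties in (i), (ii), (iv). The delicate point is the identity $X\big(L\rtensor{\hphi}{\frakbo}{\alpha}H\big)=L\rtensor{\epsilon}{\frakb}{\beta}H$. For this I would use that $L$, being invariant under the selfadjoint $C^{*}$-algebra $\rho_{\gamma}(\frakBo)$, is the range of a projection $p\in\rho_{\gamma}(\frakBo)'$, that $L\rtensor{\hphi}{\frakbo}{\alpha}H$ and $L\rtensor{\epsilon}{\frakb}{\beta}H$ are then the ranges of $p\tl\Id$ inside $K\rtensor{\hdelta}{\frakbo}{\alpha}H$ and $K\rtensor{\gamma}{\frakb}{\beta}H$ (using $[\epsilon\frakK]=L=[\hphi\frakK]$), and that a vector in one of these big spaces lies in the corresponding range precisely when all its slices $\langle\eta'|_{2}(\cdot)$ do---the passage from this slicewise criterion to the vector itself being a standard approximate-unit argument of the kind carried out in \cite{timmer:fiber}. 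By construction and continuity, all $D_{\beta}$-slices (resp.\ $D_{\alpha}$-slices) of $X(\xi\tr\zeta\tl\eta)$ (resp.\ of $X^{*}(\xi\tr\zeta\tl\eta)$) lie in $L$ for all $\xi\in\hphi$ (resp.\ $\xi\in\epsilon$), hence $X$ and $X^{*}$ map the two sub--relative tensor products into each other, and since $X$ is unitary, onto each other. For the cardinalities, $|S^{(0)}_{\bullet}|\le c+d$, each step multiplies the cardinality by at most $d$ up to a fixed finite power, and step (iv) contributes an additional factor $\omega$; hence $|S_{\bullet}|\le\omega\cdot\sup_{n}|S^{(n)}_{\bullet}|\le(c+1)\,\omega\sum_{n=0}^{\infty}d^{\,n}=e(c+1)$, so $L,\epsilon,\hphi$ are $e(c+1)$-separable. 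I expect the main obstacle to be exactly this mapping condition: pinning down which finite list of slice operations one must close under so that $X$ genuinely restricts to a unitary $L\rtensor{\hphi}{\frakbo}{\alpha}H\to L\rtensor{\epsilon}{\frakb}{\beta}H$, which is where one needs the ket--bra calculus for relative tensor products developed in \cite{timmer:fiber}.
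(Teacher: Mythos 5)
Your argument is the same L\"owenheim--Skolem-type hull construction that the paper uses: the paper likewise builds increasing families $K_{n},\gamma_{n},\hdelta_{n}$ closed under a finite list of operations (module multiplications, the representations $\rho_{\gamma},\rho_{\hdelta}$, and the action of $X$ encoded in conditions such as $X|\hdelta_{n}\rangle_{1}\beta\subseteq[\kbeta{2}\hdelta_{n+1}]$ and $K_{n}\subseteq[\gamma_{n+1}\frakK]\cap[\hdelta_{n+1}\frakK]$), with the same cardinality bookkeeping. Your way of getting $X\big(L\rtensor{\hphi}{\frakbo}{\alpha}H\big)=L\rtensor{\epsilon}{\frakb}{\beta}H$ --- closing $L$ under slices of both $X$ and $X^{*}$ and then using that a vector of the big relative tensor product lies in the range of $p\tl\Id$ iff all its slices lie in $L=pK$ --- is a legitimate, in fact more explicit, version of the step the paper dismisses with ``by construction'', and correctly treats both inclusions via unitarity.

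There is one concrete slip: step (iv) adjoins approximants in $\gamma$ and $\hdelta$ only for the slices produced in step (iii). To get $[\epsilon\frakK]=L=[\hphi\frakK]$ --- without which $(L,\epsilon,\hphi)$ is not a $C^{*}$-$(\frakb,\frakbo)$-module and your identification of $L\rtensor{\hphi}{\frakbo}{\alpha}H$ and $L\rtensor{\epsilon}{\frakb}{\beta}H$ with ranges of $p\tl\Id$ collapses --- you must adjoin such approximants for \emph{every} vector placed in $S_{L}$: in particular for the vectors of $K_{0}$ and for the vectors in $S^{(n)}_{\hphi}D_{\frakK}$ (which need not lie in $[\epsilon\frakK]$) and in $S^{(n)}_{\epsilon}D_{\frakK}$ (which need not lie in $[\hphi\frakK]$). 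This is precisely what the paper's conditions $K_{n}\subseteq[\gamma_{n+1}\frakK]$ and $K_{n}\subseteq[\hdelta_{n+1}\frakK]$ enforce; the repair uses exactly your mechanism from (iv), costs at most another factor $\omega d$ per step, and does not change the bound $e(c+1)$. A softer remark: the paper also closes under $X\kalpha{2}\gamma_{n}\subseteq[|\gamma_{n+1}\rangle_{1}\alpha]$ and $X|\hdelta_{n}\rangle_{1}\hbeta\subseteq[|\gamma_{n+1}\rangle_{1}\hbeta]$, so that the restricted unitary on $\eLhf$ also satisfies the analogue of \eqref{eq:rep-intertwine}. Your hull proves the lemma as literally stated, but for the way it is used in Proposition \ref{proposition:pmu-universal} (the subrepresentation must itself be a representation of $V$, to be isomorphic to a member of the chosen set) you would want to add those closure conditions as well.
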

\begin{proof}
  Replacing $K_{0}$, $\gamma_{0}$, $\hdelta_{0}$ by dense
  subsets, we may assume that each of these sets has
  cardinality less than or equal to $c$. Moreover, replacing
  $\gamma_{0}$ and $\hdelta_{0}$ by larger sets, we may
  assume that $\frakB = [\gamma_{0}^{*}\gamma_{0}]$,
  $\frakBo=[\hdelta_{0}^{*}\hdelta_{0}]$, and $|\gamma_{0}|
  \leq c + \omega d$, $|\hdelta_{0}| \leq c + \omega d$.

  Now, we can choose inductively $K_{n}\subseteq K$,
  $\hdelta_{n}\subseteq \hdelta$, $\gamma_{n}\subseteq
  \gamma$ for  $n =1,2,\ldots$ such that for $n=0,1,2,
  \ldots$, the following conditions hold:
  \begin{enumerate}
  \item $K_{n+1}$ is large enough so that $K_{n} +
    \hdelta_{n}\frakK + \gamma_{n}\frakK \subseteq
    [K_{n+1}]$, but small enough so that
    \begin{align*}
  |K_{n+1}| \leq
    |K_{n}| + \omega d(|\gamma_{n}| +|\hdelta_{n}|)
    \leq \omega d ( | K_{n}|+|\gamma_{n}|+|\hdelta_{n}|);    
    \end{align*}
  \item $\gamma_{n+1}$ is large enough so that
    \begin{gather*}
      \gamma_{n} + \gamma_{n}\frakB +
      \rho_{\hdelta}(\frakBo)\gamma_{n} \subseteq
      [\gamma_{n+1}], \quad K_{n} \subseteq
      [\gamma_{n+1}\frakK], \\
      X\kalpha{2}\gamma_{n} \subseteq
      [|\gamma_{n+1}\rangle_{1} \alpha], \quad X|\hdelta_{n}
      \rangle_{1} \hbeta \subseteq [|\gamma_{n+1}\rangle_{1}
      \hbeta],
    \end{gather*}
    but small enough so that
    \begin{align*}
      |\gamma_{n+1}| \leq |\gamma_{n}|(1 + \omega d) +
      \omega |K_{n}| + \omega d |\gamma_{n}| + \omega d
      |\hdelta_{n}| \leq \omega d (|K_{n}| + |\gamma_{n}| +
      |\hdelta_{n}|);
    \end{align*}
  \item $\hdelta_{n+1}$ is large enough so that
    \begin{gather*}
      \hdelta_{n} + \hdelta_{n} \frakBo +
      \rho_{\gamma}(\frakBo)\hdelta_{n}\subseteq
      [\hdelta_{n+1}], \quad K_{n} \subseteq
      [\hdelta_{n+1}\frakK], \quad X|\hdelta_{n}\rangle_{1}
      \beta \subseteq [\kbeta{2}\hdelta_{n+1}],
    \end{gather*}
    but small enough so that
    \begin{align*}
      |\hdelta_{n+1}| \leq |\hdelta_{n}| ( 1 + \omega d) +
      \omega |K_{n}| + \omega d |\hdelta_{n}| \leq \omega d
      (|K_{n}| + |\hdelta_{n}|).
    \end{align*}
\end{enumerate}
Since $|K_{0}|+|\gamma_{0}|+|\hdelta_{0}|=3 c + 2 \omega d$,
we can conclude inductively that for all $n=0,1,2,\ldots$,
\begin{align*}
  |K_{n+1}| + |\gamma_{n+1}|+|\hdelta_{n+1}| \leq \omega d
  (|K_{n}|+|\gamma_{n}|+|\hdelta_{n}|) \leq (\omega d)^{n+1}
  (c + \omega d).
\end{align*}
Therefore, the spaces $L:=\left[\bigcup_{n}K_{n}\right]$,
$\epsilon := \left[\bigcup_{n} \gamma_{n}\right]$, $\hphi :=
\left[\bigcup_{n} \hdelta_{n}\right]$ are
$e(c+1)$-separable. By construction, $\eLhf$ is a
subrepresentation of $(\cKhd,X)$.
\end{proof}

\begin{proposition} \label{proposition:pmu-universal}
  There exists a universal representation of $V$.
\end{proposition}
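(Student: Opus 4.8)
The plan is a L\"owenheim--Skolem type cardinality argument. Fix once and for all an infinite cardinal $d$ for which the spaces $\frakK,\frakB,\frakBo,\alpha,\beta,\hbeta$ are $d$-separable, and set $e:=\omega\sum_{n=0}^{\infty}d^{n}$ as in Lemma~\ref{lemma:pmu-universal}; then $e\geq\omega$, so $e(c+1)=e$ for every cardinal $c\leq e$. The universal representation will be built as a direct sum of a set of representatives of the $e$-separable representations, and Lemma~\ref{lemma:pmu-universal} will guarantee that every representation of $V$ looks, near any prescribed triple of vectors, like one of these $e$-separable pieces.

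First I would note that, up to isomorphism in $\bfcs\bfrep_{V}$, there is only a \emph{set} of representations $(\cKhd,X)$ whose underlying spaces $K,\gamma,\hdelta$ are $e$-separable: each such representation can be realized on a Hilbert space of density character at most $e$, with $\gamma$ and $\hdelta$ closed subspaces of a space of operators of bounded cardinality and $X$ a single bounded operator, so there are at most set-many. Choosing a set $\mathcal{R}$ of such representatives $(\mathcal{K}_{i},X_{i})$, $\mathcal{K}_{i}=(K_{i},\gamma_{i},\hdelta_{i})$, I would put $(\mathcal{K},X):=\boxplus_{i\in\mathcal{R}}(\mathcal{K}_{i},X_{i})$ using the direct sum of representations from Example~\ref{examples:rep}~iii), and write $\mathcal{K}=(K,\gamma,\hdelta)$. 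The point to record is that for each $i$ the canonical projection $p_{i}:=\pi^{\mathcal{K}}_{i}$ is at once a morphism of representations from $(\mathcal{K},X)$ onto $(\mathcal{K}_{i},X_{i})$ and a coisometry, and satisfies $p_{i}K=K_{i}$, $p_{i}\gamma=\gamma_{i}$, $p_{i}\hdelta=\hdelta_{i}$ exactly (not merely densely, since the operator with single nonzero entry $\xi\in\gamma_{i}$ already belongs to the algebraic direct sum defining $\boxplus_{i}\gamma_{i}$).

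To verify universality, take an arbitrary representation $(\eLhf,Y)$ and vectors $\xi\in\epsilon$, $\zeta\in L$, $\eta\in\hat\phi$. Apply Lemma~\ref{lemma:pmu-universal} to $(\eLhf,Y)$ with $K_{0}=\{\zeta\}$, $\gamma_{0}=\{\xi\}$, $\hdelta_{0}=\{\eta\}$ (so $c=\omega$) and with the fixed cardinal $d$: this yields a subrepresentation of $(\eLhf,Y)$ with underlying module $(L_{0},\epsilon_{0},\hat\phi_{0})$ and operator $Y_{0}$ the restriction of $Y$, such that $\xi\in\epsilon_{0}$, $\zeta\in L_{0}$, $\eta\in\hat\phi_{0}$ and $L_{0},\epsilon_{0},\hat\phi_{0}$ are all $e(\omega+1)=e$-separable. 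Being $e$-separable, this subrepresentation is isomorphic in $\bfcs\bfrep_{V}$ to some $(\mathcal{K}_{i},X_{i})$; fix such an $i$ and a unitary morphism of representations $S$ from $(\mathcal{K}_{i},X_{i})$ onto it, so that $SK_{i}=L_{0}$, $S\gamma_{i}=\epsilon_{0}$, $S\hdelta_{i}=\hat\phi_{0}$. The inclusion $\iota$ of the subrepresentation into $(\eLhf,Y)$ is an isometry and a semi-morphism of representations by the definition of a subrepresentation. Setting $T:=\iota\circ S\circ p_{i}\colon K\to L$, one finds that $T$ is a semi-morphism from $(\mathcal{K},X)$ to $(\eLhf,Y)$, being a composite of semi-morphisms; that $T^{*}T=p_{i}^{*}S^{*}\iota^{*}\iota Sp_{i}=p_{i}^{*}p_{i}$ is a projection, so $T$ is a partial isometry; and that $T\gamma=\iota Sp_{i}\gamma=\iota S\gamma_{i}=\epsilon_{0}\ni\xi$, and likewise $TK=L_{0}\ni\zeta$ and $T\hdelta=\hat\phi_{0}\ni\eta$. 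Thus $(\mathcal{K},X)$ is universal.

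Most of the work is mechanical once Lemma~\ref{lemma:pmu-universal} is available; the genuine content is in that lemma, which provides a separability bound $e$ that is \emph{uniform}, i.e.\ independent of the target representation. The two steps that need a little care are the set-theoretic bookkeeping in the second paragraph (that the $e$-separable representations form a set, so that $\mathcal{R}$ and the direct sum make sense) and the verification that $T$ is a partial isometry: this is exactly why the construction is routed through $p_{i}$, which is a coisometry, so that left composition with the isometries $S$ and $\iota$ preserves $T^{*}T=p_{i}^{*}p_{i}$, whereas composing with an arbitrary morphism need not. One also needs the routine checks, implicit in Example~\ref{examples:rep}~iii) and in the definition of a subrepresentation, that the direct-sum projections and the subrepresentation inclusions intertwine the relevant unitaries and are therefore (semi-)morphisms of representations.
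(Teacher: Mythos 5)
Your proof is correct and follows the same route as the paper: form the direct sum of a set of isomorphism representatives of the $e$-separable representations and use Lemma~\ref{lemma:pmu-universal} to cut an arbitrary representation down to an $e$-separable subrepresentation containing the prescribed elements. The paper compresses the verification into ``one easily verifies''; your factorization $T=\iota\circ S\circ p_{i}$ through the coisometric direct-sum projection is exactly the intended argument, spelled out.
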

\begin{proof}
  Let $d$ and $e$ be as in Lemma \ref{lemma:pmu-universal}.
  Then there exists a set ${\cal X}$ of representations of
  $V$ such that every representation $(\eLhf,Y)$ of $V$,
  where the underlying Hilbert space $L$ is $e$-separable,
  is isomorphic to some representation in ${\cal X}$.  Using
  Lemma \ref{lemma:pmu-universal}, one easily verifies that
  the direct sum $\boxplus_{X \in {\cal X}} X$ is a
  universal representation.
\end{proof}
\begin{theorem}
  Let $V$ be a well-behaved $C^{*}$-pseudo-multiplicative
  unitary and let $(\cKhd,X)$ be a universal representation
  of $V$.
 \begin{enumerate}
 \item The $*$-homomorphism $\hpi^{(u)}_{X} \colon \hA_{(u)}
   \to \hA_{X}$ is an isometric isomorphism.
 \item If $(\eLhf,Y)$ is a representation of $V$, then there
   exists a jointly semi-normal morphism $\hpi_{X,Y}$ of
   $C^{*}$-$(\frakb,\frakbo)$-algebras
   $(\hA_{X})_{K}^{\gamma,\hdelta}$,
   $(\hA_{Y})_{L}^{\epsilon,\hphi}$ such that
   $\hpi^{(u)}_{Y} = \hpi_{X,Y} \circ \hpi^{(u)}_{X}$.
 \item Let $\hDelta_{X}:=\hpi_{X,X \boxtimes X}$. Then
   $((\hA_{X})_{K}^{\gamma,\hdelta},\hDelta_{X})$ is a
   semi-normal Hopf $C^{*}$-bimodule.
 \item $\hpi_{X,V}$ is a morphism of the semi-normal Hopf
   $C^{*}$-bimodules
   $((\hA_{X})_{K}^{\gamma,\hdelta},\hDelta_{X})$ and
   $((\hA_{V})^{\alpha,\hbeta}_{H},\hDelta_{V})$.
 \item Let $(\eLhf,Y)$ be a universal representation of $V$
   and define $\hDelta_{Y}$ similarly as $\hDelta_{X}$. Then
   $\hpi_{X,Y}$ is an isomorphism of the semi-normal Hopf
   $C^{*}$-bimodules
   $((\hA_{X})_{K}^{\gamma,\hdelta},\hDelta_{X})$ and
   $((\hA_{Y})_{L}^{\epsilon,\hphi},\hDelta_{Y})$.
  \end{enumerate}
\end{theorem}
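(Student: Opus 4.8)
The plan is to deduce all five parts from the universal Banach algebra $\hA_{(u)}$ of subsection \ref{subsection:rep-banach}, the existence of a universal representation (Proposition \ref{proposition:pmu-universal}), and the functoriality of the tensor product $\boxtimes$ from Theorem \ref{theorem:rep-monoidal}.

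\emph{Parts (i) and (ii).} First I would observe that for any representation $Z$ of $V$ the map $\hpi_{Z}$ has dense image in $\hA_{Z}$, since the single-term matrix elements $\langle\xi|_{2}Z|\xi'\rangle_{2}$ (with $\xi\in\beta$, $\xi'\in\alpha$) span a dense subspace and equal $\hpi_{Z}(\omega_{(\xi),(\xi')})$. To prove (i) it then suffices to show $\|\hpi_{Z}(\omega)\|\leq\|\hpi_{X}(\omega)\|$ for every $Z$ and every $\omega\in\tilde\Omega_{\beta,\alpha}$. Given finitely many vectors in the Hilbert space underlying $Z$, universality of $X$ provides a partial-isometry semi-morphism $T$ from $X$ to $Z$ covering them, and by Proposition \ref{proposition:pmu-convolution-rep}(ii) this $T$ satisfies $T\hpi_{X}(\omega)=\hpi_{Z}(\omega)T$, so that the matrix of $\hpi_{Z}(\omega)$ on those vectors is, via $T$, a compression of $\hpi_{X}(\omega)$; taking the supremum over finite configurations gives the estimate. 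Hence $\hpi^{(u)}_{X}$ is isometric, and being a $*$-homomorphism (Proposition \ref{proposition:pmu-convolution-banach}) with dense range it is an isometric isomorphism. For (ii) I would set $\hpi_{X,Y}:=\hpi^{(u)}_{Y}\circ(\hpi^{(u)}_{X})^{-1}$; then $\hpi^{(u)}_{Y}=\hpi_{X,Y}\circ\hpi^{(u)}_{X}$ and $\hpi_{X,Y}\circ\hpi_{X}=\hpi_{Y}$ on $\tilde\Omega_{\beta,\alpha}$, so $\hpi_{X,Y}$ is a $*$-homomorphism $\hA_{X}\to\hA_{Y}$. Joint semi-normality follows from universality again: for each $\xi\in\epsilon$, $\zeta\in L$, $\eta\in\hphi$ there is a partial-isometry semi-morphism $T\colon(\cKhd,X)\to(\eLhf,Y)$ with $\xi\in T\gamma$, $\zeta\in TK$, $\eta\in T\hdelta$, and by Proposition \ref{proposition:pmu-convolution-rep}(ii) together with $\hpi_{X,Y}\circ\hpi_{X}=\hpi_{Y}$ such a $T$ lies in $\mathcal{L}^{\hpi_{X,Y}}_{s}(\cKhd,\eLhf)$, which yields $[\mathcal{L}^{\hpi_{X,Y}}_{s}(\cKhd,\eLhf)\gamma]=\epsilon$ and $[\mathcal{L}^{\hpi_{X,Y}}_{s}(\cKhd,\eLhf)\hdelta]=\hphi$.

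\emph{Part (iii).} I would put $\hDelta_{X}:=\hpi_{X,X\boxtimes X}$. The slicing relations of Lemma \ref{lemma:rep-legs-tensor} express $\hA_{X\boxtimes X}$ through $\hA_{X}$, and combined with $\hA_{X}=\hA_{X}^{*}$ (well-behavedness) they say precisely that $\hA_{X\boxtimes X}\subseteq\hA_{X}\bofibre\hA_{X}$; since $\hpi_{X,X\boxtimes X}$ surjects onto $\hA_{X\boxtimes X}$, it is a map $\hA_{X}\to\hA_{X}\bofibre\hA_{X}$, and joint semi-normality into the fiber product is inherited from (ii) because the intertwiner condition is the same for the two codomains. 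The substantial point is coassociativity, and here I would first prove, as a lemma, the compatibility of the tensor product of (semi-)morphisms of representations with the functoriality of the fiber product with respect to semi-normal morphisms: if $S\colon X\to Y$ and $S'\colon X\to Z$ are semi-morphisms of representations, then $S\botensor S'$ is a semi-morphism $X\boxtimes X\to Y\boxtimes Z$ (Theorem \ref{theorem:rep-monoidal}) and at the same time lies in the span of the operators implementing $\hpi_{X,Y}\bofibre\hpi_{X,Z}$ (the theorem on functoriality of the fiber product following Lemma \ref{lemma:fp-c-morphism}), whence $(\hpi_{X,Y}\bofibre\hpi_{X,Z})\circ\hDelta_{X}=\hpi_{X,Y\boxtimes Z}$ on the dense subalgebra $\hpi_{X}(\tilde\Omega_{\beta,\alpha})$, by testing on the elements $\hpi_{X}(\omega)$ and using Proposition \ref{proposition:pmu-convolution-rep}. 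Applying this with $(Y,Z)=(X\boxtimes X,X)$ and with $(Y,Z)=(X,X\boxtimes X)$ gives $(\hDelta_{X}\bofibre\Id)\circ\hDelta_{X}=\hpi_{X,(X\boxtimes X)\boxtimes X}$ and $(\Id\bofibre\hDelta_{X})\circ\hDelta_{X}=\hpi_{X,X\boxtimes(X\boxtimes X)}$ on $\hpi_{X}(\tilde\Omega_{\beta,\alpha})$, and these two maps are identified under conjugation by the associativity isomorphism $a_{\mathcal{K},\mathcal{K},\mathcal{K}}$ of \eqref{eq:rtp-associative}, which is a morphism of representations by Theorem \ref{theorem:rep-monoidal}; by density of $\hpi_{X}(\tilde\Omega_{\beta,\alpha})$ in $\hA_{X}$ this proves the coassociativity diagram, so $((\hA_{X})_{K}^{\gamma,\hdelta},\hDelta_{X})$ is a semi-normal Hopf $C^{*}$-bimodule.

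\emph{Parts (iv), (v), and the main obstacle.} For (iv), $\hpi_{X,V}$ is a $*$-homomorphism between the Hopf $C^{*}$-bimodules $((\hA_{X})_{K}^{\gamma,\hdelta},\hDelta_{X})$ and $((\hA_{V})_{H}^{\alpha,\hbeta},\hDelta_{V})$ (the latter concrete by well-behavedness), and to see that it intertwines the comultiplications it is enough, by density, to evaluate on $\hpi_{X}(\omega)$: the left side equals $\hDelta_{V}(\hpi_{V}(\omega))=\hpi_{V\boxtimes V}(\omega)$ by Lemma \ref{lemma:pmu-convolution-hdelta}, the right side equals $(\hpi_{X,V}\bofibre\hpi_{X,V})(\hpi_{X\boxtimes X}(\omega))$, and the two agree by the compatibility lemma of (iii) applied to semi-morphisms $X\to V$. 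For (v), both $\hpi_{X,Y}$ and $\hpi_{Y,X}$ are defined (both $X$ and $Y$ being universal) and are mutually inverse $*$-homomorphisms, so $\hpi_{X,Y}$ is a $*$-isomorphism; it intertwines $\hDelta_{X}$ and $\hDelta_{Y}$ by the argument of (iv) with $V$ replaced by $Y$, using $\hDelta_{Y}(\hpi_{Y}(\omega))=\hpi_{Y\boxtimes Y}(\omega)$, which holds by the definition of $\hpi_{Y,\cdot}$. I expect the coassociativity step in (iii) to be the main obstacle: the delicate part is organizing the ``double functoriality'' --- tensor product of representations on one side, fiber product of their leg-algebras on the other --- so that it is coherent with the associativity isomorphism of the relative tensor product, and tracking the various leg labels in the iterated fiber product $(\hA_{X}\bofibre\hA_{X})\bofibre\hA_{X}$.
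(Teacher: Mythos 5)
Your proposal is correct and takes essentially the same route as the paper: isometry of $\hpi^{(u)}_{X}$ via partial-isometry semi-morphisms supplied by universality, $\hpi_{X,Y}:=\hpi^{(u)}_{Y}\circ(\hpi^{(u)}_{X})^{-1}$ with joint semi-normality again from universality, and coassociativity and the intertwining statements in (iii)--(v) verified by multiplying against operators of the form $S\botensor S'$ and invoking the characterization \eqref{eq:fp-c-morphism} of the fiber product of morphisms together with Proposition \ref{proposition:pmu-convolution-rep}. The only cosmetic difference is that you isolate the compatibility of $\boxtimes$ with $\bofibre$ as an explicit lemma and treat the associativity isomorphism openly, where the paper works with the special case $R=(S\botensor\Id)T$ and relies on the standing identification of the iterated relative tensor products.
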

\begin{proof}
  i) Let $\omega \in \tilde \Omega_{\beta,\alpha}$, let
  $(\eLhf,Y)$ be a representation of $V$, and let $\zeta \in
  L$. Since $X$ is universal, there exists a 
  semi-morphism $T$ from $X$ to $Y$ that is a
  partial isometry and satisfies $\zeta \in TL$. Then by
  Proposition \ref{proposition:pmu-convolution-rep}, $\|
  \hpi_{Y}(\omega)\zeta\| = \| \hpi_{Y}(\omega)TT^{*}\zeta\|
  = \| T\hpi_{X}(\omega)T^{*}\zeta\| \leq
  \|\hpi_{X}(\omega)\| \|\zeta\|$. Since $Y$ and $\zeta$
  were arbitrary, we can conclude that $\|\hpi_{(u)}(\omega)\|
  \leq \|\hpi_{X}(\omega)\|$ and hence that
  $\hpi^{(u)}_{X}$ is isometric.

  ii) We have to show that $\hpi_{X,Y}:=\hpi^{(u)}_{Y} \circ
  \hpi^{(u)}_{X}{}^{-1}$ is a jointly semi-normal morphism
  of $C^{*}$-$(\frakb,\frakbo)$-algebras. Let $\xi \in
  \epsilon$ and $\eta \in \hphi$. Since $X$ is universal,
  there exists a semi-morphism $T$ from $X$ to $Y$ such that
  $\xi \in T\gamma$ and $\eta \in T\hdelta$. By Proposition
  \ref{proposition:pmu-convolution-rep}, $\hpi_{Y}(\omega) T
  = \hpi_{X}(\omega)T$ for all $\omega \in \tilde
  \Omega_{\beta,\alpha}$, and hence $T \in {\cal
    L}^{\hpi_{X,Y}}_{s}(\cKhd,\eLhf)$. The claim follows.

  iii) We need to show that $\hDelta_{X}$ is coassociative.
  We shall prove that $(\hDelta_{X} \bofibre \Id) \circ
  \hDelta_{X} = \hpi_{X,X\boxtimes X\boxtimes X}$, and a
  similar argument shows that $(\Id \bofibre \hDelta_{X}) \circ
  \hDelta_{X} = \hpi_{X,X\boxtimes X\boxtimes X}$. Let $S,T$
  be semi-morphisms from $X$ to $X\boxtimes X$. Then $R:=(S
  \botensor \Id) \circ T$ is a semi-morphism from $X$ to $X
  \boxtimes X \boxtimes X$, and a generalization of
  Proposition \ref{proposition:pmu-convolution-rep} ii)
  shows that for each $\omega \in \tilde
  \Omega_{\beta,\alpha}$,
  \begin{align*}
    (\hDelta_{X} \bofibre \Id)(
    \hDelta_{X}(\hpi_{X}(\omega))) \cdot R &= (S \botensor
    \Id) \cdot \hDelta_{X}(\hpi_{X}(\omega) ) \cdot T \\ &= R
    \cdot \hpi_{X}(\omega) = \hpi_{X,X\boxtimes X\boxtimes
      X}(\omega) \cdot R.
  \end{align*}
  Since $S$ and $T$ were arbitrary and $X$ is universal, we
  can conclude that $ (\hDelta_{X} \bofibre \Id) \circ
  \hDelta_{X}(\hpi_{X}(\omega)) = \hpi_{X,X\boxtimes
    X\boxtimes X}(\omega)$.

  iv) We have to show that $(\hpi_{X,V} \bofibre \hpi_{X,V})
  \circ \hDelta_{X} = \hDelta_{V} \circ \hpi_{X,V}$. Let
  $\omega \in \tilde \Omega_{\beta,\alpha}$ and $S \in {\cal
    L}^{\hpi_{X,V}}(K_{\hdelta},H_{\hbeta})$, $T \in {\cal
    L}^{\hpi_{X,V}}(K_{\gamma},H_{\alpha})$. Then $R:=(S
  \botensor T)$ satisfies $R(X \boxtimes X)=(V \boxtimes
  V)R$, and using Lemma \ref{lemma:pmu-convolution-hdelta},
  we find
\begin{align*}
  (\hpi_{X,V} \bofibre \hpi_{X,V})
  (\hDelta_{X}(\hpi_{X}(\omega)) \cdot R &= R \cdot
  \hDelta_{X}(\hpi_{X}(\omega)) \\ &= R \cdot
  \hpi_{X\boxtimes X}(\omega) = \hpi_{V \boxtimes V}(\omega)
  \cdot R =\hDelta_{V}(\hpi_{V}(\omega))\cdot R.
\end{align*}
Since $S$ and $T$ were arbitrary, we can conclude $ (\hpi_{X,V}
\bofibre \hpi_{X,V}) (\hDelta_{X}(\hpi_{X}(\omega)) =
\hDelta_{V}(\hpi_{V}(\omega))$.

v) We have to show that $(\hpi_{X,Y} \bofibre \hpi_{X,Y})
\circ \hDelta_{X} = \hDelta_{Y} \circ \hpi_{X,Y}$.  Let
$\omega \in \tilde \Omega_{\beta,\alpha}$ and $S \in {\cal
  L}^{\hpi_{X,Y}}(K_{\hdelta},H_{\hphi})$, $T \in {\cal
  L}^{\hpi_{X,Y}}(K_{\gamma},H_{\epsilon})$. Then $R:=(S
\botensor T)$ satisfies $R(X \boxtimes X)=(Y \boxtimes Y)R$,
and using Proposition \ref{proposition:pmu-convolution-rep},
we find
\begin{align*}
  (\hpi_{X,Y} \ast \hpi_{X,Y})
  (\hDelta_{X}(\hpi_{X}(\omega)) \cdot R &= R \cdot
  \hDelta_{X}(\hpi_{X}(\omega)) \\ &= R \cdot
  \hpi_{X\boxtimes X}(\omega) = \hpi_{Y\boxtimes
    Y}(\omega) \cdot R = \hDelta_{Y}(\hpi_{Y}(\omega))
  \cdot R.
\end{align*}
Since $S$ and $T$ were arbitrary, we can conclude  $(\hpi_{X,Y}
\ast \hpi_{X,Y}) (\hDelta_{X}(\hpi_{X}(\omega)) =
(\hDelta_{Y}(\hpi_{Y}(\omega)))$.
\end{proof}

\subsection{Corepresentations and $W^{*}$-representations } 
\label{subsection:rep-corep}

The notion of a representation of a
$C^{*}$-pseudo-multiplicative unitary can be dualized so
that one obtains the notion of a corepresentation, and
adapted to $W^{*}$-modules instead of $C^{*}$-modules so
that one obtains the notion of a $W^{*}$-representation. 
We briefly summarize the main definitions
and properties of these concepts.

  A {\em corepresentation} of $V$ consists of a
  $C^{*}$-$(\frakb,\frakbo)$-module $\cKd$ and of a unitary $X
  \colon H \rtensor{\hbeta}{\frakbo}{\gamma} K \to H
  \rtensor{\alpha}{\frakb}{\delta} K$ that satisfies
  $X(\alpha \lt \gamma) = \alpha \rt \gamma$, $X(\beta \lt
  \gamma) = \beta \lt \delta$, $X(\hbeta \rt \delta) =
  \hbeta \lt \delta$ and makes the following diagram
  commute:
\begin{gather} \label{eq:rep-corep-pentagon} \smalldiagram
  \begin{gathered} \xymatrix@R=15pt@C=20pt{ {\crHone}
      \ar[r]^{V_{12}} \ar[d]^{X_{23}} &
      {\crHtwo} \ar[r]^{X_{23}} & {\crHthree,} \\
      {\crHfive} \ar[rr]^{X_{13}} && {\crHfour}
      \ar[u]^{V_{12}} }
  \end{gathered}
\end{gather}
where $V_{12},X_{13},X_{23}$ are defined similarly as in
subsection \ref{subsection:rep-category}.  A {\em
  (semi-)morphism} of corepresentations $(\cKd,X)$ and
$(\eLf,Y)$ is an operator $T \in {\cal L}_{(s)}(\cKd,\eLf)$
satisfying $Y(\Id \botensor T) = (\Id \btensor
T)X$. Evidently, the class of all corepresentations $V$ with
all (semi-)morphisms forms a category
$\bfcs\bfcorep^{(s)}_{V}$.  One easily verifies that there
exists an isomorphism of categories $\bfcs\bfcorep^{(s)}_{V}
\to \bfcs\bfrep^{(s)}_{V^{op}}$ given by $(\cKd,X) \mapsto
(\cKd,\Sigma Y^{*}\Sigma)$ and $T \mapsto T$. Thus, all
constructions and results on representations carry over to
corepresentations. In particular, we can equip
$\bfcs\bfcorep_{V}$ with the structure of $C^{*}$-tensor
category and $\bfcs\bfcorep^{s}_{V}$ with the structure of
a tensor category.

Replacing $\frakb$ by the $W^{*}$-base $\wclose{\frakb}$ and
$C^{*}$-modules by $W^{*}$-modules (see \cite{timmer:fiber})
in definition \ref{definition:pmu-rep}, we obtain the notion
of a $W^{*}$-representation. If we reformulate this notion
using correspondences instead of $W^{*}$-modules, the
definition reads as follows.  A {\em $W^{*}$-representation}
of $V$ consists of a Hilbert space $K$ with two commuting
nondegenerate and normal representations $\hsigma \colon
\frakB \to {\cal L}(K)$, $\sigma \colon \frakBo \to {\cal
  L}(K)$ and a unitary $X \in {\cal L}(\wrHsource,
\wrHrange)$ that satisfies $X(\sigma(b^{\dag}) \tl \Id) =
(\Id \tl \rho_{\alpha}(b^{\dag}))X$, $X(\Id \tl
\rho_{\beta}(b)) = (\hsigma(b) \tl \Id)X$, $X(\Id \tl
\rho_{\hbeta}(b)) = (\Id \tl \rho_{\hbeta}(b))X$ for all
$b^{\dag} \in \frakBo$, $b \in \frakB$ and that makes the
following diagram commute,
  \begin{gather*}
    \xymatrix@R=10pt@C=15pt{ {\wrHone} \ar[r]^{X \tl \Id}
      \ar[d]^{\Id \tl V} & {\wrHtwo} \ar[r]^{\Id \tl V} &
      {\wrHthree,} \\ {\wrHfive} \ar[d]^{\Sigma_{23}}
      && {\wrHfour} \ar[u]^{X \tl \Id} \\
      {\wrHfourlt} \ar[rr]^{X \tl \Id} && {\wrHfourrt} 
\ar[u]^{\Sigma_{23}}}
  \end{gather*} 
  where $\Sigma_{23}$ denotes the isomorphisms that
  exchange the second and the third factor in the iterated
  internal tensor products.  Here, normality of $\sigma,
  \hat \sigma$ means that they extend to the von Neumann
  algebras generated by $\frakB$ and $\frakBo$,
  respectively, in ${\cal L}(\frakK)$. A {\em morphism} of
    $W^{*}$-representations $(K,\sigma,\hat\sigma,X)$ and
  $(L,\tau,\hat\tau,Y)$ is an operator $T \in {\cal L}(K,L)$
  that intertwines $\sigma$ and $\tau$ on one side and
  $\hat\sigma$ and $\hat\tau$ on the other side, and
  satisfies $Y(T \tl \Id)=(T \tl \Id)X$. Evidently, the
  class of all $W^{*}$-representations of $V$ forms a
  category $\bfws\bfrep_{V}$.  One easily verifies that
  there exists a functor $\bfcs\bfrep^{(s)}_{V} \to
  \bfws\bfrep_{V}$ given by $(\cKhd,X) \mapsto
  (K,\rho_{\gamma},\rho_{\hdelta},X)$ and $T \mapsto
  T$. Using a relative tensor product of $W^{*}$-modules
  (see \cite{timmer:fiber}), one can equip $\bfws\bfrep_{V}$
  with the structure of a $C^{*}$-tensor category similarly
  like $\bfcs\bfrep_{V}$ and finds that the functors above
  preserve the tensor product.  Finally, one can consider
  $W^{*}$-corepresentations of $V$ which are defined in a
  straightforward manner.

\subsection{Representations of groupoids and of the
  associated unitaries }
\label{subsection:rep-groupoid}

Let $G$ be  a locally compact, Hausdorff, second countable
groupoid with a left Haar system. Then the
$C^{*}$-tensor category of representations of $G$ is
equivalent to the $C^{*}$-tensor category of
corepresentations of the $C^{*}$-pseudo-multiplicative
unitary associated to $G$, as will be explained now.
We use the notation and
results of subsections \ref{subsection:pmu-groupoid} and
\ref{subsection:legs-groupoid},
\begin{gather*}
  \begin{aligned}
    \frakK&:=L^{2}(G^{0},\mu), & \frakB&= \frakBo:=
    C_{0}(G^{0}) \subseteq {\cal L}(\frakK), &
    \frakb&:=(\frakK,\frakB,\frakBo),
  \end{aligned} \\
  \begin{aligned}
    H &:= L^{2}(G,\nu), &
    \alpha&=\beta:=j(L^{2}(G,\lambda)), & 
    \hbeta &:= \hat j(L^{2}(G,\lambda^{-1})),
  \end{aligned} \\
  V \colon \Hsource \cong L^{2}(\GsrG,\nu^{2}_{s,r}) \to
  L^{2}(\GrrG,\nu^{2}_{r,r}) \cong \Hrange, \\
  (V\omega)(x,y) = \omega(x,x^{-1}y)  \text{ for all }
  \omega \in C_{c}(\GsrG), \, (x,y) \in \GrrG, \\
  \begin{aligned}
    C_{0}(G) &\cong \hA_{V} \subseteq {\cal L}(H), &
    C^{*}_{r}(G) &= A_{V}  \subseteq {\cal L}(H),
  \end{aligned} \\
  \begin{aligned}
    \rho_{\hbeta} &= s^{*} \colon C_{0}(G^{0}) \to C_{b}(G)
    \to {\cal L}(H), & \rho_{\alpha} &= r^{*} \colon
    C_{0}(G^{0}) \to C_{b}(G) \to {\cal L}(H),
  \end{aligned}
\end{gather*}
and fix further notation. Let $X$ be a locally
compact Hausdorff space, $E$ a Hilbert $C^{*}$-module over
$C_{0}(X)$ and $x \in X$. We denote by $\chi_{x} \colon C(X)
\to \complex$ the evaluation at $x$ and by $E_{x}:=E
\tr_{\chi_{x}} \complex$ the fiber of $E$ at $x$; this is
the Hilbert space associated to the sesquilinear form
$(\eta,\eta') \mapsto \langle \eta|\eta'\rangle(x)$ on
$E$. Given an element $\xi \in E$ and an operator $T \in
{\cal L}_{C_{0}(X)}(E)$, we denote by $\xi_{x}:=\xi
\tr_{\chi_{x}} 1 \in E_{x}$ and $T_{x}:=T \tr_{\chi_{x}}
\Id_{\complex} \in {\cal L}(E_{x})$ the values of $\xi$ and
$T$, respectively, at $x$.  Given a locally compact
Hausdorff space $Y$ and a continuous map $p \colon Y \to X$,
the pull-back of $E$ along $p$ is the Hilbert $C^{*}$-module
$p^{*}E:=E \tr_{p^{*}} C_{0}(Y)$ over $C_{0}(Y)$, where
$p^{*} \colon C_{0}(X) \to M(C_{0}(Y))$ denotes the
pull-back on functions.  This pull-back is functorial, that
is, if $Z$ is a locally compact Hausdorff space and $q
\colon Z \to Y$ is a continuous map, then $(p \circ q)^{*}E$
is naturally isomorphic to $q^{*}p^{*}E$.  For $\xi,T$ as
above and all $y \in Y$, we have $(p^{*}\xi)_{y}=\xi_{p(y)}$
and $(p^{*}T)_{y}=T_{y}$.

The first part of the following definition is a special case
of \cite[D\'efinition 4.4]{legall:2}:
\begin{definition}
  A {\em continuous representation of $G$} consists of a
  Hilbert $C^{*}$-module $E$ over $C_{0}(G^{0})$ and a
  unitary $U \in {\cal L}_{C_{0}(G)}(s^{*}E,r^{*}E)$ such
  that $U_{x}U_{y}=U_{xy}$ for all $(x,y) \in \GsrG$.  We
 denote by $\bfcs\bfrep_{G}$ the category of continuous
  representations of $G$, where the morphisms between
  representations $(E,U_{E})$ and $(F,U_{F})$ are all
  operator $T \in {\cal L}_{C_{0}(G^{0})}(E,F)$ satisfying
  $U_{F}\circ s^{*}T=r^{*}T \circ U_{E}$ in ${\cal
    L}_{C_{0}(G)}(s^{*}E,r^{*}F)$.
\end{definition}

The verification of the following result is straightforward:
\begin{proposition}
  \begin{enumerate}
  \item Let $(E,U_{E})$ and $(F,U_{F})$ be continuous
    representations of $G$ and represent $C_{0}(G^{0})$ on
    $F$ by right multiplication operators.  Then $(E \tr
    F)_{x}= E_{x} \otimes F_{x}$ for all $x \in G$, and
    there exists a continuous representation $U_{E}\boxtimes
    U_{F}$ of $G$ on $E \tr F$ such that $(U_{E} \boxtimes
    U_{F})_{x} = (U_{E})_{x} \otimes (U_{F})_{x}$ for all $x
    \in G$.
  \item If $S_{i}$ is a morphism of continuous
    representations $(E_{i},U_{i,E})$ and $(F_{i},U_{i,F})$
    for $i=1,2$, then $S_{1} \tr S_{2}$ is a morphism
    between $(E_{1} \tr E_{2}, U_{1,E} \boxtimes U_{2,E})$
    and $(F_{1} \tr F_{2}, U_{1,F} \boxtimes U_{2,F})$.
  \item The category $\bfcs\bfrep_{G}$ carries the structure
    of a $C^{*}$-tensor category such that 
    \begin{itemize}
    \item the tensor product is given by the constructions
      in i) and ii);
    \item the associativity isomorphism
      $a_{(E_{1},U_{1}),(E_{2},U_{2}),(E_{3},U_{3})}$ is the
      canonical isomorphism $(E_{1} \tr E_{2}) \tr E_{3} \to
      E_{1} \tr (E_{2} \tr E_{3})$ for all
      $(E_{1},U_{1}),(E_{2},U_{2}),(E_{3},U_{3})$;
    \item the unit consists of the Hilbert $C^{*}$-module
      $C_{0}(G^{0})$ and the canonical isomorphism
      $s^{*}C_{0}(G^{0})\cong C_{0}(G) \cong
      r^{*}C_{0}(G^{0})$;
    \item the isomorphisms $l_{(E,U)}$ and $r_{(E,U)}$ are
      the canonical isomorphisms $C_{0}(G^{0}) \tr E \cong E
      \cong E \tr C_{0}(G^{0})$ for each $(E,U)$. \qed
  \end{itemize}
  \end{enumerate}
\end{proposition}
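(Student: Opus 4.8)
The plan is to lift the tensor category structure of Hilbert $C^{*}$-modules over $C_{0}(G^{0})$ — equipped with the internal tensor product $\tr$ taken relative to right multiplication — to the category $\bfcs\bfrep_{G}$. Recall that this module category is a $C^{*}$-tensor category with unit $C_{0}(G^{0})$, with the canonical associativity and unit isomorphisms, and that the pentagon and triangle axioms hold there; the work is to furnish the cocycle data on top of this.

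For i), I would first record that taking fibres and pulling back along a continuous map both commute with $\tr$. Concretely, for a point $z$ the associativity of internal tensor products gives $(E \tr F) \tr_{\chi_{z}} \complex \cong E \tr (F \tr_{\chi_{z}} \complex)$, and since the pairing uses right multiplication the induced action of $C_{0}(G^{0})$ on $F_{z}$ is by the scalars $\chi_{z}(\cdot)$, whence $E \tr F_{z} \cong E_{z} \otimes F_{z}$. Applying this at $z=s(x)$ and $z=r(x)$ and using $(s^{*}E)_{x}=E_{s(x)}$ yields the fibre formula, and associativity of $\tr$ likewise yields natural isomorphisms $s^{*}(E \tr F) \cong s^{*}E \tr s^{*}F$ and $r^{*}(E \tr F) \cong r^{*}E \tr r^{*}F$ (again bookkeeping the right-multiplication $C_{0}(G^{0})$-structures). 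Under these I would set $U_{E} \boxtimes U_{F} := U_{E} \tr U_{F}$; this is legitimate because $U_{F}$, being $C_{0}(G)$-linear, intertwines the right-multiplication representations on $s^{*}F$ and $r^{*}F$, and by the functoriality of $\tr$ recalled in the preliminaries it is unitary, with $(U_{E} \boxtimes U_{F})^{*}=U_{E}^{*}\tr U_{F}^{*}$ and fibres $(U_{E} \boxtimes U_{F})_{x}=(U_{E})_{x}\otimes(U_{F})_{x}$. The cocycle identity then follows fibrewise from $(U_{E})_{x}(U_{E})_{y}\otimes(U_{F})_{x}(U_{F})_{y}=(U_{E})_{xy}\otimes(U_{F})_{xy}$ for $(x,y)\in\GsrG$, using that an operator in $\mathcal{L}_{C_{0}(G)}(s^{*}(E\tr F),r^{*}(E\tr F))$ is determined by its fibres.

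For ii), $S_{1}\tr S_{2}$ exists and is adjointable since the $S_{i}$ are $C_{0}(G^{0})$-linear, and the morphism property will follow by applying $s^{*}$ and $r^{*}$: using $s^{*}(S_{1}\tr S_{2})=s^{*}S_{1}\tr s^{*}S_{2}$ (likewise for $r^{*}$) and the bifunctoriality $(A\circ B)\tr(C\circ D)=(A\tr C)\circ(B\tr D)$, the two conditions $U_{i,F}\circ s^{*}S_{i}=r^{*}S_{i}\circ U_{i,E}$ tensor together to the single condition for $S_{1}\tr S_{2}$ and $U_{1,\cdot}\boxtimes U_{2,\cdot}$. For iii), parts i) and ii) supply the tensor bifunctor, and functoriality of $\tr$ gives its functoriality; the unit $C_{0}(G^{0})$ carries the representation given by the identifications $s^{*}C_{0}(G^{0})\cong C_{0}(G)\cong r^{*}C_{0}(G^{0})$, whose cocycle identity is trivial; and I would check that the canonical associativity and unit isomorphisms of Hilbert modules are representation morphisms by passing to fibres, where they become the corresponding isomorphisms for tensor products of Hilbert spaces and are manifestly compatible with the $(U_{i})_{x}$. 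Pentagon and triangle then follow from the coherence already available at the module level, and the $C^{*}$-structure on the morphism spaces is inherited from $\mathcal{L}_{C_{0}(G^{0})}$ once one observes that $\Hom$-sets are norm-closed subspaces and that $S^{*}$ is a morphism $(F,U_{F})\to(E,U_{E})$ whenever $S$ is one $(E,U_{E})\to(F,U_{F})$, which uses the unitarity of $U_{E},U_{F}$. I expect the only step requiring genuine care to be the compatibility of fibres and pull-backs with $\tr$ and the correct tracking of the right-multiplication module structures in i); the remaining verifications are routine.
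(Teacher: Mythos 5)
Your argument is correct, and since the paper declares this proposition ``straightforward'' and gives no proof, there is nothing to compare against; your write-up is the natural verification. The key points you identify — that fibres and pull-backs commute with the internal tensor product once the right-multiplication module structures are tracked, that $U_E\boxtimes U_F:=U_E\tr U_F$ is well defined because $U_F$ is $C_0(G)$-linear, and that the cocycle identity can be checked fibrewise because operators in $\mathcal{L}_{C_0(G)}$ are determined by their fibres — are exactly the ones the author is implicitly relying on.
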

Define $p_{1},p_{2},m \colon \GsrG \to G$ by $p_{1}(x,y)=x$,
$p_{2}(x,y)=y$, $m(x,y)=xy$ and $r_{1},t,s_{2} \colon \GsrG
\to G^{0}$ by $r_{1}(x,y)=r(x), t(x,y)=s(x),
s_{2}(x,y)=s(y)$. Then we have a commutative diagram
\begin{align} \label{eq:rep-groupoid-global} 
    \xymatrix@R=15pt{
      & G^{0} & \\
      G  \ar[ru]^{s} \ar[d]_{r} & {\GsrG} \ar[u]^{t}
      \ar[ld]^{r_{1}} \ar[rd]_{s_{2}}
      \ar[d]_{m} \ar[l]^{p_{1}} \ar[r]_{p_{2}} & G
      \ar[lu]_{r}  \ar[d]^{s} \\ 
      G^{0} & G \ar[l]^{r} \ar[r]_{s} & G^{0}.
    }
\end{align}
\begin{lemma} \label{lemma:rep-groupoid-global}
  Let $E$ be a Hilbert $C^{*}$-module over $C_{0}(G^{0})$
  and $U \in {\cal L}_{C_{0}(G)}(s^{*}E,r^{*}E)$. Then
  $U_{x}U_{y}=U_{xy}$ for all $x,y \in \GsrG$ if and only if
  $m^{*}U$ is equal to the composition
\begin{align*}
  s_{2}^{*}E \xrightarrow{p_{2}^{*}U} t^{*}E
  \xrightarrow{p_{1}^{*}U} r_{1}^{*}E.
\end{align*}
\end{lemma}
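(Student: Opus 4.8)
The plan is to reduce the asserted equality of two adjointable operators to a pointwise comparison in the fibers over $\GsrG$, and then to identify those fibers.

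First I would unwind which Hilbert $C^{*}$-modules the operators in question act between. Using functoriality of the pull-back together with the commutativity of diagram \eqref{eq:rep-groupoid-global} --- which gives $s\circ m = s\circ p_{2} = s_{2}$, $r\circ m = r\circ p_{1} = r_{1}$, and $r\circ p_{2} = s\circ p_{1} = t$ --- one obtains natural isomorphisms $m^{*}(s^{*}E)\cong s_{2}^{*}E$, $m^{*}(r^{*}E)\cong r_{1}^{*}E$, $p_{2}^{*}(s^{*}E)\cong s_{2}^{*}E$, $p_{2}^{*}(r^{*}E)\cong t^{*}E$, $p_{1}^{*}(s^{*}E)\cong t^{*}E$, $p_{1}^{*}(r^{*}E)\cong r_{1}^{*}E$. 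Hence both $m^{*}U$ and $p_{1}^{*}U\circ p_{2}^{*}U$ lie in ${\cal L}_{C_{0}(\GsrG)}(s_{2}^{*}E, r_{1}^{*}E)$, so that the composition on the right-hand side is well defined and lives in the same space as $m^{*}U$. (Here one uses that for a continuous map $p\colon Y\to X$ the homomorphism $p^{*}\colon C_{0}(X)\to M(C_{0}(Y))$ is nondegenerate, so that the pull-back of a Hilbert $C^{*}$-module along $p$ is defined.)

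Next I would record the elementary fact that an adjointable operator between Hilbert $C^{*}$-modules over $C_{0}(Z)$, with $Z$ locally compact Hausdorff, is determined by its values in the fibers: for $\eta$ in such a module $F$ one has $\|\eta\|^{2} = \sup_{z\in Z}\langle\eta|\eta\rangle(z) = \sup_{z\in Z}\|\eta_{z}\|^{2}$, and the map $\eta\mapsto\eta_{z}$ is onto $F_{z}$; consequently, for $T,S\in{\cal L}_{C_{0}(Z)}(F,F')$ one has $\|T\eta\| = \sup_{z}\|T_{z}\eta_{z}\|$, and therefore $T = S$ if and only if $T_{z} = S_{z}$ for every $z\in Z$.

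Finally I would evaluate both sides at an arbitrary $(x,y)\in\GsrG$. Since evaluating a pulled-back operator at a point returns the value of the original operator at the image point, and using functoriality once more to match up fibers, one gets $(m^{*}U)_{(x,y)} = U_{m(x,y)} = U_{xy}$ and $(p_{1}^{*}U\circ p_{2}^{*}U)_{(x,y)} = U_{p_{1}(x,y)}\,U_{p_{2}(x,y)} = U_{x}U_{y}$, both regarded as maps $E_{s(y)}\to E_{r(x)}$. By the previous paragraph, $m^{*}U$ equals the composition $p_{1}^{*}U\circ p_{2}^{*}U$ if and only if $U_{xy} = U_{x}U_{y}$ for all $(x,y)\in\GsrG$, which is precisely the claim. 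The only mildly delicate point is the bookkeeping in the first and last steps: one must track which fiber $E_{u}$, $u\in G^{0}$, each operator acts on and check that the natural isomorphisms supplied by functoriality of the pull-back intertwine the corresponding point-evaluations; everything else is formal.
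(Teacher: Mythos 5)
Your proof is correct and follows essentially the same route as the paper, whose entire argument is the fiberwise computation $((p_{1}^{*}U)(p_{2}^{*}U))_{(x,y)}=U_{x}U_{y}$ and $(m^{*}U)_{(x,y)}=U_{xy}$. You merely make explicit the two points the paper leaves implicit, namely the identification of the pulled-back modules via functoriality and the fact that an adjointable operator between Hilbert $C^{*}$-modules over $C_{0}(\GsrG)$ is determined by its values in the fibers.
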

\begin{proof}
  $((p_{1}^{*}U)(p_{2}^{*}U))_{(x,y)} = U_{x}U_{y} $,
  $(m^{*}U)_{(x,y)}=U_{xy}$ for all $(x,y) \in \GsrG$.
\end{proof}
We need the following straightforward result which
involves the operators defined in \eqref{eq:ketbra}:
\begin{lemma} \label{lemma:rep-groupoid-iso} Let $p \colon Y
  \to X$ be a continuous map of locally compact Hausdorff
  spaces, let $L$ be a Hilbert space with a nondegenerate
  injective $*$-homomorphism $C_{0}(Y) \hookrightarrow {\cal
    L}(L)$, and let $\gamma$ be a Hilbert $C^{*}$-module
  over $C_{0}(X)$.
  \begin{enumerate}
  \item There exists an isomorphism of $\Phi_{L,\gamma}^{f}
    \colon L \tl f^{*}\gamma \to L {_{f^{*}}\tl} \gamma$ of
    Hilbert spaces given by $\zeta \tl (\xi \tr_{f^{*}} g)
    \mapsto g\zeta {_{f^{*}}\tl} \xi$.
  \item There exists an isomorphism $\Psi_{L,\gamma}^{f}
    \colon f^{*}\gamma \to [r^{f^{*}}_{L}(\gamma)C_{0}(Y)]
    \subseteq {\cal L}(L, L {_{f^{*}}\tl} \gamma)$ of
    Hilbert $C^{*}$--modules over $C_{0}(Y)$ given by $\xi
    \tl_{f^{*}} g \mapsto r^{f^{*}}_{L}(\xi)g$.
  \item For all $U \in {\cal L}_{C_{0}(Y)}(f^{*}\gamma)$ and
    $\omega \in f^{*}\gamma$, we have
    $\Phi^{f}_{L,\gamma}(\Id_{L} \tl
    U)(\Phi^{f}_{L,\gamma})^{*} \Psi^{f}_{L,\gamma}(\omega)=
    \Psi^{f}_{L,\gamma}(U\omega)$ in ${\cal L}(L,L
    {_{f^{*}}\tl}\gamma)$. \qed
  \end{enumerate}
\end{lemma}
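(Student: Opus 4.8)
The plan is to prove the three assertions in turn, each by a short computation with inner products, using only the definitions of the (flipped) internal tensor product, of the pull-back module $f^{*}\gamma = \gamma \tr_{f^{*}} C_{0}(Y)$, and of the operators $r^{f^{*}}_{L}(\cdot)$ from \eqref{eq:ketbra}. Throughout, write $f := p$, let $\pi \colon C_{0}(Y) \hookrightarrow \mathcal{L}(L)$ be the given nondegenerate injective $*$-homomorphism, extended to $\bar\pi \colon M(C_{0}(Y)) \to \mathcal{L}(L)$; the representation of $C_{0}(X)$ underlying $L {_{f^{*}}\tl} \gamma$ is then $\bar\pi \circ f^{*}$. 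Note that $f^{*} \colon C_{0}(X) \to M(C_{0}(Y))$ is nondegenerate (any $h \in C_{0}(Y)$ is fixed by $f^{*}(\phi)$ when $\phi \equiv 1$ on the compact set $f(\supp h)$), so $f^{*}\gamma$ is a genuine Hilbert $C^{*}$-module over $C_{0}(Y)$.

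For (i): I would define $\Phi^{f}_{L,\gamma}$ on elementary tensors by the stated formula and verify it is inner-product preserving. Using $\langle \xi \tr_{f^{*}} g \mid \xi' \tr_{f^{*}} g'\rangle_{f^{*}\gamma} = g^{*} f^{*}(\langle\xi|\xi'\rangle_{\gamma}) g'$, the inner product $\langle \zeta \tl (\xi \tr_{f^{*}} g) \mid \zeta' \tl (\xi' \tr_{f^{*}} g')\rangle$ in $L \tl f^{*}\gamma$ unfolds to $\langle \pi(g)\zeta \mid \bar\pi(f^{*}(\langle\xi|\xi'\rangle_{\gamma}))\,\pi(g')\zeta'\rangle_{L}$, which by definition of the flipped product over $C_{0}(X)$ is exactly $\langle (g\zeta) {_{f^{*}}\tl} \xi \mid (g'\zeta') {_{f^{*}}\tl} \xi'\rangle$. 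Hence $\Phi^{f}_{L,\gamma}$ extends to an isometry, and it is onto because its range contains all $(g\zeta){_{f^{*}}\tl}\xi$ and $[\pi(C_{0}(Y)) L] = L$ by nondegeneracy of $\pi$.

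For (ii): I would compute $\bigl(r^{f^{*}}_{L}(\xi)\pi(g)\bigr)^{*}\bigl(r^{f^{*}}_{L}(\xi')\pi(g')\bigr)$. By the identity $r^{f^{*}}_{L}(\xi)^{*} r^{f^{*}}_{L}(\xi') = \bar\pi(f^{*}(\langle\xi|\xi'\rangle_{\gamma}))$ from the display following \eqref{eq:ketbra}, this equals $\bar\pi\bigl(g^{*} f^{*}(\langle\xi|\xi'\rangle_{\gamma}) g'\bigr) = \pi\bigl(\langle \xi \tr_{f^{*}} g \mid \xi' \tr_{f^{*}} g'\rangle_{f^{*}\gamma}\bigr)$, using that $C_{0}(Y)$ is an ideal in $M(C_{0}(Y))$ and $\bar\pi|_{C_{0}(Y)} = \pi$. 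Thus $\xi \tr_{f^{*}} g \mapsto r^{f^{*}}_{L}(\xi)\pi(g)$ is well-defined, isometric for the natural $C_{0}(Y)$-valued inner products (identifying $C_{0}(Y)$ with $\pi(C_{0}(Y))$ by injectivity), and $C_{0}(Y)$-linear since $r^{f^{*}}_{L}(\xi)\pi(gh) = \bigl(r^{f^{*}}_{L}(\xi)\pi(g)\bigr)\pi(h)$; its range is dense in, hence equal to, $[r^{f^{*}}_{L}(\gamma) C_{0}(Y)]$.

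For (iii): the clean route is to record the factorization identity $\Psi^{f}_{L,\gamma}(\omega)\zeta = \Phi^{f}_{L,\gamma}(\zeta \tl \omega)$ for all $\zeta \in L$, $\omega \in f^{*}\gamma$, which follows by comparing the defining formulas on $\omega = \xi \tr_{f^{*}} g$ (both sides give $\pi(g)\zeta {_{f^{*}}\tl} \xi$) and extending by linearity and continuity. Then, since $\Phi^{f}_{L,\gamma}$ is unitary and $(\Id_{L} \tl U)(\zeta \tl \omega) = \zeta \tl U\omega$, one gets for every $\zeta$ that $\Phi^{f}_{L,\gamma}(\Id_{L} \tl U)(\Phi^{f}_{L,\gamma})^{*} \Psi^{f}_{L,\gamma}(\omega)\zeta = \Phi^{f}_{L,\gamma}(\Id_{L}\tl U)(\zeta \tl \omega) = \Phi^{f}_{L,\gamma}(\zeta \tl U\omega) = \Psi^{f}_{L,\gamma}(U\omega)\zeta$, which is the claim. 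I expect no essential obstacle here; the only points needing care are the bookkeeping between the algebra map $f^{*} \colon C_{0}(X) \to M(C_{0}(Y))$ and the induced representation $\bar\pi \circ f^{*}$ on $L$, and the two appeals to nondegeneracy (of $f^{*}$, for $f^{*}\gamma$ to be a module, and of $\pi$, for surjectivity of $\Phi^{f}_{L,\gamma}$).
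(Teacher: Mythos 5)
Your proof is correct. The paper gives no proof of this lemma at all (it is announced as a ``straightforward result'' and stated with a \qed), and your direct verification --- checking that $\Phi^{f}_{L,\gamma}$ and $\Psi^{f}_{L,\gamma}$ preserve inner products on elementary tensors, using nondegeneracy of $C_{0}(Y)\hookrightarrow\mathcal{L}(L)$ for surjectivity, and reducing (iii) to the factorization $\Psi^{f}_{L,\gamma}(\omega)\zeta=\Phi^{f}_{L,\gamma}(\zeta\tl\omega)$ --- is exactly the intended argument.
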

To each representation of $G$, we functorially associate a
corepresentation of $V$:
\begin{proposition} \label{proposition:rep-groupoid-rep}
  \begin{enumerate}
  \item Let $(E,U)$ be a continuous representation of
    $G$. Put $K:=\frakK \tl E$ and identify $E$ with the
    subspace $\gamma=\delta:=\{ r_{\frakK}(\xi) \mid \xi
    \in E\} \subseteq {\cal L}(\frakK,K)$ via $\xi \mapsto
    r_{\frakK}(\xi)$. Then $\cKd$ is a
    $C^{*}$-$(\frakb,\frakbo)$-module, we have canonical
    identifications
    \begin{align*}
      H {_{s^{*}}\tl} E &\cong H {_{\rho_{\hbeta}}\tl}
      \gamma \cong \crHsource, & H {_{r^{*}}\tl} E &\cong H
      {_{\rho_{\alpha}}\tl} \delta \cong \crHrange,
    \end{align*}
    and $\cKd$ together with the unitary
    $X:=\Phi^{r}_{H,\gamma}(\Id_{H} \tl
    U)\Phi^{s}_{H,\gamma}$ form a corepresentation $\bfF
    (E,U) := (\cKd,X)$ of $V$.
  \item Let $T$ be a morphism of continuous representations
    $(E,U_{E})$, $(F,U_{F})$ of $G$. Then $\bfF
    T:=\Id_{\frakK} \tl T$ is a morphism of the
    corepresentations $\bfF(E,U_{E})$, $\bfF(F,U_{F})$.
  \item The assignments $(E,U) \mapsto \bfF(E,U)$ and $T
    \mapsto \bfF T$ form a functor
    $\bfF\colon\bfcs\bfrep_{G}\to \bfcs\bfcorep_{V}$.
  \end{enumerate}
\end{proposition}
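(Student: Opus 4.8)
The plan is to verify the three assertions in turn; the substantive points are the module structure in (i) and the commutativity of the pentagon, while the remaining claims are formal. For the module claim, identify $\gamma = \delta = \{r_{\frakK}(\xi) \mid \xi \in E\} \subseteq {\cal L}(\frakK,K)$ and check the axioms of a $C^{*}$-$(\frakb,\frakbo)$-module directly from the structure maps of the flipped internal tensor product and the ket--bra operators of \eqref{eq:ketbra}: one has $[\gamma\frakK] = [\{\zeta \tl \xi \mid \zeta \in \frakK,\, \xi \in E\}] = \frakK \tl E = K$; the identity $r_{\frakK}(\xi)^{*}r_{\frakK}(\xi') = \langle\xi|\xi'\rangle$, viewed as a multiplication operator on $\frakK = L^{2}(G^{0},\mu)$, identifies $\gamma$ with the Hilbert $C^{*}$-module $E$ and gives $[\gamma^{*}\gamma] = \frakB$; and $r_{\frakK}(\xi)b = r_{\frakK}(\xi b)$ for $b \in C_{0}(G^{0})$ (because $b\zeta \tl \xi = \zeta \tl \xi b$ in $\frakK \tl E$) gives $[\gamma\frakB] = \gamma$. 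Since $\rho_{\gamma} = \rho_{\delta}$ acts on $K = \frakK \tl E$ through the right $C_{0}(G^{0})$-module structure of $E$, i.e.\ $\rho_{\gamma}(b)(\zeta \tl \xi) = \zeta \tl \xi b$, one gets $\rho_{\gamma}(b)r_{\frakK}(\xi) = r_{\frakK}(\xi b) \in \gamma$, hence the compatibility $[\rho_{\gamma}(\frakBo)\delta] = \delta = [\rho_{\delta}(\frakB)\gamma]$. The canonical identifications $H {_{s^{*}}\tl} E \cong H {_{\rho_{\hbeta}}\tl} \gamma \cong \crHsource$ and $H {_{r^{*}}\tl} E \cong H {_{\rho_{\alpha}}\tl} \delta \cong \crHrange$ are then instances of the standard identifications for relative tensor products recalled in Subsection~\ref{subsection:pmu-rtp} (via $\hbeta\tr\frakK \cong H \cong \alpha\tr\frakK$), of $\rho_{\hbeta} = s^{*}$ and $\rho_{\alpha} = \rho_{\beta} = r^{*}$ from Lemma~\ref{lemma:pmu-groupoid-2}, and of the isomorphism $\Phi^{f}_{H,\gamma} \colon H\tl f^{*}\gamma \to H{_{f^{*}}\tl}\gamma$ of Lemma~\ref{lemma:rep-groupoid-iso}(i) for $f = s,r$.

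Next I would observe that $X$ is a well-defined unitary, being a composite of the unitary $\Id_{H}\tl U$ (unitary because $U$ is) with the unitary isomorphisms $\Phi^{r}_{H,\gamma}$, $\Phi^{s}_{H,\gamma}$ of Lemma~\ref{lemma:rep-groupoid-iso}(i), and then verify the three intertwining relations in the definition of a corepresentation. Under the identifications above, the legs $\alpha\lt\gamma,\,\beta\lt\gamma,\,\hbeta\rt\delta$ of $\crHsource$ and $\alpha\rt\gamma,\,\beta\lt\delta,\,\hbeta\lt\delta$ of $\crHrange$ are obtained from $s^{*}E$ resp.\ $r^{*}E$ via $\Psi^{s}_{H,\gamma}$ resp.\ $\Psi^{r}_{H,\gamma}$ of Lemma~\ref{lemma:rep-groupoid-iso}(ii), and the relations then follow from the naturality of the isomorphisms in Lemma~\ref{lemma:rep-groupoid-iso} together with the fact that $U$ is a $C_{0}(G)$-linear unitary from $s^{*}E$ onto $r^{*}E$, so that $[U(s^{*}E)] = r^{*}E$. (Here $\alpha = \beta$ and $\gamma = \delta$, and one checks $\alpha\rt\gamma = \beta\lt\delta$, so effectively only two relations remain.)

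The one genuinely computational point is the commutativity of the pentagon diagram \eqref{eq:rep-corep-pentagon}, and the plan is to reduce it to the cocycle identity for $U$ in the form $m^{*}U = (p_{1}^{*}U)(p_{2}^{*}U)$ of Lemma~\ref{lemma:rep-groupoid-global}, i.e.\ $U_{x}U_{y} = U_{xy}$ on $\GsrG$. Passing to the function picture, the iterated relative tensor products occurring in \eqref{eq:rep-corep-pentagon} become spaces of $L^{2}$-sections, over $\GsrG$ and over the triple fibre product, of suitable pullbacks of $E$; the copies of $V$ act by the coordinate change $(x,y)\mapsto(x,x^{-1}y)$ as in Theorem~\ref{theorem:pmu-groupoid}, and the copies of $X$ act fibrewise by the values $U_{x}$ of $U$. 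Chasing a section around the pentagon, the two composites differ only in the order in which the operators $U_{x}$, $U_{x^{-1}y}$, $U_{y}$ get applied after the substitutions induced by the $V$'s, and they agree precisely because $U_{x}U_{x^{-1}y} = U_{y}$. This is the exact analogue of the way associativity in $G$ produces the pentagon equation for $V$ itself, and of Vallin's computation in \cite{vallin:2}; it is tedious but conceptually routine, the main obstacle being careful bookkeeping of the many identifications of relative tensor products.

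Finally, for (ii) one checks that $\bfF T = \Id_{\frakK}\tl T$ is a morphism of the underlying $C^{*}$-$(\frakb,\frakbo)$-modules --- it maps the copy of $\gamma$ attached to $E$ into the one attached to $F$ because $(\Id_{\frakK}\tl T)r_{\frakK}(\xi) = r_{\frakK}(T\xi)$, and symmetrically for the adjoint using $T^{*} \in {\cal L}_{C_{0}(G^{0})}(F,E)$ --- and that the corepresentation-morphism identity $Y(\Id\botensor\bfF T) = (\Id\btensor\bfF T)X$, with $X,Y$ the operators attached to $\bfF(E,U_{E})$ and $\bfF(F,U_{F})$, becomes, under the identifications of (i), the equation $U_{F}\circ s^{*}T = r^{*}T\circ U_{E}$, which is exactly the condition for $T$ to be a morphism in $\bfcs\bfrep_{G}$. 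Assertion (iii) is then immediate from $\Id_{\frakK}\tl\Id_{E} = \Id_{K}$ and $\Id_{\frakK}\tl(T\circ T') = (\Id_{\frakK}\tl T)(\Id_{\frakK}\tl T')$.
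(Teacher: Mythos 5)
Your proposal is correct and follows essentially the same route as the paper: the module axioms and the canonical identifications come from Lemma \ref{lemma:rep-groupoid-iso}, the intertwining relations from the fact that $U$ is a $C_{0}(G)$-linear unitary $s^{*}E \to r^{*}E$ together with $\alpha=\beta$ and $\gamma=\delta$, and the pentagon reduces via Lemma \ref{lemma:rep-groupoid-global} to the cocycle identity $U_{x}U_{y}=U_{xy}$. The only difference is in the execution of that last step: you chase sections fibrewise using $U_{x}U_{x^{-1}y}=U_{y}$, whereas the paper stays global, identifying $X_{13}X_{23}$ with the operator induced by $(p_{1}^{*}U)(p_{2}^{*}U)=m^{*}U$ and then invoking $V_{12}^{*}\,p_{2}^{*}(f)\,V_{12}=\hDelta_{V}(f)=m^{*}f$ for $f\in C_{0}(G)$ to convert this into $V_{12}^{*}X_{23}V_{12}$.
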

\begin{proof}
  i) The assertion on $\cKd$ is easily checked. In ${\cal
    L}(H, \crHrange)$, we have
  \begin{align*}
    \Phi^{r}_{H,\gamma}(\Id_{H} \tl
    U)(\Phi^{s}_{H,\gamma})^{*}[\kgamma{2}C_{0}(G)]
    &=\Phi^{r}_{H,\gamma}(\Id_{H} \tl U) r_{H}(s^{*}\gamma)
    \\ &= \Phi^{r}_{H,\gamma} r_{H}(Us^{*}\gamma)
    =\Phi^{r}_{H,\gamma} r_{H}(r^{*}\gamma) =
    [\kgamma{2}C_{0}(G)].
  \end{align*}
  Note here that $r_{H}(\frei)$ denotes the operator defined in
  \eqref{eq:ketbra}, while $r$ denotes the range map of $G$.
  Using the relations $\gamma=\delta$ and
  $\alpha=\beta=[C_{0}(G)\alpha]$, we can conclude
  \begin{align*}
    X[\kgamma{2}\alpha] &= [\kgamma{2}\alpha] =
    [\kalpha{1}\gamma], & X[\kgamma{2}\beta] &=
    [|\delta\rangle_{2}\beta], & X[\khbeta{1}\delta] &=
    X[|\delta\rangle_{2}\hbeta] =
    [|\delta\rangle_{2}\hbeta].
  \end{align*}
  To finish the proof, we have to show that diagram
  \eqref{eq:rep-corep-pentagon} commutes.  We apply Lemma
  \ref{lemma:rep-groupoid-iso} to the maps $p=r_{1},t,s_{2}
  \colon \GsrG \to G^{0}$ in diagram
  \eqref{eq:rep-groupoid-global}, the space $L=\Hsource$ and
  the representation $C_{0}(\GsrG) \to {\cal L}(L)$ given by
  multiplication operators, use the relations $r_{1}^{*} =
  \rho_{(\alpha \lt \alpha)}$ and $ s_{2}^{*} =
  \rho_{(\hbeta \rt \hbeta)}$, and find that $X_{13}X_{23}$
  is equal to the composition
 \begin{gather*}
   L {_{s_{2}^{*}}\tl} \gamma
   \xrightarrow{(\Phi_{L,\gamma}^{s_{2}})^{*}} L \tl
   s_{2}^{*}\gamma \xrightarrow{\Id_{L} \tl p_{2}^{*}U} L
   \tl t^{*}\gamma \xrightarrow{\Id_{L} \tl p_{1}^{*}U} L
   \tl r_{1}^{*}\gamma \xrightarrow{\Phi_{L,\gamma}^{r_{1}}}
   L {_{r_{1}^{*}}\tl} \gamma, 
 \end{gather*}
 which coincides by Lemma \ref{lemma:rep-groupoid-global}
 with $\Phi_{L,\gamma}^{r_{1}}(\Id_{L} \tl
 m^{*}U)(\Phi_{L,\gamma}^{s_{2}})^{*}$.  Since $ V_{12}^{*}
 p_{2}^{*}(f)V_{12} = \hDelta_{V}(f) = m^{*}f$ for all $f \in
 C_{0}(G)$, this composition is equal to
 $V_{12}^{*}X_{23}V_{12}$.

 ii), iii) Straightforward.
\end{proof}
Conversely, we functorially associate to every
corepresentation of $V$ a representation of $G$. In the
formulation of this construction, we apply Lemma
\ref{lemma:rep-groupoid-iso} to $Y=G$ and $L=H$.
\begin{proposition} \label{proposition:rep-groupoid-corep}
  \begin{enumerate}
  \item Let $X$ be a corepresentation of $V$ on a
    $C^{*}$-$(\frakb,\frakbo)$-module $\cKd$.  Then
    $\gamma=\delta$, $X[\kgamma{2}C_{0}(G)] =
    [\kgamma{2}C_{0}(G)]$, and  $\gamma$ together with
    the unitary $U:=\Psi_{H,\gamma}^{r} X
    (\Psi^{s}_{H,\gamma})^{*} \colon s^{*}\gamma \to
    r^{*}\gamma$ form a continuous representation
    $\bfG(\cKd,X):=(\gamma,U)$ of $G$.
  \item If $T$ is a morphism of corepresentations $(\cKd,X)$
    and $(\eLf,Y)$, then the map $\bfG T\colon \gamma \to
    \epsilon$, $\xi \mapsto T\xi$, is a morphism of the
    continuous representations $\bfG(\cKd,X)$,
    $\bfG(\eLf,Y)$.
  \item The assignments $(\cKd,X) \mapsto \bfG(\cKd,X)$,
    $T \mapsto \bfG T$ form a functor $\bfG\colon
    \bfcs\bfcorep_{V} \to \bfcs\bfrep_{G}$.
  \end{enumerate}
\end{proposition}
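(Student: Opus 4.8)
The plan is to recognise that the assignment in i) is inverse to the construction $\bfF$ of Proposition \ref{proposition:rep-groupoid-rep}, and to run the computations of that proof in reverse, with Lemmas \ref{lemma:rep-groupoid-iso} and \ref{lemma:rep-groupoid-global} as the main tools.

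First I would dispose of the algebraic claims in i). The equalities $\gamma = \delta$ and $X[\kgamma{2}C_{0}(G)] = [\kgamma{2}C_{0}(G)]$ follow from the intertwining relations $X(\alpha \lt \gamma) = \alpha \rt \gamma$, $X(\beta \lt \gamma) = \beta \lt \delta$, $X(\hbeta \rt \delta) = \hbeta \lt \delta$ of a corepresentation together with the groupoid-specific identities $\alpha = \beta = [C_{0}(G)\alpha]$ and $\rho_{\alpha} = \rho_{\beta} = r^{*}$, $\rho_{\hbeta} = s^{*}$; this is the same bookkeeping as in the proof of Proposition \ref{proposition:rep-groupoid-rep} i), read backwards. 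Next, using the identifications $H {_{s^{*}}\tl} \gamma \cong \crHsource$ and $H {_{r^{*}}\tl} \gamma \cong \crHrange$ (valid because $\rho_{\hbeta} = s^{*}$, $\rho_{\alpha} = r^{*}$) and the isomorphisms of Hilbert $C_{0}(G)$-modules $\Psi^{s}_{H,\gamma}$, $\Psi^{r}_{H,\gamma}$ of Lemma \ref{lemma:rep-groupoid-iso} ii), the relation $X[\kgamma{2}C_{0}(G)] = [\kgamma{2}C_{0}(G)]$ together with the fact that left composition by the unitary $X$ is $C_{0}(G)$-linear and preserves $C_{0}(G)$-valued inner products shows that $U := \Psi^{r}_{H,\gamma} X (\Psi^{s}_{H,\gamma})^{*}$ is a well-defined unitary in ${\cal L}_{C_{0}(G)}(s^{*}\gamma, r^{*}\gamma)$. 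By Lemma \ref{lemma:rep-groupoid-iso} iii), $X$ is then recovered from $U$ as $X = \Phi^{r}_{H,\gamma}(\Id_{H} \tl U)(\Phi^{s}_{H,\gamma})^{*}$, so $(\gamma, X)$ is exactly the corepresentation that the construction of Proposition \ref{proposition:rep-groupoid-rep} i) produces out of $(\gamma, U)$.

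It then remains to verify $U_{x}U_{y} = U_{xy}$ for all $(x,y) \in \GsrG$; by Lemma \ref{lemma:rep-groupoid-global} this is the same as $m^{*}U = (p_{1}^{*}U)(p_{2}^{*}U)$. Here I would reuse the computation in the last paragraph of the proof of Proposition \ref{proposition:rep-groupoid-rep} i): applying Lemma \ref{lemma:rep-groupoid-iso} to the maps $r_{1}, t, s_{2} \colon \GsrG \to G^{0}$ with the Hilbert space $\Hsource$ and using $r_{1}^{*} = \rho_{(\alpha \lt \alpha)}$, $s_{2}^{*} = \rho_{(\hbeta \rt \hbeta)}$, one obtains, purely from $X = \Phi^{r}_{H,\gamma}(\Id_{H} \tl U)(\Phi^{s}_{H,\gamma})^{*}$, the identity $X_{13}X_{23} = \Phi^{r_{1}}(\Id \tl (p_{1}^{*}U)(p_{2}^{*}U))(\Phi^{s_{2}})^{*}$; and from $V_{12}^{*}p_{2}^{*}(f)V_{12} = \hDelta_{V}(f) = m^{*}f$ for $f \in C_{0}(G)$ (Theorem \ref{theorem:legs-groupoid}) one obtains $V_{12}^{*}X_{23}V_{12} = \Phi^{r_{1}}(\Id \tl m^{*}U)(\Phi^{s_{2}})^{*}$. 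Since $X$ is a corepresentation, diagram \eqref{eq:rep-corep-pentagon} commutes, which rearranges to $X_{13}X_{23} = V_{12}^{*}X_{23}V_{12}$; comparing the two displayed expressions and cancelling the isomorphisms $\Phi^{s_{2}}$, $\Phi^{r_{1}}$ gives $(p_{1}^{*}U)(p_{2}^{*}U) = m^{*}U$. Parts ii) and iii) are then routine: a (semi-)morphism $T$ of corepresentations corresponds, under the identifications above and by Lemma \ref{lemma:rep-groupoid-iso} applied to $T$, precisely to a morphism of the associated continuous representations, and functoriality of $\bfG$ is immediate since $\bfG$ and $\bfF$ are mutually inverse on objects and morphisms up to these canonical identifications.

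The step that I expect to demand the most care is showing that $U$ is genuinely a unitary in ${\cal L}_{C_{0}(G)}(s^{*}\gamma, r^{*}\gamma)$ — that is, checking that $X$ maps $[\kgamma{2}C_{0}(G)]$ onto $[\kgamma{2}C_{0}(G)]$ (and likewise for $X^{*}$) — because this is the point where the three intertwining relations of a corepresentation have to be combined with the identities $\rho_{\alpha} = \rho_{\beta} = r^{*}$, $\rho_{\hbeta} = s^{*}$; once this is in place, everything else is a mechanical transcription of the groupoid computations already performed in Proposition \ref{proposition:rep-groupoid-rep}. One could instead deduce the statement from Proposition \ref{proposition:rep-groupoid-rep} via the isomorphism $\bfcs\bfcorep_{V} \cong \bfcs\bfrep_{V^{op}}$, but the direct route seems shorter.
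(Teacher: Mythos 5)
Your overall architecture matches the paper's proof: $\gamma=\delta$ from the intertwining relations, multiplicativity of $U$ from the corepresentation pentagon via Lemmas \ref{lemma:rep-groupoid-iso} and \ref{lemma:rep-groupoid-global}, and ii), iii) routine. However, there is a genuine gap at precisely the step you flagged as delicate, namely the relation $X[\kgamma{2}C_{0}(G)] = [\kgamma{2}C_{0}(G)]$, and your proposed justification for it is circular. In the forward direction (Proposition \ref{proposition:rep-groupoid-rep} i)) this relation is obtained from the computation $\Phi^{r}_{H,\gamma}(\Id_{H}\tl U)(\Phi^{s}_{H,\gamma})^{*}[\kgamma{2}C_{0}(G)] = \Phi^{r}_{H,\gamma}r_{H}(Us^{*}\gamma) = [\kgamma{2}C_{0}(G)]$, i.e.\ it is a consequence of already possessing the $C_{0}(G)$-linear unitary $U$ --- which is exactly the object you are trying to construct here. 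Reading that bookkeeping ``backwards'' therefore presupposes the conclusion. Nor do the three intertwining relations of a corepresentation suffice: they control $X$ only on the spaces $\alpha\lt\gamma = [\kgamma{2}\alpha]$, $\beta\lt\gamma$ and $\hbeta\rt\delta$, which are subspaces of ${\cal L}(\frakK,\crHrange)$, whereas $[\kgamma{2}C_{0}(G)]$ is a subspace of ${\cal L}(H,\crHrange)$; knowing $X[\kgamma{2}\beta]=[\kgamma{2}\beta]$ tells you what $X|\eta\rangle_{2}m(f)$ does on $\beta\frakK$ but does not by itself place $X|\eta\rangle_{2}m(f)$ back in $[\kgamma{2}C_{0}(G)]$.

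The paper closes this gap with a nontrivial analytic input: the groupoid unitary is \emph{regular} (Example \ref{examples:pmu-regular} ii), proved by an explicit integral-kernel computation showing $C_{V}=[\alpha\alpha^{*}]$), and Proposition \ref{proposition:regular-mult} then yields $[X\kgamma{2}\hA_{X}]=[\kgamma{2}\hA_{X}]$-type identities; the proof of that proposition rests on Lemma \ref{lemma:semi-regular}, which uses the commutativity of the pentagon diagram together with regularity. So the missing ingredient in your argument is the pentagon equation for $X$ combined with regularity of $V$ --- not merely the intertwining relations and the identifications $\rho_{\alpha}=\rho_{\beta}=r^{*}$, $\rho_{\hbeta}=s^{*}$. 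Your closing suggestion to route through $\bfcs\bfcorep_{V}\cong\bfcs\bfrep_{V^{op}}$ does not avoid this either, since Proposition \ref{proposition:rep-groupoid-rep} only supplies the functor in the opposite direction. The remainder of your argument (recovering $X=\Phi^{r}_{H,\gamma}(\Id_{H}\tl U)(\Phi^{s}_{H,\gamma})^{*}$, deducing $p_{1}^{*}U\circ p_{2}^{*}U=m^{*}U$ from $X_{13}X_{23}=V_{12}^{*}X_{23}V_{12}$, and parts ii), iii)) is sound and coincides with the paper's.
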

\begin{proof}
  i) Since $\alpha=\beta$ and $[\kalpha{1}\gamma] =
  X[\kgamma{2}\alpha] =
  X[\kgamma{2}\beta]=[|\delta\rangle_{2}\beta]=[\kbeta{1}\delta]$
  as subsets of ${\cal L}(\frakK,\crHrange)$, we can
  conclude $\gamma = [\rho_{\delta}(\frakB)\gamma] =
  [\balpha{1}\kalpha{1}\gamma] = [\balpha{1}\kbeta{1}\delta]
  = [\rho_{\delta}(\frakB)\delta]=\delta$. The relation
  $X[\kgamma{2}C_{0}(G)] = [\kgamma{2}C_{0}(G)]$ will follow
  from Example \ref{examples:pmu-regular} ii) and
  Proposition \ref{proposition:regular-mult}. Finally, $X =
  \Phi^{r}_{H,\gamma}(\Id_{H} \tl U)\Phi^{s}_{H,\gamma}$,
  and reversing the arguments in the proof of Proposition
  \ref{proposition:rep-groupoid-rep}, we conclude from
  $X_{12}X_{13}=V_{12}^{*}X_{23}V_{12}$ that $p_{1}^{*}U
  \circ p_{2}^{*}U=m^{*}U$. By Lemma
  \ref{lemma:rep-groupoid-global}, $U$ is a representation
  on $\gamma$.

  ii), iii) Straightforward.
\end{proof}
We define an equivalence between $C^{*}$-tensor
categories to be an equivalence of the underlying
$C^{*}$-categories and tensor
categories \cite{maclane}.
\begin{theorem}
  The functors $\bfcs\bfrep_{G}
  \stackrel{\bfG}{\underset{\bfF}{\leftrightarrows}}
  \bfcs\bfcorep_{V}$ extend to an equivalence of
  $C^{*}$-tensor categories.
\end{theorem}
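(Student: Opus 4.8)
The plan is to verify that the functors $\bfF$ and $\bfG$ of Propositions~\ref{proposition:rep-groupoid-rep} and~\ref{proposition:rep-groupoid-corep} are mutually quasi-inverse through unitary natural isomorphisms, so that they form an equivalence of $C^{*}$-categories, and then to equip $\bfF$ with the structure of a strong monoidal functor. Since a strong monoidal functor which is an equivalence of $C^{*}$-categories automatically forms part of an equivalence of $C^{*}$-tensor categories, the quasi-inverse carrying a canonical induced monoidal structure \cite{maclane}, this suffices.

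\emph{The quasi-inverses.} For a continuous representation $(E,U)$ of $G$, the functor $\bfF$ produces the $C^{*}$-$(\frakb,\frakbo)$-module $\cKd$ with $K = \frakK \tl E$ and $\gamma = \delta = \{r_{\frakK}(\xi) \mid \xi \in E\}$, and $\bfG$ then returns the Hilbert $C^{*}$-module $\gamma$ over $C_{0}(G^{0}) \cong \frakB$; the map $\xi \mapsto r_{\frakK}(\xi)$ is a unitary of Hilbert $C^{*}$-modules $E \to \gamma$, and Lemma~\ref{lemma:rep-groupoid-iso}~iii), applied to the maps $r,s\colon G \to G^{0}$ and the relations $X=\Phi^{r}_{H,\gamma}(\Id_{H}\tl U)\Phi^{s}_{H,\gamma}$, $U=\Psi^{r}_{H,\gamma}X(\Psi^{s}_{H,\gamma})^{*}$, shows that it intertwines $U$ with the representation reconstructed by $\bfG$; this gives a natural isomorphism $\Id_{\bfcs\bfrep_{G}} \Rightarrow \bfG\bfF$. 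Conversely, for a corepresentation $(\cKd,X)$ of $V$ one has $\gamma=\delta$, and the canonical isomorphism $\frakK \tl \gamma \mycong K$ of~\eqref{eq:rtp-iso} identifies $r_{\frakK}(\gamma)$ with $\gamma \subseteq {\cal L}(\frakK,K)$ and, again by Lemma~\ref{lemma:rep-groupoid-iso}~iii), carries the operator produced by $\bfF\bfG$ to $X$; this gives $\bfF\bfG \Rightarrow \Id_{\bfcs\bfcorep_{V}}$. Naturality in both cases is immediate because $\bfF$ and $\bfG$ act on morphisms by $T \mapsto \Id_{\frakK} \tl T$ and $T \mapsto T$, respectively, and all isomorphisms involved are unitary, so we obtain an equivalence of $C^{*}$-categories.

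\emph{The monoidal structure.} For continuous representations $(E_{1},U_{1})$ and $(E_{2},U_{2})$ of $G$ I would build a unitary
\[
  c_{E_{1},E_{2}} \colon \bfF(E_{1},U_{1}) \boxtimes \bfF(E_{2},U_{2}) \mycong \bfF\big((E_{1},U_{1}) \boxtimes (E_{2},U_{2})\big),
\]
whose underlying map is the canonical isomorphism $(\frakK \tl E_{1}) \btensor (\frakK \tl E_{2}) \cong \frakK \tl (E_{1} \tr E_{2})$ obtained by unravelling the relative tensor product over $\frakb$ using~\eqref{eq:rtp-iso},~\eqref{eq:rtp-unital} and~\eqref{eq:rtp-associative}. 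One then checks that $c_{E_{1},E_{2}}$ intertwines the corepresentation operator of $\bfF(E_{1},U_{1}) \boxtimes \bfF(E_{2},U_{2})$ with that of $\bfF(E_{1} \tr E_{2}, U_{1} \boxtimes U_{2})$; this is where the real work lies. The unit constraint is the observation that $\bfF$ carries the unit object $C_{0}(G^{0})$, with the canonical isomorphism $s^{*}C_{0}(G^{0}) \cong C_{0}(G) \cong r^{*}C_{0}(G^{0})$, to the trivial corepresentation $\bKbo$, which follows from~\eqref{eq:rep-canonical-triv}. Finally, the pentagon for $c$ reduces to the associativity isomorphism~\eqref{eq:rtp-associative} together with functoriality of the internal tensor product, and the two triangle identities reduce to~\eqref{eq:rtp-unital}; these are routine but lengthy diagram chases, after which the induced monoidal structure on $\bfG$ and the coherence of the natural isomorphisms above with the tensor products are formal.

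\emph{The main obstacle} is the compatibility of $c_{E_{1},E_{2}}$ with the corepresentation operators. Unwinding it requires translating the tensor product of corepresentations --- which is defined through the isomorphism $\bfcs\bfcorep_{V} \cong \bfcs\bfrep_{V^{op}}$ and hence through the operators $Y_{23}$ and $X_{13}$ of the tensor product of representations --- into the fibrewise composition $p_{1}^{*}(U_{1}\boxtimes U_{2}) \circ p_{2}^{*}(U_{1}\boxtimes U_{2}) = m^{*}(U_{1}\boxtimes U_{2})$ afforded by Lemma~\ref{lemma:rep-groupoid-global} and the commutative diagram~\eqref{eq:rep-groupoid-global}, using the fibrewise identity $(U_{1} \boxtimes U_{2})_{x} = (U_{1})_{x} \otimes (U_{2})_{x}$, the description of $\bfF$ through Lemma~\ref{lemma:rep-groupoid-iso}, and the relation $V_{12}^{*}p_{2}^{*}(f)V_{12} = \hDelta_{V}(f) = m^{*}f$ already used in the proof of Proposition~\ref{proposition:rep-groupoid-rep}. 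Keeping careful track of the leg positions and of the passage between $V$ and $V^{op}$ throughout this identification is the delicate part; once it is settled, the remaining verifications are routine.
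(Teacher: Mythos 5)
Your proposal is correct and follows essentially the same route as the paper: the paper's own proof consists of exactly the two steps you identify, namely that Lemma \ref{lemma:rep-groupoid-iso} iii) makes $\bfF$ and $\bfG$ mutually quasi-inverse (hence an equivalence of $C^{*}$-categories), and that the compatibility with the monoidal structures is a "tedious but straightforward" verification which the paper, like you, does not carry out in detail. Your sketch of the coherence isomorphisms $c_{E_{1},E_{2}}$ and of the reduction of their compatibility with the corepresentation operators to Lemma \ref{lemma:rep-groupoid-global} and the relation $V_{12}^{*}p_{2}^{*}(f)V_{12}=m^{*}f$ is a faithful (and somewhat more explicit) expansion of what the paper leaves implicit.
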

\begin{proof}
  Lemma \ref{lemma:rep-groupoid-iso} iii) implies that the
  functors $\bfF,\bfG$ form equivalences of categories. The
  verification of the fact that they preserve the monoidal
  structure is  tedious but straightforward.
\end{proof}
\begin{remark}
  Let us note that a similar equivalence holds between the
  categories of measurable representations of $G$ and
  $W^{*}$-corepresentations of $V$.
\end{remark}

\section{Regular, proper and \'etale
  $C^{*}$-pseudo-multiplicative unitaries}

\label{section:props}

In this section, we study particular classes of
$C^{*}$-pseudo-multiplicative unitaries. As before, let
$\frakb=\cbasesb$ be a $C^{*}$-base, let $\pmuspace$ be a
$C^{*}$-$(\frakbo,\frakb,\frakbo)$-module, and let
$\pmuoperator$ be
a $C^{*}$-pseudo-multiplicative unitary.

\subsection{Regularity }
\label{subsection:regular}

In \cite{baaj:2}, Baaj and Skandalis showed that the pairs
$(\hA_{V},\hDelta_{V})$ and $(A_{V},\Delta_{V})$ associated
to a multiplicative unitary $V$ on a Hilbert space $H$ form
Hopf $C^{*}$-algebras if the unitary satisfies the
regularity condition $[\langle H|_{2}V|H\rangle_{1}]={\cal
  K}(H)$. This condition was generalized by Baaj in
\cite{baaj:12,baaj:10} and extended to pseudo-multiplicative
unitaries by Enock \cite{enock:10}. 

We now formulate a generalized regularity condition for
$C^{*}$-pseudo-multiplicative unitaries and show that the
pairs $((\hA_{V})^{\alpha,\hbeta}_{H},\hDelta_{V})$ and
$((A_{V})_{H}^{\beta,\alpha},\Delta_{V})$ associated to such
a unitary $V$ in subsection \ref{subsection:legs-pmu} are
concrete Hopf $C^{*}$-bi\-modules if $V$ is regular.  This
regularity condition involves
the space
\begin{align*} C_{V}:= [ \balpha{1} V
\kalpha{2}] \subseteq {\cal L}(H).
\end{align*}
\begin{proposition} \label{proposition:pmu-c} We have
\begin{gather*}
  \begin{aligned}{}
    [C_{V}C_{V}]&=C_{V}, & C_{V^{op}}&=C_{V}^{*}, &
    [C_{V}\alpha]&=\alpha, 
  \end{aligned} \\
  [C_{V}\rho_{\beta}(\frakB)]=[\rho_{\beta}(\frakB)C_{V}]
  = C_{V} = [C_{V}\rho_{\hbeta}(\frakB)] =
  [\rho_{\hbeta}(\frakB)C_{V}].
\end{gather*}
\end{proposition}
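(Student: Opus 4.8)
The identities for $C_V = [\langle\alpha|_1 V|\alpha\rangle_2]$ are all of the same flavour as those proved in Proposition~\ref{proposition:pmu-legs} for $\hA_V$ and $A_V$, and I would establish them by the same mechanism: rewrite each product as a composition of leg-operators and amplifications of $V$, push $V$ past the relevant closed subspaces using the intertwining relations \eqref{eq:pmu-intertwine}, and then contract. First, the relation $C_{V^{op}}=C_V^*$ is immediate from the definition of $V^{op}=\Sigma V^*\Sigma$ together with the identities $\langle\alpha|_1\Sigma = \langle\alpha|_2$ and $\Sigma|\alpha\rangle_2 = |\alpha\rangle_1$ on the appropriate relative tensor products, exactly as in the proof that $\hA_{V^{op}}=A_V^*$: indeed $C_{V^{op}} = [\langle\alpha|_1 \Sigma V^*\Sigma|\alpha\rangle_2] = [\langle\alpha|_2 V^*|\alpha\rangle_1] = [\langle\alpha|_1 V|\alpha\rangle_2]^* = C_V^*$.

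Next, the "module" relations in the last line follow by moving $\rho_\beta(\frakB)$ and $\rho_\hbeta(\frakB)$ through $V$. For instance, since $(\rho_\beta(b)\botensor\Id)|\alpha\rangle_2$ and $|\rho_\beta(\frakB)\alpha\rangle_2=|\alpha\rangle_2$ (using $[\rho_\beta(\frakB)\alpha]=\alpha$, which holds because $\beta$ and $\alpha$ form part of a $C^*$-$(\frakbo,\frakb,\frakbo)$-module), one gets $[C_V\rho_\beta(\frakB)] = [\langle\alpha|_1 V|\rho_\beta(\frakB)\alpha\rangle_2]=C_V$; the version with $\rho_\beta(\frakB)$ on the left uses $V(\hbeta\rt\beta)=\hbeta\lt\beta$ to rewrite $(\rho_\beta(b)\btensor\Id)$ appearing after $V$ — concretely, $\rho_\beta(\frakB)\langle\alpha|_1 = \langle\alpha|_1(\Id\btensor\rho_\beta(\frakB))$ and $(\Id\btensor\rho_\beta(\frakB))V = V(\rho_\hbeta(\frakB)\botensor\Id)$ by \eqref{eq:pmu-intertwine}, and then $\rho_\hbeta(\frakB)|\alpha\rangle_2$ is absorbed since $[\rho_\hbeta(\frakBo),\rho_\alpha(\dots)]$-type commutation and $[\rho_\hbeta(\frakB)\alpha]=\alpha$ hold. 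The $\rho_\hbeta(\frakB)$-relations are handled symmetrically, using $V(\beta\lt\alpha)=\beta\lt\beta$ on the other side.

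For $[C_V\alpha]=\alpha$: one computes $[C_V\alpha]=[\langle\alpha|_1 V|\alpha\rangle_2\alpha] = [\langle\alpha|_1 V(\alpha\lt\alpha)]$, and since $V(\alpha\lt\alpha)=\alpha\rt\alpha=[|\alpha\rangle_1\alpha]$ by \eqref{eq:pmu-intertwine}, this equals $[\langle\alpha|_1|\alpha\rangle_1\alpha]=[\rho_\alpha(\frakB)\alpha]=\alpha$, using $\langle\alpha|_1|\alpha\rangle_1=\rho_\alpha(\alpha^*\alpha)\subseteq\rho_\alpha(\frakB)$ together with nondegeneracy. The idempotence $[C_VC_V]=C_V$ is the one genuinely non-trivial identity, and I expect it to be the main obstacle: as in the proof of $[\hA_V\hA_V]=\hA_V$ and (more pointedly) in Proposition~\ref{proposition:rep-legs}, one has to insert the pentagon diagram \eqref{eq:pmu-pentagon} between the two copies of $V$. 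Explicitly, write $[C_VC_V]=[\langle\alpha|_1\langle\alpha|_2 V_{23}V_{12}\,|\alpha\rangle_3|\alpha\rangle_2]$ on $H\frange H\fsource H$ (with appropriate leg labels), use the pentagon to replace $V_{23}V_{12}$ by $V_{12}V_{13}V_{23}$, and then collapse the outer legs: the $V_{13}$ and one $V_{23}$ get absorbed into the $|\alpha\rangle$'s and $\langle\alpha|$'s via \eqref{eq:pmu-intertwine}, leaving a single $\langle\alpha|_1 V|\alpha\rangle_2$. A commuting-diagram argument in the style of Notation~\ref{notation:diagrams}, analogous to the first diagram in the proof of Proposition~\ref{proposition:rep-legs}~i), makes this precise, together with the auxiliary commutation identity \eqref{eq:pmu-diagram-commutes} relating $|\xi\rangle_2\langle\eta'|_2$ to $|\xi\rangle_3\langle\eta'|_3$ after amplification. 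The reverse inclusion $C_V\subseteq[C_VC_V]$ follows by using a bounded approximate unit for $\rho_\alpha(\frakB)$ (equivalently, for the $C^*$-module $\alpha$) to factor $\langle\alpha|_1 V|\alpha\rangle_2$ through $[C_V\rho_\alpha(\frakB)\cdot\text{(something)}]$, or more cleanly by combining $[C_V\alpha]=\alpha$ with the already-established right module identities; I would present it via the approximate-unit factorization since that mirrors the cited proofs most closely.
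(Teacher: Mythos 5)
Your overall strategy is the same as the paper's: the printed proof consists of two commutative diagrams in the style of Proposition~\ref{proposition:rep-legs}, the first using the pentagon \eqref{eq:pmu-pentagon} and the intertwining relations \eqref{eq:pmu-intertwine} to rewrite $[C_VC_V]$ as $[\balpha{1}\balpha{2}V_{13}\kalpha{2}\kalpha{2}]$, and the second identifying $[\balpha{2}V_{13}\kalpha{2}]$ with $[V\rho_{(\hbeta \rt \beta)}(\frakB)]$, which is then absorbed by the module relations; your sketches of $C_{V^{op}}=C_V^*$, of $[C_V\alpha]=\alpha$ (with the harmless slip that $\balpha{1}\kalpha{1}\alpha=\rho_\beta(\frakB)\alpha$, not $\rho_\alpha(\frakB)\alpha$), and of the idempotence via the pentagon are all of exactly this shape. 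Note that since each cell in such a diagram is an equality of closed operator spaces, both inclusions of $[C_VC_V]=C_V$ come out simultaneously; your proposed approximate-unit argument for the reverse inclusion would be essentially circular, since producing an approximate unit for $C_V$ inside $C_V$ is what is being proved.

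The genuine problems are in your treatment of the module relations in the last line, where the leg bookkeeping goes wrong in two places. First, $\kalpha{2}\rho_\beta(b)=(\rho_\beta(b)\botensor\Id)\kalpha{2}$ is a \emph{first-leg} amplification composed with a \emph{second-leg} insertion; it is not $|\rho_\beta(b)\alpha\rangle_2=(\Id\botensor\rho_\beta(b))\kalpha{2}$, so your chain $[C_V\rho_\beta(\frakB)]=[\balpha{1}V|\rho_\beta(\frakB)\alpha\rangle_2]$ does not follow. The correct route carries $\rho_\beta(b)\botensor\Id$ through $V$ using $V(\beta\lt\alpha)=\beta\lt\beta$, so that it becomes $\rho_\beta(b)\btensor\Id$ on $\Hrange$ and is then absorbed into $\balpha{1}$ via $\rho_\beta(\frakB)\alpha\subseteq\alpha$. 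Second, the operator $\Id\btensor\rho_\beta(b)$ on $\Hrange\cong\alpha\tr_{\rho_\beta}H$ that you invoke for $[\rho_\beta(\frakB)C_V]$ is not well defined unless $\rho_\beta(b)$ commutes with $\rho_\beta(\frakB)$; here no appeal to $V$ is needed at all, because the balancing of the internal tensor product over $\rho_\beta$ gives $|\xi\rangle_1\rho_\beta(b)=|\xi b\rangle_1$, hence $\rho_\beta(b)\balpha{1}=\langle \alpha b^*|_1$ and $[\rho_\beta(\frakB)C_V]=C_V$ from $[\alpha\frakB]=\alpha$. Symmetrically, $[C_V\rho_{\hbeta}(\frakB)]=C_V$ follows from the balancing identity $|\xi\rangle_2\rho_{\hbeta}(b)=|\xi b\rangle_2$ alone, while $[\rho_{\hbeta}(\frakB)C_V]$ is the remaining case that genuinely needs \eqref{eq:pmu-intertwine}, namely $V(\hbeta\rt\hbeta)=\alpha\rt\hbeta$. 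With these corrections the module relations go through exactly as in Proposition~\ref{proposition:pmu-legs}, which is all the paper itself does.
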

\begin{proof} The proof is completely analogous to the proof
  of Proposition \ref{proposition:pmu-legs}; for example,
  the first equation follows from the commutativity of the
  following two diagrams:
  \begin{gather*} \hspace{-0.1cm} \smalldiagram
    \xymatrix@C=8pt@R=14pt{ & H \ar[d]^{ \kalpha{2}}
      \ar[rr]_(0.375){C_{V}} \ar `l/0pt[l] [ld]^{\kalpha{2}}
      & & H
      \ar[rd]^(0.65){\kalpha{2}} \ar[rr]_(0.625){C_{V}} & & H & \\
      {\Hsource \hspace{-0.6cm}} \ar
      `d[dd]^(0.75){\kalpha{2}} [ddr] & {\Hsource}
      \ar[r]^(0.55){V} \ar[d]^{\kalpha{3}} & {\Hrange}
      \ar[ru]^(0.35){\balpha{1}} \ar[rd]+<-10pt,10pt>_(0.45){ \kalpha{3}}
      & & {\Hsource} \ar[r]^(0.45){V} 
      \ar@{<-}[ld]+<10pt,10pt>^(0.45){\balpha{1}} & {\Hrange}
      \ar[u]^{\balpha{1}} & {\hspace{-0.6cm}\Hrange} \ar
      `u[u]^{\balpha{1}} [ul] \\ & {\Hone} \ar[r]_(0.75){
        V_{12}} \ar[d]^{V_{23}} &&
      {\hspace{-1cm}\Htwo\hspace{-1cm}}
       & \ar[r]_(0.25){V_{23}}&
      {\Hthree} \ar[u]^{\balpha{1}} & \\ & {\Hfive}
      \ar[rrrr]^{V_{13}} & &&& {\Hfour} \ar[u]^{ V_{12}} \ar
      `r/0pt[r] [ruu]^(0.25){\balpha{2}} & } \\
    \smalldiagram \xymatrix@C=30pt@R=12pt{ & H \ar `l/0pt[l]
      [ld]^{{\kalpha{2}}} \ar[r]_{C_{V}}
      \ar[d]^{{\kalpha{2}}} & H \\ {\Hsource} \ar
      `d/0pt[d]^{\kalpha{2}} [dr] \ar[r]^(0.55){\rho_{(\hbeta
          \rt \beta)}(\frakB)} & {\Hsource} \ar[r]^{ V} &
      {\Hrange} \ar[u]^{\balpha{1}} \\ & {\Hfive}
      \ar[u]^{\balpha{2}} \ar[r]^{V_{13}} & {\Hfour}
      \ar[u]^{\balpha{2}} } \\[-4.5ex] \qedhere
  \end{gather*}
\end{proof}
\begin{definition} \label{definition:pmu-regular}
  A $C^{*}$-pseudo-multiplicative unitary
  $(\frakb,H,\hbeta,\alpha,\beta,V)$ is {\em
    semi-regular} if $C_{V} \supseteq [\alpha\alpha^{*}]$,
  and {\em regular} if $C_{V}=[\alpha\alpha^{*}]$.
\end{definition}
\begin{examples} \label{examples:pmu-regular}
  \begin{enumerate}
  \item By Proposition \ref{proposition:pmu-c}, $
      V$ is (semi-)regular if and only if $V^{op}$
    is (semi-)regular.
  \item The $C^{*}$-pseudo-multiplicative unitary associated
    to a locally compact Hausdorff groupoid $G$ as in
    Theorem \ref{theorem:pmu-groupoid} is regular. To prove
    this assertion, we use the notation introduced in
    subsection \ref{subsection:pmu-groupoid} and calculate
    that for each $\xi,\xi' \in C_{c}(G)$, $\zeta \in
    C_{c}(G) \subseteq L^{2}(G,\nu)$, $y \in G$,
  \begin{align*}
    \big(\langle j(\xi')|_{1} V |j(\xi)\rangle_{2}
    \zeta\big)(y) &= \int_{G^{r(y)}} \overline{\xi'(x)}
    \zeta(x)\xi(x^{-1}y)
    \intd\lambda^{r(y)}(x),\\
    \big(j(\xi')j(\xi)^{*}\zeta\big)(y) &= \xi'(y)
    \int_{G^{r(y)}} \overline{\xi(x)} \zeta(x)
    \intd\lambda^{r(y)}(x).
  \end{align*}
  Using standard approximation arguments, we find $[
  \balpha{1}V \kalpha{2}]= [S(C_{c}(\GrrG))] =[
  \alpha\alpha^{*}]$, where for each $\omega \in
  C_{c}(\GrrG)$, the operator $S(\omega)$ is given by
  \begin{gather*}
    (S(\omega)\zeta)(y)= \int_{G^{r(y)}} \omega(x,y)
    \zeta(x)d\lambda^{r(y)}(x) \quad \text{for all }
    \zeta\in C_{c}(G), \, y \in G.
  \end{gather*}
  \item In \cite{timmer:compact}, we introduce compact
    $C^{*}$-quantum groupoids and construct for each such
    quantum groupoid a $C^{*}$-pseudo-multiplicative
    unitary that turns out to be regular.
\end{enumerate}
\end{examples}
We shall now deduce several properties of semi-regular and
regular $C^{*}$-pseudo-multiplicative unitaries, using
commutative diagrams as explained in Notation
\ref{notation:diagrams}.
\begin{proposition} \label{proposition:regular-c} If $
    V$ is semi-regular, then $C_{V}$ is a $C^{*}$-algebra.
\end{proposition}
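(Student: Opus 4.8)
The claim is that $C_V = [\langle\alpha|_1 V |\alpha\rangle_2]$ is a $C^*$-algebra when $V$ is semi-regular, i.e.\ when $[\alpha\alpha^*] \subseteq C_V$. By Proposition \ref{proposition:pmu-c} we already know that $[C_V C_V] = C_V$ and $C_{V^{op}} = C_V^*$, so $C_V$ is a norm-closed subspace of $\mathcal{L}(H)$ that is closed under multiplication. The only remaining point is that $C_V$ is closed under the adjoint operation, i.e.\ $C_V^* \subseteq C_V$. By Proposition \ref{proposition:pmu-c} again, $C_V^* = C_{V^{op}}$, so it suffices to show $C_{V^{op}} \subseteq C_V$; equivalently, since by Example \ref{examples:pmu-regular} i) (and Proposition \ref{proposition:pmu-c}) semi-regularity of $V$ is equivalent to semi-regularity of $V^{op}$, it is enough to prove a single inclusion, namely $C_V^* \subseteq C_V$, using both $[\alpha\alpha^*] \subseteq C_V$ and $[\alpha\alpha^*] \subseteq C_{V^{op}} = C_V^*$.

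The plan is to establish the stronger identity $C_V = [C_V^* C_V]$ (or $C_V = [C_V C_V^*]$), from which $C_V^* = [C_V^* (C_V^* )^*]^* \subseteq \ldots$ will follow once the multiplicativity $[C_V C_V] = C_V$ is combined with it; more directly, one shows $C_V^* C_V \subseteq C_V$ and $C_V C_V^* \subseteq C_V$. The mechanism is the same kind of ``middle factor'' computation used in the proof of Proposition \ref{proposition:pmu-c}: one writes $C_V^* C_V = [\langle\alpha|_2 V^* |\alpha\rangle_1 \langle\alpha|_1 V |\alpha\rangle_2]$, inserts $\langle\alpha|_1 |\alpha\rangle_1 = \rho_\beta(\alpha^*\alpha) = \rho_\beta(\frakB)$ in the middle, and then uses the intertwining relations \eqref{eq:pmu-intertwine} together with the pentagon diagram \eqref{eq:pmu-pentagon} to rearrange. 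Concretely: using $V(\beta \lt \alpha) = \beta \lt \beta$, the operators $|\alpha\rangle_1\langle\alpha|_1$ appearing between the two copies of $V$ can be absorbed; the remaining expression is of the form $[\langle\alpha|_2 V^* (\text{something}) V |\alpha\rangle_2]$, and applying $V^*(\Id \btensor \rho_\beta(\frakB))V$ together with the pentagon equation rewrites $V^*\,?\,V$ using a $V_{13}$-type factor on the three-fold relative tensor product, which after slicing with $\langle\alpha|_2$ on one leg collapses back to an expression of the form $[\langle\alpha|_1 V |\alpha\rangle_2] = C_V$ — this is exactly where the hypothesis $[\alpha\alpha^*] \subseteq C_V$ (i.e.\ semi-regularity) is consumed, as it allows one to replace a residual $[\alpha\alpha^*]$ factor by something lying in $C_V$. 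I would organize this as one or two commutative diagrams in the style of Notation \ref{notation:diagrams}, closely mirroring the two diagrams displayed in the proof of Proposition \ref{proposition:pmu-c}.

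The main obstacle I expect is bookkeeping the legs correctly in the three-fold relative tensor product $H \fsource H \fsource H$: the pentagon diagram \eqref{eq:pmu-pentagon} has to be invoked with the right identification of which copy of $V$ sits in position $12$, $23$, or $13$, and the flip $\Sigma_{23}$ has to be tracked through. One must also be careful that all the intermediate spaces of operators are well defined, which again requires checking the relevant relations among $\alpha, \beta, \hbeta$ from \eqref{eq:pmu-intertwine} — but these are precisely the relations already verified for $V$ to be a $C^*$-pseudo-multiplicative unitary. Once the diagram chase yields $[C_V^* C_V] = C_V$ (and symmetrically $[C_V C_V^*] = C_V$ using $V^{op}$), we get $C_V^* = C_V^*[C_V C_V^*] = [\,(C_V^* C_V) C_V^*\,]^{**}\subseteq \ldots$; more simply, $C_V^* \subseteq [C_V^* \rho_\beta(\frakB) C_V^*]$-type manipulations combined with $[C_V C_V] = C_V$ give $C_V^* = C_V$, so $C_V$ is a $*$-algebra, hence a $C^*$-algebra since it is already norm-closed in $\mathcal{L}(H)$.
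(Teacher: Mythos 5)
Your reduction of the problem to the two inclusions $[C_{V}C_{V}^{*}]\subseteq C_{V}$ and $[C_{V}^{*}C_{V}]\subseteq C_{V}$, and your diagram-chase strategy for proving them (insert a middle factor, apply the pentagon diagram, and use semi-regularity to replace a residual $[\alpha\alpha^{*}]$ in the middle leg by $C_{V}$), is exactly the route the paper takes. The gap is in your concluding step. First, under semi-regularity the diagram chase yields only the \emph{inclusions}, not the equality $[C_{V}C_{V}^{*}]=C_{V}$ that you then invoke: the cell where the hypothesis enters commutes only in one direction, since $[\alpha\alpha^{*}]\subseteq C_{V}$ but not conversely; the equality is available only for regular $V$. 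Second, your fallback manipulations do not close the argument: by Proposition \ref{proposition:pmu-c} one has $[C_{V}^{*}\rho_{\beta}(\frakB)C_{V}^{*}]=[C_{V}^{*}C_{V}^{*}]=[C_{V}C_{V}]^{*}=C_{V}^{*}$, which is circular, and writing $C_{V}^{*}=[C_{V}^{*}(C_{V}C_{V}^{*})]$ presupposes an approximate identity acting on $C_{V}^{*}$, which is precisely what has to be produced.

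What is actually needed --- and what the paper delegates to Lemme 3.3 of \cite{baaj:12} --- is a short functional-calculus argument. The two inclusions together with $[C_{V}C_{V}]=C_{V}$ show that $[C_{V}C_{V}^{*}]$ is a closed self-adjoint subspace with $[C_{V}C_{V}^{*}C_{V}C_{V}^{*}]\subseteq[C_{V}C_{V}C_{V}^{*}]=[C_{V}C_{V}^{*}]$, hence a $C^{*}$-subalgebra of ${\cal L}(H)$. For $x\in C_{V}$ one has $x^{*}=\lim_{n}x^{*}(xx^{*})^{1/n}$ in norm, with $(xx^{*})^{1/n}\in[C_{V}C_{V}^{*}]$, so that
\begin{align*}
x^{*}\in[C_{V}^{*}C_{V}C_{V}^{*}]\subseteq[C_{V}C_{V}^{*}]\subseteq C_{V},
\end{align*}
whence $C_{V}^{*}\subseteq C_{V}$ and $C_{V}$ is a $C^{*}$-algebra. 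Your plan is therefore recoverable, but the final step as written is not correct and must be replaced by this argument (or by a citation of the lemma).
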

\begin{proof}
  Assume that $ V$ is regular. Then the following two
  diagrams commute, whence $[C_{V}C_{V}^{*}] =
  [\balpha{1}\balpha{1} V_{23} \kalpha{1}\kalpha{2}]=C_{V}$:
  \begin{align*}
    \smalldiagram \xymatrix@R=10pt@C=25pt{ & \Hrange \ar
      `r/0pt[r]_{\kalpha{1}} [rd] &
      \\
      H \ar `u/0pt[u] [ur]_(0.3){\kalpha{2}}
      \ar[d]_{C_{V}^{*}} \ar[r]_(0.45){\kalpha{1}} &{\Hrange
      } \ar[r]_(0.4){\kalpha{3}} \ar[d]^{V^{*}} & {\Htwo}
      \ar[d]^{ V_{12}^{*}} \ar `r/0pt[r] [rddd]^{V_{23}} &
      \\ H \ar@{=}[d] &{\Hsource} \ar[l]^(0.55){\balpha{2}}
      \ar@{}[rd]|{\scriptstyle\mathrm{(R)} \qquad}
      \ar[r]^(0.4){ \kalpha{3}} &{\Hone} \ar[d]^{V_{23}} &
      \\H \ar[r]^(0.45){\kalpha{2}} \ar[d]_{C_{V}}
      &{\Hsource} \ar[d]^{ V} &{\Hfive} \ar[d]^{V_{13}}
      \ar[l]^(0.6){ \balpha{2}} &\\ H & {\Hrange}
      \ar[l]^(0.55){\balpha{1}} & {\Hfour}
      \ar[l]^(0.6){\balpha{2}} &{\Hthree}
      \ar[l]^(0.4){V_{12}^{*}} \ar `d[d]
      [dll]_(0.6){\balpha{1}} \\ & {\Hrange} \ar
      `l[l]_(0.55){\balpha{1}} [lu] & & }
  \end{align*}
  \begin{align*} 
    \smalldiagram \xymatrix@C=30pt@R=10pt{ H 
      \ar[r]_{C_{V}}  \ar[d]^{\kalpha{2}}
      & H 
      &  \\ {\Hsource}  \ar[r]^(0.55){V} \ar[d]^{\kalpha{1}}
      & {\Hrange}  \ar[u]^{\balpha{1}}
      \ar[r]^{\rho_{(\beta \lt \beta)}(\frakB)} 
      \ar[d]^{\balpha{1}} 
&
      {\Hrange} 
      \ar `u/0pt[u]  [ul]^{{\kalpha{1}}} 
      \\ {\Htwo} \ar[r]^{V_{23}} & {\Hthree}
       \ar `r/0pt[r]^{\kalpha{1}} [ru]  & } 
  \end{align*}
  Now, assume that $ V$ is semi-regular. Then
  cell (R) in the first diagram need not commute, but still
  $[\kalpha{2}\balpha{2}] \subseteq
  [\balpha{2}V_{23}\kalpha{3}]$ and hence
  $[C_{V}C_{V}^{*}]\subseteq[\balpha{1}\balpha{1}V_{23}\kalpha{1}\kalpha{2}]=C_{V}$. 
  A similar argument shows that also $[C_{V}^{*}C_{V}]
  \subseteq C_{V}$, and from Proposition
  \ref{proposition:pmu-legs} and \cite[Lemme
  3.3]{baaj:12}, it follows that $C_{V}$ is a
  $C^{*}$-algebra.
\end{proof}
\begin{proposition} \label{proposition:semi-regular} Assume
  that $C_{V}=C_{V}^{*}$.
  \begin{enumerate}
  \item  Let $(\cKhd,X)$ be a representation of ${
      V}$. Then $(\hA_{X})_{K}^{\gamma,\hdelta}$ is a
    $C^{*}$-$(\frakb,\frakbo)$-algebra.
  \item $(\hA_{V})^{\alpha,\hbeta}_{H}$ is a
    $C^{*}$-$(\frakb,\frakbo)$-algebra and
    $(A_{V})_{H}^{\beta,\alpha}$ a
    $C^{*}$-$(\frakbo,\frakb)$-algebra.
  \end{enumerate}
\end{proposition}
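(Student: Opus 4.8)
The plan is to reduce part (i) to the self-adjointness of $\hA_{X}$ and then to obtain part (ii) by specialising to regular representations. For part (i), recall from Proposition \ref{proposition:rep-legs}(i) that $\hA_{X} \subseteq {\cal L}(K)$ already satisfies $[\hA_{X}\hA_{X}] = \hA_{X}$, $[\hA_{X}K] = K$, the relations $[\hA_{X}\rho_{\hdelta}(\frakB)] = [\rho_{\hdelta}(\frakB)\hA_{X}] = \hA_{X} = [\hA_{X}\rho_{\gamma}(\frakBo)] = [\rho_{\gamma}(\frakBo)\hA_{X}]$, and $[\hA_{X}^{*}K] = K$ (the last following from $[\hA_{X}^{*}\hdelta] = [\hdelta\hA_{\trivrep}^{*}]$ and $[\hdelta\frakK] = K$); moreover that proposition asserts that \emph{once} $\hA_{X} = \hA_{X}^{*}$ is known, $(\hA_{X})_{K}^{\gamma,\hdelta}$ is a $C^{*}$-$(\frakb,\frakbo)$-algebra. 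So everything reduces to proving $\hA_{X} = \hA_{X}^{*}$, and here I would argue exactly as in the proof of Proposition \ref{proposition:regular-c}: establish the two inclusions $[\hA_{X}\hA_{X}^{*}] \subseteq \hA_{X}$ and $[\hA_{X}^{*}\hA_{X}] \subseteq \hA_{X}$, and combine them with $[\hA_{X}\hA_{X}] = \hA_{X}$ and the nondegeneracy relations above; by \cite[Lemme 3.3]{baaj:12}, used exactly as in the proof of Proposition \ref{proposition:regular-c}, $\hA_{X}$ is then a $C^{*}$-algebra, in particular self-adjoint.

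The two inclusions are to be obtained by the commutative-diagram technique of Notation \ref{notation:diagrams}, in the same spirit as the proof of $[\hA_{X}\hA_{X}] = \hA_{X}$ in Proposition \ref{proposition:rep-legs}(i) and of $[C_{V}C_{V}^{*}] \subseteq C_{V}$ in Proposition \ref{proposition:regular-c}. Writing $\hA_{X} = [\bbeta{2}X\kalpha{2}]$ and $\hA_{X}^{*} = [\balpha{2}X^{*}\kbeta{2}]$, one amplifies along the $H$-leg by the regular representation and uses the pentagon relation \eqref{eq:rep-pentagon} (equivalently the absorption isomorphism $X \boxtimes V \cong \Id \btensor V$ of Proposition \ref{lemma:rep-absorb}), together with the intertwining relations \eqref{eq:rep-intertwine}, to move the factor $X^{*}$ past an intervening copy of $V$; the resulting diagram does not commute a priori, but the single failing cell is of the form $[\kalpha{2}\balpha{2}] \subseteq [\balpha{2}V_{23}\kalpha{3}]$, which after unravelling the identifications amounts to one of the inclusions $C_{V} \subseteq C_{V}^{*}$, $C_{V}^{*} \subseteq C_{V}$ --- both supplied by the hypothesis $C_{V} = C_{V}^{*}$. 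This is the precise analogue of cell (R) in the proof of Proposition \ref{proposition:regular-c}, where semi-regularity played that role. \textbf{This diagram chase, with its leg-relabellings and associativity identifications, is the only real work}; it is tedious but straightforward, and it is the single place where the hypothesis $C_{V} = C_{V}^{*}$ is genuinely used.

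For part (ii), apply part (i) to the regular representation $({_{\alpha}H_{\hbeta}},V)$ of $V$, for which $\gamma = \alpha$, $\hdelta = \hbeta$ and for which $\hA_{V}$ as in \eqref{eq:rep-legs} agrees with $\hA_{V}$ as in \eqref{eq:pmu-legs} by Example \ref{examples:rep-legs}(ii); this gives that $(\hA_{V})_{H}^{\alpha,\hbeta}$ is a $C^{*}$-$(\frakb,\frakbo)$-algebra. For $(A_{V})_{H}^{\beta,\alpha}$, note that $C_{V^{op}} = C_{V}^{*} = C_{V}$ by Proposition \ref{proposition:pmu-c}, so the hypothesis holds for $V^{op}$ as well; applying part (i) to the regular representation of $V^{op}$ shows that $\hA_{V^{op}}$ is a $C^{*}$-algebra, hence so is $A_{V} = \hA_{V^{op}}^{*}$ by Proposition \ref{proposition:pmu-legs}, and the module relations $[A_{V}\rho_{\beta}(\frakB)] = [\rho_{\beta}(\frakB)A_{V}] = A_{V} = [A_{V}\rho_{\alpha}(\frakBo)] = [\rho_{\alpha}(\frakBo)A_{V}]$ of Proposition \ref{proposition:pmu-legs} then show that $(A_{V})_{H}^{\beta,\alpha}$ is a $C^{*}$-$(\frakbo,\frakb)$-algebra.
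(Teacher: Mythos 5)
Your reduction of part (i) to the single statement $\hA_{X}=\hA_{X}^{*}$ is correct and matches the paper, and your part (ii) is essentially the paper's own argument (specialize to the regular representations of $V$ and of $V^{op}$, using $C_{V^{op}}=C_{V}^{*}$ from Proposition \ref{proposition:pmu-c} and the relations of Proposition \ref{proposition:pmu-legs}). The gap sits exactly in the step you describe as ``the only real work''. The cell you propose to leave open, $[\kalpha{2}\balpha{2}] \subseteq [\balpha{2}V_{23}\kalpha{3}]$, does \emph{not} unravel to $C_{V}\subseteq C_{V}^{*}$ or $C_{V}^{*}\subseteq C_{V}$: exactly as for cell (R) in the proof of Proposition \ref{proposition:regular-c}, the right-hand side is (up to the leg identifications) $\Id\botensor C_{V}$ while the left-hand side is $\Id\botensor[\alpha\alpha^{*}]$, so this cell is precisely the semi-regularity inclusion $[\alpha\alpha^{*}]\subseteq C_{V}$. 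That is a genuinely stronger hypothesis than $C_{V}=C_{V}^{*}$ and is not assumed in this proposition, so the diagram chase you sketch cannot close under the stated hypothesis; as written your argument would only prove the result for semi-regular $V$.

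The hypothesis $C_{V}=C_{V}^{*}$ has to enter in a different way. The paper first proves, with no assumption on $V$ whatsoever, the exact identity $[X(1 \botensor C_{V})X^{*}\kbeta{2}] = [\kbeta{2}\hA_{X}^{*}]$ (Lemma \ref{lemma:semi-regular}), by a pentagon diagram of the kind you have in mind but in which \emph{every} cell commutes. Combining this with $[\bbeta{2}\kbeta{2}\hA_{X}^{*}] = [\rho_{\gamma}(\frakBo)\hA_{X}^{*}] = \hA_{X}^{*}$ (nondegeneracy from Proposition \ref{proposition:rep-legs}) gives the formula $\hA_{X}^{*} = [\bbeta{2}X(1\botensor C_{V})X^{*}\kbeta{2}]$, whose right-hand side is manifestly self-adjoint precisely when $C_{V}=C_{V}^{*}$; this yields $\hA_{X}=\hA_{X}^{*}$ directly, with no appeal to \cite[Lemme 3.3]{baaj:12} and no need for the inclusions $[\hA_{X}\hA_{X}^{*}]\subseteq\hA_{X}$ and $[\hA_{X}^{*}\hA_{X}]\subseteq\hA_{X}$ (which then hold for free). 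To repair your write-up you should either prove and use this identity, or find a diagram whose failing cell is genuinely controlled by $C_{V}=C_{V}^{*}$ rather than by $[\alpha\alpha^{*}]\subseteq C_{V}$.
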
 
The proof uses the following central lemma:
\begin{lemma} \label{lemma:semi-regular} Let $(\cKhd,X)$ be
  a representation of $ V$. Then $[ X(1 \botensor
  C_{V})X^{*}\kbeta{2}] = [ \kbeta{2}\hA_{X}^{*}]$.
 \end{lemma}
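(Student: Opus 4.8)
The plan is to rewrite both sides as spaces of slice-type operators on iterated relative tensor products and then to identify them by a commutative diagram built from the pentagon \eqref{eq:rep-pentagon} for $X$, in the style of the proofs of Propositions \ref{proposition:pmu-c} and \ref{proposition:rep-legs}.

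Since $\hA_X=[\bbeta{2}X\kalpha{2}]$ we have $\hA_X^*=[\balpha{2}X^*\kbeta{2}]$, hence $[\kbeta{2}\hA_X^*]=[\kbeta{2}\balpha{2}X^*\kbeta{2}]$. Next, $C_V\subseteq\rho_\alpha(\frakBo)'$: by Proposition \ref{proposition:pmu-c} we have $[C_V\alpha]=\alpha=[C_V^*\alpha]$, so every element of $C_V$ is a morphism of the $C^*$-$\frakb$-module $H_\alpha$ and therefore commutes with $\rho_\alpha(\frakBo)$. Consequently $1\botensor C_V\subseteq\mathcal{L}(K\rbotensor{\hdelta}{\alpha}H)$ is well defined, and, because the amplification $c\mapsto 1\botensor c$ is isometric on $\rho_\alpha(\frakBo)'$, $1\botensor C_V=[(1\botensor\balpha{1})(1\botensor V)(1\botensor\kalpha{2})]$, where $1\botensor\kalpha{2}$ denotes the amplified ket-operators mapping $K\rbotensor{\hdelta}{\alpha}H$ into $\rHone$, $1\botensor V$ is $V_{23}=\Id\botensor V\colon\rHone\to\rHfive$, and $1\botensor\balpha{1}$ the amplified bra-operators from $\rHfive$ back to $K\rbotensor{\hdelta}{\alpha}H$; here one uses \eqref{eq:pmu-intertwine} (in particular $V(\alpha\lt\alpha)=\alpha\rt\alpha$) and the associativity isomorphisms \eqref{eq:rtp-associative} to check that the intermediate spaces are exactly $\rHone$ and $\rHfive$. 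Thus $[X(1\botensor C_V)X^*\kbeta{2}]=[X(1\botensor\balpha{1})(1\botensor V)(1\botensor\kalpha{2})X^*\kbeta{2}]$.

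Now comes the diagram chase. Because $X(\hdelta\rt\hbeta)=\gamma\rt\hbeta$ by \eqref{eq:rep-intertwine}, appending the third $H$-leg commutes with $X$, so that $(1\botensor\kalpha{2})X^*=(X\botensor\Id)^*\circ(\text{append third leg})$. The central factor $\Id\botensor V$ is then moved across $X\botensor\Id$ by the pentagon identity $(\Id\btensor V)(X\botensor\Id)=(X\btensor\Id)\,X_{13}\,(\Id\botensor V)$ encoded in \eqref{eq:rep-pentagon}; this replaces the conjugated copy of $V$ by $X_{13}$ together with a copy of $V$ acting in legs $(2,3)$. Since $X_{13}$ acts only on the first and third legs, it commutes with the bra-operator $1\botensor\balpha{1}$ on the second leg and with the bra-operator on the third leg coming from $1\botensor\kalpha{2}$; slicing these out therefore removes $X_{13}$ altogether and leaves a single $X^*$ acting on two legs. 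The remaining slices are shuffled past $X$ and $X^*$ using \eqref{eq:rep-intertwine} and the elementary ket--bra identity of the type \eqref{eq:pmu-diagram-commutes}, and what is left is exactly $[\kbeta{2}\balpha{2}X^*\kbeta{2}]=[\kbeta{2}\hA_X^*]$. Both inclusions are obtained by reading the resulting commutative diagram in the two directions, every operator being adjointable and all spans closed.

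I expect the main obstacle to be, as in the companion results, the bookkeeping of the relative tensor products: at each vertex of the diagram one must keep track of which of the modules $\alpha$, $\beta$, $\hbeta$ of $H$ is active on each leg and which associativity isomorphisms relate the different groupings, and then check cell by cell---above all the cells encoding the pentagon and the relations \eqref{eq:rep-intertwine}---that the diagram commutes. This is tedious but routine; the only genuinely new ingredient over the multiplicative-unitary case of \cite{baaj:2} is the presence of the three modules and of the associativity isomorphisms \eqref{eq:rtp-associative}.
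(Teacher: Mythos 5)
Your strategy coincides with the paper's proof: the same decomposition $1\botensor C_{V}=[\balpha{2}V_{23}\kalpha{3}]$ in three-leg numbering, the same use of \eqref{eq:rep-intertwine} to move $X^{*}$ past the appended third leg, the same appeal to the pentagon, and the same final slicing; your preliminary observations ($\hA_{X}^{*}=[\balpha{2}X^{*}\kbeta{2}]$, $C_{V}\subseteq\rho_{\alpha}(\frakBo)'$) are correct and welcome, since the paper uses the latter tacitly.

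There is, however, one step that would fail as literally written: the claim that $X_{13}$ ``commutes with the bra-operator on the third leg coming from $1\botensor\kalpha{2}$.'' First, $1\botensor\kalpha{2}=\kalpha{3}$ is a ket, not a bra; second, $X_{13}$ acts nontrivially on the third leg, so it cannot commute with any slice there. The way $X_{13}$ actually disappears is different. After your first two moves the composition is $X\,\balpha{2}\,V_{23}\,X_{12}^{*}\,\kalpha{3}\,\kbeta{2}$, and the pentagon in adjointed form gives $V_{23}X_{12}^{*}=X_{13}^{*}X_{12}^{*}V_{23}$. The factor $X_{13}^{*}$ is then cancelled by the \emph{outermost} $X$: slicing out the second leg intertwines $X_{13}$ with $X$ on the remaining two legs, so $X\,\balpha{2}\,X_{13}^{*}=\balpha{2}\,X_{13}X_{13}^{*}=\balpha{2}$. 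It is the surviving $X_{12}^{*}$ that supplies the $X^{*}$ in $[\kbeta{2}\balpha{2}X^{*}\kbeta{2}]=[\kbeta{2}\hA_{X}^{*}]$, while the third-leg ket is disposed of by $[V_{23}\kalpha{3}\kbeta{2}]=[\kbeta{3}\kbeta{2}]$ (from $V(\beta\lt\alpha)=\beta\lt\beta$ in \eqref{eq:pmu-intertwine}) followed by the elementary commutation $[\balpha{2}\kbeta{3}]=[\kbeta{2}\balpha{2}]$ of the type \eqref{eq:pmu-diagram-commutes}. With this correction your diagram is precisely the one in the paper.
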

 \begin{proof} 
    The following diagram commutes and shows that we have $[X(1 \botensor
   C_{V})X^{*}\kbeta{2}] = [
   \kbeta{2}\balpha{2}X^{*}\kbeta{2}]=
   [\kbeta{2}\hA_{X}^{*}]$:
  \begin{gather*} \smalldiagram \xymatrix@R=10pt@C=20pt{ K
\ar[rrr]_{\kbeta{2}} \ar[d]^{\kbeta{2}} &&& {\rHrange}
\ar[ld]^(0.4){\kbeta{3}} \ar[ddd]^{X^{*}} \\ {\rHrange }
\ar[r]^(0.4){ \kalpha{3}} \ar[d]^{X^{*}} & {\rHtwo}
\ar[d]^{X_{12}^{*}} \ar[r]^{V_{23}}
\ar@{}[rdd]|{\scriptstyle \mathrm{(P)}} & {\rHthree}
\ar[dd]^{X_{12}^{*}} \\ {\rHsource} \ar[d]^{1 \botensor C_{V}}
\ar[r]^(0.4){\kalpha{3}} &{\rHone} \ar[d]^{V_{23}} & \\
{\rHsource} \ar[d]^{X} &{\rHfive} \ar[r]^{X_{13}}
\ar[l]^(0.6){\balpha{2}} &{\rHfour}
\ar[lld]^(0.4){\balpha{2}} &
{\rHsource} \ar[d]_{\balpha{2}} \ar[l]_(0.4){\kbeta{3}}
\\ {\rHrange} && & {K} \ar[lll]^{\kbeta{2}} 
}
  \end{gather*} Indeed, cell (P) commutes by
\eqref{eq:rep-pentagon}, and the remaining cells 
because of \eqref{eq:rep-intertwine} or by inspection.
 \end{proof}

\begin{proof}[Proof of Proposition
\ref{proposition:semi-regular}] 
i) By Proposition \ref{proposition:rep-legs}, it suffices to
show that $\hA_{X}=\hA_{X}^{*}$. But by Proposition
\ref{proposition:rep-legs} and Lemma
\ref{lemma:semi-regular}, $\hA_{X}^{*} = [
\rho_{\alpha}(\frakBo)\hA_{X}^{*} ] = [
\bbeta{2}\kbeta{2}\hA_{X}^{*}] = [ \bbeta{2} X(1 \botensor
C_{V})X^{*}\kbeta{2}]$.

ii) Statement i) applied to $(\cKhd,X)=(\aHhb,V)$ yields the
first assertion. The second one follows after replacing
$V$ by $V^{op}$, where we use Propositions
\ref{proposition:pmu-legs} and \ref{proposition:pmu-c}.
\end{proof} 
The main result of this subsection is the following:
\begin{theorem} \label{theorem:regular-legs} If $V$
  is semi-regular, then
  $\big((\hA_{V})^{\alpha,\hbeta}_{H},\hDelta_{V}\big)$
  and $\big((A_{V})_{H}^{\beta,\alpha},\Delta_{V}\big)$
  are normal Hopf $C^{*}$-bi\-modules.
\end{theorem}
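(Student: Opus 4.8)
The plan is to assemble the machinery of the present subsection rather than to argue from scratch. Since $V$ is semi-regular, Proposition \ref{proposition:regular-c} shows that $C_{V}$ is a $C^{*}$-algebra, hence $C_{V} = C_{V}^{*}$, and Proposition \ref{proposition:semi-regular} ii) then shows that $(\hA_{V})^{\alpha,\hbeta}_{H}$ and $(A_{V})^{\beta,\alpha}_{H}$ are a $C^{*}$-$(\frakb,\frakbo)$-algebra and a $C^{*}$-$(\frakbo,\frakb)$-algebra, respectively; in particular $\hA_{V} = \hA_{V}^{*}$ and $A_{V} = A_{V}^{*}$. By Proposition \ref{proposition:pmu-legs} together with the definitions of $\hDelta_{V}$ and $\Delta_{V}$ (Proposition \ref{proposition:pmu-delta}), the algebras $\hA_{V}$ and $A_{V}$ carry the required module structure, lie in the commutants on which $\hDelta_{V}$ and $\Delta_{V}$ are defined, and satisfy $\rho_{\alpha}(\frakBo)\hA_{V} + \rho_{\hbeta}(\frakB)\hA_{V} \subseteq \hA_{V}$ and $\rho_{\beta}(\frakB)A_{V} + \rho_{\alpha}(\frakBo)A_{V} \subseteq A_{V}$. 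Hence, by Lemma \ref{lemma:pmu-delta-hopf}, everything reduces to the two fiber-product inclusions
\begin{align*}
  \Delta_{V}(A_{V}) &\subseteq A_{V} \fibre{\alpha}{\frakb}{\beta} A_{V}, & \hDelta_{V}(\hA_{V}) &\subseteq \hA_{V} \fibre{\hbeta}{\frakbo}{\alpha} \hA_{V}.
\end{align*}

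For the first inclusion I would unwind the definition of the fiber product: it demands that $\Delta_{V}(A_{V})\kbeta{2}$ and $\Delta_{V}(A_{V})^{*}\kbeta{2}$ lie in $[\kbeta{2}A_{V}]$ and that $\Delta_{V}(A_{V})\kalpha{1}$ and $\Delta_{V}(A_{V})^{*}\kalpha{1}$ lie in $[\kalpha{1}A_{V}]$. Proposition \ref{proposition:rep-legs} ii), applied to the regular representation $X = (\aHhb,V)$, for which $A_{X} = A_{V}$ by Example \ref{examples:rep-legs} ii), already yields $[\Delta_{V}(A_{V})\kbeta{2}] \subseteq [\kbeta{2}A_{V}]$ and $[\Delta_{V}(A_{V}^{*})\kalpha{1}] \subseteq [\kalpha{1}A_{V}^{*}]$; since $\Delta_{V}$ is a $*$-homomorphism and $A_{V} = A_{V}^{*}$, the remaining two inclusions follow formally, so $\Delta_{V}(A_{V}) \subseteq A_{V} \fibre{\alpha}{\frakb}{\beta} A_{V}$. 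For the second inclusion I would run exactly the same argument for the opposite unitary $V^{op}$, which is semi-regular by Example \ref{examples:pmu-regular} i): using $A_{V^{op}} = \hA_{V}^{*} = \hA_{V}$ (Proposition \ref{proposition:pmu-legs} and $\hA_{V} = \hA_{V}^{*}$), this gives $\Delta_{V^{op}}(\hA_{V}) \subseteq \hA_{V} \fibre{\alpha}{\frakb}{\hbeta} \hA_{V}$, and then the relation $\hDelta_{V} = \Ad_{\Sigma}^{-1} \circ \Delta_{V^{op}}$ (Proposition \ref{proposition:pmu-delta}), applied through the $\Ad_{\Sigma}$-isomorphism of fiber products, transports this to $\hDelta_{V}(\hA_{V}) \subseteq \hA_{V} \fibre{\hbeta}{\frakbo}{\alpha} \hA_{V}$. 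Finally, Lemma \ref{lemma:pmu-delta-hopf}, applied with $\hB = \hA_{V}$ and with $B = A_{V}$, yields both assertions of the theorem.

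The main obstacle here is not depth but bookkeeping: one has to keep careful track of which of $\hbeta$ and $\beta$ sits in which slot of each relative tensor product and each fiber product, in particular under the passage to $V^{op}$, and to confirm that $\hA_{V}$ and $A_{V}$ really are contained in the commutants on which $\hDelta_{V}$ and $\Delta_{V}$ are defined. Should the detour through the opposite unitary prove awkward, the inclusion $\hDelta_{V}(\hA_{V}) \subseteq \hA_{V} \fibre{\hbeta}{\frakbo}{\alpha} \hA_{V}$ can instead be obtained directly by a commutative-diagram argument of the same type as in the proofs of Proposition \ref{proposition:rep-legs} and Lemma \ref{lemma:semi-regular}, combining the pentagon diagram \eqref{eq:pmu-pentagon} with the intertwining relations \eqref{eq:pmu-intertwine}.
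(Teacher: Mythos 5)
Your proof is correct and follows essentially the same route as the paper: Proposition \ref{proposition:semi-regular} (via $C_V=C_V^*$ from semi-regularity) for the $C^{*}$-algebra structure, Proposition \ref{proposition:rep-legs} applied to the regular representation of $V$ and of $V^{op}$ (using $A_{V^{op}}=\hA_V^{*}=\hA_V$) for the fiber-product inclusions, and Lemma \ref{lemma:pmu-delta-hopf} to conclude. You merely spell out both halves and the $\Ad_{\Sigma}$ bookkeeping that the paper leaves implicit.
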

\begin{proof} 
  We prove the assertion concerning
  $\big((\hA_{V})^{\alpha,\hbeta}_{H},\hDelta_{V}\big)$; for
  $\big((A_{V})_{H}^{\beta,\alpha},\Delta_{V}\big)$, the
  arguments are similar.  By Proposition
  \ref{proposition:semi-regular},
  $(\hA_{V})_{H}^{\alpha,\hbeta}$ is a
  $C^{*}$-$(\frakb,\frakbo)$-algebra, and by Proposition
  \ref{proposition:rep-legs}, applied to
  $A_{V^{op}}=\hA_{V}^{*}=\hA_{V}$, we have
  $\Delta_{V^{op}}(\hA_{V}) \subseteq (\hA_{V})
  \fibre{\hbeta}{\frakbo}{\alpha} (\hA_{V})$. Now, the claim follows
  from Lemma \ref{lemma:pmu-delta-hopf}.
\end{proof}
We collect several auxiliary results on regular
$C^{*}$-pseudo-multiplicative unitaries.
\begin{proposition}
  \label{proposition:regular-mult} Assume that $V$ is
  regular.
  \begin{enumerate}
  \item Let $(\cKhd,X)$ be a representation of $
      V$. Then $[ X\kalpha{2}\hA_{X}] = [ \kbeta{2}
    \hA_{X}]$ and $[ X |\hdelta\rangle_{1} A_{V}] = [
    \kgamma{1} A_{V}]$.
  \item $[ V\kalpha{2}\hA_{V}] = [
\kbeta{2} \hA_{V}]$ and $[
V\khbeta{1}A_{V}] = [ \kalpha{1}A_{V}]$.
  \end{enumerate}
\end{proposition}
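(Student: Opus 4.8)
The plan is to derive (ii) from (i) and to prove (i) by recycling the machinery already set up for semi-regular unitaries.

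Part (ii) is the special case of (i) in which $(\cKhd,X)$ is the regular representation $(\aHhb,V)$ of Example \ref{examples:rep}~ii): then $\gamma=\alpha$ and $\hdelta=\hbeta$, and the two equations of (i) specialize to the two equations of (ii). So it is enough to prove (i), for a fixed representation $(\cKhd,X)$ of $V$, using the commutative-diagram conventions of Notation \ref{notation:diagrams}.

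For the first equation of (i): since $V$ is regular, $C_V=[\alpha\alpha^{*}]$ by Definition \ref{definition:pmu-regular}; in particular $C_V=C_V^{*}$, so Proposition \ref{proposition:semi-regular}~i) applies and gives $\hA_X=\hA_X^{*}$. Moreover, under the standard identifications of subsection \ref{subsection:pmu-rtp} one checks routinely that $[\kalpha{2}\balpha{2}]=1\botensor[\alpha\alpha^{*}]=1\botensor C_V$ as subspaces of $\mathcal{L}(\rHsource)$, while $[\balpha{2}X^{*}\kbeta{2}]=\hA_X^{*}$ simply by taking adjoints in the definition $\hA_X=[\bbeta{2}X\kalpha{2}]$ of \eqref{eq:rep-legs}. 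Feeding this into Lemma \ref{lemma:semi-regular} yields
\[
  [X\kalpha{2}\hA_X^{*}]=[X\kalpha{2}\balpha{2}X^{*}\kbeta{2}]=[X(1\botensor C_V)X^{*}\kbeta{2}]=[\kbeta{2}\hA_X^{*}],
\]
and because $\hA_X=\hA_X^{*}$ this is exactly $[X\kalpha{2}\hA_X]=[\kbeta{2}\hA_X]$.

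For the second equation of (i) I would follow the pattern of the proofs of Lemma \ref{lemma:semi-regular} and of the middle line of \eqref{eq:rep-legs-ax} in Proposition \ref{proposition:rep-legs}: expand $A_V=[\balpha{1}V\khbeta{1}]$ via \eqref{eq:pmu-legs}, pass to the threefold relative tensor products occurring in the pentagon \eqref{eq:rep-pentagon} with $X$ on the first two legs and the copy of $V$ defining $A_V$ on the last two, use the pentagon in the form $X_{12}X_{13}=V_{23}X_{12}V_{23}^{*}$ to move $X$ past $V$, and then collapse the auxiliary $H$-leg using the intertwining relations $X(\hdelta\rt\hbeta)=\gamma\rt\hbeta$ and $X(\hdelta\rt\beta)=\hdelta\lt\beta$ of \eqref{eq:rep-intertwine} together with $[A_XA_V]=A_V$ from Proposition \ref{proposition:rep-legs}~ii). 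Regularity enters once more through $C_V=[\alpha\alpha^{*}]$, which is precisely what is needed to close the resulting diagram in the direction that is not automatic for merely semi-regular unitaries. Each individual cell of the diagram is then seen to commute by \eqref{eq:rep-intertwine}, \eqref{eq:pmu-intertwine}, or the pentagons \eqref{eq:rep-pentagon} and \eqref{eq:pmu-pentagon}.

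The first equation is essentially immediate once Lemma \ref{lemma:semi-regular} is available; the main obstacle is the second one, and there the difficulty is purely organizational — choosing the correct module subspaces on the threefold relative tensor products, placing the auxiliary $C_V$-factor in the right leg, and checking cell by cell that the large commutative diagram commutes. No structural ingredient beyond the pentagon and the regularity identity $C_V=[\alpha\alpha^{*}]$ is required.
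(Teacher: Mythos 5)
Your treatment of the first identity in (i) is correct and coincides with the paper's argument: $\hA_{X}=\hA_{X}^{*}$ from Proposition \ref{proposition:semi-regular}, the identification $[\kalpha{2}\balpha{2}]=\Id\botensor[\alpha\alpha^{*}]=\Id\botensor C_{V}$ from regularity, and Lemma \ref{lemma:semi-regular}; and the first identity of (ii) is indeed the case $X=V$. The gap is in the second identity of (i). What you offer there is a plan, not a proof, and it omits the one structural ingredient that makes the diagram close: the relation $[V\khbeta{1}A_{V}]=[\kalpha{1}A_{V}]$ (equivalently $[V^{*}\kalpha{1}A_{V}]=[\khbeta{1}A_{V}]$). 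If you carry out the chase you describe — lift $A_{V}=[\balpha{1}V\khbeta{1}]$ to the threefold tensor product, apply the pentagon $X_{12}X_{13}=V_{23}X_{12}V_{23}^{*}$ and the intertwining relations $X(\hdelta\rt\hbeta)=\gamma\rt\hbeta$, $X^{*}(\gamma\rt\alpha)=\gamma\lt\alpha$ — you arrive at expressions of the form $[\kgamma{1}V\khbeta{1}]$ that must be converted into $[\kgamma{1}\kalpha{1}(\cdots)]$ before \eqref{eq:rep-intertwine} can be applied again, and that conversion is exactly the missing relation; the paper's diagram uses it as a commuting cell twice. It is not a consequence of the pentagon or of \eqref{eq:rep-intertwine}: the paper obtains it by applying the already-established first identity to the \emph{opposite} unitary $V^{op}$, using $\hA_{V^{op}}=A_{V}^{*}$ (Proposition \ref{proposition:pmu-legs}) and the fact that $V^{op}$ is again regular (Example \ref{examples:pmu-regular}~i)). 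You never invoke $V^{op}$, so your assertion that ``no structural ingredient beyond the pentagon and $C_{V}=[\alpha\alpha^{*}]$ is required'' is where the argument breaks.

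This omission also makes your architecture circular: you propose to deduce the second identity of (ii) as the special case $X=V$ of the second identity of (i), while the proof of the second identity of (i) needs the second identity of (ii) as an input. The correct ordering is the paper's: prove the first identity of (i); specialize to $X=V$ to get the first identity of (ii); apply that to $V^{op}$ (together with $A_{V}=A_{V}^{*}$, which follows from semi-regularity of $V^{op}$) to obtain $[V\khbeta{1}A_{V}]=[\kalpha{1}A_{V}]$; and only then prove $[X|\hdelta\rangle_{1}A_{V}]=[\kgamma{1}A_{V}]$ for a general representation $X$, feeding that relation into the commutative diagram alongside the pentagon for $X$ and the module relations $[\rho_{\beta}(\frakB)A_{V}]=A_{V}$.
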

\begin{proof} 
Using Lemma \ref{lemma:semi-regular} and the
relation $\hA_{X}=\hA_{X}^{*}$ (Proposition
\ref{proposition:semi-regular}), we find that $[
X\kalpha{2}\hA_{X}] = [
X\kalpha{2}\balpha{2}X^{*}\kbeta{2}] = [ X(1 \botensor
C_{V})X^{*}\kbeta{2}] = [\kbeta{2}\hA_{X}]$.

Replacing $(\cKhd,X)$ by $(\aHhb,V)$, we obtain the first
equation in ii), and replacing $V$ by $V^{op}$
and using Proposition \ref{proposition:pmu-legs}, we
obtain $[ \Sigma V^{*} \Sigma \kalpha{2} A_{V}^{*} ] = [
\khbeta{2} A_{V}^{*}]$, which yields
$[V\khbeta{1}A_{V}]=[\kalpha{1}A_{V}]$. 

Finally, let us prove the equation
$[X|\hdelta\rangle_{1}A_{V}]=[\kgamma{1}A_{V}]$.  The
following commutative diagram shows that
$[X_{13}|\hdelta\rangle_{1}\kalpha{1}A_{V}] =
[\kalpha{2}\kgamma{1}A_{V}]$:
  \begin{align*} \smalldiagram \xymatrix@R=10pt{ {H}
\ar[r]^{A_{V}} \ar@{=}[d] & {H} \ar[r]^(0.4){\kalpha{1}} &
{\Hrange} \ar[d]^{V^{*}} \ar[r]^(0.4){|\hdelta\rangle_{1}} &
{\rHfive} \ar[d]^{V_{23}^{*}} \ar `r/0pt[r]
`d[rdddd]^{X_{13}} [dddd] & \\ H \ar@{=}[d] \ar[r]^{A_{V}}&
H \ar[r]^(0.4){\khbeta{1}} & {\Hsource}
\ar[r]^(0.4){|\hdelta\rangle_{1}} & {\rHone} \ar[d]^{X_{12}}
& \\ H \ar@{=}[d] \ar[r]^{A_{V}} & H
\ar[r]^(0.4){\khbeta{1}}& {\Hsource}
\ar[r]^(0.4){\kgamma{1}} \ar[d]^{V} & {\rHtwo}
\ar[d]^{V_{23}} &\\ H \ar[r]^{A_{V}} \ar@{=}[d] & H
\ar[r]^(0.4){\kalpha{1}} & {\Hrange}
\ar[r]^(0.4){\kgamma{1}} & {\rHthree} \ar[d]^{X_{12}} & \\ H
\ar[r]^{A_{V}} & H \ar[r]^(0.4){\kgamma{1}}& {\rHrange}
\ar[r]^(0.4){\kalpha{2}}& {\rHfour} & }
  \end{align*} Moreover, also the following diagram
commutes,
\begin{align*} \smalldiagram \xymatrix@C=30pt@R=15pt{ H
    \ar[r]^{\kalpha{1}} \ar[rd]|(0.4){\rho_{\beta}(\frakB)}
    & {\Hrange} \ar[d]^{\balpha{1}}
    \ar[r]^(0.4){X_{13}|\hdelta\rangle_{1}} & {\rHfour}
    \ar[d]^{\balpha{2}} & {\rHrange}
    \ar[l]_(0.4){\kalpha{2}} \ar[ld]|(0.4){\rho_{(\hdelta \lt
        \beta)}(\frakB)} \\ H \ar[u]^{A_{V}} \ar[r]^{A_{V}} &
    H \ar[r]^{X|\hdelta\rangle_{1}} & {\rHrange} & H
    \ar[u]_{\kgamma{1}} \ar[l]_{\kgamma{1}} & H,
    \ar[l]_{A_{V}}}
  \end{align*} and hence
$[X|\hdelta\rangle_{1}A_{V}]=[\balpha{2}X_{13}|\hdelta\rangle_{1}\kalpha{1}]
= [\balpha{2}\kalpha{2}\kgamma{1}A_{V}] =[\kgamma{1}A_{V}]$.
  \end{proof} 
  The last result in this subsection involves the algebras
  $\hA_{\trivrep} = [\beta^{*}\alpha]$ and
  $\hA_{\triv_{V^{op}}}=[\hbeta^{*}\alpha]$
  associated to the trivial representations of $V$
  and $ V^{op}$, respectively.
  \begin{proposition} \label{proposition:pmu-regular-ab} If
    $ V$ is regular, then $[\beta
    \hA_{\trivrep}]=[\alpha \hA_{\trivrep}]$ and $[\hbeta
    A_{\triv_{ V^{op}}}]=[\alpha A_{\triv_{ V^{op}}}]$.
\end{proposition}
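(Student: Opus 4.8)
The plan is to read off the first identity from Lemma~\ref{lemma:semi-regular} applied to the \emph{trivial} representation $\trivrep$ of $V$, and then to get the second identity by applying the first one to the opposite unitary $V^{op}$. The role of the regularity hypothesis is twofold. First, by Definition~\ref{definition:pmu-regular} it gives $C_V=[\alpha\alpha^{*}]$, which is in particular self-adjoint, so $C_V=C_V^{*}$ (equivalently, cite Proposition~\ref{proposition:regular-c}). Consequently Proposition~\ref{proposition:semi-regular}~i), applied to the representation $(\bKbo,\trivrep)$ of $V$ (Example~\ref{examples:rep}~i)), shows that $(\hA_{\trivrep})^{\frakB,\frakBo}_{\frakK}$ is a $C^{*}$-$(\frakb,\frakbo)$-algebra; hence $\hA_{\trivrep}$ is a $C^{*}$-algebra and $\hA_{\trivrep}=\hA_{\trivrep}^{*}$. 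Recall also from Example~\ref{examples:rep-legs}~i) that $\hA_{\trivrep}=[\beta^{*}\alpha]$, so $\hA_{\trivrep}^{*}=[\alpha^{*}\beta]$.

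For the main step I would apply Lemma~\ref{lemma:semi-regular} with $X=\trivrep$, obtaining $[\trivrep(1\botensor C_V)\trivrep^{*}\kbeta{2}]=[\kbeta{2}\hA_{\trivrep}^{*}]$ in ${\cal L}\big(\frakK,\frakK\rtensor{\frakB}{\frakb}{\beta}H\big)$. Writing $\trivrep=\Psi^{*}\Phi$ for the canonical isomorphisms of \eqref{eq:rep-canonical-triv}, and using that $C_V\in{\cal L}_{s}(H_{\alpha})$ (since $[C_V\alpha]=\alpha$ by Proposition~\ref{proposition:pmu-c}), one transports this identity through the identifications \eqref{eq:rtp-iso}: cutting by $\Psi$ and using $[\Psi\kbeta{2}]=\beta$ together with $\Phi(1\botensor C_V)\Phi^{*}=C_V$, it becomes $[C_V\beta]=[\beta\hA_{\trivrep}^{*}]$ inside ${\cal L}(\frakK,H)$. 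On the left, $[C_V\beta]=[\alpha\alpha^{*}\beta]=[\alpha\hA_{\trivrep}^{*}]$; hence $[\alpha\hA_{\trivrep}^{*}]=[\beta\hA_{\trivrep}^{*}]$, and substituting $\hA_{\trivrep}^{*}=\hA_{\trivrep}$ gives $[\beta\hA_{\trivrep}]=[\alpha\hA_{\trivrep}]$. The delicate part here is precisely this transport: making sure that the various relative-tensor-product identifications over the source and target of $\trivrep$ are bookkept correctly so that Lemma~\ref{lemma:semi-regular} collapses to the clean identity $[C_V\beta]=[\beta\hA_{\trivrep}^{*}]$; once this and the self-adjointness of $\hA_{\trivrep}$ are in hand the rest is formal.

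Finally, for the second identity I would invoke Example~\ref{examples:pmu-regular}~i) that $V^{op}$ is again regular, and note that under the passage to $V^{op}=(\frakb,H,\beta,\alpha,\hbeta,V^{op})$ the module $\beta$ is replaced by $\hbeta$, the module $\alpha$ is unchanged, and the trivial representation of $V^{op}$ has $\hA_{\triv_{V^{op}}}=[\hbeta^{*}\alpha]$. Applying the first identity to $V^{op}$ therefore reads exactly $[\hbeta\hA_{\triv_{V^{op}}}]=[\alpha\hA_{\triv_{V^{op}}}]$, which is the claim.
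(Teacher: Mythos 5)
Your proof is correct. It rests on the same two pillars as the paper's own argument: the identity $[C_{V}\beta]=[\beta\,\alpha^{*}\beta]=[\beta\hA_{\trivrep}^{*}]$ combined with regularity $C_{V}=[\alpha\alpha^{*}]$, and the self-adjointness $\hA_{\trivrep}=\hA_{\trivrep}^{*}$ coming from Proposition~\ref{proposition:semi-regular}~i). The difference is purely in how the first identity is obtained. The paper draws a small ad hoc commutative diagram directly in ${\cal L}(\frakK,H)$, using $V(\beta\lt\alpha)=\beta\lt\beta$ and $[\balpha{1}\kbeta{2}]=[\beta\alpha^{*}]$, so no transport through the canonical isomorphisms is needed. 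You instead specialize Lemma~\ref{lemma:semi-regular} to the trivial representation and then conjugate by $\Psi$, using $\Psi\trivrep=\Phi$, $\Phi(1\botensor C_{V})\Phi^{*}=C_{V}$ (legitimate since $[C_{V}\alpha]=\alpha$ forces $C_{V}\subseteq\rho_{\alpha}(\frakBo)'$) and $[\Psi\kbeta{2}]=\beta$; this bookkeeping is exactly right and collapses the lemma to $[C_{V}\beta]=[\beta\hA_{\trivrep}^{*}]$. What your route buys is economy of new diagrams — everything is recycled from an already-proven general lemma — at the cost of the $\Phi,\Psi$ transport; the paper's route is shorter once one accepts drawing one more diagram. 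Your reduction of the second identity to the first via $V^{op}$ (using Example~\ref{examples:pmu-regular}~i) and reading $A_{\triv_{V^{op}}}$ as $[\hbeta^{*}\alpha]$, as the preamble to the proposition indicates) matches the paper exactly. One small plus of your write-up: you justify $\hA_{\trivrep}=\hA_{\trivrep}^{*}$ explicitly, whereas the paper uses it silently in the step $[\beta\hA_{\trivrep}]=[\beta\hA_{\trivrep}^{*}]$.
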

\begin{proof} The following diagram commutes
  \begin{gather*} \smalldiagram \xymatrix@R=10pt@C=30pt{
{\frakK} \ar[rr]_{\beta} \ar[d]^{\beta} & & H
\ar[r]_(0.6){\alpha^{*}} \ar[d]^{\kbeta{2}} & {\frakK}
\ar[d]^{\beta} \\ H \ar[r]^(0.4){\kalpha{2}} \ar `d/0pt[d]
`r[drrr]^{\alpha \alpha^{*}} [rrr]& {\Hsource} \ar[r]^{V} &
{\Hrange} \ar[r]^(0.6){\balpha{1}} & H \\ &&& }
\end{gather*} and shows that $[\beta \hA_{\trivrep}]=[\beta
\hA_{\trivrep}^{*}]=[\beta
\alpha^{*}\beta]=[\alpha\alpha^{*}\beta] = [\alpha
\hA_{\trivrep}^{*}]=[\alpha\hA_{\trivrep}]$.  The second
equation follows by replacing $ V$ with $ V^{op}$.
\end{proof}

\subsection{Proper and \'etale $C^{*}$-pseudo-multiplicative
unitaries }

\label{subsection:fixed}

In \cite{baaj:2}, Baaj and Skandalis characterized
multiplicative unitaries that correspond to compact or
discrete quantum groups by the existence of fixed or cofixed
vectors, respectively, and showed that from such vectors,
one can construct a Haar state and a counit on the
associated legs. We adapt some of their constructions to
$C^{*}$-pseudo-multiplicative unitaries as follows.  

Given a $C^{*}$-$\frakb^{(\dagger)}$-module $K_{\gamma}$,
let $M(\gamma)=\{T \in {\cal L}(\frakK,K) \mid
T\frakB^{(\dagger)}\subseteq \gamma, T^{*}\gamma \subseteq
\frakB^{(\dagger)}\}$.
\begin{definition} \label{definition:fixed} A {\em fixed
    element} for $V$ is an operator $\eta \in M(\hbeta) \cap
  M(\alpha) \subseteq {\cal L}(\frakK,H)$ satisfying
  $V|\eta\rangle_{1} = |\eta\rangle_{1}$. A {\em cofixed
    element} for $V$ is an operator $\xi \in M(\alpha) \cap
  M(\beta) \subseteq {\cal L}(\frakK,H)$ satisfying
  $V|\xi\rangle_{2}=|\xi\rangle_{2}$. We denote the set of
  all fixed/cofixed elements for $V$ by
  $\Fix(V)$/$\Cofix(V)$.
\end{definition}
\begin{example} \label{example:fixed-groupoid} Let us
  consider the $C^{*}$-pseudo-multiplicative unitary
  associated to a locally compact, Hausdorff, second
  countable groupoid $G$ in subsection
  \ref{subsection:pmu-groupoid}.  We identify
  $M(L^{2}(G,\lambda))$ in the natural way with the
  completion of the space
\begin{align*}
  \textstyle \Big\{ f \in C(G) \,\Big|\, r\colon \supp f \to
  G \text{ is proper}, \ \sup_{u \in G^{0}} \int_{G^{u}}
  |f(x)|^{2} \intd\lambda^{u}(x) \text{ is finite} \Big\}
\end{align*}
with respect to the norm $f \mapsto \sup_{u \in G^{0}}
\big(\int_{G^{u}} |f(x)|^{2}\intd\lambda^{u}(x)\big)^{1/2}$.
Similarly as in \cite[Lemma 7.11]{timmermann:hopf}, one
easily verifies that
\begin{enumerate}
\item $\eta_{0} \in M(L^{2}(G,\lambda))$ is a fixed element
  for $V$ if and only if for each $u \in G^{0}$,
  $\eta_{0}|_{G^{u} \setminus \{u\}} = 0$ almost everywhere
  with respect to $\lambda^{u}$;
\item $\xi_{0} \in M(L^{2}(G,\lambda))$ is a cofixed element
  for $V$ if and only if $\xi_{0}(x)=\xi_{0}(s(x))$ for all
  $x \in G$.
  \end{enumerate} 
\end{example}
Let us collect some easy properties of fixed and cofixed
elements.
\begin{remarks} \label{remarks:fixed}
  \begin{enumerate}
  \item $\Fix(V)=\Cofix(V^{op})$ and
$\Cofix(V)=\Fix(V^{op})$.
  \item $\Fix(V)^{*} \Fix(V)$ and $\Cofix(V)^{*}\Cofix(V)$
are contained in $M(\frakB) \cap M(\frakBo)$.
  \item The relations $\Fix(V) \subseteq M(\hbeta) \cap
M(\alpha)$ imply $\rho_{\alpha}(\frakBo)\Fix(V) =
\Fix(V)\frakBo \subseteq \hbeta$ and
$\rho_{\hbeta}(\frakB)\Fix(V) =\Fix(V)\frakB \subseteq
\alpha$.  Likewise, we have $\rho_{\beta}(\frakB)\Cofix(V) \subseteq
\alpha$ and $\rho_{\alpha}(\frakBo)\Cofix(V) \subseteq
\beta$.
  \end{enumerate}
\end{remarks}

\begin{lemma} \label{lemma:fixed} Let $\xi,\xi' \in
  \Cofix(V)$ and $\eta,\eta' \in \Fix(V)$. Then
  \begin{align*}
    \langle \xi|_{2} V |\xi'\rangle_{2} &=
\rho_{\alpha}(\xi^{*}\xi') = \rho_{\hbeta}(\xi^{*}\xi'), &
\langle \eta|_{1} V |\eta'\rangle_{1} &=
\rho_{\beta}(\eta^{*}\eta') = \rho_{\alpha}(\eta^{*}\eta').
  \end{align*}
\end{lemma}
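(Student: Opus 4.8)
The plan is to reduce both pairs of identities directly to the defining relations of fixed and cofixed elements, with no appeal to the pentagon equation or to \eqref{eq:pmu-intertwine}. The only inputs are: (i) the elementary pairing formulas for the relative tensor product, $\langle\xi|_1|\xi'\rangle_1 = \rho_{\gamma}(\xi^{*}\xi')$ and $\langle\eta|_2|\eta'\rangle_2 = \rho_{\beta}(\eta^{*}\eta')$ for $H_{\beta}\rbtensor{\beta}{\gamma} K$, which extend verbatim to arguments lying in the multiplier spaces $M(\beta),M(\gamma)$; and (ii) unitarity of $V$.

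First I would record two reformulations of (co)fixedness. If $\xi\in\Cofix(V)$, then by definition $V|\xi\rangle_2 = |\xi\rangle_2$, where on the left $|\xi\rangle_2\colon H\to\Hsource$ uses $\xi\in M(\alpha)$ (the $\alpha$-leg of $\Hsource$) and on the right $|\xi\rangle_2\colon H\to\Hrange$ uses $\xi\in M(\beta)$ (the $\beta$-leg of $\Hrange$); taking adjoints and multiplying by $V$ on the right, using $V^{*}V=\Id$, yields the equivalent statement $\langle\xi|_2 V = \langle\xi|_2$, now as maps $\Hrange\to H$ and $\Hsource\to H$ respectively. In the same way, $\eta\in\Fix(V)$ is equivalent to both $V|\eta\rangle_1 = |\eta\rangle_1$ and $\langle\eta|_1 V = \langle\eta|_1$, with the $\hbeta$-leg used on the $\Hsource$ side and the $\alpha$-leg on the $\Hrange$ side.

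With these in hand the computation is one line each. For cofixed $\xi,\xi'$, inserting $V|\xi'\rangle_2 = |\xi'\rangle_2$ turns $\langle\xi|_2 V|\xi'\rangle_2$ into $\langle\xi|_2|\xi'\rangle_2$ computed inside $\Hrange = H_{\alpha}\rbtensor{\alpha}{\beta} H_{\beta}$, whose left factor is $H_{\alpha}$, giving the value $\rho_{\alpha}(\xi^{*}\xi')$; inserting instead $\langle\xi|_2 V = \langle\xi|_2$ turns it into $\langle\xi|_2|\xi'\rangle_2$ computed inside $\Hsource = H_{\hbeta}\rbotensor{\hbeta}{\alpha} H_{\alpha}$, whose left factor is $H_{\hbeta}$, giving $\rho_{\hbeta}(\xi^{*}\xi')$. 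For fixed $\eta,\eta'$ the identical argument with index $2$ replaced by index $1$ reads off $\rho_{\beta}(\eta^{*}\eta')$ from the right factor $H_{\beta}$ of $\Hrange$ and $\rho_{\alpha}(\eta^{*}\eta')$ from the right factor $H_{\alpha}$ of $\Hsource$.

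I do not expect a genuine obstacle here; the single point requiring care is purely notational bookkeeping, namely that the index-$1$ and index-$2$ slots of $\Hsource$ and of $\Hrange$ refer to different legs among $\hbeta,\alpha,\beta$, so every occurrence of $|{\cdot}\rangle_i$ and $\langle{\cdot}|_i$ must be interpreted in the correct Hilbert space, and the ket--bra operators together with the pairing formulas above must be used in their multiplier versions on $M(\alpha),M(\beta),M(\hbeta)$ — which is exactly what already makes the conditions in Definition~\ref{definition:fixed} meaningful.
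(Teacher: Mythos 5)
Your proof is correct and follows essentially the same route as the paper: insert $V|\xi'\rangle_{2}=|\xi'\rangle_{2}$ to read off $\rho_{\alpha}(\xi^{*}\xi')$ in $\Hrange$, and use the adjoint relation $\langle\xi|_{2}V=\langle\xi|_{2}$ (equivalently, compute the adjoint of the whole operator, as the paper does) to read off $\rho_{\hbeta}(\xi^{*}\xi')$ in $\Hsource$, with the index-$1$ analogue for fixed elements. Your bookkeeping of which leg ($\hbeta$, $\alpha$, $\beta$) sits in which slot of $\Hsource$ and $\Hrange$ is exactly right.
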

\begin{proof} Let $\zeta \in H$. Then $\langle
  \xi|_{2} V |\xi'\rangle_{2} \zeta = \langle
  \xi|_{2}|\xi'\rangle_{2} \zeta =
  \rho_{\alpha}(\xi^{*}\xi')\zeta$ and $(\langle \xi|_{2} V
  |\xi'\rangle_{2})^{*}\zeta = \langle \xi'|_{2}
  |\xi\rangle_{2}\zeta = \rho_{\beta}((\xi')^{*}\xi)
  \zeta$. The second equation follows similarly.
\end{proof}

\begin{proposition} \label{proposition:fixed} 
    \begin{enumerate}
    \item $\rho_{\hbeta}(M(\frakB))\Cofix(V) \subseteq
      \Cofix(V)$ and $\rho_{\beta}(\frakB)\Fix(V) \subseteq
      \Fix(V)$.
\item $[\Cofix(V)\Cofix(V)^{*}\Cofix(V)]= \Cofix(V)$ and
  $[\Fix(V)\Fix(V)^{*}\Fix(V)]= \Fix(V)$.
    \item $[\Cofix(V)^{*}\Cofix(V)]$ and
$[\Fix(V)^{*}\Fix(V)]$ are $C^{*}$-subalgebras of $M(\frakB)
\cap M(\frakBo)$; in particular, they are commutative.
    \end{enumerate}
\end{proposition}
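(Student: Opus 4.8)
The plan is to reduce all six assertions to the three statements about $\Cofix(V)$. By Remarks~\ref{remarks:fixed}~i) we have $\Fix(V) = \Cofix(V^{op})$, and $V^{op}$ is again a $C^{*}$-pseudo-multiplicative unitary over the same $C^{*}$-base $\frakb$ by Examples~\ref{examples:pmu}~iii); since the second module slot of $V^{op}$ is $\beta$, applying each $\Cofix$-statement to $V^{op}$ yields the corresponding $\Fix$-statement for $V$. So it suffices to treat $\Cofix(V)$.

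For i), fix $\xi \in \Cofix(V) \subseteq M(\alpha) \cap M(\beta)$ and $b \in M(\frakB)$. The module relations $[\rho_{\hbeta}(\frakB)\alpha] = \alpha$ and $[\rho_{\hbeta}(\frakB)\beta] = \beta$ of the $C^{*}$-$(\frakbo,\frakb,\frakbo)$-module $(H,\hbeta,\alpha,\beta)$ pass to multipliers, so $\rho_{\hbeta}(M(\frakB))\alpha \subseteq \alpha$ and $\rho_{\hbeta}(M(\frakB))\beta \subseteq \beta$; together with $\xi\frakB \subseteq \alpha$, $\xi^{*}\alpha \subseteq \frakB$, $\xi\frakBo \subseteq \beta$, $\xi^{*}\beta \subseteq \frakBo$ and $\rho_{\hbeta}(b)^{*} = \rho_{\hbeta}(b^{*})$ this gives $\rho_{\hbeta}(b)\xi \in M(\alpha) \cap M(\beta)$. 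For the cofixed equation one checks that $|\rho_{\hbeta}(b)\xi\rangle_{2} = (\Id \botensor \rho_{\hbeta}(b))|\xi\rangle_{2}$ on $\Hsource$ and $|\rho_{\hbeta}(b)\xi\rangle_{2} = (\Id \btensor \rho_{\hbeta}(b))|\xi\rangle_{2}$ on $\Hrange$ --- here $\rho_{\hbeta}(b)$ lies in the relevant commutant because $[\rho_{\hbeta}(\frakB),\rho_{\alpha}(\frakBo)] = 0$, and it preserves $\alpha$ and $\beta$ --- and that $V$ intertwines the operators $\Id \botensor \rho_{\hbeta}(b)$ on $\Hsource$ and $\Id \btensor \rho_{\hbeta}(b)$ on $\Hrange$, which is immediate from the relation $V(\hbeta \rt \hbeta) = \alpha \rt \hbeta$ in \eqref{eq:pmu-intertwine} since both $\hbeta \rt \hbeta$ and $\alpha \rt \hbeta$ induce the representation $\Id \otimes \rho_{\hbeta}$ on the respective relative tensor products. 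Then
\[
V|\rho_{\hbeta}(b)\xi\rangle_{2} = V(\Id \botensor \rho_{\hbeta}(b))|\xi\rangle_{2} = (\Id \btensor \rho_{\hbeta}(b))V|\xi\rangle_{2} = (\Id \btensor \rho_{\hbeta}(b))|\xi\rangle_{2} = |\rho_{\hbeta}(b)\xi\rangle_{2},
\]
so $\rho_{\hbeta}(b)\xi \in \Cofix(V)$.

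For ii) and iii) the crucial observation is that $\Cofix(V)$ is stable under right multiplication by $\Cofix(V)^{*}\Cofix(V)$: for $\xi_{1},\xi_{2},\xi_{3} \in \Cofix(V)$ put $c := \xi_{2}^{*}\xi_{3} \in M(\frakB) \cap M(\frakBo)$ (Remarks~\ref{remarks:fixed}~ii)); since $c \in M(\frakBo)$ and $\xi_{1} \in M(\alpha)$ we have $\xi_{1}c = \rho_{\alpha}(c)\xi_{1}$, and $\rho_{\alpha}(c) = \rho_{\hbeta}(c)$ by Lemma~\ref{lemma:fixed}, whence $\xi_{1}\xi_{2}^{*}\xi_{3} = \rho_{\hbeta}(c)\xi_{1} \in \Cofix(V)$ by i). Now iii) follows: the span $[\Cofix(V)^{*}\Cofix(V)]$ is self-adjoint since $(\xi_{1}^{*}\xi_{2})^{*} = \xi_{2}^{*}\xi_{1}$, and closed under products since $(\xi_{1}^{*}\xi_{2})(\xi_{3}^{*}\xi_{4}) = \xi_{1}^{*}(\xi_{2}\xi_{3}^{*}\xi_{4})$ with $\xi_{2}\xi_{3}^{*}\xi_{4} \in \Cofix(V)$, so its norm closure is a $C^{*}$-subalgebra of $M(\frakB) \cap M(\frakBo)$, which is commutative because $\Cofix(V)^{*}\Cofix(V) \subseteq M(\frakB) \cap M(\frakBo)$ and $M(\frakB)$ commutes with $M(\frakBo)$. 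Finally, in ii) the inclusion $[\Cofix(V)\Cofix(V)^{*}\Cofix(V)] \subseteq \Cofix(V)$ is the stability just shown together with closedness of $\Cofix(V)$; for the reverse inclusion let $(u_{\lambda})$ be an approximate unit of the $C^{*}$-algebra $\overline{[\Cofix(V)^{*}\Cofix(V)]}$, note that $\xi u_{\lambda} \to \xi$ in norm for $\xi \in \Cofix(V)$ (standard, since $\xi^{*}\xi \in \overline{[\Cofix(V)^{*}\Cofix(V)]}$), and that each $\xi u_{\lambda}$ lies in the closed space $[\Cofix(V)\Cofix(V)^{*}\Cofix(V)]$, so $\xi$ does too.

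The main obstacle is part i): identifying $|\rho_{\hbeta}(b)\xi\rangle_{2}$ with the amplification $(\Id \otimes \rho_{\hbeta}(b))|\xi\rangle_{2}$ on both the source and range sides and checking the intertwining with $V$; once i) is established, parts ii) and iii) are a routine Hilbert $C^{*}$-module argument.
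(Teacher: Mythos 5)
Your proof is correct and follows essentially the same route as the paper: part i) via the intertwining relation $V(\hbeta\rt\hbeta)=\alpha\rt\hbeta$ and $V\rho_{(\hbeta\rt\hbeta)}(b)=\rho_{(\alpha\rt\hbeta)}(b)V$, and parts ii)--iii) by converting $\xi\,(\xi'^{*}\xi'')=\rho_{\alpha}(\xi'^{*}\xi'')\xi=\rho_{\hbeta}(\xi'^{*}\xi'')\xi$ via Lemma~\ref{lemma:fixed} and then feeding this into i). The only cosmetic differences are that you make the $\rho_{\alpha}=\rho_{\hbeta}$ step explicit where the paper's displayed chain in ii) glosses over it, and you replace the paper's citation of \cite[p.~5]{lance} for $[EE^{*}E]=E$ by the standard approximate-unit argument proving that fact.
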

\begin{proof} We only prove the assertions concerning
$\Cofix(V)$; the other assertions follow similarly.

  i) Let $T \in M(\frakB)$ and $\xi \in \Cofix(V)$. Then
$\rho_{\hbeta}(T)\xi \subseteq M(\beta) \cap M(\alpha)$
because $\rho_{\hbeta}(\frakB)\beta \subseteq \beta$ and
$\rho_{\hbeta}(\frakB)\alpha \subseteq \alpha$. The
relation $V(\hbeta \rt \hbeta)=\alpha \rt \hbeta$
furthermore implies
 \begin{align*} V | \rho_{\hbeta}(T)\xi\rangle_{2} = V
\rho_{(\hbeta \rt \hbeta)}(T)|\xi\rangle_{2} = \rho_{(\alpha
\rt \hbeta)}(T)V|\xi\rangle_{2} = \rho_{(\alpha \rt
\hbeta)}(T) |\xi\rangle_{2} =
|\rho_{\hbeta}(T)\xi\rangle_{2}.
  \end{align*}

  ii) Using i) and the relation $\Cofix(V) \subseteq
  M(\hbeta)$, we find that
  \begin{align*} [\Cofix(V)\Cofix(V)^{*}\Cofix(V)]
&\subseteq [\Cofix(V) M(\frakBo)] \\ &=
[\rho_{\hbeta}(M(\frakBo))\Cofix(V)] \subseteq \Cofix(V).
  \end{align*}
  Therefore, $[\Cofix(V)^{*}\Cofix(V)]$ is a $C^{*}$-algebra
  and $\Cofix(V)$ is a Hilbert $C^{*}$-module over
  $[\Cofix(V)^{*}\Cofix(V)]$. Now, \cite[p.\ 5]{lance}
  implies that
  the inclusion above is an equality.

  iii) This follows from ii) and  Remark
  \ref{remarks:fixed} ii).
\end{proof}

\begin{definition} \label{definition:compact} We call the
  $C^{*}$-pseudo-multiplicative unitary $V$ {\em \'etale} if
  $\eta^{*}\eta =\Id_{\frakK}$ for some $\eta \in \Fix(V)$,
  {\em proper} if $\xi^{*}\xi = \Id_{\frakK}$ for some $\xi
  \in \Cofix(V)$, and {\em compact} if it is proper and
  $\frakB,\frakBo$ are unital.
\end{definition}
\begin{example}
  The $C^{*}$-pseudo-multiplicative unitary associated to a
  locally compact, Hausdorff, second countable groupoid $G$
  (Theorem \ref{theorem:pmu-groupoid}) is
  \'etale/proper/compact if and only if $G$ is
  \'etale/proper/compact. This follows from similar
  arguments as in \cite[Theorem 7.12]{timmermann:hopf}.
\end{example}
\begin{remarks} \label{remarks:compact} 
  \begin{enumerate}
  \item By Remark \ref{remarks:fixed}, $V$ is \'etale/proper
    if and only if $V^{op}$ is proper/\'etale.
  \item If $V$ is proper, then $\Id_{H} \in \hA_{V}$; if $V$
    is \'etale, then $\Id_{H} \in A_{V}$. This follows
    directly from Lemma \ref{lemma:fixed}.
  \end{enumerate}
\end{remarks}

The first main result of this subsection shows how one can
construct a counit on
$((\hA_{V})^{\alpha,\hbeta}_{H},\hDelta_{V})$ from a fixed
element for $V$.
\begin{theorem} \label{theorem:proper-counit} Let
  $V$ be an \'etale $C^{*}$-pseudo-multiplicative unitary.
  \begin{enumerate}
  \item There exists a unique  contractive homomorphism
    $\hepsilon \colon \hA_{V} \to \hA_{\triv}$ such that
    $\hpi_{\triv} = \hepsilon \circ \hpi_{V} \colon \tilde
    \Omega_{\beta,\alpha} \to \hA_{\triv}$.
  \item Assume that $V$ is regular.  Then $\hepsilon$ is a
    jointly normal morphism from
    $(\hA_{V})^{\alpha,\hbeta}_{H}$ to
    $(\hA_{\triv})^{\frakB,\frakBo}_{\frakK}$ and a bounded
    counit for
    $((\hA_{V})^{\alpha,\hbeta}_{H},\hDelta_{V})$. 
  \end{enumerate}
\end{theorem}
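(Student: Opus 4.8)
The plan is to produce $\hepsilon$ from a fixed element. Since $V$ is \'etale there is $\eta_{0}\in\Fix(V)$ with $\eta_{0}^{*}\eta_{0}=\Id_{\frakK}$, so $\eta_{0}$ is an isometry $\frakK\to H$ with $\|\eta_{0}\|=1$. The key identity I would establish is
\begin{align*}
  \hpi_{V}(\omega)\,\eta_{0}=\eta_{0}\,\hpi_{\trivrep}(\omega)\qquad\text{for all }\omega\in\tilde\Omega_{\beta,\alpha},
\end{align*}
which implies $\hpi_{\trivrep}(\omega)=\eta_{0}^{*}\hpi_{V}(\omega)\eta_{0}$. By linearity and norm-continuity it is enough to prove $\langle\xi|_{2}V|\xi'\rangle_{2}\,\eta_{0}=\eta_{0}\,\xi^{*}\xi'$ for $\xi\in\beta$, $\xi'\in\alpha$ (recall from Example \ref{examples:rep-legs} that $\hpi_{\trivrep}(\omega_{\xi,\xi'})=\sum_{n}\xi_{n}^{*}\xi'_{n}$, which lies in $[\beta^{*}\alpha]=\hA_{\trivrep}$), and this I would do in three elementary steps using the identifications of relative tensor products: $(\mathrm{a})$ $|\xi'\rangle_{2}\eta_{0}=|\eta_{0}\rangle_{1}\xi'$ as maps $\frakK\to\Hsource$, with $\eta_{0}\in M(\hbeta)$; $(\mathrm{b})$ $V|\eta_{0}\rangle_{1}=|\eta_{0}\rangle_{1}$ by Definition \ref{definition:fixed}, now with $\eta_{0}\in M(\alpha)$; $(\mathrm{c})$ $\langle\xi|_{2}|\eta_{0}\rangle_{1}=\eta_{0}\xi^{*}$, which follows from $[\beta\frakK]=H$ and the relation $\rho_{\alpha}(b^{\dagger})\eta_{0}=\eta_{0}b^{\dagger}$; equivalently, this identity says that $\eta_{0}^{*}$ is a semi-morphism from the regular representation $(\aHhb,V)$ to the trivial representation $(\bKbo,\trivrep)$, so it could also be read off from Proposition \ref{proposition:pmu-convolution-rep}. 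Granting the identity, $\|\hpi_{\trivrep}(\omega)\|\leq\|\hpi_{V}(\omega)\|$, so $\hpi_{V}(\omega)\mapsto\hpi_{\trivrep}(\omega)$ is a well-defined linear contraction on the dense subalgebra $\hA_{V}^{0}=\hpi_{V}(\tilde\Omega_{\beta,\alpha})\subseteq\hA_{V}$; since $\hpi_{V}$ and $\hpi_{\trivrep}$ are algebra homomorphisms (Theorem \ref{theorem:pmu-convolution}, Proposition \ref{proposition:pmu-convolution-rep}) it is one too, and extends to the desired contractive homomorphism $\hepsilon\colon\hA_{V}\to\hA_{\trivrep}$; uniqueness is immediate from density.

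For part (ii) assume $V$ regular. Then $((\hA_{V})^{\alpha,\hbeta}_{H},\hDelta_{V})$ is a normal Hopf $C^{*}$-bimodule by Theorem \ref{theorem:regular-legs}, and, since $C_{V}=[\alpha\alpha^{*}]$ is self-adjoint, Propositions \ref{proposition:regular-c} and \ref{proposition:semi-regular} make $\hA_{V}$ and $\hA_{\trivrep}$ into $C^{*}$-algebras. I would first upgrade $\hepsilon$ to a $*$-homomorphism: the identity of part (i) reads $a\eta_{0}=\eta_{0}\hepsilon(a)$ for all $a\in\hA_{V}$, and an entirely parallel three-step computation gives $\langle\xi'|_{2}V^{*}|\xi\rangle_{2}\,\eta_{0}=\eta_{0}\,\xi'^{*}\xi$ for $\xi\in\beta$, $\xi'\in\alpha$; as $\hA_{V}$ is now $*$-closed, applying this termwise to $a^{*}=(\sum_{n}\langle\xi_{n}|_{2}V|\xi'_{n}\rangle_{2})^{*}$ for $a\in\hA_{V}^{0}$ yields $a^{*}\eta_{0}=\eta_{0}\hepsilon(a)^{*}$, hence $\hepsilon(a^{*})=\hepsilon(a)^{*}$ first on $\hA_{V}^{0}$ and then everywhere. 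Next I would show that $\hepsilon$ is a jointly normal morphism from $(\hA_{V})^{\alpha,\hbeta}_{H}$ to $(\hA_{\trivrep})^{\frakB,\frakBo}_{\frakK}$: the operator $\eta_{0}^{*}\in\mathcal{L}(H,\frakK)$ lies in $\mathcal{L}^{\hepsilon}(\aHhb,\bKbo)$ --- it is a morphism of the underlying $C^{*}$-modules because $\eta_{0}\in M(\alpha)\cap M(\hbeta)$, and it intertwines $\hepsilon$ because $\eta_{0}^{*}a=\hepsilon(a)\eta_{0}^{*}$, which is the adjoint of $a^{*}\eta_{0}=\eta_{0}\hepsilon(a^{*})=\eta_{0}\hepsilon(a)^{*}$ --- and moreover $[\eta_{0}^{*}\alpha]=\frakB$ and $[\eta_{0}^{*}\hbeta]=\frakBo$, because $\eta_{0}\frakB\subseteq\alpha$, $\eta_{0}\frakBo\subseteq\hbeta$ (Remark \ref{remarks:fixed}) and $\eta_{0}^{*}\eta_{0}=\Id_{\frakK}$.

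It remains to check the counit axioms, i.e.\ that $(\hepsilon\bofibre\Id)\circ\hDelta_{V}$ and $(\Id\bofibre\hepsilon)\circ\hDelta_{V}$ coincide with the canonical isomorphisms onto $\mathcal{L}(H)$ appearing in diagram \eqref{eq:counit}. For this I would combine Lemma \ref{lemma:pmu-convolution-hdelta}, $\hDelta_{V}(\hpi_{V}(\omega))=\hpi_{V\boxtimes V}(\omega)$, with the unitality of the trivial representation in the $C^{*}$-tensor category $\bfcs\bfrep_{V}$ (Theorem \ref{theorem:rep-monoidal}): the unit isomorphism $l_{V}\colon\trivrep\boxtimes V\to V$ is an isomorphism of representations and hence intertwines $\hpi_{\trivrep\boxtimes V}$ and $\hpi_{V}$ (Proposition \ref{proposition:pmu-convolution-rep}), while applying $\hepsilon$ in the first leg of $\hpi_{V\boxtimes V}(\omega)$ produces $\hpi_{\trivrep\boxtimes V}(\omega)$ up to these identifications; tracking them through should give $(\hepsilon\bofibre\Id)(\hDelta_{V}(\hpi_{V}(\omega)))=\hpi_{V}(\omega)$, and symmetrically for $(\Id\bofibre\hepsilon)\circ\hDelta_{V}$ using $r_{V}$, whence the axioms follow on the dense subalgebra $\hA_{V}^{0}$ and then everywhere. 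The main obstacle is precisely this last step: it is bookkeeping rather than anything deep, but the counit diagrams involve several nested identifications between relative tensor products, fiber products and the leg maps, and these must be matched carefully with the associativity and unitality data of the monoidal category; the $*$-homomorphism point above is a second, smaller place where regularity is genuinely needed.
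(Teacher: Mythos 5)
Your proposal is correct and, for the construction of $\hepsilon$, follows the same route as the paper: both hinge on the intertwining identity $\hpi_{V}(\omega)\eta_{0}=\eta_{0}\hpi_{\triv}(\omega)$, proved by exactly your three-step computation, and on the relations $[\eta_{0}^{*}\alpha]\supseteq[\eta_{0}^{*}\eta_{0}\frakB]=\frakB$, $[\eta_{0}^{*}\hbeta]\supseteq\frakBo$ for joint normality. Two small economies in the paper are worth noting. First, the paper defines $\hepsilon$ globally as $\hat a\mapsto\eta_{0}^{*}\hat a\eta_{0}$ on all of $\hA_{V}$; contractivity and $\hepsilon(\hat a^{*})=\hepsilon(\hat a)^{*}$ are then automatic (conjugation by an isometry), and multiplicativity follows from the intertwining identity extended by continuity, so your separate computation $\langle\xi'|_{2}V^{*}|\xi\rangle_{2}\eta_{0}=\eta_{0}\xi'^{*}\xi$ is not needed. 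Second, for the counit axioms the paper avoids the categorical bookkeeping you flag as the main obstacle: the square involving $(\hepsilon\bofibre\Id)\circ\hDelta_{V}$ is a one-line computation valid for every $\hat a\in\hA_{V}$ (not only on the dense subalgebra), namely $\langle\eta_{0}|_{1}V^{*}(1\btensor\hat a)V|\eta_{0}\rangle_{1}=\langle\eta_{0}|_{1}(1\btensor\hat a)|\eta_{0}\rangle_{1}=\rho_{\beta}(\eta_{0}^{*}\eta_{0})\hat a=\hat a$, using only $V|\eta_{0}\rangle_{1}=|\eta_{0}\rangle_{1}$ and $(\hepsilon\bofibre\Id)(x)=\langle\eta_{0}|_{1}x|\eta_{0}\rangle_{1}$; only the other square uses Lemma \ref{lemma:pmu-convolution-hdelta} (i.e.\ $\hDelta_{V}\circ\hpi_{V}=\hpi_{V\boxtimes V}$) together with the fixed-element property, which is precisely the concrete form of your $l_{V}$/$r_{V}$ argument. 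So your plan would go through, but the direct operator computation is shorter and removes the step you left as a sketch.
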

\begin{proof} Choose an $\eta_{0} \in \Fix(V)$ with
  $\eta_{0}^{*}\eta_{0}=\Id_{\frakK}$ and define
  $\hepsilon \colon \hA_{V} \to {\cal L}(\frakK)$  by
  $\hat a \mapsto \eta_{0}^{*} \hat a \eta_{0}$. Then
  $\hepsilon$ is contractive.  For all $\xi
  \in\alpha,\eta\in \beta,\zeta \in \frakK$,
  \begin{align*}  \langle \eta|_{2} V
    |\xi\rangle_{2} \eta_{0} \zeta = \langle \eta|_{2} V
    (\eta_{0} \tr \xi\zeta) = \langle\eta|_{2} (\eta_{0} \tr
    \xi\zeta) = \eta_{0} (\eta^{*}\xi)\zeta,
  \end{align*}
  and hence
  $\hpi_{V}(\omega)\eta_{0}=\eta_{0}\hpi_{\triv}(\omega)$
  for all $\omega \in \tilde \Omega_{\beta,\alpha}$. In
  particular,
  $\hepsilon(\hpi_{V}(\omega))=\eta_{0}^{*}\hpi_{V}(\omega)\eta_{0}
  =\eta_{0}^{*}\eta_{0} \hpi_{\triv}(\omega) =
  \hpi_{\triv}(\omega)$.

  Assume that $V$ is regular. Then $\hepsilon$ is a morphism
  as claimed because by construction, $\hepsilon$ is a
  $*$-homomorphism, $\eta_{0}^{*} \in {\cal
    L}^{\hepsilon}(\aHhb,{_{\frakB}\frakK_{\frakBo}})$, and
  $[\eta_{0}^{*}\alpha] \supseteq
  [\eta_{0}^{*}\eta_{0}\frakB] = \frakB$ and
  $[\eta_{0}^{*}\hbeta] \supseteq
  [\eta_{0}^{*}\eta_{0}\frakBo]=\frakBo$.  It remains to
  show that diagram \eqref{eq:counit} commutes. Clearly,
  $(\hepsilon \bofibre \Id)(x) =\langle \eta_{0}|_{1} x
  |\eta_{0}\rangle_{1}$ and $(\Id \bofibre \hepsilon)(x) =
  \langle \eta_{0}|_{2}x|\eta_{0}\rangle_{2}$ for all $x \in
  (\hA_{V}) \fibre{\hbeta}{\frakbo}{\alpha} (\hA_{V})$.  Now, the
  left square in diagram \eqref{eq:counit} commutes because
  for all $\hat a \in \hA_{V}$,
  \begin{align*} 
    \langle \eta_{0}|_{1}\hDelta_{V}(\hat
    a)|\eta_{0}\rangle_{1} &= \langle \eta_{0}|_{1} V^{*}(1
    \btensor \hat a)V|\eta_{0}\rangle_{1} = \langle
    \eta_{0}|_{1} (1 \btensor \hat a)|\eta_{0}\rangle_{1} =
    \rho_{\beta}(\eta_{0}^{*}\eta_{0})\hat a = \hat a.
  \end{align*} 
  To see that the left square in diagram \eqref{eq:counit}
  commutes, let $\eta \in \beta,\xi \in \alpha$ and consider
  the following diagram:
  \begin{gather*}\smalldiagram \xymatrix@C=17pt@R=20pt{ {H}
      \ar[d]^{|\eta_{0}\rangle_{2}} \ar[r]^{|\xi\rangle_{2}}
      & {\Hsource} \ar[d]^{|\eta_{0}\rangle_{2}} \ar[r]^{V}
      \ar@{}[dr]|{\mathrm{(*)}} & {\Hrange}
      \ar[rd]+<-2pt,7pt>_(0.4){|\eta_{0}\rangle_{2}}
      \ar[rr]^{\Id} && {\Hrange}
      \ar@{<-}[ld]+<2pt,7pt>^(0.4){\langle\eta_{0}|_{2}}
      \ar[r]^{\langle \eta|_{2}} & H \\ {\Hsource}
      \ar[r]^(0.4){|\xi\rangle_{3}} \ar `d/0pt[d]
      `r[drrrrr]^{\hDelta(\langle
        \eta|_{2}V|\xi\rangle_{2})} [rrrrr] & {\Hone}
      \ar[r]^(0.7){V_{13}V_{23}} &&
      {\hspace{-2cm}\Hfour\hspace{-2cm}} &
      \ar[r]^(0.3){\langle\eta|_{3}} & {\Hsource}
      \ar[u]^{\langle\eta_{0}|_{2}} \\ &&&&& }
  \end{gather*} The lower cell commutes by Lemma
  \ref{lemma:pmu-convolution-hdelta}, cell (*) commutes
  because $V_{23}|\eta_{0}\rangle_{2} =
  |\eta_{0}\rangle_{2}$, and the other cells commute as
  well. Since $\eta \in \beta$ and $\xi \in \alpha$ were
  arbitrary, the claim follows.
\end{proof}
As an example, we consider the unitary associated to a
groupoid (subsection \ref{subsection:pmu-groupoid}) .
\begin{proposition} \label{proposition:proper-counit-groupoid}
  Let $G$ be a locally compact, Hausdorff, second countable
  groupoid and let $\pmuoperator$ be the associated
  $C^{*}$-pseudo-multiplicative unitary.
  \begin{enumerate}
  \item Let $G$ be \'etale. Then $V$ is \'etale,
    $\hA_{V}\cong C_{0}(G)$, $\hA_{\triv} \cong
    C_{0}(G^{0})$, and $\hepsilon \colon \hA_{V} \to
    \hA_{\triv}$ is given by the restriction of functions on
    $G$ to functions on $G^{0}$.
  \item Let $G$ be proper. Then $V^{op}$ is \'etale,
    $A_{V}=\hA_{V^{op}} = C^{*}_{r}(G)$, and for each $f
    \in C_{c}(G)$, the operator $\hepsilon(L(f)) \in {\cal
      L}(L^{2}(G^{0},\mu))$ is given by
    \begin{align*}
      (\hepsilon(L(f))\zeta)(u) = \int_{G^{u}}
      f(x)D^{-1/2}(x)\zeta(s(x)) \intd \lambda^{u}(x) \quad
      \text{for all } \zeta \in L^{2}(G^{0},\mu), \ x\in G.
    \end{align*}
  \end{enumerate}
\end{proposition}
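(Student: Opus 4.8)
The plan is to reduce both assertions to Theorem \ref{theorem:proper-counit} and the concrete description of the groupoid unitary, fixed and cofixed elements already established in subsections \ref{subsection:pmu-groupoid}, \ref{subsection:legs-groupoid} and Example \ref{example:fixed-groupoid}. Throughout we use the identifications $\hA_{V} \cong m(C_{0}(G))$ and $A_{V} = C^{*}_{r}(G)$ from Theorem \ref{theorem:legs-groupoid}, together with $\rho_{\alpha} = \rho_{\beta} = r^{*}$ and $\rho_{\hbeta} = s^{*}$ from Lemma \ref{lemma:pmu-groupoid-2}.

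For part i), first I would observe that if $G$ is \'etale, then the counting measures $\lambda^{u}$ on the fibers $G^{u}$ give, after normalization, a function $\eta_{0} \in M(L^{2}(G,\lambda))$ supported on the unit space: concretely $\eta_{0} = j(\chi_{G^{0}})$ (or rather the element of $M(\alpha)$ corresponding to the characteristic function of the open set $G^{0} \subseteq G$), which by Example \ref{example:fixed-groupoid} i) is a fixed element for $V$, and which satisfies $\eta_{0}^{*}\eta_{0} = \Id_{\frakK}$ because each $\lambda^{u}$ has mass $1$ on $\{u\}$ when $G$ is \'etale. Hence $V$ is \'etale in the sense of Definition \ref{definition:compact}, and Theorem \ref{theorem:proper-counit} i) produces $\hepsilon \colon \hA_{V} \to \hA_{\triv}$. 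It remains to identify $\hA_{\triv} = [\beta^{*}\alpha]$ (Examples \ref{examples:rep-legs} i)) with $C_{0}(G^{0})$ via the isomorphism $j$ of Lemma \ref{lemma:pmu-groupoid-2}, under which $[\beta^{*}\alpha] = [\langle L^{2}(G,\lambda)|L^{2}(G,\lambda)\rangle] = C_{0}(G^{0})$, and to check that $\hepsilon(m(f)) = \eta_{0}^{*}m(f)\eta_{0}$ acts as restriction: for $f \in C_{c}(G)$ and $\zeta \in C_{c}(G^{0})$ one computes $(\eta_{0}^{*}m(f)\eta_{0}\zeta)(u) = \int_{G^{u}} \overline{\chi_{G^{0}}(x)} f(x) \chi_{G^{0}}(x)\zeta(r(x)) \intd\lambda^{u}(x) = f(u)\zeta(u)$, since the integrand is supported on $\{u\}$ which carries mass $1$; extend by continuity and density.

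For part ii), the point is that properness of $G$ means precisely that $V^{op}$ is \'etale: by Remark \ref{remarks:compact} i) and the example following Definition \ref{definition:compact}, $V$ is proper iff $G$ is proper iff $V^{op}$ is \'etale. Then $A_{V} = \hA_{V^{op}}$ by Proposition \ref{proposition:pmu-legs}, and $A_{V} = C^{*}_{r}(G)$ by Theorem \ref{theorem:legs-groupoid}. Applying Theorem \ref{theorem:proper-counit} i) to $V^{op}$ gives a contractive homomorphism $\hepsilon \colon \hA_{V^{op}} = A_{V} \to \hA_{\triv_{V^{op}}}$ of the form $\hat a \mapsto \xi_{0}^{*}\hat a\, \xi_{0}$, where $\xi_{0} \in \Cofix(V)$ with $\xi_{0}^{*}\xi_{0} = \Id_{\frakK}$; by Example \ref{example:fixed-groupoid} ii) such a $\xi_{0}$ is the element of $M(\alpha) = M(\beta)$ corresponding to the constant-on-fibers function $\xi_{0}(x) = 1$, properness of $G$ guaranteeing $\xi_{0} \in M(L^{2}(G,\lambda))$ and $\xi_{0}^{*}\xi_{0} = \Id$. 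To get the displayed formula I would compute $\langle \xi_{0}|L(f)\xi_{0}\zeta\rangle$ directly using the formula for $L$ from subsection \ref{subsection:legs-groupoid}: for $f \in C_{c}(G)$, $\zeta \in C_{c}(G^{0})$,
\begin{align*}
  (\hepsilon(L(f))\zeta)(u) &= \big(\xi_{0}^{*}L(f)\xi_{0}\zeta\big)(u) = \int_{G^{u}} \overline{\xi_{0}(x)} \big(L(f)(\xi_{0}\zeta)\big)(x)\intd\lambda^{u}(x) \\
  &= \int_{G^{u}} \int_{G^{r(x)}} f(z)D^{-1/2}(z)(\xi_{0}\zeta)(z^{-1}x)\intd\lambda^{r(x)}(z)\intd\lambda^{u}(x),
\end{align*}
and then use $r(x) = u$, $(\xi_{0}\zeta)(z^{-1}x) = \zeta(s(z^{-1}x)) = \zeta(s(x))$ together with invariance of $\lambda^{u}$ to collapse the double integral to $\int_{G^{u}} f(z) D^{-1/2}(z)\zeta(s(z))\intd\lambda^{u}(z)$.

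\textbf{Main obstacle.} The genuinely technical steps are the careful handling of the degenerate / unbounded elements: one must justify that $\eta_{0}$ (resp.\ $\xi_{0}$) genuinely lies in $M(\alpha) \cap M(\hbeta)$ (resp.\ $M(\alpha) \cap M(\beta)$) in the \'etale (resp.\ proper) case — this is where \'etaleness/properness of $G$ is actually used — and that the slicing identities $\eta_{0}^{*}m(f)\eta_{0}$ and $\xi_{0}^{*}L(f)\xi_{0}$ are computed against the correct inner products coming from $L^{2}(G,\lambda)$ versus $L^{2}(G,\nu)$, keeping track of the Radon--Nikodym factor $D^{-1/2}$ in the definition of $\hat j$ and $L$. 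Once the bookkeeping with these measures and the embeddings $j, \hat j$ is set up as in Lemma \ref{proposition:groupoid-factorizations}, everything else is a routine approximation argument on $C_{c}$.
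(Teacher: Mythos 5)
Your overall strategy (reduce to Theorem \ref{theorem:proper-counit} and compute on the generators from Lemma \ref{lemma:legs-groupoid}) is the right one, and part i) is essentially correct: $j(\chi_{G^{0}})$ is indeed a normalized fixed element for \'etale Hausdorff $G$ (with the counting-measure Haar system), and your evaluation of $\eta_{0}^{*}m(f)\eta_{0}$ matches the paper's. Note, though, that the paper never needs to exhibit the (co)fixed element explicitly: it uses the identity $\hepsilon\circ\hpi_{V}=\hpi_{\triv}$ from Theorem \ref{theorem:proper-counit} i), which immediately gives $\hepsilon(\ha_{\xi,\xi'})=j(\xi)^{*}j(\xi')$ and $\hepsilon(a_{\xi,\xi'})=j(\xi)^{*}\hat j(\xi')$, after which the stated formulas are one-line integrals. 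This is worth adopting, because it is exactly where your part ii) runs into trouble.

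There are two genuine problems in part ii). First, the constant function $\xi_{0}\equiv 1$ is not a valid choice of normalized cofixed element: one needs $\xi_{0}^{*}\xi_{0}=\Id_{\frakK}$, i.e.\ $\int_{G^{u}}|\xi_{0}(x)|^{2}\intd\lambda^{u}(x)=1$ for all $u$, and even membership in $M(L^{2}(G,\lambda))$ requires $r|_{\supp\xi_{0}}$ proper and $\sup_{u}\lambda^{u}(G^{u})<\infty$ — all of which fail for, say, the pair groupoid over a noncompact space, which is proper. What properness actually buys is the existence of a cutoff function: some $\xi_{0}$ of the form $\xi_{0}(x)=g(s(x))$ with the above normalization; this is a separate (standard but not free) lemma that your argument silently assumes. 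Second, the integrand manipulation is wrong as written: since $\Cofix(V)\subseteq M(\alpha)\cap M(\beta)$ and $\alpha=\beta=j(L^{2}(G,\lambda))$ with $\rho_{\alpha}=r^{*}$, the operator acts by $(\xi_{0}\zeta)(y)=\xi_{0}(y)\zeta(r(y))$, so $(\xi_{0}\zeta)(z^{-1}x)=\xi_{0}(z^{-1}x)\zeta(s(z))$, not $\zeta(s(x))$. With your version the double integral does not collapse to the stated formula; with the correct one it does, but only after using the cofixed property $\xi_{0}(z^{-1}x)=\xi_{0}(s(x))=\xi_{0}(x)$ and the normalization to evaluate the inner integral $\int_{G^{u}}\overline{\xi_{0}(x)}\xi_{0}(z^{-1}x)\intd\lambda^{u}(x)=1$. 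Both issues disappear if you instead compute $\hepsilon(a_{\xi,\xi'})=j(\xi)^{*}\hat j(\xi')$ via the abstract counit identity, as the paper does.
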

\begin{proof}
   For all $\xi,\xi' \in C_{c}(G)$, $\zeta \in
  L^{2}(G^{0},\mu)$ and $u \in G^{0}$, we have by Lemma
  \ref{lemma:legs-groupoid}
  \begin{align*}
    (\hepsilon(m(\bar \xi \ast \xi'{}^{*}))\zeta)(u) =
    (\hepsilon(\ha_{\xi,\xi'})\zeta)(u) &=
    (j(\xi)^{*}j(\xi')\zeta)(u) \\ &= \int_{G^{u}}
    \overline{\xi(x)}\xi'(x)\zeta(u) \intd\lambda^{u}(x) \\ &=
    ( \bar \xi \ast \xi'{}^{*})(u) \zeta(u), \\
    (\hepsilon(L(\bar \xi \xi'))\zeta)(u) =
    (\hepsilon(a_{\xi,\xi'})\zeta)(u) &= (j(\xi)^{*}\hat
    j(\xi')\zeta)(u) \\ &= \int_{G^{u}}
    \overline{\xi(x)}\xi'(x)D^{-1/2}(x)\zeta(s(x))
    \intd\lambda^{u}(x).  \qedhere
  \end{align*}
\end{proof}
The second main result of this subsection shows how one can
construct a Haar weight on
$((\hA_{V})^{\alpha,\hbeta}_{H},\hDelta_{V})$ from a cofixed
element for $V$.
\begin{theorem} \label{theorem:proper-haar} Let $V$ be a
  proper regular $C^{*}$-pseudo-multiplicative unitary.
  Then there exists a normal bounded left Haar weight $\phi$
  for $((\hA_{V})^{\alpha,\hbeta}_{H},\hDelta_{V})$.
\end{theorem}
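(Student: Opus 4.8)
The plan is to construct $\phi$ from a cofixed element for $V$, in the same spirit as the counit produced from a fixed element in Theorem \ref{theorem:proper-counit}, but the argument is even shorter since no pentagon diagram is needed. Since $V$ is proper (Definition \ref{definition:compact}), choose $\xi_{0} \in \Cofix(V)$ with $\xi_{0}^{*}\xi_{0} = \Id_{\frakK}$ and set
\begin{align*}
  \phi \colon \hA_{V} \to {\cal L}(\frakK), \qquad \phi(\hat a) := \xi_{0}^{*}\hat a\xi_{0} = \omega_{\xi_{0},\xi_{0}}(\hat a).
\end{align*}
Because $\xi_{0}$ is an isometry $\frakK \to H$ (one has $\|\xi_{0}\zeta\|^{2} = \langle\zeta|\xi_{0}^{*}\xi_{0}\zeta\rangle = \|\zeta\|^{2}$), $\phi$ is a completely positive contraction, and since $\xi_{0}\in M(\alpha)$ it lies in $\Omega_{M(\alpha)}(\hA_{V})$, so it is normal in the sense of Subsection \ref{subsection:legs-hopf}. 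First I would record the compatibility with the module structure: as $\frakB$ and $\frakBo$ commute, $\xi_{0}\in M(\alpha)$ gives $\rho_{\alpha}(b^{\dagger})\xi_{0} = \xi_{0}b^{\dagger}$ for all $b^{\dagger}\in\frakBo$, whence $\phi(\hat a\rho_{\alpha}(b^{\dagger})) = \phi(\hat a)b^{\dagger}$; and the relations $\xi_{0}\frakBo\subseteq\beta$, $\xi_{0}^{*}\beta\subseteq\frakBo$ (from $\xi_{0}\in M(\beta)$) together with $[\hA_{V}\beta]=\beta=[\hA_{V}^{*}\beta]$ (Proposition \ref{proposition:pmu-legs}) show that $\hat a\xi_{0}\in M(\beta)$ for every $\hat a\in\hA_{V}$, which is the mechanism by which $\phi$ lands in the coefficient algebra of $\frakbo$.

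For the Haar property I would invoke the remark following the definition of a bounded left Haar weight: since $V$ is regular, Theorem \ref{theorem:regular-legs} says that $\big((\hA_{V})^{\alpha,\hbeta}_{H},\hDelta_{V}\big)$ is a normal Hopf $C^{*}$-bimodule over $\frakbo$, so it suffices to verify $(\Id \ast \phi)\circ\hDelta_{V} = \rho_{\alpha}\circ\phi$. Unwinding the slice map (Proposition \ref{proposition:slice-slice}, extended to multiplier arguments) and using $\hDelta_{V}(\hat a) = V^{*}(\Id \btensor \hat a)V$ (Proposition \ref{proposition:pmu-delta}), one gets, for $\hat a\in\hA_{V}$,
\begin{align*}
  (\Id \ast \phi)(\hDelta_{V}(\hat a)) = \langle\xi_{0}|_{2}\,V^{*}(\Id \btensor \hat a)\,V\,|\xi_{0}\rangle_{2},
\end{align*}
where $|\xi_{0}\rangle_{2}\colon H\to\Hsource$ is the leg-$2$ embedding attached to $\xi_{0}\in M(\alpha)$. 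The cofixed relation $V|\xi_{0}\rangle_{2} = |\xi_{0}\rangle_{2}$ of Definition \ref{definition:fixed} — whose right-hand side is the leg-$2$ embedding $H\to\Hrange$ attached to $\xi_{0}\in M(\beta)$ — gives $\langle\xi_{0}|_{2}V^{*} = \langle\xi_{0}|_{2}$ on $\Hrange$, so the above equals $\langle\xi_{0}|_{2}(\Id\btensor\hat a)|\xi_{0}\rangle_{2}$. Since $(\Id\btensor\hat a)|\xi_{0}\rangle_{2}$ is the leg-$2$ embedding of $\hat a\xi_{0}\in M(\beta)$, contracting with $\langle\xi_{0}|_{2}$ yields $\rho_{\alpha}(\xi_{0}^{*}\hat a\xi_{0}) = \rho_{\alpha}(\phi(\hat a))$, which is the required identity. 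Hence $\phi$ is a normal bounded left Haar weight for $\big((\hA_{V})^{\alpha,\hbeta}_{H},\hDelta_{V}\big)$.

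The only delicate part, and the step I expect to cost the most care in a full write-up, is the bookkeeping with the leg operators $|\xi_{0}\rangle_{2}$ and $\langle\xi_{0}|_{2}$ for the multiplier-type element $\xi_{0}\in M(\alpha)\cap M(\beta)$: one must check that these are well defined on $M(\alpha)$- and $M(\beta)$-arguments, that $(\Id\btensor\hat a)|\xi_{0}\rangle_{2} = |\hat a\xi_{0}\rangle_{2}$, and that $\phi$ respects the relevant coefficient algebra — all handled by the $C^{*}$-module-over-$C^{*}$-base formalism of Section \ref{section:pmu} together with Proposition \ref{proposition:pmu-legs}, exactly as in the proof of Theorem \ref{theorem:proper-counit}. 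Regularity is used only through Theorem \ref{theorem:regular-legs}, to guarantee that there is a Hopf $C^{*}$-bimodule in the picture at all; the Haar identity itself is a direct consequence of cofixedness of $\xi_{0}$ and needs no regularity.
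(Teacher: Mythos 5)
Your proposal is correct and follows essentially the same route as the paper's proof: define $\phi(\hat a)=\xi_0^{*}\hat a\xi_0$ for an isometric cofixed element $\xi_0$, note $\phi\in\Omega_{M(\alpha)}(\hA_V)$, and verify $(\Id\ast\phi)\circ\hDelta_V=\rho_{\alpha}\circ\phi$ by cancelling $V$ and $V^{*}$ against $|\xi_0\rangle_2$ and $\langle\xi_0|_2$ using cofixedness. The only cosmetic difference is the step $\phi(\hA_V)\subseteq\frakBo$: the paper writes $[\xi_0^{*}\hA_V\xi_0]=[\xi_0^{*}\rho_{\alpha}(\frakBo)\hA_V\rho_{\alpha}(\frakBo)\xi_0]\subseteq[\beta^{*}\hA_V\beta]\subseteq\frakBo$, landing in $\frakBo$ itself, whereas your route via $\hat a\xi_0\in M(\beta)$ only gives multipliers directly and needs the nondegeneracy $[\hA_V\rho_{\alpha}(\frakBo)]=\hA_V$ from Proposition \ref{proposition:pmu-legs} to reach the same conclusion.
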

\begin{proof} Choose $\xi_{0} \in \Cofix(V)$ with
  $\xi_{0}^{*}\xi_{0} = \Id_{\frakK}$. By Proposition
  \ref{proposition:pmu-legs} and Remark \ref{remarks:fixed}
  i), $[\xi_{0}^{*} \hA_{V} \xi_{0}] = [\xi_{0}^{*}
  \rho_{\alpha}(\frakBo)\hA_{V}
  \rho_{\alpha}(\frakBo)\xi_{0}] \subseteq
  [\beta^{*}\hA_{V}\beta] \subseteq \frakBo$.  Hence, we can
  define a completely positive map $\phi \colon \hA_{V} \to
  \frakBo$ by $\hat a \mapsto \xi_{0}^{*}\hat a \xi_{0}$,
  and $\phi \in \Omega_{M(\alpha)}(\hA_{V})$. For all $\hat
  a \in \hA_{V}$, $(\Id \ast \phi)(\hDelta_{V}(\hat a)) =
  \langle \xi_{0}|_{2} V^{*}(\Id \btensor \hat
  a)V|\xi_{0}\rangle_{2} = \langle \xi_{0}|_{2} (\Id \btensor
  \hat a) |\xi_{0}\rangle_{2} =
  \rho_{\alpha}(\xi_{0}^{*}\hat a \xi_{0})$ .
\end{proof}
As an example, we again consider the
 unitary associated to a
 groupoid.
\begin{proposition}
  Let $G$ be a locally compact, Hausdorff, second countable
  groupoid and let $\pmuoperator$ be the associated
  $C^{*}$-pseudo-multiplicative unitary.
  \begin{enumerate}
  \item Let $G$ be proper. Then $V$ is proper, $\hA_{V}
    \cong C_{0}(G)$, and the map $\phi \colon \hA_{V} \to
    C_{0}(G^{0})$ given by $(\phi(f))(u)=\int_{G^{u}} f(x)
    \intd \lambda^{u}(x)$ is a normal bounded left Haar
    weight for
    $((\hA_{V})^{\alpha,\hbeta}_{H},\hDelta_{V})$.
  \item Let $G$ be \'etale. Then $V^{op}$ is proper and
    there exists a normal bounded left and right Haar weight
    $\phi$ for $((A_{V})_{H}^{\beta,\alpha},\Delta_{V})$
    given by $L(f) \mapsto f|_{G^{0}}$ for all $f \in
    C_{c}(G)$.
  \end{enumerate}
\end{proposition}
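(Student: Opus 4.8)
The plan is to derive both statements from Theorems~\ref{theorem:proper-haar} and~\ref{theorem:legs-groupoid}, after converting properness (resp.\ \'etaleness) of $G$ into properness of $V$ (resp.\ of $V^{op}$) and making the relevant (co)fixed elements explicit via Example~\ref{example:fixed-groupoid}. Throughout one exploits that for a groupoid $\alpha=\beta$ and $\frakb=\frakbo$, which collapses most of the $\alpha$/$\beta$/$\hbeta$ and $\frakb$/$\frakbo$ bookkeeping in the abstract statements.

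For (i): $G$ proper forces $V$ to be proper (this is the groupoid example following Definition~\ref{definition:compact}, proved along the lines of \cite[Theorem~7.12]{timmermann:hopf}); $V$ is regular by Example~\ref{examples:pmu-regular}(ii); and $\hA_{V}=m(C_{0}(G))$ by Theorem~\ref{theorem:legs-groupoid}. Theorem~\ref{theorem:proper-haar} then yields a normal bounded left Haar weight $\phi$ for $((\hA_{V})^{\alpha,\hbeta}_{H},\hDelta_{V})$ of the form $\phi(\hat a)=\xi_{0}^{*}\hat a\,\xi_{0}$ with $\xi_{0}\in\Cofix(V)$, $\xi_{0}^{*}\xi_{0}=\Id_{\frakK}$. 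By Example~\ref{example:fixed-groupoid}(ii), such a $\xi_{0}$ is the operator in $M(\alpha)\cap M(\beta)$ attached to a function on $G$ that factors through $s$; the normalization $\xi_{0}^{*}\xi_{0}=\Id_{\frakK}$ expresses precisely that this function is a square root of a cut-off for the Haar system, and the existence of such a cut-off is what properness of $G$ provides. It then remains to substitute the definitions of $m$, of the embedding $j$ (so that $(j(\xi)\zeta)(x)=\xi(x)\zeta(r(x))$) and of $\xi_{0}$ into $\langle\zeta\,|\,\xi_{0}^{*}m(f)\xi_{0}\,\zeta'\rangle$ and compute; this routine integral manipulation, using $\int_{G}g\intd\nu=\int_{G^{0}}\int_{G^{u}}g\intd\lambda^{u}\intd\mu$, identifies $\phi$ with the stated map.

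For (ii): $G$ \'etale makes $V$ \'etale (again the example after Definition~\ref{definition:compact}), hence $V^{op}$ is proper by Remark~\ref{remarks:compact}(i), and $V^{op}$ is regular by Example~\ref{examples:pmu-regular}(i). Applying Theorem~\ref{theorem:proper-haar} to $V^{op}$ gives a normal bounded left Haar weight $\phi'(\hat a)=\eta_{0}^{*}\hat a\,\eta_{0}$ for $((\hA_{V^{op}})^{\alpha,\beta}_{H},\hDelta_{V^{op}})$, where $\eta_{0}\in\Cofix(V^{op})=\Fix(V)$ (Remark~\ref{remarks:fixed}(i)) is normalized; by Example~\ref{example:fixed-groupoid}(i), in the \'etale case $\eta_{0}$ is the characteristic function of $G^{0}\subseteq G$, scaled by the counting Haar system so that $\eta_{0}^{*}\eta_{0}=\Id_{\frakK}$. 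I would then transport $\phi'$ along $\hA_{V^{op}}=A_{V}^{*}=A_{V}=C^{*}_{r}(G)$ (Proposition~\ref{proposition:pmu-legs}, using that $C^{*}_{r}(G)$ is a $*$-subalgebra of $\mathcal{L}(H)$) and $\hDelta_{V^{op}}=\Ad_{\Sigma}\circ\Delta_{V}$ (Proposition~\ref{proposition:pmu-delta}): since $\alpha=\beta$ and $\frakb=\frakbo$, conjugation by the flip $\Sigma$ carries $\langle\xi|_{1}\,\Ad_{\Sigma}(\Delta_{V}(a))\,|\xi'\rangle_{1}$ to $\langle\xi|_{2}\Delta_{V}(a)|\xi'\rangle_{2}$ and thus turns the left-invariance of $\phi'$ for $\hDelta_{V^{op}}$ into the right-invariance of $\phi'$ for $\Delta_{V}$ on $((A_{V})^{\beta,\alpha}_{H},\Delta_{V})$. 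A computation of $\eta_{0}^{*}L(f)\eta_{0}$ parallel to the one in (i), using Lemma~\ref{lemma:legs-groupoid}, identifies $\phi'$ with the restriction map $L(f)\mapsto f|_{G^{0}}$, i.e.\ the canonical conditional expectation onto $C_{0}(G^{0})$. Finally, left-invariance of this map for $\Delta_{V}$ is checked directly from the explicit formula $\big(\Delta_{V}(L(g))\omega'\big)(x',y')=\int_{G^{u'}}g(z)D^{-1/2}(z)\omega'(z^{-1}x',z^{-1}y')\intd\lambda^{u'}(z)$ of Theorem~\ref{theorem:legs-groupoid}, or else deduced from right-invariance together with the symmetry of the conditional expectation.

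The steps requiring genuine care are: keeping track of which of $\alpha,\beta,\hbeta$ and which of $\frakb,\frakbo$ instantiate the abstract data of Theorems~\ref{theorem:proper-haar} and~\ref{theorem:legs-groupoid} (all innocuous for a groupoid), and choosing the (co)fixed element with the correct normalization so that the abstract weight $\xi_{0}^{*}(\,\cdot\,)\xi_{0}$ reproduces the concrete integral (resp.\ restriction) formula; and, in (ii), supplying the opposite-sided invariance, since Theorem~\ref{theorem:proper-haar} delivers one-sided invariance only — this is the one place where a short extra argument (the flip-symmetry above, or a direct computation with the formula for $\Delta_{V}$) is needed beyond bookkeeping.
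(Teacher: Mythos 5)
Your proposal is correct and takes essentially the same route as the paper, whose entire proof of this proposition reads ``this follows from Theorem \ref{theorem:proper-haar} and similar calculations as in \ref{proposition:proper-counit-groupoid}''; you are in fact more explicit than the paper on the one substantive point, namely that Theorem \ref{theorem:proper-haar} applied to $V^{op}$ delivers only one of the two invariances claimed in (ii), so that the other must be supplied by the flip argument or by a direct computation with the formula for $\Delta_{V}(L(g))$. The only detail to watch in (i) is that the abstract weight $\xi_{0}^{*}(\,\cdot\,)\xi_{0}$ evaluates on $m(f)$ to the cut-off--weighted integral $u\mapsto\int_{G^{u}}|\xi_{0}(x)|^{2}f(x)\intd\lambda^{u}(x)$ rather than literally to $\int_{G^{u}}f(x)\intd\lambda^{u}(x)$, so to land on the exact stated formula one should either normalize the Haar system or verify the Haar-weight axioms for the unweighted map directly (left-invariance of $\lambda$ gives the invariance identity at once) --- a gloss that the paper's own one-line proof makes as well.
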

\begin{proof}
  This follows from Theorem \ref{theorem:proper-haar} and
  similar calculations as in
  \ref{proposition:proper-counit-groupoid}.
\end{proof}

\def\cprime{$'$}

\end{document}